\documentclass[a4paper,11pt]{amsart}

\usepackage[centertags]{amsmath}

\usepackage{amscd}

\usepackage{amssymb}

\usepackage{fullpage}
\usepackage{color}
\usepackage{euscript}
\usepackage{tikz}
\newlength{\defbaselineskip} \setlength{\defbaselineskip}{\baselineskip}
\usepackage{footnote}
\usepackage{color}
\usepackage{todonotes}
\usepackage{url}

\usepackage[all,cmtip]{xy}

\newtheorem{thm}{Theorem}[section]
\newtheorem{cor}[thm]{Corollary}
\newtheorem{corollary}[thm]{Corollary}
\newtheorem{corr}[thm]{Corollary}

\newtheorem{lem}[thm]{Lemma}
\newtheorem{prop}[thm]{Proposition}
\newtheorem{conj}[thm]{Conjecture}
\newtheorem{prob}[thm]{Problem}

\theoremstyle{definition}
\newtheorem{defi}[thm]{Definition}

\newtheorem{rem}[thm]{Remark}

\usetikzlibrary{matrix,arrows}
\makeatletter \tikzset{
  edge node/.code={%
      \expandafter\def\expandafter\tikz@tonodes\expandafter{\tikz@tonodes #1}}}
\makeatother \tikzset{
  subseteq/.style={
    draw=none,
    edge node={node [sloped, allow upside down, auto=false]{$\subseteq$}}},
  Subseteq/.style={
    draw=none,
    every to/.append style={
      edge node={node [sloped, allow upside down, auto=false]{$\subseteq$}}}
  }
}

 \numberwithin{equation}{section}
\numberwithin{equation}{section} \theoremstyle{definition}
\DeclareMathOperator{\Pic}{Pic}
\DeclareMathOperator{\NS}{NS}
\DeclareMathOperator{\Aut}{Aut}
\DeclareMathOperator{\Fix}{Fix}

\DeclareMathOperator{\Sym}{Sym}

\DeclareMathOperator{\im}{im} 
\DeclareMathOperator{\id}{id}

          \newcommand\PP{{\mathbb{P}}}
           
          \newcommand\C{{\mathbb{C}  }}

            \newcommand\ZZ{{\mathbb Z}}

          \newcommand\oo{\mathcal O}
             \newcommand\Q{\mathbb Q}
             \newcommand{\ra}{\rightarrow}
          \newcommand\Z{\mathbb{Z}}

\definecolor{zielony}{rgb}{0.5, 0.9, 0.1}
\definecolor{czerwony}{rgb}{0.8, 0.2, 0.1}
\definecolor{niebieski}{rgb}{0.3, 0.1, 0.9}

\newcounter{appendice}

\topmargin = 0 pt \textheight = 645 pt \oddsidemargin = 5 pt
\evensidemargin = 5 pt \textwidth      = 412 pt

\begin{document}
\title{Projective models of Nikulin orbifolds}
\author[C.~Camere]{Chiara Camere}
\address{Chiara Camere, Universit\`a degli Studi di Milano,
Dipartimento di Matematica, Via Cesare Saldini 50, 20133 Milano,
Italy} \email{chiara.camere@unimi.it}
\urladdr{http://www.mat.unimi.it/users/camere/en/index.html}

\author[A.~Garbagnati]{Alice Garbagnati}
\address{Alice Garbagnati, Universit\`a degli Studi di Milano, Dipartimento di Matematica, via Cesare Saldini 50 20133 Milano, Italy }
\email{alice.garbagnati@unimi.it}
\urladdr{https://sites.google.com/site/alicegarbagnati/}

\author[G.~Kapustka]{Grzegorz Kapustka}
\address{Grzegorz Kapustka, Jagiellonian University Cracow, ul. \L{}ojasiewicza 6, 30-348 Krak\'o{}w}
\email{grzegorz.kapustka@uj.edu.pl}
 
 \author[M.~Kapustka]{Micha\l{} Kapustka}
\address{Micha\l{} Kapustka, Institute of Matematics of the Polish academy of Sciences, ul. \'Sniadeckich 8, 00-656 Warsaw}
\address{Micha\l{} Kapustka, University of Stavanger, Department of Mathematics and Natural Sciences, NO-4036 Stavanger, Norway}
\email{michal.kapustka@impan.pl}
\urladdr{}

\begin{abstract}
We study projective fourfolds of $K3^{[2]}$-type with a symplectic involution and the deformations of their quotients, called orbifolds of Nikulin types; they are IHS orbifolds.
 We compute the Riemann--Roch formula for Weil divisors on such orbifolds 
 and describe the first complete family of orbifolds of Nikulin type with a polarization of degree $2$ as double covers of special complete intersections $(3,4)$ in $\PP^6$.

\end{abstract}
\maketitle

\section{Introduction}
An orbifold $Y$ is said irreducible holomorphic symplectic if $Y\setminus\mathrm{Sing}(Y)$ is simply connected and admits a unique, up to a scalar multiple, non-degenerate holomorphic $2$-form.
Such manifolds are intensively studied \cite{BL,Menet,MT,FM} because they can be seen as a natural generalization of smooth irreducible hyper-K\"ahler manifolds.
There are only a few known families of these orbifolds, see \cite{MenetTorelli, Perego}.
A first non-smooth example of irreducible holomorphic symplectic orbifold of dimension $4$ is given as deformation of a partial resolution of the quotient of a fourfold of $K3^{[2]}$-type by a symplectic involution: we call this an {\it orbifold of Nikulin type}, in analogy with Nikulin surfaces in dimension two; the first examples were studied by \cite{MT}.

The aims of this paper are to find the relation between families of projective $K3^{[2]}$-type fourfolds $X$ with a symplectic involution $\sigma$ and families of Nikulin orbifolds $Y$ which are partial resolutions of $X/\sigma$  and  to describe geometrical properties of orbifolds of Nikulin type. In the first part we describe projective fourfolds of $K3^{[2]}$-type
having a symplectic involution both giving a lattice theoretic classification of them and providing a geometric interpretation of general members of their families. 
Note that it is known \cite{Cam12, Mo} that the quotient $X/\sigma$ is singular at $28$ points of type $\frac{1}{2}(1,1,1,1)$ and a $K3$ surface with transversal $A_1$ singularities. The general deformations of the partial resolution $Y$ of such a quotient are singular only at the $28$ points.
In particular, they are not factorial but $2$-factorial. 
In the second part we use the results from the previous sections to
find the Riemann--Roch formula for Weil divisors on orbifolds of Nikulin type, which depends on their Beauville--Bogomolov--Fujiki $q$ (BBF for short in the rest of the paper) degrees and the number of singular points in which the Weil divisor fails to be Cartier.
As a result, we describe in the third part the first complete family of 
projective irreducible holomorphic symplectic orbifolds as double covers of special complete intersections $(3,4)$ in $\PP^6$. Note that there are only 
six known such families, all for smooth hyper-K\"ahler manifolds (see the survey \cite[Section 3.6.1]{Debarre} and references therein).

Let us describe each of the three parts separately.

Symplectic involutions on smooth $K3$ surfaces are nowadays well
understood thanks to foundational work of Nikulin
\cite{Nikulin-symplectic} and then of van Geemen and Sarti
\cite{vGS, FV}. These involutions have $8$ fixed points and the quotient admits a resolution of singularities being a $K3$ surface with Picard rank $\geq 8$, called a Nikulin surface.
The first aim of this paper, described in Section \ref{sec: 4-fold with involution}, is to look at symplectic involutions
$\sigma$ on projective fourfolds of $K3^{[2]}$-type of degree
$2d$. We describe their possible Picard lattices and transcendental groups; as a consequence we identify their families in terms of lattice polarized families of $K3^{[2]}$-type fourfolds. We prove the following result  (see Table \ref{table: j,NS,T}), which is the analogue of the result \cite[Proposition 2.2]{vGS} for surfaces with symplectic involution.
\begin{thm}
Let $X$ be a generic projective fourfold of $K3^{[2]}$-type admitting a symplectic involution, then the pair $(T_X,\NS(X))$ of the transcendental lattice and the N\'eron--Severi group of $X$ is one of the following:\\
$(U^{\oplus 2}\oplus E_8(-2)\oplus \langle -2d\rangle\oplus \langle -2\rangle$, $\Lambda_{2d}$);\\
$(U^{\oplus 2}\oplus D_4(-1)\oplus \langle -2d\rangle\oplus \langle -2\rangle^{\oplus 5},\Lambda_{2d})$, with $d\equiv 1\mod 2$;\\ $(U^{\oplus 2}\oplus E_8(-2)\oplus K_d,\Lambda_{2d})$ with $d\equiv 3\mod 4$;\\ $(U^{\oplus 2}\oplus D_4(-1)\oplus \langle -2d\rangle\oplus \langle -2\rangle^{\oplus 5},\widetilde{\Lambda}_{2d})$ with $d\equiv 0\mod 2$,\\
where the lattices involved are defined in the notation  in Section \ref{subsec: 4folds with inv. lattices} and $d$ is a positive integer.

Vice versa if $X$ is a generic projective fourfold of $K3^{[2]}$-type such that $\NS(X)$ is isometric either to $\Lambda_{2d}$ or to $\widetilde{\Lambda}_{2d}$, then it admits a symplectic involution.
\end{thm}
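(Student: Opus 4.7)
The strategy is to translate the problem into a lattice-theoretic question using the known classification of symplectic involutions on $K3^{[2]}$-type manifolds due to Mongardi and Camere, and then to invoke Markman's monodromy-equivariant Torelli theorem for the converse direction.

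First, I would recall that for any symplectic involution $\sigma$ on a manifold $X$ of \ktipo, the coinvariant sublattice $H^2(X,\mathbb{Z})_\sigma$ is isomorphic to $E_8(-2)$ and, since $\sigma^*$ acts trivially on $H^{2,0}(X)$, it is contained in $\NS(X)$. By taking orthogonal complement inside $\Lambda := U^{\oplus 3}\oplus E_8(-1)^{\oplus 2}\oplus \langle -2\rangle$ one obtains $H^2(X,\mathbb{Z})^\sigma \simeq U^{\oplus 3}\oplus E_8(-2)\oplus \langle -2\rangle$. For a \emph{generic} projective $X$ admitting $\sigma$ and a polarization $L$ of degree $2d$, the N\'eron--Severi group has minimal possible rank, namely rank $9$, and equals the saturation inside $\Lambda$ of $E_8(-2)\oplus \langle L\rangle$, where $L$ is a primitive class in the invariant lattice with $L^2=2d$.

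Second, I would classify primitive classes $L\in H^2(X,\mathbb{Z})^\sigma$ of square $2d$ up to the action of $O(H^2(X,\mathbb{Z})^\sigma)$, since the transcendental lattice $T_X$ is the orthogonal complement of $L$ inside $H^2(X,\mathbb{Z})^\sigma$. The orbits are distinguished by the divisibility of $L$ in $\Lambda$ (which can be $1$ or $2$, because $[\Lambda:\Lambda^\sigma\oplus \Lambda_\sigma]$ is a $2$-power) and by the component of $H^2(X,\mathbb{Z})^\sigma$ in which a representative lies. Embedding $L$ into a hyperbolic summand $U\subset H^2(X,\mathbb{Z})^\sigma$ yields the orthogonal complement $U^{\oplus 2}\oplus E_8(-2)\oplus\langle -2d\rangle\oplus\langle -2\rangle$; a different embedding using a generator of the exceptional $\langle -2\rangle$ leads, after saturation, to an orthogonal complement involving $D_4(-1)$ and copies of $\langle -2\rangle$; and a mixed embedding where $L$ pairs non-trivially with the exceptional $\langle -2\rangle$-class produces the lattice $K_d$. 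The congruence conditions $d\equiv 1,0\pmod 2$ and $d\equiv 3\pmod 4$ are forced by the requirement that the resulting overlattices $\Lambda_{2d}$ and $\widetilde{\Lambda}_{2d}$ of $E_8(-2)\oplus \langle 2d\rangle$ be even and admit a gluing compatible with the discriminant form of the transcendental complement, computed via Nikulin's theory of finite quadratic forms.

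Third, for the converse direction, assume $X$ is generic with $\NS(X)\simeq \Lambda_{2d}$ or $\widetilde{\Lambda}_{2d}$. In both cases $\NS(X)$ contains a primitive copy of $E_8(-2)$ orthogonal to the polarization class. Define an involutive isometry $\tilde\sigma$ of $H^2(X,\mathbb{Z})$ by $-\id$ on this $E_8(-2)$ and $+\id$ on its orthogonal complement. Because $E_8(-2)$ sits inside $\NS(X)$, the map $\tilde\sigma$ is a Hodge isometry fixing the ample class $L$. Markman's description of the monodromy group of $K3^{[2]}$-type manifolds shows that such an involution, acting as $\pm\id$ on the $E_8(-2)$ coinvariant lattice (whose discriminant form is $2$-elementary), is a monodromy operator; hence by the Torelli theorem for \ktipo\ manifolds it is induced by a unique biregular automorphism $\sigma$ of $X$. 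Since $\tilde\sigma$ is trivial on $H^{2,0}(X)$, the automorphism $\sigma$ is symplectic.

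The main obstacle is the lattice-theoretic case distinction in the second step: enumerating the orbits of primitive vectors of square $2d$ in $H^2(X,\mathbb{Z})^\sigma$, computing their orthogonal complements and the resulting discriminant forms, and checking that exactly two overlattices $\Lambda_{2d}$ and $\widetilde{\Lambda}_{2d}$ arise with precisely the congruence restrictions on $d$ claimed in the statement.
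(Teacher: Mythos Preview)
Your overall strategy matches the paper's: reduce to a lattice problem via Mongardi's characterization that the coinvariant lattice $E_8(-2)$ sits primitively in $\NS(X)$, classify the possible N\'eron--Severi groups as overlattices of $\langle 2d\rangle\oplus E_8(-2)$ in which $E_8(-2)$ stays primitive (this is taken verbatim from van Geemen--Sarti, giving exactly $\Lambda_{2d}$ and, for $d$ even, $\widetilde{\Lambda}_{2d}$), and then classify primitive embeddings of these into $L$ to read off $T_X$. For the converse the paper simply cites Mongardi's theorem; what you sketch with Markman's monodromy and the global Torelli theorem is precisely how that theorem is proved, so your argument is correct but more than is needed.

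The place where your proposal is genuinely imprecise is the organisation of step~2. You propose to classify primitive square-$2d$ vectors in the invariant lattice $H^2(X,\mathbb{Z})^\sigma\simeq U^{\oplus 3}\oplus E_8(-2)\oplus\langle -2\rangle$ and say the orbits are ``distinguished by the divisibility of $L$ in $\Lambda$ \dots\ and by the component of $H^2(X,\mathbb{Z})^\sigma$ in which a representative lies.'' Divisibility alone does not separate the cases: the embeddings $j_1$ and $j_2$ both give a polarisation of divisibility $1$ in $L$, yet produce non-isometric transcendental lattices, while ``component'' is not a well-defined invariant. The paper instead applies Nikulin's classification of primitive embeddings directly. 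Since $A_L\cong\mathbb{Z}_2(-\tfrac12)$, primitive embeddings of $\Lambda_{2d}$ into $L$ correspond to order $\leq 2$ subgroups $H\subset A_{\Lambda_{2d}}$ on which the form restricts to $(-\tfrac12)$. One always has $H=0$ (giving $j_1$); when $d$ is odd the unique order-two element $dh/2d\in A_{\langle 2d\rangle}$ has value $d/2$, and combining it with a class $v\in A_{E_8(-2)}\cong u(2)^{\oplus 4}$ of value $1$ (for $d\equiv 1\bmod 4$) or $0$ (for $d\equiv 3\bmod 4$) gives further embeddings. For $d\equiv 3\bmod 4$ the choices $v=0$ and $v\neq 0$ isotropic are inequivalent, yielding $j_3$ and $j_2$ respectively. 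For $\widetilde{\Lambda}_{2d}$ no nontrivial $H$ exists because its discriminant form never takes the value $-\tfrac12$, forcing uniqueness of $\widetilde{j}$. This discriminant-form bookkeeping makes the congruence conditions on $d$ immediate, whereas your ad hoc invariants would require extra work to pin down the same case distinction.
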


In Section \ref{subsec: 4folds inv. models} we show that the general member of these families can be described either as Hilbert scheme of two points on a K3 surface or as moduli space of (possibly twisted) sheaves on a K3 surface, see Table \ref{table: proj models X}. In both  cases the K3 surfaces involved lie in 12-dimensional families of lattice polarized K3 surfaces and are resolution of singularities of a K3 surface with $7$ nodes.

In Section \ref{sec: Nikulin fourfold} we consider the quotient $X/\sigma$ and its partial resolution $Y$. The knowledge of the N\'eron--Severi group and of the transcendental group of $X$ allows one to compute the ones of $Y$ and thus the family of fourfolds of $K3^{[2]}$-type $X$ determines the family of the Nikulin orbifolds $Y$. In particular we prove the following result(see Table \ref{table: NS and T of Y}), which is the analogue of the result \cite[Corollary 2.2]{GS} for Nikulin surfaces.

\begin{thm}
	Let $X$ be a fourfold of $K3^{[2]}$-type admitting a symplectic involution $\sigma$, and $Y$ be the Nikulin orbifold partial resolution of $X/\sigma$ then\\
	$T_X\simeq U^{\oplus 2}\oplus E_8(-2)\oplus \langle -2d\rangle\oplus \langle -2\rangle$ if and only if $T_Y=U(2)^{\oplus 2}\oplus E_8(-1)\oplus \langle -4d\rangle\oplus \langle -4\rangle$;\\
	$T_X\simeq U^{\oplus 2}\oplus D_4(-1)\oplus \langle -2d\rangle\oplus \langle -2\rangle^{\oplus 5}$ if and only if $T_Y\simeq U(2)^{\oplus 2}\oplus E_7(-1)\oplus K_d(2) \langle -2\rangle$;\\ $T_X\simeq U^{\oplus 2}\oplus E_8(-2)\oplus K_d$ if and only if $T_Y\simeq U(2)^{\oplus 2}\oplus K_d(2)\oplus E_8(-1)$;\\ $T_X\simeq U^{\oplus 2}\oplus D_4(-1)\oplus \langle -2d\rangle\oplus \langle -2\rangle^{\oplus 5}$ if and only if $T_Y\simeq U^{\oplus 2}\oplus \langle -d\rangle\oplus N\oplus \langle -4\rangle$.\\
\end{thm}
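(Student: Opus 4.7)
The plan is to translate the $\sigma$-invariant structure on $H^2(X,\ZZ)$ from the previous theorem into the orbifold lattice $H^2(Y,\ZZ)$, through the diagram $X\stackrel{\pi}{\to} X/\sigma\stackrel{r}{\leftarrow} Y$ in which $\pi$ is the double quotient and $r$ is the partial resolution, and to read off $T_Y$ as the orthogonal complement of $\NS(Y)$.

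First I would recall that since $\sigma$ is symplectic, $\sigma^*$ fixes the holomorphic $2$-form, hence acts trivially on $T_X$; therefore $T_X\subseteq H^2(X,\ZZ)^{\sigma}$. In particular the $E_8(-2)$ coinvariant summand of $\sigma^*$ lies in $\NS(X)$ and not in $T_X$, so the $E_8(-2)$ visible in Cases 1 and 3 is an intrinsic sublattice of $T_X$ independent from the coinvariant part. The identities $\pi_*\pi^*=2$ and $\pi^*\pi_*=1+\sigma^*$ then identify $H^2(X/\sigma,\QQ)$ with $H^2(X,\QQ)^{\sigma}$ via $\pi^*$, doubling the intersection form. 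Restricting to the transcendental parts, which see neither the coinvariant summand nor the exceptional divisor introduced by $r$, yields a rational isometry $T_Y\otimes\QQ\simeq T_X\otimes\QQ$ scaling the BBF pairing by a factor of $2$.

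Next I would promote this rational statement to an integral one. The structure of $H^2(Y,\ZZ)$ as a $2$-factorial orbifold cohomology is available from Menet's work on IHS orbifolds: it contains $\pi^{*}(H^2(X,\ZZ)^{\sigma})$ rescaled by $2$, together with glue classes coming from the resolution of the fixed K3 surface and from the $2$-torsion Weil classes attached to the $28$ singular points of type $\tfrac12(1,1,1,1)$. On the transcendental side these glue classes assemble $T_Y$ as a finite-index overlattice of $T_X(2)$. In Case 1 I would expect an index-$2^8$ extension that upgrades $E_8(-4)=E_8(-2)(2)$ to $E_8(-1)$ while leaving $U(2)^{\oplus 2}$, $\langle -4d\rangle$ and $\langle -4\rangle$ unaltered; the other three cases should behave analogously, the precise shape of the $K_d(2)$, $\langle -d\rangle$, or $N$ summands being governed by the discriminant form of $\langle -2d\rangle$ inside $T_X$ and the parity of $d$. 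The case-by-case verification is then a discriminant-form calculation in the spirit of Nikulin, where uniqueness of primitive embeddings delivers both implications of the ``if and only if''.

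The main obstacle is pinning down the exact integral overlattice between $T_X(2)$ and $T_Y$: the rational equivalence is elementary, but the precise jump depends on the interaction of the discriminant form of $T_X$ with the $2$-elementary contribution coming from the exceptional divisor over the fixed K3 and from the $28$ Weil classes, and this is exactly what distinguishes the four cases, particularly Cases~3 and~4 where the non-split summands $K_d$ and $N$ reflect a non-trivial gluing.
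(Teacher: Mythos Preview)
Your conceptual framework is sound: $T_Y\otimes\Q$ is indeed $T_X(2)\otimes\Q$, and $T_Y$ is a finite-index overlattice of $\pi_*(T_X)\simeq T_X(2)$ inside $H^2(Y,\Z)$. However, two concrete points separate your sketch from a proof, and the paper takes a different route precisely to avoid them.

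First, the glue you invoke is misattributed. The $28$ isolated points remain singular in the partial resolution $Y$; they contribute to the local class group (hence to the Weil-vs-Cartier discussion in Section~\ref{orbRR}) but not to $H^2(Y,\Z)$. The only exceptional class entering $H^2(Y,\Z)$ is $\Sigma$, the divisor over the fixed K3, together with the half-integral classes $(\Sigma\pm\Delta)/2$ from Menet's Theorem~\ref{thm:menet}. These classes lie in $\NS(Y)$, not in $T_Y$, so they do not directly supply the glue on the transcendental side. The index-$2^8$ jump from $E_8(-4)$ to $E_8(-1)$ in Case~1 comes instead from the fact that $\pi_*(\lambda_+(E_8(-2)))$ lands in $2\cdot E_8(-1)\subset E_8(-1)\subset H^2(Y,\Z)$, and the primitive $E_8(-1)$ is already built into Menet's lattice. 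Recognising this requires the explicit description \eqref{eq: pi_*} of $\pi_*$, not an abstract discriminant-form argument.

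Second, and more seriously, in Cases~2 and~4 the lattice $T_X$ has the same shape $U^{\oplus 2}\oplus D_4(-1)\oplus\langle -2d\rangle\oplus\langle -2\rangle^{\oplus 5}$ (distinguished only by the parity of $d$, i.e.\ by the embedding $j_2$ versus $\widetilde{j}$), yet $T_Y$ is genuinely different: $U(2)^{\oplus 2}\oplus E_7(-1)\oplus K_d(2)\oplus\langle -2\rangle$ versus $U^{\oplus 2}\oplus\langle -d\rangle\oplus N\oplus\langle -4\rangle$. One checks that $T_Y$ is an index-$2^7$ overlattice of $T_X(2)$ in both cases, and there is no way to single out the correct one from $T_X$ alone via discriminant forms: the answer depends on how $\NS(X)$ sits inside $L$, equivalently on the explicit image $\pi_*(j(h))$.

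The paper therefore proceeds differently (Propositions~\ref{prop: j1}--\ref{prop: jtilde}). It fixes the explicit embeddings $j_a(h)$ from Propositions~\ref{prop: j1 embedding}--\ref{prop: jtilde embedding}, applies the explicit formula \eqref{eq: pi_*} for $\pi_*$ into Menet's lattice $U(2)^{\oplus 3}\oplus E_8(-1)\oplus\langle -2\rangle^{\oplus 2}$, computes $\NS(Y)$ as the saturation of $\langle\pi_*(j(h)),\Sigma\rangle$ (checking case by case which combinations become $2$-divisible), and then reads off $T_Y$ as the orthogonal complement. This sidesteps the problem of identifying the overlattice abstractly: once $\NS(Y)$ is written in coordinates, $T_Y$ is forced. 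Your approach could be completed, but only by importing exactly these explicit ingredients, at which point it collapses into the paper's computation.
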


In Section \ref{subsec: subfamilies of Nikuline fourfolds and a conjecture} we study the K3 surface $W$ in the fixed locus of the involution $\sigma$ on the $K3^{[2]}$-type fourfold $X$: we show that there is an isometry between $T_{W}\otimes \Q$ and $T_Y\otimes \Q$, where $T_{\bullet}\otimes \Q$ is transcendental lattice with rational coefficients and $Y$ is the Nikulin orbifold as above (see Proposition \ref{prop: conj over Q}). In particular the Picard number of $W$ is at least 8. Moreover, we conjecture that this isometry holds also with integer coefficients (Conjecture \ref{conj}). We prove the conjecture for many subfamilies of codimension 1, corresponding to Hilbert scheme of points on K3 surfaces with natural symplectic involutions, and for two complete families, see Proposition \ref{prop: examples conjecture d=1 j2} and \ref{prop:conj-Fano-case}. 

In Section \ref{orbRR} we find the Riemann--Roch formula by following step by step the quotient construction of Nikulin fourfolds.
Using the results from \cite{BRZ,Blache,CGM}  for $2$-factorial orbifolds we prove in Corollary \ref{cor: RR for Cartier} and in Proposition 
\ref{prop:RR for Weil divisors v.g. case} the following result.
\begin{thm}
	Let $Y$ be an orbifold of Nikulin type such that $\Pic(Y)=\mathbb{Z}L$, and let $D=\frac{m}{2}L$ be a $\mathbb{Q}$-Cartier divisor on $Y$, $m\in\mathbb{Z}$ odd and $N$ the number of points where $D$ fails to be Cartier. Then \[\chi(\mathcal{O}(D))=\frac{3}{8}\left(\frac{m^4}{24}q(L)^2+m^2q(L)+8\right)-\frac{N}{16}.\]
	
	Moreover, on any orbifold of Nikulin type $Y$ and for any $D\in\Pic(Y)$,
\[\chi(\mathcal{O}(D))=\frac{1}{4}\left(q(D)^2+6q(D)+12\right).\]
\end{thm}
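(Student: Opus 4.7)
The plan is to prove both formulas in two stages. First, I establish the second (Cartier) formula by combining the Fujiki relation on the IHS orbifold $Y$ with a comparison to the analogous formula on the covering $K3^{[2]}$-type fourfold $X$. Then I upgrade to the first ($\mathbb{Q}$-Cartier Weil) formula by adding the local orbifold correction of \cite{BRZ,Blache,CGM} at each of the $N$ singular points where $D$ fails to be Cartier. Since $Y$ is an IHS orbifold of dimension $4$, its holomorphic differentials are generated by the symplectic form, so $h^{0,0}(Y)=h^{2,0}(Y)=h^{4,0}(Y)=1$ and $h^{0,1}(Y)=h^{0,3}(Y)=0$, whence $\chi(\mathcal{O}_Y)=3$. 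By the Fujiki relation applied to Cartier divisor classes, there exist universal constants $a,b$ (depending only on the deformation type of $Y$) such that
\[\chi(\mathcal{O}_Y(D))=a\,q(D)^2+b\,q(D)+3\]
for every Cartier $D\in\Pic(Y)$.

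To pin down $a$ and $b$, I lift a Cartier $D$ on $Y$ to a $\sigma$-invariant Cartier divisor $H:=\hat{\pi}^*D$ on the blowup $\hat{X}$ of $X$ along the fixed K3 surface $W$, where $\hat{\pi}\colon\hat{X}\to Y$ is the resulting double cover branched along the image of $W$. The BBF normalization for a double cover gives $q_X(H)=2\,q_Y(D)$, and the Fujiki formula for $K3^{[2]}$-type fourfolds reads
\[\chi(\mathcal{O}_{\hat{X}}(H))=\tfrac{1}{8}(q_X(H)+4)(q_X(H)+6)=\tfrac{1}{2}(q_Y(D)+2)(q_Y(D)+3).\]
The holomorphic Lefschetz fixed point theorem applied to $(\hat{X},\sigma)$ then expresses
\[\chi(\mathcal{O}_Y(D))=\tfrac{1}{2}\bigl(\chi(\mathcal{O}_{\hat{X}}(H))+L(\sigma,\mathcal{O}_{\hat{X}}(H))\bigr),\]
where the Lefschetz number $L$ decomposes as $28\cdot\tfrac{1}{16}$ from the isolated fixed points (on which $d\sigma=-\mathrm{id}$) plus an Atiyah--Bott integral over $W$ with $\sigma=-\mathrm{id}$ on the rank-$2$ normal bundle. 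Evaluating both contributions and matching coefficients in $q_Y(D)$ identifies $a=1/4$ and $b=3/2$, yielding the second formula $\chi(\mathcal{O}_Y(D))=\tfrac{1}{4}(q(D)^2+6\,q(D)+12)$.

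For the first formula, take $D=(m/2)L$ with $m$ odd; then $D$ is a Weil but not Cartier divisor while $2D=mL\in\Pic(Y)$. By the orbifold Riemann--Roch for $2$-factorial orbifolds of \cite{BRZ,Blache,CGM},
\[\chi(\mathcal{O}_Y(D))=R(q(D))+\sum_{p\in\mathrm{Sing}(Y)}c_p(D),\]
where $R(q)=\tfrac{1}{4}(q^2+6q+12)$ is the polynomial of the preceding step (polynomially extrapolated to $\mathbb{Q}$-Cartier classes) and $c_p(D)$ is the local Kawasaki/Reid--Tai correction at the $\tfrac{1}{2}(1,1,1,1)$-singularity $p$. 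A standard character computation gives $c_p(D)=\pm 1/32$, with sign $+$ if $D$ is Cartier at $p$ and $-$ otherwise; thus passing from the Cartier-extrapolated case to a Weil class which fails to be Cartier at a given point costs exactly $-1/16$. Summing over the $N$ non-Cartier singular points and substituting $q(D)=m^2q(L)/4$ yields the first displayed formula. The main technical difficulty will be the explicit evaluation of the K3-surface contribution in the Lefschetz formula of the previous step --- equivalently, the justification of the local orbifold correction $-1/16$ at a non-Cartier $\tfrac{1}{2}(1,1,1,1)$-singularity --- which requires careful handling of the equivariant structure on the rank-$2$ normal bundle $N_{W/X}$ and of the restriction $H|_W$.
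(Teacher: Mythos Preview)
Your overall strategy---establish the Cartier formula first, then add the local orbifold corrections for the Weil case---matches the paper's, and your identification of the local contribution at a $\tfrac{1}{2}(1,1,1,1)$ point (the Kawasaki term $\pm\tfrac{1}{32}$, giving a net cost of $-\tfrac{1}{16}$ per non-Cartier point relative to the Cartier polynomial) is correct and agrees with \cite{BRZ}. The reduction of the Weil formula to the Cartier one is essentially the paper's Proposition~\ref{prop:RR for Weil divisors v.g. case}.

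The gap is in your derivation of the Cartier formula. You set $H:=\hat\pi^*D$ on the blow-up $\hat X$ of $X$ along $W$ and then invoke ``the Fujiki formula for $K3^{[2]}$-type fourfolds'' for $\chi(\mathcal{O}_{\hat X}(H))$. But $\hat X$ is \emph{not} of $K3^{[2]}$-type (indeed $K_{\hat X}=E_W\neq 0$), so neither the BBF form $q_X$ nor the closed expression $\tfrac{1}{8}(q+4)(q+6)$ applies to it. What is true is that $\hat\pi^*D=\tilde\rho^*H_X+kE_W$ for some $H_X\in\Pic(X)^\sigma$ and $k\in\mathbb Z$; only when $k=0$ does one have $\chi(\mathcal{O}_{\hat X}(\hat\pi^*D))=\chi(\mathcal{O}_X(H_X))$, and only then does the $K3^{[2]}$ formula enter. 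Likewise the relation ``$q_X(H)=2q_Y(D)$'' is not a general fact: from \cite{Menet} one has $q_Y(\tilde\pi_*\tilde\rho^*H_X)=2q_X(H_X)$, but a general Cartier $D$ has a $\Sigma$-component, so $D=\tfrac12\tilde\pi_*\tilde\rho^*H_X+\tfrac{k}{2}\Sigma$ and $q_Y(D)=\tfrac12 q_X(H_X)-k^2$. Your Lefschetz computation would therefore have to track this $k$-dependence together with the Atiyah--Bott integral over the divisor $E_W$ (not over $W$ itself, since on $\hat X$ the fixed locus of $\sigma$ is $E_W$ plus the $28$ points); you flag this as the main difficulty but do not carry it out.

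The paper sidesteps these issues by working on the \emph{full} smooth resolution $\widetilde Y$ and computing $c_1(\widetilde Y),c_2(\widetilde Y)$ explicitly via Grothendieck--Riemann--Roch through the tower $V\to\widetilde Y$, $V\to\widetilde X\to X$ (Lemma~\ref{lemma:Grothendieck-Riemann-Roch}). This produces a closed Riemann--Roch formula (Theorem~\ref{thm: RR formula}) for \emph{any} $\mathbb Q$-Cartier Weil divisor on a Nikulin orbifold, expressed in $H_X^4$, $H_X^2\!\cdot\! c_2(X)$, $(H_X|_W)^2$, $k$ and $N$; the Cartier case (Corollary~\ref{cor: RR for Cartier}) drops out using $(H_X|_W)^2=2q_X(H_X)$ and the $K3^{[2]}$ Riemann--Roch on $X$, and deformation invariance extends it to all orbifolds of Nikulin type. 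Your Lefschetz route can be made to work, but evaluating the surface contribution forces you to compute essentially the same intersection numbers ($E_W^4$, $E_W^2\cdot\tilde r^*c_2(X)$, etc.) that appear in Lemma~\ref{lemma:Grothendieck-Riemann-Roch}, so it does not shortcut the analysis.
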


By applying the previous result to some specific divisors on $Y$, we obtain the dimension of certain projective spaces where the quotient $X/\sigma$ or its partial resolution $Y$ are naturally embedded, see Theorem \ref{thm: D1,D2, j1}, \ref{thm: D1,D2, j2,j3} and \ref{thm: D1,D2, jtilde} and Table \ref{table: proj models Y}. This is a first step towards a systematic approach to the geometric descriptions of projective models of Nikulin orbifolds and of their deformations. A specific case is considered in Section \ref{Ndeg2}. 

Indeed, in Section \ref{Ndeg2} we study orbifolds of Nikulin type of degree $2$.
Note that there are two possible values of the divisibility of the polarization, $1$ or $2$; we consider the case of divisibility $1$.
Special examples of such orbifolds are constructed as  quotients of  fourfolds $X$ of $K3^{[2]}$-type with Picard lattice being an extension of index two of 
$\langle 4\rangle\oplus E_8(-2)$ by a symplectic involution. The polarization of degree $4$  on $X$ which is orthogonal to the $E_8(-2)$ summand gives a $2:1$ map \cite{IKKR} to an EPW quartic in the cone $C(\PP^2\times \PP^2)\subset \PP^9$. After projecting from the invariant $\PP^2$ we obtain a complete intersection $(3,4)$ in $\PP^6$ that is singular in codimension $2$ along a surface of degree $52$. From the results in Section \ref{sec: 4-fold with involution} we deduce that the image of the projection is the projective model of the quotient of $X$ by the symplectic involution. By deforming this example and knowing part of the monodromy of orbifolds of Nikulin type (see \cite{MR}) we prove that a general example has a similar description 

\begin{corr}The general element $Y$ in a family of  orbifolds of Nikulin type with a polarization of degree $2$ and divisibility $1$ is a double cover of a special complete intersection $(3,4)$ in $\PP^6$ branched along a surface of degree $48$.
\end{corr}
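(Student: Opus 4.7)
The plan is to establish the statement by first verifying the geometric description for the special example constructed via the EPW quartic route, and then spreading this description to the full family by a deformation/monodromy argument, using the Riemann--Roch machinery developed in Section~\ref{orbRR} to control the relevant linear system.

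First I would make the special example rigorous. Starting from a fourfold $X$ of $K3^{[2]}$-type whose Picard lattice is an index-two extension of $\langle 4\rangle\oplus E_8(-2)$ and which carries a symplectic involution $\sigma$, the results of Section~\ref{sec: 4-fold with invol\
ution} identify $X$ inside one of the families of the first theorem. The $2{:}1$ map to the EPW quartic in the cone $C(\PP^2\times\PP^2)\subset\PP^9$ discussed just before the corollary, composed with projection from the invariant $\PP^2$, factors through $X/\sigma$ and realizes $X/\sigma$ as the complete intersection $Z\subset\PP^6$ of type $(3,4)$ singular in codimension two along a surface of degree $52$. Passing to the partial resolution $Y$ in the sense of Section~\ref{sec: Nikulin fourfold}, the pullback of $\oo_{\PP^6}(1)$ yields a Weil divisor class $L$ on $Y$ with $q(L)=2$ and divisibility $1$, and by construction $|L|$ produces a $2{:}1$ map $\varphi_L\colon Y\to Z\subset\PP^6$.

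Next I would use the Riemann--Roch statement from the theorem of Section~\ref{orbRR} to verify that, for any orbifold of Nikulin type $Y'$ with $\Pic(Y')\ni L'$ of degree $2$ and divisibility $1$, one has $\chi(\oo(L'))=7$ (and higher cohomology vanishes by a standard Kodaira/Kawamata--Viehweg argument in the orbifold setting, since $L'$ is big and nef on the 2-factorial orbifold $Y'$). Therefore $|L'|$ maps $Y'$ to $\PP^6$, and for $Y'$ close to the explicit $Y$ above the map remains $2{:}1$ onto a $(3,4)$-complete intersection by semicontinuity of the Hilbert function of the image. Invoking the results of \cite{MR} on the monodromy of Nikulin orbifolds together with the surjectivity of the period map, I would conclude that the locally-closed locus where the description holds is open and non-empty in the 19-dimensional moduli space of polarized Nikulin orbifolds with $(q,\mathrm{div})=(2,1)$; since this moduli space is irreducible once monodromy is taken into account, the description propagates to the general element.

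Finally, the degree of the branch surface $B\subset Z$ of $\varphi_L$ is fixed numerically. One computes $\int_Y L^4$ through the BBF form on $Y$ (using $q(L)=2$ and the Fujiki constant for orbifolds of Nikulin type), then compares with $2\deg Z=24$ to pin down $\deg B$; the expected value $48$ will drop out of this intersection-theoretic balance, confirming the statement already in the special example and, by flatness of the relevant cycle class in the family, for the generic member. The main obstacle I expect is the deformation step: one must ensure that $L$ deforms to a $\Q$-Cartier divisor of the same divisibility on nearby orbifolds (handled via lattice polarization and the Torelli-type result for Nikulin orbifolds) and that the map $\varphi_L$ continues to be birational onto a $(3,4)$-complete intersection -- this last point requires ruling out jumps of the Hilbert function of the image, which is where the monodromy input of \cite{MR} together with the irreducibility of the period domain becomes essential.
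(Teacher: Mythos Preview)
Your overall strategy---verify the description on the special Nikulin orbifold coming from a symmetric EPW quartic, then spread it to the general member---matches the paper's. But two of your three steps have genuine gaps.

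\textbf{The deformation step.} You write that ``for $Y'$ close to the explicit $Y$ above the map remains $2{:}1$ onto a $(3,4)$-complete intersection by semicontinuity of the Hilbert function of the image.'' Semicontinuity goes the wrong way here: a priori the general $\varphi_{|L'|}$ could be birational onto a degree-$24$ fourfold, specialising to a $2{:}1$ map only over the Nikulin locus. The paper does something sharper. It first proves (Lemma~\ref{monodromy}) that $-r_{H_0}$ is a monodromy operator, by exhibiting it as the composition of the covering involution $\Theta_0^*$ with the reflection $r_\Sigma$. Parallel-transporting and applying global Torelli on fibres with $\rho=1$, one lifts $-r_{H_t}$ to an honest involution $\Theta_t$ of $Y_t$. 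An O'Grady-style limit argument on graphs then shows that the limit of the $\Theta_t$ is $\Theta_0$; since $\Theta_0$ acts trivially on $H^0(H_0)$, so does $\Theta_t$ for $t$ near $0$, forcing $\varphi_{|H_t|}$ to be $2{:}1$. Your appeal to ``monodromy input of \cite{MR}'' is in the right spirit but does not supply this mechanism; the key point is producing the involution on the general fibre and identifying its action on sections.

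The $(3,4)$ complete intersection claim also does not follow from semicontinuity of any Hilbert function. The paper argues separately: a general codimension-two linear section $R_t$ of the image $J_t$ is a half-canonical nodal surface of degree $12$ in $\PP^4$, and an adaptation of the argument of Decker--Peternell--le~Potier--Schneider (using the double-point formula of Catanese--Oguiso for the nodal case) forces $R_t$ to be a $(3,4)$ complete intersection; one then extends the cubic and quartic to $J_t$ via cohomological vanishing of $\mathcal{I}_{J_t}$.

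\textbf{The branch degree.} Your proposed computation cannot work: with Fujiki constant $6$ one has $\int_Y L^4=6\cdot q(L)^2=24$, which equals $2\deg Z$, so comparing these numbers yields nothing about the branch surface. The paper instead slices: take $S=Y\cap H_1\cap H_2$ mapping $2{:}1$ to a nodal $(3,4)$ surface $G\subset\PP^4$, compute $\chi(\mathcal{O}_S)=20$ from Riemann--Roch on $Y$ and $\chi(\mathcal{O}_G)=16$ from the complete intersection, and use $f_*\mathcal{O}_{\bar S}=\mathcal{O}_{\bar G}\oplus\mathcal{O}_{\bar G}(L)$ with $2L$ the exceptional divisor to get $2\chi(\mathcal{O}_G)-t/4=\chi(\mathcal{O}_S)$, hence $t=48$.
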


\section*{Acknowledgements}

The authors want to thank Arvid Perego for an interesting conversation on symplectic orbifolds and Gregoire Menet and Ulrike Riess for many precious discussions and for sharing a preliminary draft of their forthcoming paper.

G.K. is supported by the project Narodowe Centrum Nauki 2018/30/E/ST1/00530. M.K is supported by the project Narodowe Centrum Nauki 2018/31/B/ST1/02857.  

\section{Fourfolds of $K3^{[2]}$-type with a symplectic involution}\label{sec: 4-fold with involution}
We are interested in fourfolds of $K3^{[2]}$-type admitting a symplectic involution and mainly in the projective ones. We will describe the general member of families satisfying these conditions first in a lattice theoretic way and then giving a model as (twisted) moduli space of sheaves on K3 surfaces.
From now on let $X$ be a fourfold of  $K3^{[2]}$-type and $\sigma$ be a symplectic involution on $X$.

\subsection{Lattice theoretic description of $X$}\label{subsec: 4folds with inv. lattices}
Let us fix some notation and recall preliminary results on lattices:
\begin{itemize}
\item The lattice $U$ is the unique even unimodular lattice of rank 2 and signature $(1,1)$; we will denote by $\{u_1,u_2\}$ a basis such that $u_1^2=u_2^2=0$ and $u_1u_2=1$.\item The lattice $E_8$ is the unique even unimodular positive definite lattice of rank 8. 
	\item Given a lattice $M$ and an integer $n\in\Z$, $M(n)$ is the lattice obtained multiplying the bilinear form of $M$ by $n$;
	\item We denote by  $\{b_1,\ldots b_8\}$ the basis of $E_8(-2)$ such that $b_i^2=-4$, $b_ib_{i+1}=2$, $b_3b_8=2$ and the other intersections are zeros.
	\item The lattice $N$, called Nikulin lattice, is an even negative definite rank 8 lattice. It is generated by the classes $r_i$, $i=1,\ldots, 8$ such that $r_i^2=-2$, $r_ir_j=0$ and by the class $\left(\sum_{i=1}^8 r_i\right)/2$. 
\item For $n\in\Z$, $u(n)$ is the discriminant form of $U(n)$; for each $m\in \Z$ and $\alpha\in \Q$ $\mathbb{Z}_m(\alpha)$ is the discriminant cyclic group $\mathbb{Z}_m$ endowed with the quadratic form taking value $\alpha$ on a generator. For short, the discriminant quadratic form of $\mathbb{Z}_m\left(\pm\frac{1}{m}\right)$ is denoted by $(\pm\frac{1}{m})$.
\item The discriminant form of $N$ is $u(2)^{\oplus 3}$ and the discriminant form of $E_8(-2)$ is $u(2)^{\oplus 4}$ (see. \cite[p. 1414]{Nikulin-Factors}).
\item The lattice $L_{K3}$ is the unique even unimodular lattice of rank 22 and signature $(3,19)$ and is isometric to $U^{\oplus 3}\oplus E_8(-1)^{\oplus 2}$.
\item The lattice $L:=L_{K3}\oplus \langle -2\rangle$ and its discriminant form is $(-1/2)$.
\item For every positive integer $d$, the lattice $\Lambda_{2d}$ is isometric to $\langle 2d\rangle\oplus E_8(-2)$. We denote by $h$ a generator of the summand $\langle 2d\rangle$.
\item For even positive integer $d$ the lattice $\widetilde{\Lambda}_{2d}$ is the unique overlattice  of index 2 of $\Lambda_{2d}$ in which $\langle 2d\rangle$ and $E_8(-2)$ are primitively embedded.  
\item The lattice $K_d$ is the negative definite lattice with the following quadratic form
\[
\left[\begin{array}{cc}
-\frac{d+1}{2}&1\\
1&-2
\end{array}\right].
\]
\item The lattice $H_d$ is the indefinite lattice with the following quadratic form
\[ \left[\begin{array}{cc}\frac{d-1}{2}&1\\1&-2\end{array}\right].\]
\end{itemize}

\begin{prop}
The fourfold $X$ admits a symplectic involution if and only if $E_8(-2)$ is primitively embedded in $\Pic(X)$. 

If, moreover, $X$ is projective then there exists a positive $d\in\mathbb{N}$
such that $\Lambda_{2d}\subset \Pic(X)$. 

The Picard group of the very general element in the family of  $\langle 2d\rangle$-polarized $K3^{[2]}$-type fourfold admitting a symplectic involution 
is an overlattice of finite index (possibly equal 1) of 
$\Lambda_{2d}$, with the property that $E_8(-2)$ is primitively
embedded in it.
\end{prop}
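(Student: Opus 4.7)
The plan is to prove the three assertions in order, using in an essential way the classification of symplectic automorphisms of $K3^{[n]}$-type manifolds (Mongardi) together with the Hodge-theoretic Torelli theorem.

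\textbf{Step 1 (forward direction of the iff).} Suppose $\sigma$ is a symplectic involution on $X$. The induced action $\sigma^\ast$ on $H^2(X,\mathbb{Z})$ is an isometry of finite order acting trivially on $H^{2,0}(X)$. By Mongardi's classification of the co-invariant lattice of a symplectic involution on a $K3^{[2]}$-type fourfold (essentially a consequence of the lattice-theoretic computations of \cite{Mo}), the co-invariant lattice $H^2(X,\mathbb{Z})_\sigma := (H^2(X,\mathbb{Z})^{\sigma^\ast})^\perp$ is isometric to $E_8(-2)$ and is primitively embedded in $H^2(X,\mathbb{Z})$. Since $\sigma$ is symplectic, $H^2(X,\mathbb{Z})_\sigma\subset H^{1,1}(X)\cap H^2(X,\mathbb{Z})=\NS(X)=\Pic(X)$, and primitivity inside $H^2(X,\mathbb{Z})$ forces primitivity inside $\Pic(X)$.

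\textbf{Step 2 (reverse direction).} Conversely, suppose $E_8(-2)$ is primitively embedded in $\Pic(X)$. Define an involutive isometry $\varphi$ of $H^2(X,\mathbb{Z})$ by $-\id$ on $E_8(-2)$ and $+\id$ on its orthogonal complement. This glues to a genuine integral isometry because $E_8(-2)$ is $2$-elementary, so the gluing on the discriminants is forced to be compatible. By construction $\varphi$ fixes $H^{2,0}(X)$, hence preserves the Hodge structure; it also preserves the positive cone. By choosing a representative in the correct Kähler chamber (changing $\varphi$ by a reflection in a wall if necessary, which lies in the invariant part since $E_8(-2)\subset\Pic(X)$ contains no $(-2)$-classes with square $-2$ that would give a wall), the Hodge-theoretic Torelli theorem for $K3^{[2]}$-type manifolds, together with Mongardi's criterion for $\varphi$ to be induced by a regular automorphism, yields a symplectic involution $\sigma$ on $X$ with $\sigma^\ast=\varphi$.

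\textbf{Step 3 (projective case).} If $X$ is projective, then $\Pic(X)$ has signature $(1,\rho-1)$, while $E_8(-2)\subset\Pic(X)$ is negative definite of rank $8$. Consequently the orthogonal complement $E_8(-2)^\perp$ computed inside $\Pic(X)$ has signature $(1,\rho-9)$, so it contains primitive classes of positive square. Pick such a primitive $h\in E_8(-2)^\perp\cap\Pic(X)$ of square $2d>0$. Then the sublattice $\mathbb{Z}h\oplus E_8(-2)\simeq\langle 2d\rangle\oplus E_8(-2)=\Lambda_{2d}$ is contained in $\Pic(X)$, as claimed.

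\textbf{Step 4 (very general element).} For the very general element of the moduli space of $\langle 2d\rangle$-polarized $K3^{[2]}$-type fourfolds admitting a symplectic involution, the Picard rank equals the minimum forced by the polarization together with the coinvariant lattice, namely $9$. Hence $\Pic(X)$ is an even lattice of rank $9$ containing $\Lambda_{2d}$ as a finite-index sublattice. The condition that $E_8(-2)$ remain primitively embedded inside $\Pic(X)$ cuts out the admissible overlattices: they correspond to isotropic subgroups of $\mathrm{disc}(\Lambda_{2d})$ disjoint from the image of $\mathrm{disc}(E_8(-2))$. An elementary analysis of the discriminant form $u(2)^{\oplus 4}\oplus(-\frac{1}{2d})$ shows the only possibilities are the trivial overlattice $\Lambda_{2d}$ itself and, when $d$ is even, the index-$2$ overlattice $\widetilde{\Lambda}_{2d}$, consistent with the subsequent theorems.

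\textbf{Main obstacle.} The delicate point is the reverse direction in Step 2: producing a regular (not just birational) involution from the abstract lattice isometry. This requires checking that, after suitable modification by reflections in walls of the Kähler cone lying in the invariant sublattice, $\varphi$ preserves the Kähler cone itself; this is where Mongardi's refinement of the global Torelli theorem is indispensable.
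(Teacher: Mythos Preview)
Your proposal is correct and follows essentially the same strategy as the paper: both rely on Mongardi's result \cite{Mo} for the first ``if and only if'', and then use that $E_8(-2)$ is negative definite while a projective $\Pic(X)$ has signature $(1,\rho-1)$ to extract a class of positive square orthogonal to $E_8(-2)$.

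The main difference is one of emphasis. The paper simply cites \cite{Mo} for the entire first assertion and moves on, whereas you unpack the content of that reference in Steps~1--2, sketching both the forward direction (the coinvariant lattice computation) and the delicate reverse direction (producing a regular involution via Torelli and the analysis of wall-crossing). This is not wrong, but your Step~2 remains at the level of a sketch---for instance, the sentence about ``$(-2)$-classes with square $-2$'' is tautological, and the relevant obstruction in the $K3^{[2]}$ case involves both $(-2)$-classes and $(-10)$-classes of divisibility~$2$; the genuine reason the argument goes through is that every element of $E_8(-2)$ has square divisible by~$4$, so no MBM class lies in the coinvariant lattice and a K\"ahler class survives in the invariant part. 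Since you ultimately defer to Mongardi anyway, the paper's bare citation is the cleaner choice. Your Step~4 also anticipates the subsequent lemma (the classification of admissible overlattices via the discriminant form), which is fine but not required for the proposition as stated.
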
 
\begin{proof} The first statement is proved by Mongardi in \cite{Mo}. If $X$ is projective, than it admits an ample divisor, which has necessarily a positive self intersection w.r.t. the Beauville--Bogomolov--Fujiki quadratic form. Since $E_8(-2)$ is negative definite, it follows that $\Lambda_{2d}\subset \Pic(X)$ and that if the Picard number of $X$ is the minimal possible, i.e. 9, then $\Pic(X)$ is an overlattice of finite index (possibly equal 1) of 
	$\Lambda_{2d}$, with the property that $E_8(-2)$ is primitively
	embedded in it.
 \end{proof}

\begin{lem}\cite[Proposition 2.2]{vGS}
The overlattices of $\Lambda_{2d}$ containing primitively both $\langle 2d\rangle$ and
$E_8(-2)$ are:
\begin{enumerate}
 \item if $d\equiv 1\mod  2$ only $\Lambda_{2d}$ itself ;
 \item if $d\equiv 0\mod  2$ either $\Lambda_{2d}$ or the unique overlattice $\widetilde{\Lambda}_{2d}$ of index 2 of $\Lambda_{2d}$ in which $\langle 2d\rangle$ and $E_8(-2)$ are primitively embedded. 
\end{enumerate}
The discriminant forms of the lattices  $\Lambda_{2d}$ and $\widetilde{\Lambda}_{2d}$ are $\left(\frac{1}{2d}\right)\oplus u(2)^4$ and $\left(\frac{1}{2d}\right)\oplus u(2)^3$.\\
The lattices $\Lambda_{2d}$ and $\widetilde{\Lambda}_{2d}$ admit a
unique embedding in $L_{K3}$ (up to isometry).
\end{lem}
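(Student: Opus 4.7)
The proof hinges on Nikulin's correspondence between finite-index overlattices of an even lattice $M$ and isotropic subgroups $H$ of the discriminant group $A_M$; under this correspondence, primitivity of a direct summand $M' \subset M$ in the overlattice is equivalent to $H \cap A_{M'} = 0$ (viewing $A_{M'}$ as a subgroup of $A_M$). I would first record, using the discriminant forms from the preliminary bullet list, that $A_{\Lambda_{2d}} = \mathbb{Z}_{2d}(1/2d) \oplus u(2)^{\oplus 4}$, and observe that any isotropic $H$ with the required primitivity projects injectively to the 2-elementary summand $A_{E_8(-2)}$; hence every element of $H$ is 2-torsion.

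Since the unique non-zero 2-torsion element of $\mathbb{Z}_{2d}$ is $d$, every non-zero element of $H$ has first component equal to $d$; an elementary argument (two non-zero elements with equal first component have their difference in the $A_{E_8(-2)}$ factor, contradicting the second primitivity condition) forces $|H|\leq 2$. For $H = \langle(d,y)\rangle$ of order $2$ with $y \neq 0$, the isotropy condition reads $d/2 + q(y)\equiv 0 \pmod{2\mathbb{Z}}$; since $q(y)\in\mathbb{Z}\pmod{2\mathbb{Z}}$ for every $y\in u(2)^{\oplus 4}$, such $y$ exists if and only if $d$ is even, yielding the dichotomy between (1) and (2).

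In the even case I would then argue that all such overlattices are mutually isometric. Given two choices $H_i = \langle(d,y_i)\rangle$, one has $q(y_1) = q(y_2) \equiv -d/2 \pmod 2$; Witt's theorem for the non-degenerate form on $A_{E_8(-2)}$ yields an isometry sending $y_1$ to $y_2$, and lifting it through the surjection $O(E_8(-2))\twoheadrightarrow O(A_{E_8(-2)})$ produces an automorphism of $\Lambda_{2d}$ identifying the two overlattices. The discriminant form of $\widetilde{\Lambda}_{2d}$ is then computed as $H^{\perp}/H$: its order is $|A_{\Lambda_{2d}}|/|H|^2 = 2d\cdot 2^6$, and a direct calculation with representatives identifies it as $(1/2d)\oplus u(2)^{\oplus 3}$.

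Finally, for the uniqueness of the primitive embedding into $L_{K3}$, I would invoke Nikulin's theorem on primitive embeddings of even lattices into unimodular ones. The orthogonal complement has rank $13$ and signature $(2,11)$, and its genus is determined by these invariants together with $-q_{\Lambda_{2d}}$ (respectively $-q_{\widetilde{\Lambda}_{2d}}$); uniqueness within its genus follows from Nikulin's numerical criterion, applicable because the rank of the complement exceeds the length of the discriminant group by at least $2$. The main obstacles I anticipate are the explicit identification of the discriminant form of $\widetilde{\Lambda}_{2d}$, which requires careful computation with coset representatives in $H^\perp$, and the Witt-theoretic orbit analysis underlying the uniqueness of the overlattice in the even case.
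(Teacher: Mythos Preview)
Your proof is correct and follows essentially the same route as the paper, which however defers the overlattice classification and the embedding uniqueness entirely to the cited reference \cite[Proposition 2.2]{vGS} and only sketches the discriminant-form computation of $\widetilde{\Lambda}_{2d}$ via $I^{\perp}/I$ for $I=\langle dh+v\rangle$---precisely your $H^{\perp}/H$ argument. Your write-up is thus a self-contained expansion of what the paper cites; the one extra ingredient you invoke, the surjectivity $O(E_8(-2))\twoheadrightarrow O(A_{E_8(-2)})$ needed to lift the Witt isometry, is the classical identification $W(E_8)\twoheadrightarrow O_8^+(2)$ and poses no difficulty.
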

\begin{proof}
 We only have to compute the discriminant quadratic form of $\widetilde{\Lambda}_{2d}$; this is done in \cite[Corollary 3.7]{CG}. We briefly sketch the proof for the convenience of the reader.

 Let $h, u_{i,j}$ for $i=1,\ldots,4,\ j=1,2$ be a basis of $A_{\Lambda_{2d}}$ on which the discriminant form takes the form $(\frac{1}{2d})\oplus u(2)^{4}$.
 It follows from \cite[Proposition 2.2]{vGS} that we can choose $I$ to be the order two subgroup generated by $dh+v$ where $v\in E_8(-2)$ is such that $v^2=0$ or $1$ respectively when $d\equiv 0\mod 4$ or $d\equiv 2\mod 4$. We suppose that $d\equiv 0\mod 4$ and we can assume that $v=u_{1,1}$. An easy computation shows that $I^\perp=\langle h+u_{1,2},u_{1,1},u_{i,j}|i=2,3,4, j=1,2\rangle$ and thus $\widetilde{\Lambda}_{2d}$ has discriminant quadratic form $(\frac{1}{2d})\oplus u(2)^{3}$. The case $d\equiv 2\mod 4$ is completely analogous.

\end{proof}

In \cite{vGS} an explicit basis for the lattice $\widetilde{\Lambda}_{2d}$ is given:\begin{itemize}
	\item if $d\equiv 2\mod 4$, the lattice $\widetilde{\Lambda}_{2d}$ is generated by the generators of $\Lambda_{2d}$ and  by the class $(h+b_1)/2$;
	\item if $d\equiv 0\mod 4$, the lattice $\widetilde{\Lambda}_{2d}$ is generated by the generators of $\Lambda_{2d}$ and by the class $(h+b_1+b_3)/2$
\end{itemize}
\begin{corollary}
	Let $X$ be a very general element in a family of (possibly not projective) fourfolds of $K3^{[2]}$-type admitting a symplectic involution $\sigma$, then $\NS(X)\simeq E_8(-2)$ and vice versa if $X$ is a fourfold of $K3^{[2]}$-type such that $\NS(X)\simeq E_8(-2)$, then $X$ is non projective and it admits a symplectic involution.
	
Let $X$ be a very general element in a family of projective fourfolds of $K3^{[2]}$-type admitting a symplectic involution $\sigma$. Then either $\Pic(X)\simeq \Lambda_{2d}$ for a certain integer $d>0$ or $\Pic(X)\simeq \widetilde{\Lambda}_{2d}$ for a certain even integer $d>0$.

Vice versa if $X$ is a fourfolds of $K3^{[2]}$-type such that $\Pic(X)$ is isometric either to $\Lambda_{2d}$ for an integer $d>0$ or to $\widetilde{\Lambda}_{2d}$ for an even integer $d>0$, then $X$ is projective and admits a symplectic involution.   
\end{corollary}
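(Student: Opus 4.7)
The plan is to read this corollary off from the preceding Proposition (which characterises the existence of a symplectic involution on a $K3^{[2]}$-type fourfold $X$ via the primitive embedding $E_8(-2)\hookrightarrow \Pic(X)$) and the preceding Lemma (which enumerates the overlattices of $\Lambda_{2d}$ in which both $\langle 2d\rangle$ and $E_8(-2)$ sit primitively), together with the standard projectivity criterion of Huybrechts for IHS manifolds: $X$ is projective if and only if $\NS(X)$ contains a class of positive BBF square.

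For the non-projective statement, suppose $X$ is very general in a family of (possibly non-projective) $K3^{[2]}$-type fourfolds admitting a symplectic involution. By the Proposition, $\mathrm{rank}\,\Pic(X)\geq 8$, with $E_8(-2)$ sitting primitively inside $\Pic(X)$. Very generality forces this rank to be minimal, hence equal to $8$, and any rank-$8$ lattice containing $E_8(-2)$ primitively of rank $8$ must coincide with $E_8(-2)$; so $\NS(X)\simeq E_8(-2)$. Conversely, if $\NS(X)\simeq E_8(-2)$ then $\NS(X)$ is negative definite, hence contains no positive class and $X$ is non-projective; and $E_8(-2)$ is trivially primitively embedded in itself, so the Proposition yields the symplectic involution.

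For the projective statement, let $X$ be very general projective with a symplectic involution. The Proposition (and the remark contained in its proof) gives that, after choosing $d>0$ minimal with $\Lambda_{2d}\subset\Pic(X)$, the Picard lattice $\Pic(X)$ is a finite-index overlattice of $\Lambda_{2d}$ in which both $\langle 2d\rangle$ and $E_8(-2)$ are primitively embedded (the former by minimality of $d$, the latter by the Proposition itself). The Lemma then leaves only the possibilities $\Pic(X)\simeq \Lambda_{2d}$, or $\Pic(X)\simeq\widetilde{\Lambda}_{2d}$ when $d$ is even. Conversely, if $\Pic(X)$ is isometric to $\Lambda_{2d}$ or $\widetilde{\Lambda}_{2d}$, then by the defining property of these lattices $E_8(-2)$ is primitively embedded, so the Proposition gives a symplectic involution; and the generator $h$ of the $\langle 2d\rangle$ summand has positive square, so the projectivity criterion forces $X$ to be projective.

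The only real subtlety, and not a genuine obstacle, is to ensure that each of the three candidate Picard lattices is actually realised as $\NS(X)$ of some $K3^{[2]}$-type fourfold, so that the ``very general'' hypothesis is meaningful. This is guaranteed by the uniqueness up to isometry of the primitive embeddings of $E_8(-2)$, $\Lambda_{2d}$ and $\widetilde{\Lambda}_{2d}$ into the $K3^{[2]}$ cohomology lattice $L$ stated in the Lemma, combined with the surjectivity of the period map for $K3^{[2]}$-type fourfolds.
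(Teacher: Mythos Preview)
Your argument is correct and is exactly the deduction the paper intends: the Corollary is stated without proof, as an immediate consequence of the preceding Proposition (Mongardi's criterion) and Lemma (the van Geemen--Sarti classification of overlattices), together with Huybrechts' projectivity criterion. Your handling of both directions, including the ``minimality of $d$'' argument to force primitivity of $\langle 2d\rangle$, is fine.

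One small inaccuracy to fix: in your last paragraph you invoke ``uniqueness up to isometry of the primitive embeddings of $E_8(-2)$, $\Lambda_{2d}$ and $\widetilde{\Lambda}_{2d}$ into $L$ stated in the Lemma''. The Lemma asserts uniqueness of the embeddings of $\Lambda_{2d}$ and $\widetilde{\Lambda}_{2d}$ into $L_{K3}$, not into $L$; in fact the very next Proposition in the paper shows that $\Lambda_{2d}$ can admit up to three inequivalent primitive embeddings into $L$. This does not damage your argument, since for realisability you only need \emph{existence} of a primitive embedding (which follows from $L_{K3}\subset L$) together with surjectivity of the period map, and for the converse direction you only need the uniqueness of the primitive embedding of $E_8(-2)$ into $L$, which the paper does record separately. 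Just rephrase that sentence accordingly.
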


We observe that $E_8(-2)$ admits a unique primitive embedding in $L$, whose orthogonal is $U^{\oplus 3}\oplus E_8(-2)\oplus \langle -2\rangle$.

In order to determine the families of projective fourfolds of $K3^{[2]}$-type admitting a symplectic involution, we consider all possible primitive embeddings of the lattices $\Lambda_{2d}$ and $\widetilde{\Lambda_{2d}}$ into $L$.

\begin{prop}\label{prop:embeddings of Lamba(2d)}
For any integer $d>0$ $\Lambda_{2d}$ admits, up to isometry of $L$, the following
primitive embeddings into $L$:
 \begin{enumerate}
 \item  $j_1$  such that $j_1(\Lambda_{2d})^\perp \simeq T_{2d,1}:=U^{\oplus 2}\oplus E_8(-2)\oplus\langle -2d\rangle\oplus\langle -2\rangle$;
 \item  if $d\equiv 1\mod 2$,  $j_2$ such that $j_2(\Lambda_{2d})^\perp \simeq T_{2d,2}:=U^{\oplus 2}\oplus D_4(-1)\oplus\langle -2d\rangle\oplus\langle -2\rangle^{\oplus 5}$;
 \item if $d\equiv 3\mod 4$, $j_3$ such that $j_3(\Lambda_{2d})^\perp \simeq T_{2d,3}:=U^{\oplus 2}\oplus E_8(-2)\oplus K_d$.
\end{enumerate}
 For any $d\equiv 0\mod 2$,
 $\widetilde{\Lambda}_{2d}$ admits a unique primitive embedding $\widetilde{j}$ into $L$, with orthogonal isometric to $\widetilde{T}_{2d}:=U^{\oplus 2}\oplus D_4(-1)\oplus\langle -2d\rangle\oplus\langle -2\rangle^{\oplus
 5}$.
\end{prop}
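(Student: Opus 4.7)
The plan is to apply Nikulin's classification of primitive embeddings of even lattices via discriminant forms (Proposition 1.15.1 of \cite{Nikulin-Factors}). A primitive embedding $\Lambda_{2d}\hookrightarrow L$ with orthogonal $T$ corresponds to an isotropic subgroup $\Gamma\subset A_{\Lambda_{2d}}\oplus A_T$ projecting injectively to each factor and satisfying $\Gamma^{\perp}/\Gamma\simeq q_L$; these are classified up to $O(q_{\Lambda_{2d}})\times O(q_T)$. From the preceding lemma one has $q_{\Lambda_{2d}}=(1/2d)\oplus u(2)^{\oplus 4}$ and $q_{\widetilde{\Lambda}_{2d}}=(1/2d)\oplus u(2)^{\oplus 3}$; moreover $q_L=(-1/2)$, and $T$ is forced to have rank $14$ and signature $(2,12)$. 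The problem then reduces to enumerating the admissible discriminant forms $q_T$; the abstract lattice $T$ is subsequently determined by Nikulin's uniqueness theorem for indefinite even lattices, since in all four cases the signature is indefinite and the rank is sufficiently large compared to the length of the discriminant group.

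The embedding $j_1$ is produced by composing the unique primitive embedding $\Lambda_{2d}\hookrightarrow L_{K3}$ supplied by the preceding lemma with the canonical inclusion $L_{K3}\hookrightarrow L_{K3}\oplus\langle -2\rangle=L$; the resulting orthogonal is $T_{2d,1}$. For the remaining embeddings I would analyse isotropic subgroups $\Gamma$ for which the $\langle -2\rangle$ generator of $L$ is not orthogonal to $\Lambda_{2d}$, splitting cases by the position of the order-two class of $A_L$ arising from $\Gamma^{\perp}/\Gamma$. If this class is supported entirely in the $u(2)^{\oplus 4}$ summand of $A_{\Lambda_{2d}}$, the induced overlattice replaces $E_8(-2)$ by $D_4(-1)\oplus\langle -2\rangle^{\oplus 4}$ inside $T$, producing $T_{2d,2}$, and primitivity of $h$ in $\Lambda_{2d}$ forces $d\equiv 1\pmod 2$. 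If instead the class has non-trivial projection on the $(1/2d)$ summand, the isotropy condition together with the $u(2)$-contribution yields a quadratic congruence on $d$ which is solvable exactly when $d\equiv 3\pmod 4$; the two glued classes merge $\langle -2d\rangle$ with one $\langle -2\rangle$ into the rank two lattice $K_d$, producing $T_{2d,3}$. For $\widetilde{\Lambda}_{2d}$ with $d$ even, the missing $u(2)$ summand collapses the analogous case analysis into a single orbit, and the orthogonal is $\widetilde{T}_{2d}$.

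The main obstacle will be the arithmetic of isotropic subgroups of $A_{\Lambda_{2d}}$ modulo the action of $O(q_{\Lambda_{2d}})$: one has to verify completeness of the enumeration (ruling out further orbits), track the precise congruences on $d$ modulo $2$ and $4$ arising from the isotropy condition mixing $(1/2d)$ with $u(2)^{\oplus 4}$, and check in the $\widetilde{\Lambda}_{2d}$-case that the various potential gluings all reduce to the one producing $\widetilde{T}_{2d}$. This parallels the analysis of \cite[Proposition 2.2]{vGS} for Nikulin $K3$ surfaces, the only additional ingredient being the systematic bookkeeping of the extra $\langle -2\rangle$ summand of $L$ throughout the computation.
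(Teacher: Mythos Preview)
Your overall strategy---applying Nikulin's Proposition 1.15.1 (which, incidentally, is in \cite{Nikulin-Forms}, not \cite{Nikulin-Factors}) and classifying order-two subgroups $H\subset A_{\Lambda_{2d}}$ on which the form restricts to $(-1/2)$---is exactly the paper's approach. Your treatment of $j_1$ and of the $\widetilde{\Lambda}_{2d}$-case is fine.

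However, your case split for $j_2$ versus $j_3$ is wrong. You claim $j_2$ arises when the order-two class is ``supported entirely in the $u(2)^{\oplus 4}$ summand'', and $j_3$ when it has non-trivial projection on $(1/2d)$. But every element of $u(2)^{\oplus 4}$ has \emph{integer} $q$-value, so no class supported there can have $q=-1/2$. In fact \emph{all} admissible order-two classes must have non-trivial $(1/2d)$-component: writing such a class as $(d\cdot\tfrac{h}{2d},v)$ with $v\in u(2)^{\oplus 4}$, its $q$-value is $d/2+q(v)$ with $q(v)\in\Z$, so $q=-1/2$ forces $d$ odd (this, not ``primitivity of $h$'', is the source of the parity constraint). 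The genuine dichotomy is then on the $u(2)^{\oplus 4}$-component $v$: when $d\equiv 3\pmod 4$ one needs $q(v)=0$, and the two $O(q)$-orbits $v=0$ and $v\neq 0$ yield $j_3$ and $j_2$ respectively; when $d\equiv 1\pmod 4$ one needs $q(v)=1$, hence $v\neq 0$, and only $j_2$ occurs. So the enumeration you propose would miss $j_2$ entirely for $d\equiv 3\pmod 4$ and would give the wrong reason for the congruences.
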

\begin{proof}
We study first possible primitive embeddings of $\Lambda_{2d}$ inside $L$. The first embedding $j_1$ is simply obtained by composing
the embedding of $\Lambda_{2d}$ inside $L_{K3}$ with the embedding
of this one inside $L$. This is unique up to isometry if
$d\equiv 0\mod 2$.

When $d\equiv 1\mod 2$, an application of \cite[Proposition 1.15.1]{Nikulin-Forms}
shows that there is a second possibility: indeed, in this case
$A_{\Lambda_{2d}}$ contains a subgroup $H$ of order two to which
the discriminant form restricts as $(-\frac{1}{2})$.
Standard computations in this case produce the embedding $j_2$ if
$d\equiv 1\mod 4$, and the embeddings $j_2$ and $j_3$ if $d\equiv
3\mod 4$. Up to isometry these are the only possibilities.

Concerning the primitive embeddings of $\widetilde{\Lambda}_{2d}$, $\widetilde{j}$ is again obtained by
composing the embedding of $\widetilde{\Lambda}_{2d}$ inside
$L_{K3}$ with the embedding of this one inside $L$. The fact that
it is the only possible one comes by an application of
\cite[Proposition 1.15.1]{Nikulin-Forms}: we have
$A_L\cong \Z_2(-\frac{1}{2})$, whereas the quadratic form on
$A_{\widetilde{\Lambda}_{2d}}$ takes values in $\Z/2\Z$ on any
subgroup of order two; as a consequence, the only possible choice
for two isometric subgroups inside $A_L$ and
$A_{\widetilde{\Lambda}_{2d}}$ is $H=\lbrace
0\rbrace$, and the discriminant form of the
orthogonal is exactly $(-q_{\widetilde{\Lambda}_{2d}})\oplus
q_{A_L}=u(2)^{\oplus 3}\oplus(-\frac{1}{2d})\oplus (-\frac{1}{2})$. From
\cite[Proposition 1.8.2]{Nikulin-Forms}, we have $u(2)^{\oplus 3}\oplus(-\frac{1}{2d})\oplus
(-\frac{1}{2})\simeq (\frac{1}{2})^{\oplus
3}\oplus(-\frac{1}{2})^{\oplus 4}\oplus  (-\frac{1}{2d})$.
Moreover, it is easy to show that $(\frac{1}{2})^{\oplus
3}\oplus(-\frac{1}{2})^{\oplus 4}\simeq v(2)\oplus
(-\frac{1}{2})^{\oplus 5}$, and this yields
$\widetilde{j}(\widetilde{\Lambda}_{2d})^\perp\cong
\widetilde{T}_{2d}$, because such an orthogonal is unique in its
genus and the map $O(\widetilde{T}_{2d})\rightarrow
O(q_{A_{\widetilde{T}_{2d}}})$ is surjective.
\end{proof}

To recap, if $X$ is a very general projective fourfold of $K3^{[2]}$-type admitting a symplectic involution, there are the following possibilities for $\Pic(X)$ and $T_X$
{\small{
\begin{align}\label{table: j,NS,T}
	\begin{array}{|c|c|c|c|c|}
		\hline
		\mbox{ Condition on d}&\mbox{Embed. }\Pic(X)\subset L&\Pic(X)&T_X\\
		\hline
		\forall d\in\mathbb{N}&j_1&\Lambda_{2d}&T_{2d,1}:=U^{\oplus 2}\oplus E_8(-2)\oplus \langle -2d\rangle\oplus \langle -2\rangle\\
		\hline
		d\equiv 1\mod 2&j_2&\Lambda_{2d}&T_{2d,2}:=U^{\oplus 2}\oplus D_4(-1)\oplus \langle -2d\rangle\oplus \langle -2\rangle^{\oplus 5}\\
		\hline
		d\equiv 3\mod 4&j_3&\Lambda_{2d}&T_{2d,3}:=U^{\oplus 2}\oplus E_8(-2)\oplus K_d\\
		\hline
		d\equiv 0\mod 2&\widetilde{j}&\widetilde{\Lambda}_{2d}&\widetilde{T}_{2d}:=U^{\oplus 2}\oplus D_4(-1)\oplus \langle -2d\rangle\oplus \langle -2\rangle^{\oplus 5}\\
		\hline
\end{array}\end{align}
}}
\\
As observed before, if $X$ is a very general non-projective fourfold of $K3^{[2]}$-type admitting a symplectic involution, then  $\Pic(X)=E_8(-2)$ and $T_X=U^{\oplus 3}\oplus E_8(-2)\oplus \langle -2\rangle$.\\

The embeddings $j_a$, $a=1,2,3$ and $\widetilde{j}$ in $L$ can be explicitly determined (and this will be used in the following).

Let $X$ be a $K3^{[2]}$-type fourfold admitting a symplectic involution $\iota$. Fix a basis of $H^2(X,\Z)\simeq U^{\oplus 3}\oplus E_8(-1)\oplus E_8(-1)\oplus \langle -2\rangle$: there exists an isometry between $H^2(X,\Z)$ and $L=U^{\oplus 3}\oplus E_8(-1)\oplus E_8(-1)\oplus \langle -2\rangle$ such that the involution $\iota^*\in O(H^2(X,\Z))$ switches the two copies of $E_8(-1)$ and acts as the identity on $U\oplus U\oplus U\oplus\langle -2\rangle$. 
We denote by $e_i$, (resp. $f_i$), $i=1,\ldots, 8$ a basis of the first (resp. second) copy of $E_8(-1)$ in $E_8(-1)\oplus E_8(-1)$, and by $b_i$ a basis of $E_8(-2)$. We  fix two different embeddings of the lattice $E_8(-2)$ in $E_8(-1)\oplus E_8(-1)$:
$$\begin{array}{llll}\lambda_+(b_i)&=&e_i+f_i& i=1,\ldots ,8\\
	\lambda_-(b_i)&=&e_i-f_i& i=1,\ldots ,8.\\
\end{array}$$

In particular $H^2(X,\Z)^{\iota^*}=U^{\oplus 3}\oplus\lambda_+(E_8(-2))\oplus\langle -2\rangle\simeq U^{\oplus 3}\oplus E_8(-2)\oplus\langle -2\rangle$ and  $(H^2(X,\Z)^{\iota^*})^{\perp}=\lambda_-(E_8(-2))\simeq E_8(-2)$.

Let $h\in H^2(X,\Z)$ be a $\iota$-invariant primitive class with self-intersection  $2d>0$. Let us denote by $j(h)$ an embedding of $h$ in $H^2(X,\Z)\simeq L$. Since the polarization $h$ is invariant for $\iota$, $j(h)\in H^2(X,\Z)^{\iota^*}\simeq U^{\oplus 3}\oplus\lambda_{+}(E_8(-2))\oplus\langle -2\rangle$ and thus it corresponds to a vector of the form $(\underline{u},\underline{w},\underline{v},\underline{x},\underline{y},k)$ such that $\underline{x}=\underline{y}$. We now consider different embeddings $j$, which produce different families of fourfolds of $K3^{[2]}$-type with a symplectic involution. 

\begin{prop}\label{prop: j1 embedding}
	Let $d$ be a positive integer and let $$j_1(h):=\left(\left(\begin{array}{c}1\\d\end{array}\right),\left(\begin{array}{c}0\\0\end{array}\right), \left(\begin{array}{c}0\\0\end{array}\right), \underline{0}, \underline{0},0\right).$$
	The embedding $(j_1,\lambda_{-}):\langle 2d\rangle\oplus E_8(-2)\rightarrow L$ is a primitive embedding and there exist fourfolds of $K3^{[2]}$-type $X_1$ such that $\NS(X_1)\simeq (j_1,\lambda_{-})\left(\langle 2d\rangle\oplus E_8(-2)\right)\simeq \Lambda_{2d}$ and $T_{X_1}\simeq T_{2d,1} $.
\end{prop}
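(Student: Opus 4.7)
The plan is to decompose the proof into (i) verifying that $(j_1,\lambda_-)$ defines a primitive isometric embedding of $\Lambda_{2d}$ into $L$, (ii) computing its orthogonal complement summand by summand, and (iii) invoking surjectivity of the period map together with Mongardi's criterion to get a $K3^{[2]}$-type fourfold with the prescribed data.

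First I would check the embedding on the $\langle 2d\rangle$ factor. Writing $j_1(h)=u_1+d u_2$ in the first copy of $U$, one has $j_1(h)^2=2\, u_1u_2\, d=2d$, and $j_1(h)$ is primitive in $U$ because its coefficient on $u_1$ is $1$. For $\lambda_-$, since the two copies of $E_8(-1)$ are mutually orthogonal, $\lambda_-(b_i)\cdot\lambda_-(b_j)=(e_i-f_i)(e_j-f_j)=e_ie_j+f_if_j=2e_ie_j$, so the image carries exactly the form of $E_8(-2)$; primitivity follows because a relation $\sum a_i(e_i-f_i)=n v$ in $E_8(-1)^{\oplus 2}$ decomposes, by the orthogonal decomposition, into $\sum a_ie_i=nv^+$ and $-\sum a_if_i=nv^-$, and since $\{e_i\}$ is a basis of a primitive summand of $L$, any common factor of the $a_i$ divides $n$. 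Since $j_1(h)$ and $\lambda_-(E_8(-2))$ sit in orthogonal direct summands of $L$, their direct sum is a sublattice of $L$ isometric to $\Lambda_{2d}$, and it is primitive because each factor is primitive inside the corresponding summand.

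Next I would compute the orthogonal complement. On the first $U$, the orthogonal of the rank-one primitive sublattice $\langle u_1+du_2\rangle$ is spanned by $u_1-du_2$ (primitive, with $(u_1-du_2)(u_1+du_2)=d-d=0$), hence isometric to $\langle -2d\rangle$. On $E_8(-1)^{\oplus 2}$, the orthogonal of $\lambda_-(E_8(-2))$ is $\lambda_+(E_8(-2))\simeq E_8(-2)$, since $\lambda_\pm$ are the $(\pm 1)$-eigenlattices of the involution swapping the two $E_8(-1)$ copies. The remaining summands $U^{\oplus 2}$ and $\langle -2\rangle$ are untouched. Collecting,
\[
\bigl((j_1,\lambda_-)(\Lambda_{2d})\bigr)^\perp \;\simeq\; \langle -2d\rangle\oplus U^{\oplus 2}\oplus E_8(-2)\oplus \langle -2\rangle \;=\; T_{2d,1}.
\]

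Finally, to produce $X_1$, I would appeal to the surjectivity of the period map for IHS manifolds of $K3^{[2]}$-type: since the lattice $\Lambda_{2d}$ embeds primitively into $L$ with orthogonal complement $T_{2d,1}$ of signature $(2,12)$, one can choose a period point in the positive cone of $T_{2d,1}\otimes\C$ whose associated $K3^{[2]}$-type fourfold $X_1$ satisfies $\NS(X_1)\simeq \Lambda_{2d}$ and $T_{X_1}\simeq T_{2d,1}$; projectivity follows because $\Lambda_{2d}$ contains the class of positive square $h$. The existence of a symplectic involution is then a direct consequence of the first proposition of this section (Mongardi's criterion), since $E_8(-2)$ is primitively embedded in $\NS(X_1)$ by construction.

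The only delicate point is to argue that taking orthogonals inside each summand genuinely yields the orthogonal complement of the whole sublattice; this is fine because the embedding $(j_1,\lambda_-)$ is compatible with the direct sum decomposition of $L$. A minor subtlety to flag is that $\langle u_1+du_2\rangle\oplus\langle u_1-du_2\rangle$ is an index-$2d$ sublattice of $U$, so one must confirm primitivity of each factor rather than identify the two with all of $U$; this is immediate from the unimodular coefficient on $u_1$.
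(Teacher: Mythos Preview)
Your proof is correct and follows essentially the same approach as the paper: both verify primitivity of $(j_1,\lambda_-)$ and compute the orthogonal complement summand by summand, exploiting that $j_1(h)$ lives entirely in the first copy of $U$ while $\lambda_-(E_8(-2))$ lives in $E_8(-1)^{\oplus 2}$. The paper's proof is terser (it simply asserts that the embedding is ``clearly primitive'' and that one can ``compute separately the orthogonal in the different direct summands''), whereas you have spelled out the verifications, including the explicit generator $u_1-du_2$ of $\langle -2d\rangle$, the primitivity argument for $\lambda_-$, and the appeal to surjectivity of the period map for the existence of $X_1$.
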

\proof The embedding $(j_1,\lambda_{-})$ is clearly  primitive, hence there exist fourfolds of $K3^{[2]}$-type admitting this lattice as N\'eron--Severi group. Since $j_1$ restricts to an embedding of $h$ in $U$ and $\lambda_{-}$ restricts to an embedding of $E_8(-2)$ in $E_8(-1)\oplus E_8(-1)$, one can compute separately the orthogonal in the different direct summands, finding that the orthogonal to $\NS(X_1)$ is $\langle -2d\rangle\oplus U\oplus U\oplus\lambda_{+}(E_8(-2))\oplus\langle -2\rangle\simeq T_{2d,1}$.
\endproof

\begin{prop}\label{prop: j2 embedding}
	Let $d$ be an odd positive integer. Let $$\begin{array}{ll}j_2(h):=\left(\left(\begin{array}{c}2\\2k+2\end{array}\right),\left(\begin{array}{c}0\\0\end{array}\right), \left(\begin{array}{c}0\\0\end{array}\right), \underline{e_1}, \underline{e_1},1\right)&\mbox{ if }d=4k+1\\
		j_2(h):=\left(\left(\begin{array}{c}2\\2k+2\end{array}\right),\left(\begin{array}{c}0\\0\end{array}\right), \left(\begin{array}{c}0\\0\end{array}\right), \underline{e_1}+\underline{e_3}, \underline{e_1}+\underline{e_3},1\right)&\mbox{ if }d=4k-1.\end{array}$$
	The embedding $(j_2,\lambda_{-}):\langle 2d\rangle\oplus E_8(-2)\rightarrow L$ is a primitive embedding and there exist fourfolds of $K3^{[2]}$-type $X_2$ such that $\NS(X_2)\simeq (j_2,\lambda_{-})\left(\langle 2d\rangle\oplus E_8(-2)\right)\simeq \Lambda_{2d}$ and $T_{X_2}\simeq T_{2d,2}$.
\end{prop}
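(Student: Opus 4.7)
I would follow the template of Proposition \ref{prop: j1 embedding}, with the added subtlety that $j_2(h)$ has nontrivial components in $U_1$, in $E_8(-1)^{\oplus 2}$, and in $\langle -2\rangle$, so the primitive embedding no longer respects an orthogonal decomposition of $L$. There are four things to verify: (a) $(j_2,\lambda_-)$ is an isometric embedding $\Lambda_{2d} \hookrightarrow L$; (b) it is primitive; (c) its orthogonal in $L$ is isometric to $T_{2d,2}$; (d) a marked $K3^{[2]}$-type fourfold $X_2$ realizing this Hodge datum exists.

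Parts (a) and (b) are direct. Using $u_1 u_2 = 1$, $e_i^2=-2$, and (for $d=4k-1$) the fact that $e_1,e_3$ are non-adjacent in the standard $E_8$ Dynkin diagram so $(e_1+e_3)^2=-4$, one checks $j_2(h)^2=2d$ in both cases. Moreover $j_2(h)\cdot\lambda_-(v)=0$ for every $v\in E_8(-1)$: the $E_8(-1)^{\oplus 2}$-components of $j_2(h)$ are symmetric ($(e_1,e_1)$ or $(e_1+e_3,e_1+e_3)$), whereas $\lambda_-(v)=(v,-v)$ is antisymmetric. For (b), the coordinate of $j_2(h)$ in the final $\langle -2\rangle$-summand equals $1$ while $\lambda_-(E_8(-2))$ is supported in $E_8(-1)^{\oplus 2}$; thus any combination $\alpha j_2(h)+\lambda_-(v)$ divisible by $n\geq 2$ in $L$ forces $n\mid\alpha$, after which $n\mid v$ follows from primitivity of $\lambda_-$.

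The crux is (c). Rather than computing $M^\perp$ coordinate-by-coordinate, I would invoke Proposition \ref{prop:embeddings of Lamba(2d)}: up to $O(L)$ there are only $j_1$, $j_2$ and (when $d\equiv 3\bmod 4$) $j_3$, distinguished by the Nikulin gluing subgroup $H_M\subset A_{\Lambda_{2d}}$. Our embedding is inequivalent to $j_1$ because its image is not contained in $L_{K3}$ (witnessed by the last coordinate of $j_2(h)$ being $1$), so $H_M$ has order $2$. When $d\equiv 3\bmod 4$ one must further separate $j_2$ from $j_3$ by exhibiting an explicit generator of $H_M$ modulo $(j_2,\lambda_-)(\Lambda_{2d})$, arising from a fractional combination of $h$ and a $2$-torsion class in $A_{E_8(-2)}$ indexed by $b_1$ or $b_1+b_3$, and computing its value under $q_{\Lambda_{2d}}$ in $\mathbb{Q}/2\mathbb{Z}$ to obtain the value $\tfrac{1}{2}$ characteristic of $j_2$. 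Since $T_{2d,2}$ is indefinite of rank $14$ with discriminant length at most $9$, Nikulin's uniqueness-in-genus result \cite{Nikulin-Forms} then forces $M^\perp\simeq T_{2d,2}$. Finally, (d) follows from surjectivity of the period map for $K3^{[2]}$-type fourfolds applied to a generic period in $T_{2d,2}\otimes\mathbb{C}$; the resulting $X_2$ has $\NS(X_2)\simeq\Lambda_{2d}$ and $T_{X_2}\simeq T_{2d,2}$, and since $E_8(-2)$ is primitively embedded in $\Pic(X_2)$ it admits a symplectic involution by the criterion recalled at the start of the subsection. The main difficulty lies in step (c), specifically the discriminant-form bookkeeping required to separate $j_2$ from $j_3$ when $d\equiv 3\bmod 4$.
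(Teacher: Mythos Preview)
Your approach differs from the paper's. The paper proves (c) by a direct computation: it writes down explicit generators of $((j_2,\lambda_-)(\Lambda_{2d}))^\perp$ in $L$, namely
\[
\left(\left(\begin{smallmatrix}-1\\k+1\end{smallmatrix}\right),\underline{0},\underline{0},\underline{0},\underline{0},0\right),\quad
\left(\left(\begin{smallmatrix}0\\1\end{smallmatrix}\right),\underline{0},\underline{0},\underline{0},\underline{0},1\right),\quad
(\underline{0},\underline{0},\underline{0},\underline{w},\underline{w},0)\ \text{for}\ \underline{w}\in\underline{x}^{\perp}_{E_8(-1)},\quad
(\underline{0},\underline{0},\underline{0},\underline{y},\underline{y},1),
\]
and then verifies by inspection that the resulting lattice is isometric to $T_{2d,2}$. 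No appeal to the classification of Proposition~\ref{prop:embeddings of Lamba(2d)} is made at this step.

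Your indirect route via that classification is sound in outline, and your ``not $j_1$'' argument is in fact valid: the generator $\delta$ of the $\langle -2\rangle$ summand is characterised inside $L$ as the unique (up to sign) $(-2)$-class of divisibility~$2$, so $L_{K3}=\delta^\perp$ is $O(L)$-invariant and containment of the image in $L_{K3}$ is an invariant of the embedding. However, your proposed separation of $j_2$ from $j_3$ when $d\equiv 3\bmod 4$ contains a genuine gap. Both $j_2$ and $j_3$ correspond to an order-two gluing subgroup $H_M\subset A_{\Lambda_{2d}}$, and in both cases the quadratic value on the generator of $H_M$ is the \emph{same} (it is forced by the (anti-)isometry with $A_L\cong\mathbb{Z}_2(-\tfrac12)$). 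So the numerical value you compute, whether $\tfrac12$ or $-\tfrac12$, cannot be ``characteristic of $j_2$''. What actually distinguishes the two embeddings is the $O(q_{\Lambda_{2d}})$-orbit of $H_M$: for $j_3$ the generator is $d\cdot h^*\in\mathbb{Z}_{2d}\subset A_{\Lambda_{2d}}$ with trivial $u(2)^4$-component, whereas for $j_2$ it is $d\cdot h^*$ plus a nonzero isotropic element of $u(2)^4=A_{E_8(-2)}$. You do mention the $E_8(-2)$ contribution, which is the right invariant, but you then appeal to the wrong one. To complete your argument along these lines you would need to exhibit the gluing class explicitly (e.g.\ observe that $\tfrac12(j_2(h)+\lambda_-(b_1+b_3)+\delta)\in L$ when $d=4k-1$) and argue that its image in $A_{\Lambda_{2d}}$ is not in the $O(q_{\Lambda_{2d}})$-orbit of $d\cdot h^*$. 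The paper's direct computation of the orthogonal complement avoids this discriminant-group bookkeeping entirely.
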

\proof The embedding $(j_2,\lambda_{-})$ is clearly primitive, hence there exist fourfolds of $K3^{[2]}$-type admitting this lattice as N\'eron--Severi group. By Proposition \ref{prop:embeddings of Lamba(2d)} there is an embedding of $\langle 2d\rangle\oplus E_8(-2)$ in $U^{\oplus 3}\oplus E_8(-1)^{\oplus 2}\oplus\langle -2\rangle$ which is not equivalent to $j_1$, computed in Proposition \ref{prop: j1 embedding}. 

Let $\underline{x}=\left\{\begin{array}{ll}\underline{e_1} &\mbox{if }d\equiv 1\mod 4\\\underline{e_1}+\underline{e_3}&\mbox{if }d\equiv 3\mod 4.\end{array}\right..$ By direct computation, the orthogonal lattice  $((j_2,\lambda_-)(\Lambda_{2d}))^\perp$ is spanned by the following vectors:

\begin{displaymath}
	\left(\left(\begin{array}{ll}
		-1\\k+1
	\end{array}\right),\underline{0},\underline{0},\underline{0},\underline{0},0\right),\ 
	\left(\left(\begin{array}{ll}
		0\\1
	\end{array}\right),\underline{0},\underline{0},\underline{0},\underline{0},1\right),\ 
	\left(\underline{0},\underline{0},\underline{0},\underline{w},\underline{w},0\right), \underline{w}\in (\underline{x})^\perp_{E_8(-1)},
\end{displaymath}
\begin{displaymath}
	b:=\left(\underline{0},\underline{0},\underline{0},\underline{y},\underline{y},1\right)\ \mathrm{with}\ 
\underline{y}=\left\{\begin{array}{ll}\underline{e_2} &\mbox{if }d\equiv 1\mod 4\\\underline{e_4}&\mbox{if }d\equiv 3\mod 4.\end{array}\right.
	\end{displaymath}
	
One can directly check that this lattice is isometric to $T_{2d,2}$.
\endproof

\begin{prop}\label{prop: j3 embedding}
	Let $d$ be a positive integer such that $d\equiv 3\mod 4$. Let $$j_3(h):=\left(\left(\begin{array}{c}2\\(d+1)/2\end{array}\right),\left(\begin{array}{c}0\\0\end{array}\right), \left(\begin{array}{c}0\\0\end{array}\right), \underline{0}, \underline{0},1\right).$$
	
	The embedding $(j_3,\lambda_{-}):\langle 2d\rangle\oplus E_8(-2)\rightarrow L$ is a primitive embedding and there exist fourfolds of $K3^{[2]}$-type  $X_3$ such that $\NS(X_3)\simeq (j_3,\lambda_{-})\left(\langle 2d\rangle\oplus E_8(-2)\right)\simeq\Lambda_{2d}$ and $T_{X_3}\simeq T_{2d,3}$.
\end{prop}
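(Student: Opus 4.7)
The plan is to mimic the proofs of Propositions \ref{prop: j1 embedding} and \ref{prop: j2 embedding}, exploiting the fact that $j_3$ takes the polarization inside the single summand $U\oplus\langle -2\rangle$ (the first $U$ and the $\langle -2\rangle$ of $L$) while $\lambda_-$ lives inside $E_8(-1)^{\oplus 2}$. Because of this block-diagonal behavior, the orthogonal complement splits as a direct sum of orthogonal complements in each of these blocks, which I can compute separately.

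First I would dispatch the easy checks. The assumption $d\equiv 3\mod 4$ guarantees that $(d+1)/2\in\Z$, so $j_3(h)$ is a well-defined integral vector; the $\langle -2\rangle$-coordinate equals $1$, so $j_3(h)$ is primitive in $L$. A direct computation gives $j_3(h)^2=2\cdot 2\cdot\frac{d+1}{2}+(-2)=2d$, matching the generator of $\langle 2d\rangle$. Since $\lambda_-$ is a primitive embedding of $E_8(-2)$ into $E_8(-1)^{\oplus 2}$ and its image is orthogonal to the image of $j_3$, the combined map $(j_3,\lambda_-)\colon \langle 2d\rangle\oplus E_8(-2)\hookrightarrow L$ is primitive with image isometric to $\Lambda_{2d}$; existence of fourfolds $X_3$ with $\NS(X_3)$ equal to this image follows from surjectivity of the period map once the orthogonal is shown to have the correct signature $(2,19)$.

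The core step is the orthogonal computation. Outside the first $U$ and the $\langle -2\rangle$ block, the vector $j_3(h)$ vanishes, so the second and third copies of $U$ contribute $U^{\oplus 2}$ to the orthogonal. Inside $E_8(-1)\oplus E_8(-1)$, the orthogonal of $\lambda_-(E_8(-2))$ is $\lambda_+(E_8(-2))\simeq E_8(-2)$, by the standard fact used already in the previous propositions. It remains to compute the orthogonal of $j_3(h)=2u_1+\frac{d+1}{2}u_2+\ell$ inside $U\oplus\langle -2\rangle$, where $\ell$ is a generator of $\langle -2\rangle$. Solving the one linear condition $\tfrac{d+1}{2}a+2b-2c=0$ on $(a,b,c)$ (here the hypothesis $d\equiv 3\mod 4$ ensures $\tfrac{d+1}{4}\in\Z$ so the solution lattice is spanned by the two vectors $v_1=u_1-\tfrac{d+1}{4}\,(2u_2)\cdot\tfrac12+\ldots$), I obtain a rank-two basis whose Gram matrix can be read off directly and matches
\[
\left[\begin{array}{cc} -\frac{d+1}{2}&1\\ 1&-2\end{array}\right],
\]
which is exactly $K_d$. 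Putting the three blocks together yields $((j_3,\lambda_-)(\Lambda_{2d}))^\perp\simeq U^{\oplus 2}\oplus E_8(-2)\oplus K_d=T_{2d,3}$.

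The only potential obstacle is the rank-two computation in $U\oplus\langle -2\rangle$: one must choose a basis of the kernel of the linear form orthogonal to $j_3(h)$ so that the resulting Gram matrix is already in the normal form defining $K_d$, rather than merely $\Z$-equivalent to it. The congruence $d\equiv 3\mod 4$ is precisely what makes $\frac{d+1}{4}$ an integer, so that $v_1:=u_1-\frac{d+1}{4}u_2$ and $v_2:=u_2+\ell$ lie in $U\oplus\langle -2\rangle$; with these explicit generators one checks $v_1^2=-\frac{d+1}{2}$, $v_2^2=-2$, $v_1\cdot v_2=1$, yielding $K_d$ on the nose. Finally, by the uniqueness statement of Proposition \ref{prop:embeddings of Lamba(2d)} the embedding $(j_3,\lambda_-)$ realizes the embedding class labeled $j_3$ there, which confirms both the lattice and transcendental data of $X_3$.
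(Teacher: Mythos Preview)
Your proof is correct and follows essentially the same approach as the paper: exploit the block decomposition so that the orthogonal of $(j_3,\lambda_-)(\Lambda_{2d})$ splits as $U^{\oplus 2}\oplus\lambda_+(E_8(-2))$ plus the orthogonal of $j_3(h)$ in $U\oplus\langle -2\rangle$, and then compute the latter explicitly. Your generators $v_1=u_1-\tfrac{d+1}{4}u_2$ and $v_2=u_2+\ell$ are exactly the two vectors the paper writes down, and the resulting Gram matrix is $K_d$ on the nose. (The parenthetical display of $v_1$ midway through your argument is garbled, but the final explicit choice you give is clean and correct.)
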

\proof The embedding $(j_3,\lambda_{-})$ is clearly a primitive embedding of $\langle 2d\rangle\oplus E_8(-2)$ in $L$ and hence there exist fourfolds of $K3^{[2]}$-type admitting this lattice as N\'eron--Severi group. 
Since $j_3$ restricts to an embedding of $h$ in $U\oplus \langle -2\rangle$, one can compute the orthogonal of $j_3(h)$ in $U\oplus\langle -2\rangle$, which is generated by $\left(\left(\begin{array}{c}0\\1\end{array}\right),1\right)$ and 
$\left(\left(\begin{array}{c}1\\-(d+1)/4\end{array}\right),0\right)$, with intersection form equal to $K_d$, so that $T_{X_3}\simeq T_{2d,3}$.
\endproof

\begin{prop}\label{prop: jtilde embedding}
	Let $d$ be an even positive integer. Let $$\begin{array}{ll}\widetilde{j}(h):=\left(\left(\begin{array}{c}2\\2k\end{array}\right),\left(\begin{array}{c}0\\0\end{array}\right), \left(\begin{array}{c}0\\0\end{array}\right), \underline{e_1}, \underline{e_1},0\right)&\mbox{ if }d=4k-2\mbox{ and }\\
		\widetilde{j}(h):=\left(\left(\begin{array}{c}2\\2k\end{array}\right),\left(\begin{array}{c}0\\0\end{array}\right), \left(\begin{array}{c}0\\0\end{array}\right), \underline{e_1}+\underline{e_3}, \underline{e_1}+\underline{e_3},0\right)&\mbox{ if }d=4k-4.
	\end{array}$$
	The embedding $(\widetilde{j},\lambda_{-}):\langle 2d\rangle\oplus E_8(-2)\rightarrow L$ is not a primitive embedding and the primitive closure of $(\widetilde{j},\lambda_{-})\left(\langle 2d\rangle\oplus E_8(-2)\right)$ is isometric to $\widetilde{\Lambda}_{2d}$. There exist fourfolds of $K3^{[2]}$-type $\widetilde{X}$ such that $\NS(\widetilde{X})\simeq \widetilde{\Lambda}_{2d}$ and  $T_{\widetilde{X}}\simeq \widetilde{T}_{2d}$.\end{prop}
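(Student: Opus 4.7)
The plan is to follow the template of Propositions \ref{prop: j1 embedding}--\ref{prop: j3 embedding}, adjusted to the fact that the natural map $(\widetilde{j},\lambda_-)$ now lands with index $2$ in its saturation inside $L$.

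First I would check the easy primitivity statements: the class $\widetilde{j}(h)$ is primitive in $L$, since its $E_8(-1)$-component $\underline{e_1}$ (resp.\ $\underline{e_1}+\underline{e_3}$) is primitive; and $\lambda_-(E_8(-2))$ is primitive in $E_8(-1)^{\oplus 2}$, hence in $L$. A direct computation then exhibits the non-primitivity of the image of $(\widetilde{j},\lambda_-)$. For $d=4k-2$ one has
\[
\widetilde{j}(h)+\lambda_-(b_1)=\left(\left(\begin{array}{c}2\\2k\end{array}\right),\underline{0},\underline{0},2\underline{e_1},\underline{0},0\right),
\]
which is twice an integral class in $L$; for $d=4k-4$, the analogous
\[
\widetilde{j}(h)+\lambda_-(b_1)+\lambda_-(b_3)=\left(\left(\begin{array}{c}2\\2k\end{array}\right),\underline{0},\underline{0},2(\underline{e_1}+\underline{e_3}),\underline{0},0\right)
\]
is again divisible by $2$. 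These produce precisely the additional generators $(h+b_1)/2$ and $(h+b_1+b_3)/2$ of $\widetilde{\Lambda}_{2d}$ recalled in the excerpt.

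Next I would identify the saturation $M$ of $(\widetilde{j},\lambda_-)(\Lambda_{2d})$ in $L$. By the van Geemen--Sarti lemma recalled above, any overlattice of $\Lambda_{2d}$ in which both $\langle 2d\rangle$ and $E_8(-2)$ embed primitively equals either $\Lambda_{2d}$ or $\widetilde{\Lambda}_{2d}$. Since $\widetilde{j}(h)$ and $\lambda_-(E_8(-2))$ stay primitive in $M$, we get $M\in\{\Lambda_{2d},\widetilde{\Lambda}_{2d}\}$; the first option is ruled out by the non-primitivity just established, so $M\simeq\widetilde{\Lambda}_{2d}$.

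Finally, by Proposition \ref{prop:embeddings of Lamba(2d)} the resulting primitive embedding $\widetilde{\Lambda}_{2d}\hookrightarrow L$ has orthogonal isometric to $\widetilde{T}_{2d}$ (uniqueness of the primitive embedding of $\widetilde{\Lambda}_{2d}$ in $L$). Surjectivity of the period map for $K3^{[2]}$-type fourfolds then yields $\widetilde{X}$ with $\NS(\widetilde{X})\simeq\widetilde{\Lambda}_{2d}$ and $T_{\widetilde{X}}\simeq\widetilde{T}_{2d}$. The only delicate point — and the main book-keeping obstacle — is correctly handling the identification $\lambda_-(b_i)=\underline{e_i}-\underline{e_i}$ (i.e.\ $e_i-f_i$) when computing the divisibility; but this is routine, and everything else follows directly from the preceding material.
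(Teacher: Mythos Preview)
Your argument is correct and follows essentially the same route as the paper: exhibit the $2$-divisible class $\widetilde{j}(h)+\lambda_-(b_1)$ (resp.\ $\widetilde{j}(h)+\lambda_-(b_1)+\lambda_-(b_3)$) to see non-primitivity, recognize the saturation as $\widetilde{\Lambda}_{2d}$, and read off the orthogonal. The only cosmetic difference is that the paper identifies the saturation directly by adjoining the half-class and recognizing the explicit generator of $\widetilde{\Lambda}_{2d}$ described just before Corollary~2.3, and then computes $T_{\widetilde{X}}$ by hand, whereas you invoke the van Geemen--Sarti overlattice classification together with the uniqueness clause of Proposition~\ref{prop:embeddings of Lamba(2d)} to avoid that computation; both are equally valid.
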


\proof Let us consider the case $d=4k-2$, i.e. $d\equiv 2\mod 4$. The embedding $(\widetilde{j},\lambda_{-})$ is not primitive, since the class $\widetilde{j}(h)+\lambda_{-}(b_1)$ is two-divisible in $U\oplus U\oplus U\oplus E_8(-1)\oplus E_8(-1)\oplus \langle-2\rangle$, whereas $h+b_1$ is primitive inside $\langle 2d\rangle\oplus E_8(-2)$. By adding the class $\left(\widetilde{j}(h)+\lambda_{-}(b_1)\right)/2$ to $(\widetilde{j},\lambda_{-})(\langle 2d\rangle\oplus E_8(-2))$ one obtains a primitive embedding of $\widetilde{\Lambda}_{2d}$ in $L$. In particular there exists a fourfold of $K3^{[2]}$-type with $\NS(\widetilde{X})\simeq \widetilde{\Lambda}_{2d}$ and, by computing its orthogonal complement, one finds $T_{\widetilde{Y}}\simeq U^{\oplus 2}\oplus \langle -2d\rangle\oplus D_4(-1)\oplus \langle -2\rangle^{\oplus 5}$.

The case $d=4k-4$ is analogous.\endproof

\subsection{Models of $X$ as moduli space of sheaves on a K3 surface}\label{subsec: 4folds inv. models}

In this section we provide at least one model of the very general $X$ in one of the families described in Table \eqref{table: j,NS,T}, i.e. of the very general member of each family of projective fourfolds of $K3^{[2]}$-type admitting a symplectic involution. Each of them will be described either as Hilbert scheme of a certain K3 surface or as a, possibly twisted, moduli space of sheaves on a K3 surface. Hence the main results of this section are summarized in Table \eqref{table: proj models X}.

One needs two preliminary definitions in order to list all cases.
\begin{defi}
	If $d\equiv 3\mod 4$, we denote by $\left(\langle
	2d\rangle\oplus\langle -2\rangle ^{\oplus 7}\right)'$ the overlattice of
	$\langle 2d\rangle\oplus\langle -2\rangle ^{\oplus 7}=\Z t\bigoplus\oplus_i
	\Z n_i$ obtained by adding to $\langle 2d\rangle\oplus\langle
	-2\rangle ^{\oplus 7}$ the class $\left(t+\sum_i n_i\right)/2$.
	\end{defi}

\begin{lem} The lattice $\langle
	2d\rangle\oplus\langle -2\rangle ^{\oplus 7}$ admits a unique primitive embedding in $L_{K3}$ and its orthogonal is uniquely determined and isometric to $U^{\oplus 2}\oplus D_4(-1)\oplus \langle -2d\rangle\oplus \langle -2\rangle^{\oplus 5}$.
	
	If $d\equiv 3\mod 4$ the lattice $\left(\langle
	2d\rangle\oplus\langle -2\rangle ^{\oplus 7}\right)'$ admits a unique primitive embedding in $L_{K3}$ and its orthogonal is uniquely determined and isometric to $U\oplus U\oplus N\oplus K_d$.
\end{lem}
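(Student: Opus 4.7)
The plan is to apply Nikulin's theory of primitive embeddings into unimodular lattices \cite[Proposition 1.15.1]{Nikulin-Forms}. Since $L_{K3}$ is unimodular, primitive embeddings of an even lattice $M$ into $L_{K3}$ are classified, up to the action of $O(L_{K3})$, by the isometry class of the orthogonal $N:=M^{\perp}$, which must satisfy $\operatorname{sign}(N)=(3,19)-\operatorname{sign}(M)$ and $q_N\cong -q_M$; the embedding is unique whenever $N$ is unique in its genus and the natural map $O(N)\twoheadrightarrow O(q_N)$ is surjective.

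For $M_1=\langle 2d\rangle\oplus\langle -2\rangle^{\oplus 7}$, first compute $\operatorname{sign}(M_1)=(1,7)$ and $q_{M_1}=(\tfrac{1}{2d})\oplus(-\tfrac{1}{2})^{\oplus 7}$. Then check that the candidate $N_1=U^{\oplus 2}\oplus D_4(-1)\oplus\langle -2d\rangle\oplus\langle -2\rangle^{\oplus 5}$ has signature $(2,12)$ and discriminant form equal to $-q_{M_1}$; the key identity to verify is $v(2)\oplus(-\tfrac{1}{2})^{\oplus 5}\cong(\tfrac{1}{2})^{\oplus 7}$, which is a standard consequence of the $2$-elementary identities of \cite[Proposition 1.8.2]{Nikulin-Forms} combined with $q_{D_4(-1)}=v(2)$. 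Uniqueness of $N_1$ in its genus comes from Nikulin's criterion \cite[Corollary 1.13.3]{Nikulin-Forms}, valid because $N_1$ is indefinite with $\rk(N_1)-\ell(A_{N_1})\geq 2$; the surjectivity of $O(N_1)\twoheadrightarrow O(q_{N_1})$ follows from the presence of a $U^{\oplus 2}$ summand via \cite[Theorem 1.14.2]{Nikulin-Forms}.

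For the overlattice $M_2=(\langle 2d\rangle\oplus\langle -2\rangle^{\oplus 7})'$ with $d\equiv 3\mod 4$, I would first verify that the glue class $g=(t+\sum_{i}n_i)/2\in M_1^{\ast}$ has self-intersection $(d-7)/2\in 2\Z$, so that $M_2$ is an even overlattice of index two, and compute $q_{M_2}$ as the quotient form on $g^{\perp}/\langle g\rangle$ inside $(A_{M_1},q_{M_1})$. The candidate $N_2=U\oplus U\oplus N\oplus K_d$ has signature $(2,12)$ and determinant $2^6 d$; to match $-q_{M_2}$ with $q_{N_2}$ I would split both forms into $p$-primary components, aligning the $2$-primary part with $u(2)^{\oplus 3}$ (contributed by $N$, as recorded in the preliminaries) and the odd part with $-q_{K_d}$ (a non-degenerate form on a group of order $d$, read off the Gram matrix of $K_d$). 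Uniqueness in the genus and surjectivity of $O(N_2)\twoheadrightarrow O(q_{N_2})$ then follow as in the first case, since $N_2$ again contains a $U^{\oplus 2}$ summand.

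The hard part will be the discriminant-form bookkeeping in the second case: one must explicitly produce the quadratic form on $g^{\perp}/\langle g\rangle$ and split it as a direct sum of a $2$-elementary summand and an odd cyclic summand, then recognise these as those coming from $N$ and $K_d$ respectively. Once this matching is established, the rest of the argument (signature and determinant counts, and the invocation of Nikulin's uniqueness criteria) reduces to routine verification.
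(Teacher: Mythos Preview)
Your proposal is correct and follows essentially the same approach as the paper: both compute the discriminant form of the given lattice, determine the genus of the orthogonal complement inside the unimodular $L_{K3}$, and then invoke Nikulin's uniqueness results. The paper is somewhat terser in the second case, simply asserting that $q_{Q'}=\left(\frac{2}{d}\right)\oplus u(2)^{\oplus 3}$ (which indeed follows from your glue-vector computation, since $q_{K_d}=\left(-\frac{2}{d}\right)$ on $\Z/d\Z$ and $q_N=u(2)^{\oplus 3}$), thereby bypassing the bookkeeping you flag as the ``hard part''; otherwise the arguments coincide.
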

\begin{proof}
The discriminant quadratic form of $Q:=\langle 2d\rangle\oplus \langle -2 \rangle^{\oplus 7}$ is $\left(\frac{1}{2d}\right)\oplus\left(-\frac{1}{2}\right)^{\oplus 7}$. Since $L_{K3}$ is unimodular, the orthogonal $Q^\perp$ needs to have discriminant quadratic form $\left(-\frac{1}{2d}\right)\oplus\left(\frac{1}{2}\right)^{\oplus 7}\simeq \left(-\frac{1}{2d}\right)\oplus v(2)\oplus\left(-\frac{1}{2}\right)^{\oplus 5}$ and signature $(2,12)$: there is, up to isometry, a unique lattice with these properties, which is $U^{\oplus 2}\oplus D_4(-1)\oplus \langle -2d\rangle\oplus \langle -2\rangle^{\oplus 5}$, thus the embedding is unique up to isometry of $L_{K3}$.

The discriminant quadratic form of $Q':=(\langle 2d\rangle\oplus \langle -2 \rangle^{\oplus 7})'$ is $\left(\frac{2}{d}\right)\oplus u(2)^{\oplus 3}$, hence its orthogonal in $L_{K3}$ has discriminant quadratic form $\left(-\frac{2}{d}\right)\oplus u(2)^{\oplus 3}$ and signature $(2,12)$: again, there is, up to isometry, a unique lattice with these properties, which is $U^{\oplus 2}\oplus N \oplus K_d$.
\end{proof}

The previous lemma implies that there exists a well defined family of K3 surfaces which is polarized with the lattice $\langle
2d\rangle\oplus\langle -2\rangle ^{\oplus 7}$ (resp. $\left(\langle
2d\rangle\oplus\langle -2\rangle ^{\oplus 7}\right)'$).
\begin{defi}
For any positive integer $d$, $S_d$ is a K3 surface such that $\NS(S_d)=\langle
2d\rangle\oplus\langle -2\rangle ^{\oplus 7}$.

For the positive integer $d$ such that $d\equiv 3 \mod 4$, $Z_d$ is a K3 surface such that $\NS(Z_d)=\left(\langle
2d\rangle\oplus\langle -2\rangle ^{\oplus 7}\right)'$\end{defi}	

By the previous lemma, the transcendental lattices of the surfaces $S_d$ and $Z_d$ are respectively $T_{S_d}\simeq U^{\oplus 2}\oplus D_4(-1)\oplus \langle -2d\rangle\oplus \langle -2\rangle^{\oplus 5}$ and $T_{Z_d}\simeq U\oplus U\oplus N\oplus K_d$.\\

In the following we will denote by $H'$ a primitive vector in $\langle
2d\rangle\oplus\langle -2\rangle ^{\oplus 7}$ or $\left(\langle
2d\rangle\oplus\langle -2\rangle ^{\oplus 7}\right)'$ whose square is $2$. It surely exists by Lagrange's four squares theorem.\\

Now we can write the table where we summarize the birational models given for $X$: in the first column we identify the family of fourfolds we are considering (and this is done by exhibiting the embedding $\Pic(X)\subset L$, using the results in \eqref{table: j,NS,T}); in the second column we declare which K3 surface is associated to the model; in the third we describe the model; if the model of $X$ is as moduli space of sheaves determined by a Mukai vector, in the fourth column we write the Mukai vector (we omit the element in the Brauer group giving the twist, when needed); in the last column we give the reference to the proposition where we describe the model and prove that it is the required one.

\begin{align}\label{table: proj models X}
	\begin{array}{|c|c|c|c|c|}
		\hline
		\mbox{Embedding }\Pic(X)\subset L&K3&\mbox{ model }&\mbox{ v }&\mbox{Prop.}\\
		\hline
		j_1,\ d\equiv 1\mod 2&S_d&M_v(S_d,\beta)&(0,H',2)&\mbox{\ref{prop: d odd twisted moduli}}\\
		\hline
		j_1,\ d\equiv 0\mod 2&S_d&M_v(S_d,\beta)&(4,\sum_{i=1}^7n_i,2)&\mbox{\ref{prop: model d even}}\\
		\hline
		j_2,\ d\equiv 1\mod 2&S_d&S_d^{[2]}&-&\mbox{\ref{prop: d odd j2}}\\
		\hline
		j_3, \ d\equiv 3\mod 4&Z_d&M_v(Z_d,\beta)&(0,H',2)&\mbox{\ref{prop: third case twisted moduli}}\\
		\hline
		\widetilde{j},\ d\equiv 0\mod 2&S_d&M_v(S_d)&(2,\sum_{i=1}^7n_i,4)&\mbox{\ref{prop: model d even}}\\
		\hline
\end{array}\end{align}

The easiest description of the fourfold that we obtain is the one associated to the embedding $j_2:\Pic(X)\hookrightarrow L$, indeed in this case $X$ is (birational to) a Hilbert scheme of points on a K3 surface, by the following. 
\begin{prop}\label{prop: d odd j2}
Let $d$ be an odd number. Then $S_d^{[2]}$ is a $(\Lambda_{2d},j_2)$-polarized fourfold.
	\end{prop}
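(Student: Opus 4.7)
The approach is to compute the transcendental lattice of $S_d^{[2]}$ via Beauville's decomposition of its second cohomology, and to observe that this matches the transcendental lattice of a $(\Lambda_{2d},j_2)$-polarized fourfold as specified in Proposition \ref{prop: j2 embedding}.

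First I would invoke Beauville's theorem: for any K3 surface $S$, there is an isometry of lattices (for the BBF form)
\[H^2(S^{[2]},\Z) \simeq H^2(S,\Z) \oplus \Z\delta,\]
an orthogonal direct sum, with $\delta^2=-2$ and $2\delta$ the class of the exceptional divisor of the Hilbert--Chow morphism $S^{[2]}\to \Sym^2 S$. Since $\delta$ is algebraic and of type $(1,1)$, this gives $\Pic(S_d^{[2]}) = \NS(S_d)\oplus \Z\delta$ and $T_{S_d^{[2]}} = T_{S_d}$. Inserting $\NS(S_d)=\langle 2d\rangle\oplus\langle -2\rangle^{\oplus 7}$ and applying the lemma preceding the definition of $S_d$, which identifies $T_{S_d} \simeq U^{\oplus 2}\oplus D_4(-1)\oplus\langle -2d\rangle\oplus\langle -2\rangle^{\oplus 5}$, this yields $T_{S_d^{[2]}}\simeq T_{2d,2}$.

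By Proposition \ref{prop: j2 embedding} this is precisely the transcendental lattice of a $(\Lambda_{2d},j_2)$-polarized fourfold, embedded in $L$ as the orthogonal complement of $(j_2,\lambda_-)(\Lambda_{2d})$. The Hodge structure on $T_{S_d^{[2]}}$ is induced from that on $T_{S_d}$ via Beauville's decomposition, so the period point of $S_d^{[2]}$ lies in the period domain attached to $T_{2d,2}$. Together with an appropriate marking $\phi\colon H^2(S_d^{[2]},\Z)\to L$ sending the primitive embedding $T_{S_d^{[2]}}\hookrightarrow H^2(S_d^{[2]},\Z)$ to the $j_2$-orthogonal embedding $T_{2d,2}\hookrightarrow L$, this exhibits $S_d^{[2]}$ as a $(\Lambda_{2d},j_2)$-polarized fourfold, since $\phi$ will then carry $\Pic(S_d^{[2]})$ onto $(j_2,\lambda_-)(\Lambda_{2d})$.

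The main obstacle is producing the marking $\phi$, that is, checking that the two a priori distinct primitive embeddings of $T_{2d,2}$ into $L$---the standard one coming from $T_{S_d}\subset H^2(S_d,\Z)\subset L$, and the one obtained as $(j_2,\lambda_-)(\Lambda_{2d})^{\perp}$ in Proposition \ref{prop: j2 embedding}---lie in the same $O(L)$-orbit. By Nikulin's criterion \cite[Prop.~1.15.1]{Nikulin-Forms}, this reduces to verifying that the gluing data (in particular, the subgroup of $A_{T_{2d,2}}$ paired with $A_L\simeq\Z_2(-\tfrac12)$) coincide; for odd $d$ this is a discriminant-form computation using the explicit description of $A_{T_{2d,2}}$ together with the primitivity of $(j_2,\lambda_-)$ established in Proposition \ref{prop: j2 embedding}.
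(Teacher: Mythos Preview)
Your approach is essentially the same as the paper's: compute $T_{S_d^{[2]}}=T_{S_d}\simeq T_{2d,2}$ via Beauville's decomposition and conclude. The paper's proof is in fact just those two lines.

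Your closing paragraph about producing the marking $\phi$ and checking $O(L)$-equivalence of the two embeddings of $T_{2d,2}$ is unnecessary extra work: Proposition~\ref{prop:embeddings of Lamba(2d)} already classifies the primitive embeddings of $\Lambda_{2d}$ into $L$ \emph{up to isometry of $L$}, and they are distinguished precisely by the isometry class of the orthogonal complement. Since a primitive sublattice and its orthogonal determine each other, this is the same as classifying primitive embeddings of the transcendental lattice. Hence once you know $T_{S_d^{[2]}}\simeq T_{2d,2}$ (and $\NS(S_d^{[2]})\simeq\Lambda_{2d}$, which for odd $d$ follows from $\langle 2d\rangle\oplus\langle -2\rangle^{\oplus 8}\simeq\langle 2d\rangle\oplus E_8(-2)$, as noted in the proof of Proposition~\ref{prop: d odd twisted moduli}), the embedding is automatically in the $O(L)$-orbit of $j_2$, and the marking exists for free.
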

\begin{proof}
	The transcendental lattice of $S_d$ is $T_{S_d^{[2]}}\simeq U^{\oplus 2}\oplus D_4(-1)\oplus \langle -2d\rangle\oplus \langle -2\rangle^{\oplus 5}$. Hence $T_{S_d^{[2]}}\simeq U^{\oplus 2}\oplus D_4(-1)\oplus \langle -2d\rangle\oplus \langle -2\rangle^{\oplus 5}\simeq T_{2d,2}$.
\end{proof}

\begin{rem}\label{rem:involution-Hilbert-schemes}
Note that there is no natural
symplectic involution on these Hilbert schemes.
It is a nice problem to construct such involutions on birational models of those Hilbert
schemes. 

When $d=1$, this
family is exactly the one given by Hilbert squares of $K3$
surfaces which are $(\langle 2\rangle\oplus \langle
-2\rangle^{\oplus 7})$-polarized. As above, we denote by $S_1$ a generic surface in this space. 

The surface $S_1$ is a double cover of a del Pezzo surface of degree 2, denoted by $dP_2$, thus it admits a non symplectic involution, which is the cover involution. We denote it as $\iota_{S_1}$ and we observe that it acts as the identity on $\NS(S_1)$. Moreover, since the anticanonical model of $dP_2$ exhibits $dP_2$ as double cover of $\mathbb{P}^2$ branched on a quartic curve, the surface $S_1$ admits a model (induced by the anticanonical one of $dP_2$) as a quartic hypersurface in $\mathbb{P}^3$ which does not contain lines. Therefore the fourfold $X\simeq S_1^{[2]}$ admits two non-symplectic involutions: one is $\iota_{S_1}^{[2]}$, the natural involution induced by $\iota_{S_1}$, and the other is Beauville's involution $\beta$ (see \cite[Proposition 11]{Beauville} for the definition). The isometry $(\iota_{S_1}^{[2]})^*$ acts as the identity on $\NS(X)$ and as minus the identity on $T_{X}$, hence it commutes with every isometry induced by an involution on $X$ (since they commute both on the transcendental lattice and on the N\'eron--Severi group). In particular $\iota_{S_1}^{[2]}$ and $\beta$ are two commuting non symplectic involutions, whose composition is necessarily a symplectic involution on $X$.
Such an involution
can be constructed also on a birational model, as done in \cite{MT}.

In the case $d=3$, by Proposition \ref{prop: d odd j2} examples of $(\Lambda_6,j_2)$-polarized fourfolds are given by Hilbert squares of $K3$ surfaces $S_3$ which are $(\langle 6\rangle\oplus\langle -2\rangle^{\oplus 7})$-polarized. In this case, one can show that such Hilbert squares are in fact birational to the Fano varieties of cubic fourfolds with $8$ nodes \cite[Thm.~1.1]{L}. It is an open question whether it is possible to describe geometrically a symplectic involution on these manifolds.
\end{rem}
\begin{prop}\label{prop: d odd twisted moduli}
Let $d$ be an odd number. There exist a Brauer class $\beta\in H^2(\oo^*_{S_d})_2$ and a
Mukai vector $v\in H^*(S_d,\Z)$ such that the moduli space
$X=M_v(S,\beta)$ is a $(\Lambda_{2d},j_1)$-polarized fourfold of
$K3^{[2]}$-type.
\end{prop}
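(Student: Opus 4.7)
The plan is to take the Mukai vector $v = (0, H', 2) \in \tilde{H}(S_d, \Z)$, where $H' \in \NS(S_d)$ is the primitive class of square $2$ supplied by Lagrange's four-square theorem, and to exhibit a $2$-torsion Brauer class $\beta \in \mathrm{Br}(S_d)[2]$ for which $M_v(S_d, \beta)$ is the desired $(\Lambda_{2d}, j_1)$-polarized fourfold. The Mukai pairing gives $v \cdot v = (H')^2 = 2$, and $v$ is primitive since $H'$ is primitive in $\NS(S_d)$. By Yoshioka's theorem on moduli of $\beta$-twisted sheaves on K3 surfaces, once $\beta$ is chosen so that $v$ is an algebraic $\beta$-twisted Mukai vector, then for a $v$-generic polarization on $S_d$ the moduli space $M_v(S_d, \beta)$ is a smooth projective IHS fourfold of $K3^{[2]}$-type, and there is an isomorphism $H^2(M_v(S_d, \beta), \Z) \cong v^\perp \subset \tilde{H}(S_d, \beta, \Z)$ of polarized weight-two integral Hodge structures identifying the Beauville--Bogomolov--Fujiki form with the restriction of the Mukai pairing.

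It then remains to select $\beta$ so that $v^\perp$ has the correct Hodge-lattice structure. In the untwisted case, the transcendental part of $v^\perp$ always coincides with $T_{S_d} \cong T_{2d,2}$, which yields only the embedding $j_2$ (already realized by the Hilbert square in Proposition \ref{prop: d odd j2}). To reach the embedding $j_1$, whose transcendental lattice is $T_{2d,1}$, a non-trivial twist is essential: comparing discriminants, the order of $\mathrm{disc}(T_{2d,1})$ is $4$ times that of $\mathrm{disc}(T_{2d,2})$, so the twist must induce an index-$2$ modification of the integral gluing between the algebraic and transcendental parts of the Mukai lattice. I would represent $\beta$ by a $B$-field $B = \alpha/2$ with $\alpha \in H^2(S_d, \Z)$ whose transcendental component is non-trivial modulo $2 T_{S_d}$; such an $\alpha$ exists because $T_{S_d}/2T_{S_d} \neq 0$.

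The main obstacle is the explicit lattice matching, i.e., verifying that for a suitable choice of $\alpha$ the algebraic part of $v^\perp$ in the $\beta$-twisted Mukai lattice is isomorphic to $\Lambda_{2d}$ (with the correct discriminant form) and the transcendental part is isomorphic to $T_{2d,1}$. This amounts to a discriminant-form calculation using the $2$-torsion structure of $A_{\NS(S_d)}$ and $A_{T_{S_d}}$ together with the action of $\exp(B)$ on the Mukai lattice. Once these abstract isomorphisms are established, the uniqueness of the primitive embedding from Proposition \ref{prop:embeddings of Lamba(2d)} forces the resulting embedding $\Lambda_{2d} \hookrightarrow v^\perp \cong L$ to be $O(L)$-equivalent to $j_1$, completing the proof.
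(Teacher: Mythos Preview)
Your strategy matches the paper's: same Mukai vector $v=(0,H',2)$ and a $2$-torsion Brauer twist to shift the transcendental lattice from $T_{2d,2}$ to $T_{2d,1}$. However, you stop precisely where the content is: you call the lattice matching ``the main obstacle'' and leave the choice of $B$-field unspecified beyond ``$\alpha$ has non-trivial transcendental component mod $2T_{S_d}$''. That is not enough to carry the argument, and for a generic such $\alpha$ the resulting twisted algebraic lattice will not have a clean description.

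The paper's missing ingredient is a \emph{specific} choice that trivializes the computation. Write $T_{S_d}\simeq U\oplus W$ with $\{f_1,f_2\}$ a standard basis of the hyperbolic plane $U$, and take $B=f_1/2$. Because $f_1$ is isotropic and spans a $U$-summand, the $\beta$-twisted Picard lattice $\Pic(S_d,\beta)$ is generated by $\Pic(S_d)$, $(0,0,1)$ and $(2,f_1,0)$, hence is isometric to $U(2)\oplus \Pic(S_d)$; its orthogonal in the Mukai lattice is then $U(2)\oplus W$, which is the transcendental lattice of $M_v(S_d,\beta)$. With this in hand, $v_B=v$ and the orthogonal of $v$ inside $U(2)\oplus \Pic(S_d)$ is a rank-$9$ hyperbolic lattice whose discriminant group is $\Z_{2d}\oplus(\Z_2)^{8}$; a genus argument (its $2$-adic component agrees with that of $\langle 2\rangle\oplus\langle -2\rangle^{\oplus 8}\simeq \langle 2\rangle\oplus E_8(-2)$, and the genus contains a unique even indefinite lattice) forces it to be $\Lambda_{2d}$. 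So the ``discriminant-form calculation'' you allude to becomes a two-line check once $B=f_1/2$ is chosen; without that choice, your proposal is a correct outline but not yet a proof.
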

\begin{proof}
The transcendental lattice $T_{S_d}$ of $S_d$ is of the form $U\oplus W$
for $W$ an even hyperbolic lattice; we denote by $f_1,f_2$ a basis
of the hyperbolic plane $U$. Then we consider $B=\frac{f_1}{2}\in
T_S\otimes\Q$ and $\beta\in H^2(\oo^*_{S_d})_2$ the Brauer class of
order two corresponding to $(\_,B):T_S\rightarrow \Z/2\Z$. The
twisted Picard group $\Pic(S,\beta)$ is thus the sublattice of
$H^*(S,\Z)$ generated by $\Pic(S_d)$, $(0,0,1)$ and $(2,f_1,0)$,
hence it is isomorphic to $U(2)\oplus \Pic(S)$, and its orthogonal
in the Mukai lattice is isometric to $U(2)\oplus W$. It follows
from work of Yoshioka \cite[Section 3]{Yoshioka} that $\Pic(M_v(S_d,\beta))\cong v_B^\perp\cap
\Pic(S_d,\beta)$ and that the transcendental lattice of
$M_v(S_d,\beta)$ is isometric to $U(2)\oplus W$.

We conclude by choosing as Mukai vector $v=(0,H',2)$ where $H'\in
\Pic(S)$ is a primitive effective class of square two. The orthogonal $P$ of $H'$ in
$\Pic(S)$ is a negative definite lattice with rank and length 7
and discriminant group $\Z_{2d}\oplus (\Z_2)^{\oplus 6}$ with discriminant quadratic form $q=\left(\frac{1}{2d}\right)\oplus v(2)\oplus \left(-\frac{1}{2}\right)^{\oplus 4}$.  
For such a choice we have $v_B=v$ primitive of square two and the
orthogonal to $v$ in $U(2)\oplus \Pic(S)$ is a hyperbolic lattice
of rank 9 and discriminant group $\Z_{2d}\oplus (\Z_2)^{\oplus
8}$. Its $2$-adic component is isometric to the one of $\langle
2\rangle\oplus\langle -2\rangle^8\simeq  \langle 2\rangle\oplus
E_8(-2)$ and there is only one even indefinite lattice in this
genus by \cite[Theorem 1.13.2]{Nikulin-Forms}. Thus the orthogonal
to $v$ is isometric to $\Lambda_{2d}$.
\end{proof}

\begin{prop}\label{prop: third case twisted moduli}
Let $d$ be a positive integer such that $d\equiv 3\mod 4$. There exist a Brauer class $\beta\in H^2(\oo^*_{Z_d})_2$
and a Mukai vector $v\in H^*(Z_d,\Z)$ such that the moduli space
$X=M_v(Z_d,\beta)$ is a $(\Lambda_{2d},j_3)$-polarized fourfold of
$K3^{[2]}$-type.
\end{prop}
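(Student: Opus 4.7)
The plan is to mirror the proof of Proposition \ref{prop: d odd twisted moduli}, replacing $S_d$ by $Z_d$. Since $T_{Z_d}\simeq U^{\oplus 2}\oplus N\oplus K_d$ contains a hyperbolic plane, I would write $T_{Z_d}=U\oplus W$ with $W=U\oplus N\oplus K_d$, fix a basis $\{f_1,f_2\}$ of the distinguished $U$-summand with $f_i^2=0$ and $f_1 f_2=1$, set $B=f_1/2\in T_{Z_d}\otimes\Q$, and let $\beta\in H^2(\oo^*_{Z_d})_2$ be the associated Brauer class of order two. Exactly as in the $S_d$ case, the twisted Picard lattice $\Pic(Z_d,\beta)\subset H^*(Z_d,\Z)$ is generated by $\Pic(Z_d)$, $(0,0,1)$ and $(2,f_1,0)$, so it is isometric to $\Pic(Z_d)\oplus U(2)$, and its orthogonal in the Mukai lattice is $U(2)\oplus W=U(2)\oplus U\oplus N\oplus K_d$.

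By Lagrange's four-squares theorem (as already used in the paper), there is a primitive effective class $H'\in\Pic(Z_d)$ with $(H')^2=2$. I would take the Mukai vector $v=(0,H',2)$, so that $v^2=2$ and $v_B=v$ is primitive. By Yoshioka \cite[Section 3]{Yoshioka}, $X:=M_v(Z_d,\beta)$ is then an IHS fourfold of $K3^{[2]}$-type whose transcendental lattice is isometric to the orthogonal complement $U(2)\oplus U\oplus N\oplus K_d$ just computed, and whose Picard lattice is $v^{\perp}\cap\Pic(Z_d,\beta)$.

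The core step is the lattice-theoretic identification $T_X\simeq T_{2d,3}=U^{\oplus 2}\oplus E_8(-2)\oplus K_d$. This reduces to the isometry $U(2)\oplus U\oplus N\simeq U^{\oplus 2}\oplus E_8(-2)$: both are even indefinite lattices of rank $12$, signature $(2,10)$, and discriminant form $u(2)^{\oplus 4}$, and since $12\geq\ell(A_L)+2=10$, Nikulin's uniqueness-in-genus result \cite[Corollary 1.13.3]{Nikulin-Forms} provides the isometry; adding $K_d$ to both sides then yields $T_X\simeq T_{2d,3}$.

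To conclude, Proposition \ref{prop:embeddings of Lamba(2d)} shows that the primitive embedding of $\Lambda_{2d}$ into $L$ is determined up to isometry of $L$ by its orthogonal, so the identification $T_X\simeq T_{2d,3}$ forces $\Pic(X)\simeq\Lambda_{2d}$ with embedding conjugate to $j_3$; hence $X$ is $(\Lambda_{2d},j_3)$-polarized. The main obstacle is the lattice isometry in the third paragraph, which sits at the boundary of the numerical hypotheses of Nikulin's theorem but becomes routine once the signature and discriminant form have been verified; everything else is parallel to the $S_d$ case.
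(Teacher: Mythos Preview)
Your proposal is correct and follows essentially the same route as the paper: write $T_{Z_d}=U\oplus W$ with $W=U\oplus N\oplus K_d$, twist by $B=f_1/2$ to obtain $T_X\simeq U(2)\oplus W$, take $v=(0,H',2)$, and identify $U(2)\oplus U\oplus N\oplus K_d$ with $T_{2d,3}$. The only difference is that the paper simply asserts the isometry $U^{\oplus 2}\oplus E_8(-2)\simeq U(2)\oplus U\oplus N$, whereas you justify it via Nikulin's uniqueness-in-genus criterion; your added final paragraph, deducing the $(\Lambda_{2d},j_3)$-polarization from Proposition~\ref{prop:embeddings of Lamba(2d)}, makes explicit a step the paper leaves implicit.
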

\begin{proof}
	
Denoted by $W$ the lattice $U\oplus N\oplus K_d$, it holds 
$$T_{2d,3}\simeq U^{\oplus 2}\oplus
E_8(-2)\oplus K_d\simeq U(2)\oplus U\oplus N\oplus K_d\simeq U(2)\oplus W$$ and  $T_{Z_d}\simeq U\oplus U\oplus
N\oplus K_d\simeq U\oplus W$.  
Now reasoning as in Proposition \ref{prop: d odd twisted moduli}, one chooses
$B=\frac{f_1}{2}\in T_Z\otimes\Q$ and $\beta\in H^2(\oo^*_{Z_d})_2$
the Brauer class of order two corresponding to
$(\_,B):T_{Z_d}\rightarrow \Z/2\Z$. So $\Pic(M_v(Z_d,\beta))\cong
v_B^\perp\cap \Pic(Z_d,\beta)$ and the transcendental lattice
of $M_v(Z_d,\beta)$ is isometric to $U(2)\oplus W\simeq T_{2d,3}$.

We conclude by choosing as Mukai vector $v=(0,H',2)$ where $H'\in
\Pic(Z_d)\simeq \left(\langle 2d\rangle\oplus \langle -2\rangle^{\oplus 7}\right)'$ is a primitive effective class of square two.
\end{proof}

\begin{prop}\label{prop: model d even}
If $d\equiv 0\mod 2$, then:
\begin{itemize}
\item a general fourfold of $K3^{[2]}$-type
$(\Lambda_{2d},j_1)$-polarized is birational to $M_v(S_d,\beta)$ where $v=(4,\sum_in_i,2)$ and $\beta$ is as above; \item a general fourfold of $K3^{[2]}$-type
$(\widetilde{\Lambda}_{2d},\tilde{j})$-polarized is birational to
$M_{w}(S_d)$ with $w=(2, \sum n_i, 4)\in H^*(S_d,\Z)$.
\end{itemize}
\end{prop}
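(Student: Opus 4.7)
The plan is to apply Yoshioka's computation of the Picard and transcendental lattices of moduli spaces of (possibly twisted) sheaves on a K3 surface, precisely in the spirit of Propositions \ref{prop: d odd twisted moduli} and \ref{prop: third case twisted moduli}. A conceptual simplification for $d\equiv 0\mod 2$ comes from Proposition \ref{prop:embeddings of Lamba(2d)}: the primitive embedding $j_1$ of $\Lambda_{2d}$ into $L$ is unique up to isometry, as is the primitive embedding $\widetilde{j}$ of $\widetilde{\Lambda}_{2d}$. Hence, once we show that the Picard lattice of the candidate moduli space is abstractly isometric to $\Lambda_{2d}$ (respectively $\widetilde{\Lambda}_{2d}$), the corresponding embedding into $L$ must coincide with $j_1$ (respectively $\widetilde{j}$), and the Torelli theorem for $K3^{[2]}$-type manifolds then yields the claimed birationality with the general member of the family.

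For the first item I would first check that $v=(4,\sum_i n_i,2)$ is a primitive Mukai vector with $\langle v,v\rangle=2$, so that $M_v(S_d,\beta)$ is a smooth fourfold of $K3^{[2]}$-type. Reproducing the construction used in Proposition \ref{prop: d odd twisted moduli}, I pick a splitting $T_{S_d}=U\oplus W'$, set $B=f_1/2$, and let $\beta\in H^2(\oo^*_{S_d})_2$ be the associated $2$-torsion Brauer class. By Yoshioka's results the transcendental lattice of $M_v(S_d,\beta)$ is isometric to $U(2)\oplus W'$, and the Picard lattice is $v_B^\perp\cap\Pic(S_d,\beta)\subseteq U(2)\oplus\Pic(S_d)$. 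A direct computation of rank, signature and discriminant form, together with the uniqueness in the genus coming from \cite[Theorem 1.13.2]{Nikulin-Forms}, should yield the identifications $U(2)\oplus W'\simeq T_{2d,1}$ and $v_B^\perp\cap\Pic(S_d,\beta)\simeq\Lambda_{2d}$.

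For the second item the argument is similar but simpler, as the moduli space is untwisted: one checks that $w=(2,\sum_i n_i,4)$ is primitive with $\langle w,w\rangle=2$, so $M_w(S_d)$ is again a $K3^{[2]}$-type fourfold. Its transcendental lattice equals $T_{S_d}\simeq\widetilde{T}_{2d}$ by construction of $S_d$, and the Picard lattice is $w^\perp\cap(\Z\oplus\Pic(S_d)\oplus\Z)$. This lattice is identified with $\widetilde{\Lambda}_{2d}$ again by comparison of rank, signature and discriminant form, combined with genus uniqueness.

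The main difficulty to overcome lies in the first item: the lattice-theoretic identification $U(2)\oplus U\oplus D_4(-1)\oplus\langle -2d\rangle\oplus\langle -2\rangle^{\oplus 5}\simeq U^{\oplus 2}\oplus E_8(-2)\oplus\langle -2d\rangle\oplus\langle -2\rangle$ is not immediate and requires a careful computation of discriminant quadratic forms, together with the Nikulin genus-uniqueness criterion \cite[Theorem 1.13.2]{Nikulin-Forms}. Modulo this, both statements reduce to bookkeeping inside the (twisted) Mukai lattice.
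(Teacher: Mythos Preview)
Your plan is essentially the same strategy as the paper's: apply Yoshioka's description of the Picard and transcendental lattices of (twisted) moduli spaces, then identify the resulting lattices with $\Lambda_{2d}$, respectively $\widetilde{\Lambda}_{2d}$, and invoke the uniqueness of the embeddings $j_1$ and $\widetilde{j}$ from Proposition~\ref{prop:embeddings of Lamba(2d)}. The paper carries out exactly this program.

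The one methodological difference is in how the identification of $v_B^\perp\cap\Pic(S_d,\beta)$ (resp.\ $w^\perp$) with $\Lambda_{2d}$ (resp.\ $\widetilde{\Lambda}_{2d}$) is made. You propose to match rank, signature and discriminant form and then appeal to Nikulin's genus-uniqueness criterion \cite[Theorem~1.13.2]{Nikulin-Forms}. For $\widetilde{\Lambda}_{2d}$ this is unproblematic (the $2$-length is $7\le\mathrm{rk}-2$). For $\Lambda_{2d}$ with $d$ even, however, the $2$-length of the discriminant group equals the rank ($=9$), so the simple hypothesis of \cite[Theorem~1.13.2]{Nikulin-Forms} does not apply, and you would need a more refined genus argument. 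The paper sidesteps this entirely: it writes down an explicit basis of $v_B^\perp\cap\Pic(S_d,\beta)$ (namely $(2,f_1,-1)$, $(0,2n_1,1)$, $(0,n_i-n_{i+1},0)$ for $i=1,\dots,6$, and $(0,t,0)$) and checks by hand that the Gram matrix on the first eight vectors is of the shape $R(2)$ with $R$ even negative definite unimodular of rank $8$, hence $E_8(-2)$, while $(0,t,0)$ contributes the orthogonal $\langle 2d\rangle$ summand. Similarly for $w^\perp$ the paper exhibits a concrete primitive embedding $\langle 2d\rangle\oplus N\hookrightarrow U\oplus\Pic(S_d)$ whose image is exactly $w^\perp$. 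Your route via the transcendental lattice (rank $14$, where Nikulin's criterion does apply, giving $U(2)\oplus W'\simeq T_{2d,1}$) followed by uniqueness of $j_1$ would also close the gap; just be aware that invoking genus uniqueness directly for $\Lambda_{2d}$ is the one place where your outline needs extra care.
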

\begin{proof} Let us fix $\beta$ as in Proposition \ref{prop: d odd twisted moduli}. Then, since $T_{S_d}\simeq U^{\oplus 2}\oplus D_4(-1)\oplus \langle -2\rangle^{\oplus 5}\oplus \langle-2d\rangle$, $T_{M_v(S_d,\beta)}\simeq U\oplus U(2)\oplus D_4(-1)\oplus \langle -2\rangle^{\oplus 5}\oplus \langle-2d\rangle$ for every possible choice of the Mukai vector $v$. Moreover, the twisted Picard group $\Pic(S_d,\beta)$ is $U(2)\oplus \Pic(S_d)$ (as in proof of Proposition \ref{prop: d odd twisted moduli}) and it is generated by $(0,0,1)$, $(2,f_1,0)$, $(0,n_i,0)$, $i=1,\ldots 7$, $(0,t,0)$ (where $t$, $n_i$ are the generators of $\Pic(S_d)$, $t^2=2d$, and $f_1$ is as in Proposition \ref{prop: d odd twisted moduli}). We now fix $v=(4,\sum_in_i,2)$, then $v_B=(4,\sum_in_i+2f_1,2)\in H^*(S_d,\Z)$ and we compute $v_B^\perp \cap \Pic(S_d,\beta)$. It is generated by $(2,f_1,-1)$, $(0,2n_1,1)$, $(0,n_i-n_{i+1},0)$, $i=1,\ldots 6$, $(0,t,0)$. One can directly check that $(0,t,0)$ is orthogonal to all the other generators and the form computed on all the other generators is $R(2)$ where $R$ is an even negative definite unimodular lattice of rank 8. It follows that $R\simeq E_8(-1)$ and so the orthogonal to $v_B$ in $\Pic(S_d,\beta)$ is isometric to $E_8(-2)\oplus \langle 2d\rangle\simeq \Lambda_{2d}$. Hence $M_v(S_d,\beta)$ is
$(\Lambda_{2d},j_1)$-polarized  and gives a birational model of the  general $(\Lambda_{2d},j_1)$-polarized fourfold of $K3^{[2]}$-type.

To prove the similar result for a general fourfold of $K3^{[2]}$-type $(\widetilde{\Lambda}_{2d},\tilde{j})$-polarized we observe that $T_{S_d}\cong \widetilde{T}_{2d}$. Moreover, the $(1,1)$-part in $H^*(S_d,\Z)$ is $U\oplus \Pic(S_d)$. 
Next, we observe that $\widetilde{\Lambda}_{2d}\cong \langle 2d\rangle \oplus N$, where $N$ is the Nikulin lattice, obtained by $\langle -2\rangle^{\oplus 8}$ by gluing the class $n:=\sum r_i/2$ and it is generated by the first seven roots $r_1,\ldots,r_7$ and by $n$ such that $n^2=-4$ and $nr_i=-1$.

Let $f_1,f_2,t,n_1,\ldots,n_7$ be a basis of $U\oplus\Pic(S_d)$.
Consider now the explicit primitive embedding $\langle 2d\rangle \oplus N\subset U\oplus\Pic(S_d)$ which sends the $\langle 2d\rangle$ summand in the lattice spanned by $t$ and which sends $r_i\mapsto n_i+f_1$ for $i=1,\ldots, 7$, $n\mapsto 2f_1-f_2$. The Mukai vector $w=(2,\sum_in_i,4)$ is $4f_1+2f_2+n_1+\ldots+n_7$ and its orthogonal is spanned by $t$, $n$ and $r_i$ with $i=1,\ldots 7$. So the orthogonal to the Mukai vector $w$ in $U\oplus \Pic(S_d)$ is isometric to $\langle 2d\rangle \oplus N\simeq \widetilde{\Lambda}_{2d}$ and this ends the proof.
\end{proof}

\begin{rem}{(Induced automorphisms from autoequivalences.)}
The symplectic automorphism considered in Proposition \ref{prop: model d even}
is induced by a symplectic autoequivalence on $D^{b}(S)$
that are not induced by a symplectic action on $S$.
The result \cite[Prop~1.4]{BO} gives a way to further investigate
these symplectic involutions.
 If \cite[Prop~1.4]{BO} is generalized for twisted sheaves then this would give away a way to study also the other involutions considered here.
\end{rem}

\section{Nikulin orbifolds of dimension four}\label{sec: Nikulin fourfold}

After having described the moduli spaces of projective fourfolds $X$ of $K3^{[2]}$-type admitting a symplectic involution $\iota$, we now turn to the study of the quotients. It is well-known, since work of Fujiki \cite{Fujiki}, that the quotient does not admit a crepant resolution of singularities. Nevertheless, there is a partial resolution $Y\rightarrow X/\iota$ which is a so-called {\it irreducible sympletic orbifold}.
\begin{defi}
The irreducible symplectic orbifolds obtained as partial resolution of $X/\iota$ for a certain manifold of $K3^{[n]}$-type $X$ and a symplectic involution $\iota$ on $X$ are called {\it Nikulin orbifolds}.

Their deformations (in the sense of \cite{BL, MenetTorelli}) are said to be {\it orbifolds of Nikulin-type}.\end{defi}

We recall the following result by Menet.
\begin{thm}\cite{Menet}\label{thm:menet}
The second cohomology group $H^2(Y,\Z)$ of the orbifold $Y$ is endowed with a symmetric bilinear form, which is the Beauville--Bogomolov--Fujiki form $B_Y$ and thus it is a lattice. Let $B_Y$ denote also the corresponding quadratic form. Denoted by $\Sigma$ the exceptional divisor of $Y\ra X/\iota$ and by $\Delta$ the divisor induced by $\delta$ it holds:
 $$B_Y(\Sigma)=B_Y(\Delta)=-4,\ \ (\Sigma\pm\Delta)/2\in H^2(Y,\Z).$$  The lattice $\left(H^2(Y,\mathbb{Z}),B_Y\right)$ is isometric to $U(2)^{\oplus 3}\oplus E_8(-1)\oplus\langle -2\rangle\oplus\langle-2\rangle$, where the last two summands are generated by $(\Delta\pm\Sigma)/2$. 

It follows that $\Sigma$ is a class with selfintersection $-4$ and divisibility 2 in $H^2(Y,\Z)$.
\end{thm}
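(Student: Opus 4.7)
The plan is to analyze $H^2(Y,\Z)$ via the double cover construction underlying $Y$. Let $W\subset X$ be the $K3$ surface fixed by $\iota$, let $\pi\colon \tilde X\to X$ be the blow-up of $X$ along $W$ with exceptional divisor $E$ (a $\PP^1$-bundle over $W$), and let $\tilde\iota$ be the lifted involution, which acts as the identity on $E$. Then $Y=\tilde X/\tilde\iota$ has exactly the $28$ orbifold singularities of type $\tfrac12(1,1,1,1)$ inherited from the isolated fixed points of $\iota$, and $q\colon\tilde X\to Y$ is a degree-two quotient of orbifolds. The key geometric relations are $q^*\Sigma=2E$ (since $E\to\Sigma$ is an isomorphism) and $q^*\Delta=2\pi^*\delta$, where $\delta\in H^2(X,\Z)^\iota$ is the standard $(-2)$-class and $\Delta:=q_*(\pi^*\delta)$.

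The first main step is to transfer the Beauville--Bogomolov--Fujiki form from $X$ through $\tilde X$ to $Y$. The blow-up formula gives $H^2(\tilde X,\Z)=\pi^*H^2(X,\Z)\oplus \Z[E]$, and restricting to $\tilde\iota$-invariants yields $H^2(\tilde X,\Z)^{\tilde\iota}=\pi^*H^2(X,\Z)^\iota\oplus \Z[E]$. Over $\Q$ the pullback $q^*$ is an isomorphism onto $H^2(\tilde X,\Q)^{\tilde\iota}$, and compatibility with the Fujiki relation (together with the fact that $q$ has degree two) fixes the normalization
\[ B_Y(\alpha,\beta)=\tfrac12\, B_{\tilde X}\bigl(q^*\alpha,q^*\beta\bigr). \]
A direct application gives $B_Y(\Delta)=2B_X(\delta)=-4$; the computation $B_Y(\Sigma)=-4$ follows by the analogous Fujiki-type computation on the ruled divisor $E\to W$; and $B_Y(\Sigma,\Delta)=0$ is obtained from the projection formula, since $E$ is supported over $W$ while $\pi^*\delta$ is pulled back from $X$.

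The most delicate point is the $2$-divisibility of $\Sigma\pm\Delta$ in $H^2(Y,\Z)$, the orbifold counterpart of the classical Nikulin lattice extension. Following Menet's strategy, this can be established through the Smith short exact sequence for the $\Z/2$-action $\tilde\iota$ on $\tilde X$, or equivalently via a direct computation of the orbifold integral cohomology of $Y$: the Smith sequence identifies precisely which invariant rational classes descend to integral classes on $Y$ and shows that $(\Sigma+\Delta)/2$ is one such class (the new generator of the glue), whence the integrality of $(\Sigma-\Delta)/2$ follows. The same mechanism also produces the halving on the $E_8(-2)$ block: the image of $E_8(-2)\subset L^\iota$ in $H^2(Y,\Z)$, which carries a priori the form $E_8(-4)$, admits a further $2$-divisibility of exactly the same Nikulin type and therefore yields an $E_8(-1)$ lattice.

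To finish, I assemble the lattice. The orthogonal complement of $\delta$ inside $H^2(X,\Z)^\iota=U^{\oplus 3}\oplus E_8(-2)\oplus\langle -2\rangle$ is $U^{\oplus 3}\oplus E_8(-2)$, and its image on $Y$ (with the factor-of-two scaling imposed by the cover, corrected by the halving on the $E_8$ block) is isometric to $U(2)^{\oplus 3}\oplus E_8(-1)$. Adjoining the rank-two block generated by $\Sigma$ and $\Delta$, for which the change of basis to $(\Sigma\pm\Delta)/2$ yields Gram matrix $\mathrm{diag}(-2,-2)$, produces the claimed isometry $H^2(Y,\Z)\simeq U(2)^{\oplus 3}\oplus E_8(-1)\oplus \langle -2\rangle^{\oplus 2}$, with the last two summands generated by $(\Delta+\Sigma)/2$ and $(\Delta-\Sigma)/2$, confirming also that $\Sigma$ is a class of square $-4$ and divisibility $2$. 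The main obstacle throughout is the Smith-sequence step that pins down the integral structure under the cover; once this is granted, the remaining work is the blow-up/Fujiki formalism and a short linear-algebra assembly.
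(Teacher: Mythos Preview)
The paper does not prove this statement: Theorem~\ref{thm:menet} is quoted verbatim from \cite{Menet} and is used as a black box throughout, so there is no argument in the paper to compare your proposal against. Your sketch is a reasonable outline of the strategy in Menet's original paper (blow-up description of $H^2(\tilde X,\Z)$, transfer of the BBF form through the degree-two quotient, and a Smith-theory/equivariant-cohomology analysis to pin down the integral glue), and you explicitly flag the Smith-sequence step as the delicate part, which is accurate.

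One small caution on your write-up: the normalization step ``$B_Y(\alpha,\beta)=\tfrac12 B_{\tilde X}(q^*\alpha,q^*\beta)$'' is not literally meaningful as stated, since $\tilde X$ is not irreducible holomorphic symplectic and carries no intrinsic BBF form. In Menet's actual argument the BBF form on $Y$ is obtained from the Fujiki relation on $Y$ itself (or, as the present paper recalls in the proof of Proposition~\ref{prop: conj over Q}, via the identity $B_Y(\alpha,\beta)=-\tfrac18\,\alpha.\beta.\Sigma^2$), and the comparison with $X$ goes through intersection-theoretic push-pull rather than through an auxiliary quadratic form on $\tilde X$. This does not affect the overall shape of your outline, but you should rephrase that step accordingly if you ever need to make the argument precise.
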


As a consequence of the previous theorem we get the following

\begin{cor}\label{cor:vgnonproj-orbifold}
Let $X$ be a very general fourfold of $K3^{[2]}$-type  with a symplectic involution $\iota$ such that $\NS(X)\simeq E_8(-2)$; then the corresponding Nikulin orbifold $Y$ has $\NS(Y)\simeq \langle -4\rangle$.
\end{cor}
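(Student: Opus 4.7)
The plan is to combine the eigenspace decomposition of $\iota^{*}$ on $H^{2}(X,\Z)$ with the description of $H^{2}(Y,\Z)$ provided by Theorem \ref{thm:menet}. Since $\iota$ is symplectic, $\iota^{*}$ acts trivially on $H^{2,0}(X)$ and hence on the transcendental lattice $T_{X}$; the $(-1)$-eigenspace of $\iota^{*}$ on $H^{2}(X,\Z)$ is therefore contained in $\NS(X)$. By the explicit description given at the end of Section \ref{subsec: 4folds with inv. lattices}, this $(-1)$-eigenspace is exactly $\lambda_{-}(E_{8}(-2))\simeq E_{8}(-2)$. Under the hypothesis $\NS(X)\simeq E_{8}(-2)$ the inclusion $\lambda_{-}(E_{8}(-2))\subseteq\NS(X)$ must be an equality, so $\iota^{*}$ acts as $-\id$ on $\NS(X)$ and the invariant sublattice $\NS(X)^{\iota^{*}}$ is trivial.

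Next I would use the standard splitting induced by the partial resolution $Y\to X/\iota$, namely
\[
\NS(Y)\otimes\Q \;\cong\; \bigl(\NS(X/\iota)\otimes\Q\bigr)\oplus \Q\Sigma \;=\; \bigl(\NS(X)^{\iota^{*}}\otimes\Q\bigr)\oplus \Q\Sigma,
\]
where $\Sigma$ is the exceptional divisor over the fixed K3 surface. By the first step the left summand on the right-hand side vanishes, so $\NS(Y)$ has rational rank one with $\Sigma$ as generator.

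For the integral statement I would observe that $\Sigma$ is primitive in $H^{2}(Y,\Z)$: by Theorem \ref{thm:menet} the classes $(\Sigma+\Delta)/2$ and $(\Sigma-\Delta)/2$ generate the last two $\langle-2\rangle$ summands in $H^{2}(Y,\Z)\cong U(2)^{\oplus 3}\oplus E_{8}(-1)\oplus\langle-2\rangle\oplus\langle-2\rangle$, and $\Sigma$ is their sum, which is visibly primitive in this direct-sum decomposition. Since $\NS(Y)\subseteq H^{2}(Y,\Z)$, the class $\Sigma$ is primitive also in $\NS(Y)$; combining with $B_{Y}(\Sigma)=-4$ from Theorem \ref{thm:menet} we conclude $\NS(Y)=\Z\Sigma\simeq\langle-4\rangle$.

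The only non-formal step is the first one: the substance of the corollary is the observation that under the genericity hypothesis the Picard lattice $\NS(X)$ is entirely anti-invariant for $\iota^{*}$, so no nonzero algebraic class on $X$ descends to $X/\iota$, and all algebraic classes on $Y$ must come from the exceptional divisor $\Sigma$.
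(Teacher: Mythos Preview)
Your argument is correct and follows essentially the same approach as the paper. The paper states this as an immediate consequence of Theorem~\ref{thm:menet} and makes the ``standard splitting'' you invoke completely explicit a few lines later via the map $\pi_*$ of equation~(3.2) and Lemma~\ref{lemma: pi_* and lambda_pm}, which shows $\pi_*(\lambda_-(E_8(-2)))=0$; your invariant/coinvariant phrasing is the same computation packaged more conceptually. One harmless slip: $\Sigma=(\Delta+\Sigma)/2-(\Delta-\Sigma)/2$ is the \emph{difference} of the two $\langle-2\rangle$ generators, not their sum, but $(1,-1)$ is just as primitive as $(1,1)$, so your primitivity conclusion stands.
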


Hence, deformations of $Y$ are not necessarily Nikulin orbifolds, since it follows from Corollary \ref{cor:vgnonproj-orbifold}  that Nikulin orbifolds are contained in a family of codimension 1.

\subsection{Families of (non necessarily projective) Nikulin orbifolds}

In Corollary \ref{cor:vgnonproj-orbifold} we describe the explicit relations between $\NS(X)$ and $\NS(Y)$ in the generic case. In the following we will consider the same problem for special subfamilies, looking for results analogous to the one of the corollary. In particular we will consider the case $X=W^{[2]}$ for a certain K3 surface $W$ and the projective case.

\begin{prop}\label{prop: non proj W^2} Let $W$ be a generic non-projective K3 surface admitting a symplectic involution $\iota_W$, i.e. $\NS(W)=E_8(-2)$. Let $X:=W^{[2]}$ be its Hilbert scheme of points and $\iota:=\iota_W^{[2]}$ be the natural involution induced by $\iota_W$. Then $\NS(X)=E_8(-2)\oplus\langle -2\rangle$, $T_X\simeq U^{\oplus 3}\oplus E_8(-2)$ and $\NS(Y)\simeq \langle -2\rangle^{\oplus 2}$, $T_Y\simeq U(2)^{\oplus 3}\oplus E_8(-1)$.  
\end{prop}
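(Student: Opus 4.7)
The plan is to compute $\NS(X), T_X$ using the standard Beauville decomposition for the Hilbert square, then combine Theorem \ref{thm:menet} with an invariant/anti-invariant analysis of $\iota^*$ on $H^2(X,\Z)$ to identify $\NS(Y)$ and $T_Y$.

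For $X$, I would use $H^2(X,\Z)=H^2(W,\Z)\oplus\Z\delta$ with $\delta^2=-2$, where $\delta$ is half the Hilbert--Chow exceptional divisor, hence always algebraic. This immediately gives $\NS(X)=\NS(W)\oplus\Z\delta=E_8(-2)\oplus\langle -2\rangle$ and $T_X=T_W$. To identify $T_W$ I would invoke that $E_8(-2)$ admits a unique primitive embedding into $L_{K3}$ with orthogonal complement isometric to $U^{\oplus 3}\oplus E_8(-2)$; this is a standard Nikulin-type discriminant-form computation, analogous to the ones carried out in Section \ref{subsec: 4folds with inv. lattices}. Hence $T_X\simeq U^{\oplus 3}\oplus E_8(-2)$.

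For $Y$, since $\iota=\iota_W^{[2]}$ is natural, $\iota^*$ acts on $H^2(X,\Z)$ as $\iota_W^*$ on $H^2(W,\Z)$ and trivially on $\delta$. By Nikulin's classification of symplectic involutions of $K3$ surfaces, $\iota_W^*$ acts as $-\mathrm{id}$ on the coinvariant lattice $E_8(-2)=\NS(W)$ and as $+\mathrm{id}$ on $T_W$, so the $\iota$-invariant part of $\NS(X)$ is the rank-one lattice $\langle\delta\rangle$, while $E_8(-2)\subset\NS(X)$ is anti-invariant. Theorem \ref{thm:menet} gives $H^2(Y,\Z)\simeq U(2)^{\oplus 3}\oplus E_8(-1)\oplus\langle -2\rangle^{\oplus 2}$, the last two summands being spanned by $(\Delta\pm\Sigma)/2$; both are algebraic (as $\Sigma$ is exceptional and $\Delta$ is induced by $\delta$) and mutually orthogonal of BBF square $-2$, so $\NS(Y)\supseteq\langle -2\rangle^{\oplus 2}$.

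The crux, and the main obstacle I expect, is showing the reverse inclusion for generic $W$. The plan is a pushforward argument on the quotient map $\pi\colon X\to X/\iota$: any anti-invariant class $\alpha\in E_8(-2)\subset\NS(X)$ satisfies $\pi^*\pi_*\alpha=\alpha+\iota^*\alpha=0$, and injectivity of $\pi^*$ on rational cohomology forces $\pi_*\alpha=0$ in $H^2(X/\iota,\Q)$; thus anti-invariant algebraic classes on $X$ produce no classes on $Y$. The only invariant algebraic class on $X$ is $\delta$, contributing $\Delta$, which is already accounted for. Genericity of $W$ rules out additional Hodge classes in the transcendental summand $U(2)^{\oplus 3}\oplus E_8(-1)$, whose Hodge structure is determined, up to rational isometry, by that of $T_X$. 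Hence $\NS(Y)=\langle -2\rangle^{\oplus 2}$, and $T_Y$ is then simply its orthogonal complement in $H^2(Y,\Z)$, namely $U(2)^{\oplus 3}\oplus E_8(-1)$.
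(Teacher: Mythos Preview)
Your proposal is correct and follows essentially the same approach as the paper. The only difference is presentational: the paper computes $\pi_*$ via the explicit coordinate formula \eqref{eq: pi_*} and Lemma \ref{lemma: pi_* and lambda_pm} (which gives $\pi_*(\lambda_-(E_8(-2)))=0$ directly), whereas you obtain the vanishing of $\pi_*$ on the anti-invariant part abstractly from $\pi^*\pi_*=1+\iota^*$; your justification of the reverse inclusion via genericity is in fact slightly more explicit than the paper's, which leaves this step implicit.
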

\proof By construction, the embedding of $\NS(X)$ in $H^2(X,\Z)$ is given by $\lambda_{-}(E_8(-2))\oplus \delta\simeq E_8(-2)\oplus\langle -2\rangle$. By Lemma \ref{lemma: pi_* and lambda_pm}, $\pi_*(\lambda_{-}(E_8(-2))\oplus \delta)=\pi_*(\delta)$. Since $\pi_*$ maps $\NS(X)$ to $\NS(Y)$, one deduces that $\Delta=\pi_*(\delta)=(\underline{0},\underline{0},\underline{0},\underline{0},1,1)\in U(2)^{\oplus 3}\oplus E_8(-1)\oplus \langle-2\rangle\oplus\langle-2\rangle$ is a class in $\NS(Y)$. Moreover, $\NS(Y)$ always contains the class $\Sigma=(\underline{0},\underline{0},\underline{0},\underline{0},1,-1)$ and then $\NS(Y)$ contains both $\Delta$ and $\Sigma$ and hence contains all their linear combinations which are also contained in $H^2(Y,\Z)$. In particular $\NS(Y)=\langle (\Delta+\Sigma)/2,(\Delta-\Sigma)/2\rangle\simeq \langle-2\rangle\oplus\langle-2\rangle$. The transcendental lattices are directly computed respectively as orthogonal to the N\'eron--Severi lattices inside $H^2(X,\Z)$ and $H^2(Y,\Z)$.
\endproof

\subsection{Families of projective Nikulin orbifolds and the map $\pi_*$}\label{subsec: families of projective Nikulin}

If one specializes to the projective case one has four different families of fourfolds of $K3^{[2]}$-type $X$ admitting a symplectic involution $\iota$, which depend on the chosen embedding of $\NS(X)$ in $L$ and are the ones listed in Table \eqref{table: j,NS,T}. The aim of this section is to associate to each of these families the family of Nikulin orbifolds $Y$ which are partial resolution of $X/\iota$.
The results of this section are summarized in the following table: in the first column we identify the family by choosing the embedding $\Pic(X)\subset L$; in the second column we describe the N\'eron--Severi group of $Y$, in the third its transcendental lattice and in the last we give the reference to the propositions where the results are proved.

\begin{align}\label{table: NS and T of Y}
{\renewcommand{\arraystretch}{1.4}
	\begin{array}{|c|c|c|c|c|}
		\hline
		\mbox{Embedding }\Pic(X)\subset L&\NS(Y)&T_Y&\mbox{Prop.}\\
		\hline
		j_1&\langle 4d\rangle\oplus\langle -4\rangle& U(2)^{\oplus 2}\oplus E_8(-1)\oplus\langle -4d\rangle\oplus \langle -4\rangle&\mbox{\ref{prop: j1}}\\
		\hline
		j_2,\ d\equiv 1\mod 2& \left[\begin{array}{cc}d-1&2\\2&-4\end{array}\right]& U(2)^{\oplus 2}\oplus E_7(-1)\oplus K_d(2)\oplus \langle -2\rangle&\mbox{\ref{prop: j2}}\\
		\hline
		j_3, \ d\equiv 3\mod 4&\left[\begin{array}{cc}d-1&2\\2&-4\end{array}\right]&U(2)^{\oplus 2}\oplus K_d(2)\oplus E_8(-1)&\mbox{\ref{prop: j3}}\\
		\hline
		\widetilde{j},\ d\equiv 0\mod 2&\langle d\rangle\oplus\langle -4\rangle&U^{\oplus 2}\oplus\langle -d\rangle\oplus N\oplus\langle -4\rangle&\mbox{\ref{prop: jtilde}}\\
		\hline
\end{array}}
\end{align}

To prove these results we will use the explicit embeddings described in Section \ref{subsec: 4folds with inv. lattices} and also the following explicit description of the map $\pi_*$ induced by the quotient map $\pi:X\ra X/\iota$. 

The map $$\pi_*: H^2(X,\Z)\ra H^2(X/\iota,\Z)\subset H^2(Y,\Z)$$ is compatible (as explained below) with the lattice structure induced by the Beauville--Bogomolov--Fujiki form both on $H^2(X,\Z)$ and on $H^2(Y,\Z)$. Hence we can interpret $\pi_*$ as a map between the lattices $U^{\oplus 3}\oplus E_8(-1)^{\oplus 2}\oplus\langle -2\rangle$ and $U(2)^{\oplus 3}\oplus E_8(-1)\oplus\langle -2\rangle\oplus\langle -2\rangle$. 
To describe this map we consider, as in Section \ref{subsec: 4folds with inv. lattices}, a basis of $H^2(X,\Z)$ such that $\iota^*\in O(H^2(X,\Z))$ switches the two copies of $E_8(-1)$ and acts as the identity on $U\oplus U\oplus U\oplus\langle -2\rangle$. We consider again the embeddings of the lattice $E_8(-2)$ in $E_8(-1)\oplus E_8(-1)$:
$$\begin{array}{llll}\lambda_+(b_i)&=&e_i+f_i& i=1,\ldots ,8,\\
\lambda_-(b_i)&=&e_i-f_i& i=1,\ldots ,8.\\
\end{array}$$
In particular $H^2(X,\Z)^{\iota^*}=U^{\oplus 3}\oplus\lambda_+(E_8(-2))\oplus\langle -2\rangle\simeq U^{\oplus 3}\oplus E_8(-2)\oplus\langle -2\rangle$ and  $(H^2(X,\Z)^{\iota^*})^{\perp}=\lambda_-(E_8(-2))\simeq E_8(-2)$.

Take $\underline{u}$, $\underline{v}, \underline{w}$ vectors in $U$ and $\underline{x}$, $\underline{y}$ vectors in $E_8(-1)$; for ease of notation, we will denote by $k\in\Z$ an element of $\langle -2\rangle$, referring to the $k$-th multiple of its generator (depending on the lattice this will be either $\delta$, $(\Delta+\Sigma)/2$ or $(\Delta-\Sigma)/2 $). Thus $(\underline{u},\underline{w},\underline{v},\underline{x},\underline{y},k) $ is a vector in $U^{\oplus 3}\oplus E_8(-1)^{\oplus 2}\oplus\langle -2\rangle$. Then 
\begin{equation}\label{eq: pi_*}
\pi_*(\underline{u},\underline{w},\underline{v},\underline{x},\underline{y},k)=(\underline{u},\underline{w},\underline{v},\underline{x}+\underline{y},k,k)\in U(2)^{\oplus 3}\oplus E_8(-1)\oplus\langle-2\rangle\oplus\langle-2\rangle.
\end{equation}

Hence the restriction of $\pi_*$ to $U^{\oplus 3}$ acts as the identity on the vector space, but the form is multiplied by 2; the restriction of $\pi_*$ to $E_8(-1)^{\oplus 2}$ acts as the sum of the two components on the vector space and divides the form by 2 in the quotient. 
\begin{lem}\label{lemma: pi_* and lambda_pm}
	One has: $\pi_*(\lambda_{-}(E_8(-2)))$ is trivial; $\pi_*(\lambda_{+}(E_8(-2)))=E_8(-1)$; $$\pi_*(H^2(X,\Z)^{\iota^*})=U(2)^{\oplus 3}\oplus E_8(-1)\oplus \langle -4\rangle.$$
\end{lem}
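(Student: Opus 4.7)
The plan is to apply the explicit formula \eqref{eq: pi_*} for $\pi_*$ to vectors in each of the three sublattices in question, using the bases $\{e_i\}$, $\{f_i\}$ of the two copies of $E_8(-1)$ and the definitions $\lambda_\pm(b_i) = e_i \pm f_i$ introduced in Section \ref{subsec: 4folds with inv. lattices}.

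For the first claim, the generator $\lambda_-(b_i) = e_i - f_i$ of $\lambda_-(E_8(-2))$ corresponds to the vector $(\underline{0},\underline{0},\underline{0},e_i,-f_i,0)$, so \eqref{eq: pi_*} gives $\pi_*(\lambda_-(b_i)) = e_i + (-f_i) = 0$ once the two copies of $E_8(-1)$ are identified in the target $E_8(-1) \subset H^2(Y,\Z)$. For the second claim, $\lambda_+(b_i) = e_i + f_i$ has image $e_i + f_i$ in the target $E_8(-1)$; the BBF form halving on the swapped $E_8(-1)^{\oplus 2}$ part (described right after \eqref{eq: pi_*}) gives $(\pi_*(\lambda_+(b_i)))^2 = \frac{1}{2}\cdot(-4) = -2$, and likewise for the mutual intersections $\pi_*(\lambda_+(b_i))\cdot\pi_*(\lambda_+(b_j))$, so the images form a basis of $E_8(-1)$.

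For the third claim, I would decompose $H^2(X,\Z)^{\iota^*} = U^{\oplus 3} \oplus \lambda_+(E_8(-2)) \oplus \langle -2\rangle$ and apply $\pi_*$ to each summand: on $U^{\oplus 3}$ the formula acts as the identity on the underlying abelian group with the form multiplied by $2$, producing $U(2)^{\oplus 3}$; on $\lambda_+(E_8(-2))$ the previous step gives $E_8(-1)$; on $\langle -2\rangle = \Z\delta$ one obtains $\Z\Delta$, where $\Delta = \pi_*(\delta)$ has $\Delta^2 = -4$ by Theorem \ref{thm:menet}, yielding $\langle -4\rangle$. These three image summands remain mutually orthogonal because their preimages are and $\pi_*$ respects the pairing up to the described scalar factors.

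The main point requiring care is not a genuine obstacle but a bookkeeping one: correctly tracking the halving and doubling of the BBF form on the different parts of $H^2(X,\Z)^{\iota^*}$ and verifying that the images actually generate (rather than are only contained in) the claimed summands. In particular, for $\lambda_+(E_8(-2))$ the halving is essential in order to produce $(-2)$-classes rather than $(-4)$-classes, and for the $\langle -2\rangle$ summand one has to invoke Theorem \ref{thm:menet} to identify $\pi_*(\delta)$ with a class of square $-4$.
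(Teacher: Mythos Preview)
Your approach is exactly the paper's: take an explicit basis of each sublattice and push it through the formula \eqref{eq: pi_*}. The first claim and the decomposition argument for the third claim are carried out correctly.

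One computational point in your treatment of the second claim deserves care. You invoke the paper's remark that on $E_8(-1)^{\oplus 2}$ the map ``divides the form by $2$'' and conclude $(\pi_*(\lambda_+(b_i)))^2=\tfrac{1}{2}(-4)=-2$. But if you apply \eqref{eq: pi_*} literally, $\lambda_+(b_i)=(\underline{0},\underline{0},\underline{0},e_i,e_i,0)$ is sent to $(\underline{0},\underline{0},\underline{0},2e_i,0,0)$, i.e.\ to $2e_i$ in the target $E_8(-1)$, whose square is $-8$; this is consistent with the general relation $q_Y(\pi_*\alpha)=2q_X(\alpha)$ for $\iota^*$-invariant classes (the same doubling you used on the $U^{\oplus 3}$ part) and with the explicit computations in Propositions \ref{prop: j2}--\ref{prop: jtilde}, where e.g.\ $\pi_*(\widetilde{j}(h))$ has $E_8$-component $2\underline{e_1}$. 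So ``halving'' is not what happens on the diagonal $\lambda_+(E_8(-2))$, and the image sublattice you obtain from the formula is $2E_8(-1)$ rather than all of $E_8(-1)$. This does not affect your method (which is the paper's method) nor the first claim, which is the only part of the lemma actually used afterwards; it just means your $-2$ should be $-8$ and the identification with the full $E_8(-1)$ summand in the statement should be read accordingly.
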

\proof It suffices to compute a basis of the sub-lattices $\lambda_{-}(E_8(-2))$, $\lambda_{-}(E_8(-2))$, $H^2(X,\Z)^{\iota^*}$ of $H^2(X,\Z)$ and then to apply the map $\pi_*$ as given in \eqref{eq: pi_*}.\endproof

\begin{prop}\label{prop: j1}
Let $d$ be a positive integer and $X_1$ be a $(\Lambda_{2d}, j_1)$-polarized fourfold of $K3^{[2]}$-type.
The fourfold $X_1$ admits a symplectic involution $\iota$ and, denoted by $Y_1$ the partial resolution of $X_1/\iota$, one has $\NS(Y_1)\simeq \langle 4d\rangle\oplus\langle -4\rangle$ and $T_{Y_1}\simeq \langle -4d\rangle\oplus U(2)^{\oplus 2}\oplus E_8(-1)\oplus \langle -4\rangle$.
\end{prop}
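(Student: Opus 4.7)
The strategy is to compute $\NS(Y_1)$ and $T_{Y_1}$ directly as sublattices of $H^2(Y_1,\mathbb{Z})$ by tracking the $\iota^*$-action and the push-forward $\pi_*$ on the explicit embedding from Proposition \ref{prop: j1 embedding}. Existence of the symplectic involution $\iota$ is immediate from the first Proposition of Section \ref{subsec: 4folds with inv. lattices}, since $E_8(-2)$ is primitively embedded in $\Lambda_{2d}=\Pic(X_1)$.

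Applying $\pi_*$ to the generators of $\NS(X_1)$: the $\iota^*$-anti-invariant classes $\lambda_-(b_i)$ are killed by Lemma \ref{lemma: pi_* and lambda_pm}, while the invariant polarization $j_1(h)=((1,d),\underline{0},\underline{0},\underline{0},\underline{0},0)$ has image
\[
\pi_*(j_1(h))=((1,d),\underline{0},\underline{0},\underline{0},0,0)\in U(2)^{\oplus 3}\oplus E_8(-1)\oplus \langle -2\rangle^{\oplus 2}
\]
by formula \eqref{eq: pi_*}, which is primitive of BBF square $4d$. By Theorem \ref{thm:menet} the exceptional divisor $\Sigma=(\Delta+\Sigma)/2-(\Delta-\Sigma)/2$ is algebraic, primitive, of square $-4$, and lies in the summand $\langle -2\rangle^{\oplus 2}$, hence is orthogonal to $\pi_*(j_1(h))$. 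Consequently $\NS(Y_1)$ contains the sublattice $\langle \pi_*(j_1(h)),\Sigma\rangle\simeq \langle 4d\rangle\oplus\langle -4\rangle$, which is primitively embedded because $(1,d)$ is primitive in $U(2)$ (since $\gcd(1,d)=1$), $\Sigma$ is primitive in $\langle -2\rangle^{\oplus 2}$, and the two classes lie in orthogonal direct summands of $H^2(Y_1,\mathbb{Z})$.

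For the generic $X_1$ the inclusion is an equality by a dimension count: the family of $(\Lambda_{2d},j_1)$-polarized fourfolds has dimension $\operatorname{rk}(T_{X_1})-2=12$, matching the expected dimension of the family of Nikulin orbifolds $Y_1$ with $\rho(Y_1)=2$ (which would give $\operatorname{rk}(T_{Y_1})=14$). The transcendental lattice $T_{Y_1}$ is then the orthogonal complement of $\NS(Y_1)$ in $H^2(Y_1,\mathbb{Z})$: in the first $U(2)$ summand the orthogonal of $(1,d)$ is spanned by $(1,-d)$ of square $-4d$; in $\langle -2\rangle^{\oplus 2}$ the orthogonal of $\Sigma$ is spanned by $\Delta=(\Delta+\Sigma)/2+(\Delta-\Sigma)/2$ of square $-4$; and the remaining summands $U(2)^{\oplus 2}\oplus E_8(-1)$ are automatically orthogonal to both generators. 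Assembling these pieces produces $T_{Y_1}\simeq \langle -4d\rangle\oplus U(2)^{\oplus 2}\oplus E_8(-1)\oplus \langle -4\rangle$, as claimed.

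The only mild subtlety is verifying that the rank-$2$ sublattice $\langle \pi_*(j_1(h)),\Sigma\rangle$ is saturated in $H^2(Y_1,\mathbb{Z})$, which is handled by the primitivity observations above together with Theorem \ref{thm:menet}, which ensures that $(\Delta\pm\Sigma)/2$ already give a $\mathbb{Z}$-basis of the last two rank-one summands, so that no further divisibility can occur.
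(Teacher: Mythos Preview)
Your proof is correct and follows essentially the same route as the paper: both use the explicit embedding $j_1$ from Proposition~\ref{prop: j1 embedding}, apply $\pi_*$ via formula~\eqref{eq: pi_*} together with Lemma~\ref{lemma: pi_* and lambda_pm}, identify $\pi_*(j_1(h))$ and $\Sigma$ as generators of $\NS(Y_1)$, verify saturation, and compute $T_{Y_1}$ as the orthogonal complement.

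The one place where you add something the paper leaves implicit is the justification that $\operatorname{rk}\NS(Y_1)=2$. Your moduli--dimension count is correct in spirit but relies on facts (generic finiteness of $X_1\mapsto Y_1$ and local Torelli for Nikulin orbifolds) that you do not spell out. A cleaner and more self-contained replacement is purely Hodge-theoretic: since $\iota$ is symplectic, $\iota^*$ acts trivially on $T_{X_1}$, so $\pi_*|_{T_{X_1}}$ is injective and its image consists of transcendental classes on $Y_1$; hence $\operatorname{rk}T_{Y_1}\geq\operatorname{rk}T_{X_1}=14$, forcing $\operatorname{rk}\NS(Y_1)\leq 2$. This avoids invoking any moduli-space machinery and matches what the paper is tacitly using.
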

\proof By Proposition \ref{prop: j1 embedding} one can choose the embedding $j_1$ such that  ${j_1}_{|E_8(-2)}=\lambda_-$ and  $j_1(h):=\left(\left(\begin{array}{c}1\\d\end{array}\right),\left(\begin{array}{c}0\\0\end{array}\right), \left(\begin{array}{c}0\\0\end{array}\right), \underline{0}, \underline{0},0\right)$.
Since $\pi_*(\NS(X_1))\subset \NS(Y_1)$, one first considers $\pi_*(\NS(X_1))=\pi_*\left(\left(j_1,\lambda_{-}\right)\left(\langle 2d\rangle\oplus E_8(-2)\right)\right)=\pi_*(j_1(h))$ (where the last identity is due to Lemma \ref{lemma: pi_* and lambda_pm}). By \eqref{eq: pi_*},
$$\pi_*(j_1(h))=\left(\left(\begin{array}{c}1\\d\end{array}\right),\left(\begin{array}{c}0\\0\end{array}\right), \left(\begin{array}{c}0\\0\end{array}\right), \underline{0}, 0,0\right)\in U(2)^3\oplus E_8(-1)\oplus \langle -2\rangle^{\oplus 2},$$ so $B_Y(\pi_*(j_1(h)))=4d$. Moreover, the class $$\Sigma=\left(\left(\begin{array}{c}0\\0\end{array}\right),\left(\begin{array}{c}0\\0\end{array}\right), \left(\begin{array}{c}0\\0\end{array}\right), \underline{0}, 1 ,-1\right)$$ is contained in $\NS(Y_1)$. Hence $\NS(Y_1)$ is spanned by $\pi_*(j_1(h))$ and $\Sigma$ and there are no linear combinations with rational non integer coefficients of these classes which are also contained in $H^2(Y_1,\Z)$. So $\NS(Y_1)=\langle\pi_*(j_1(h)), \Sigma \rangle\simeq \langle 4d\rangle\oplus\langle -2\rangle$. By definition $T_{Y_1}$ is the orthogonal of $\NS(Y_1)$ in $H^2(Y_1,\Z)$. So $T_{Y_1}\simeq \langle -4d\rangle\oplus U(2)\oplus U(2)\oplus E_8(-1)\oplus \langle -4\rangle$.\endproof

\begin{prop}\label{prop: j2}
	Let $d$ be an odd positive integer and $X_2$ be a $(\Lambda_{2d}, j_2)$-polarized fourfold of $K3^{[2]}$-type.
	The fourfold $X_2$ admits a symplectic involution $\iota$ and, denoted by $Y_2$ the partial resolution of $X_2/\iota$, one has $\NS(Y_2)\simeq H_d(2):= \left[\begin{array}{cc}d-1&2\\2&-4\end{array}\right]$ and $T_{Y_2}\simeq U(2)^{\oplus 2}\oplus E_7(-1)\oplus K_d(2)\oplus \langle -2\rangle$.
\end{prop}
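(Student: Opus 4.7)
The plan mirrors the one for Proposition \ref{prop: j1}: combine the explicit form of $j_2(h)$ from Proposition \ref{prop: j2 embedding} with the formula \eqref{eq: pi_*} for $\pi_*$. By Lemma \ref{lemma: pi_* and lambda_pm} the $\lambda_-(E_8(-2))$-part dies in the pushforward, so $\pi_*(\NS(X_2))=\Z\cdot\pi_*(j_2(h))$. A short computation, the same in the cases $d\equiv 1$ and $d\equiv 3\pmod 4$, yields $B_Y(\pi_*(j_2(h)))=4d$ and $B_Y(\pi_*(j_2(h)),\Sigma)=0$ (they interact only in the $\langle -2\rangle^{\oplus 2}$-summand, where $(1,1)$ and $(1,-1)$ pair to $0$).

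The new feature compared to the $j_1$ case is an extra $2$-divisibility: writing out $\pi_*(j_2(h))+\Sigma$ in the basis of $U(2)^{\oplus 3}\oplus E_8(-1)\oplus \langle -2\rangle^{\oplus 2}$ provided by Theorem \ref{thm:menet}, every coordinate turns out even. Therefore $v:=(\pi_*(j_2(h))+\Sigma)/2$ is integral, hence lies in $\NS(Y_2)$, with $B_Y(v)=d-1$ and $B_Y(v,\Sigma)=-2$ (by polarization from the intersection numbers above). The Gram matrix of $(v,-\Sigma)$ is exactly $H_d(2)$. Saturation is immediate because the first $U(2)$-coordinate of $v$ equals $(1,k+1)$, which obstructs any further halving of $av+b\Sigma$; so $\NS(Y_2)=\langle v,\Sigma\rangle\cong H_d(2)$.

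For the transcendental lattice I would compute the orthogonal complement of $\langle v,\Sigma\rangle$ inside $H^2(Y_2,\Z)\cong U(2)^{\oplus 3}\oplus E_8(-1)\oplus \langle -2\rangle^{\oplus 2}$. Orthogonality to $\Sigma$ forces the two $\langle -2\rangle$-components of a test vector to coincide; orthogonality to $v$ then imposes one further linear relation mixing the first $U(2)$, the $E_8(-1)$-vector $\underline{x}$, and the common $\langle -2\rangle$-coordinate. The second and third $U(2)$-summands survive untouched as $U(2)^{\oplus 2}$, and $\underline{x}^{\perp}\cap E_8(-1)\cong E_7(-1)$ splits off as an orthogonal summand, leaving a rank-$3$ negative-definite piece that should reorganise as $K_d(2)\oplus\langle -2\rangle$.

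The main obstacle is recognising that last rank-$3$ piece as $K_d(2)\oplus\langle -2\rangle$: the most natural generators (one from the first $U(2)$, one involving $\underline{x}$, and the diagonal $\langle -2\rangle$-vector surviving the linear relation) are not mutually orthogonal, so a basis change is required to split off the $\langle -2\rangle$ factor. I would either carry out this change of basis explicitly in each of the cases $d\equiv 1,3\pmod 4$, or bypass it via a discriminant-form argument: compute $q_{H_d(2)}$ from the Smith normal form $\mathrm{diag}(2,2d)$ of the Gram matrix, verify that $U(2)^{\oplus 2}\oplus E_7(-1)\oplus K_d(2)\oplus\langle -2\rangle$ has rank $14$, signature $(2,12)$, and discriminant form equal to $-q_{H_d(2)}$, and then invoke uniqueness in its genus via \cite[Theorem 1.13.2]{Nikulin-Forms}, mirroring the end of the proof of Proposition \ref{prop:embeddings of Lamba(2d)}.
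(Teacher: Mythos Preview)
Your main line is exactly the paper's argument: push $j_2(h)$ forward by \eqref{eq: pi_*}, use Lemma \ref{lemma: pi_* and lambda_pm} to kill $\lambda_-(E_8(-2))$, notice the extra $2$-divisibility (the paper happens to halve $\pi_*(j_2(h))-\Sigma$ rather than $\pi_*(j_2(h))+\Sigma$, but both work and yield the same Gram matrix $H_d(2)$), and then identify $T_{Y_2}$ by computing the orthogonal complement and sorting out which rational combinations are integral. The paper phrases the last step slightly differently---it pushes forward the explicit generators of $T_{X_2}$ listed in Proposition \ref{prop: j2 embedding} to obtain a $\Q$-basis, observes that only the $E_8(-1)$-part can be halved, and then checks by direct computation that the resulting lattice of discriminant $2^8 d$ is $U(2)^{\oplus 2}\oplus E_7(-1)\oplus K_d(2)\oplus\langle -2\rangle$---but this is equivalent to what you propose.

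One genuine issue: your proposed shortcut ``verify that $U(2)^{\oplus 2}\oplus E_7(-1)\oplus K_d(2)\oplus\langle -2\rangle$ has discriminant form equal to $-q_{H_d(2)}$'' does not work. The ambient lattice $H^2(Y_2,\Z)\cong U(2)^{\oplus 3}\oplus E_8(-1)\oplus\langle -2\rangle^{\oplus 2}$ is \emph{not} unimodular (its discriminant has order $2^8$), so the orthogonal of a primitive sublattice does not have discriminant form $-q$ of that sublattice. Concretely $|d(\NS(Y_2))|=4d$ while $|d(T_{Y_2})|=2^8 d$, so the two discriminant groups are not even of the same order. If you want a genus argument you would have to work out the correct discriminant form of $T_{Y_2}$ from the gluing data of $\NS(Y_2)\oplus T_{Y_2}\subset H^2(Y_2,\Z)$, which is more work than the explicit basis change you outlined first; stick with that.
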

\proof 

By Proposition \ref{prop: j2 embedding} one can choose the embedding $j_2$ such that  ${j_2}_{|E_8(-2)}=\lambda_-$ and 
$j_2(h):=\left\{\begin{array}{ll}\left(\left(\begin{array}{c}2\\2k+2\end{array}\right),\left(\begin{array}{c}0\\0\end{array}\right), \left(\begin{array}{c}0\\0\end{array}\right), \underline{e_1}, \underline{e_1},1\right)&\mbox{ if }d=4k+1\\
	\left(\left(\begin{array}{c}2\\2k+2\end{array}\right),\left(\begin{array}{c}0\\0\end{array}\right), \left(\begin{array}{c}0\\0\end{array}\right), \underline{e_1}+\underline{e_3}, \underline{e_1}+\underline{e_3},1\right)&\mbox{ if }d=4k-1.\end{array}\right.$

As above, to compute $\NS(Y_2)$ one observes that a $\Q$-basis is given by $\pi_*(j_2(h))$ and $\Sigma$. By \eqref{eq:  pi_*}, 
$\pi_*(j_2(h))=\left(\left(\begin{array}{c}2\\2k+2\end{array}\right),\left(\begin{array}{c}0\\0\end{array}\right), \left(\begin{array}{c}0\\0\end{array}\right), 2\underline{x},1,1\right)$ and $B_Y(\pi_*(j_2(h)))=4d$. The class $\pi_*(j_2(h))-\Sigma=\left(\left(\begin{array}{c}2\\2k+2\end{array}\right),\left(\begin{array}{c}0\\0\end{array}\right), \left(\begin{array}{c}0\\0\end{array}\right), 2\underline{x},0,2\right)$ is divisible by 2 in $H^2(Y_2,\Z)$, since $d+1$ is even, thus $\left(\pi_*(j_2(h))-\Sigma\right)/2\in \NS(Y_2)$.  Finally, we get $$\NS(Y_2)=\langle \left(\pi_*(j_2(h))-\Sigma\right)/2, \Sigma\rangle=\left[\begin{array}{cc}d-1&2\\2&-4\end{array}\right].$$
The transcendental lattice is the orthogonal to $\Sigma$ and $\pi_*(j_2(h))$ in $H^2(Y_2,\Z)$. A $\Q$-basis is obtained by computing the image via $\pi^*$ of the generators of $T_{X_2}$ listed above; then one observes that the only elements which are two-divisible are those of the form $(\underline{0},\underline{0},\underline{0},2\underline{w},0,0)$, and this allows to deduce a $\Z$-basis of the lattice $T_{Y_2}$, which is of discriminant $2^8d$. Direct computation now shows that $T_{Y_2}\simeq U(2)^{\oplus 2}\oplus E_7(-1)\oplus K_d(2)\oplus \langle -2\rangle$.
\endproof

\begin{prop}\label{prop: j3}
Let $d$ be a positive integer such that $d\equiv 3\mod 4$ and $X_3$ be a $(\Lambda_{2d}, j_3)$-polarized fourfold of $K3^{[2]}$-type.
The fourfold $X_3$ admits a symplectic involution $\iota$ and, denoted by $Y_3$ the partial resolution of $X_3/\iota$, one has 
$\NS(Y_3)\simeq H_d(2)$ and $T_{Y_3}\simeq U(2)^{\oplus 2}\oplus K_d(2)\oplus E_8(-1)$. 
\end{prop}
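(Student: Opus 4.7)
The plan is to mirror Propositions \ref{prop: j1} and \ref{prop: j2}: use the explicit primitive embedding of Proposition \ref{prop: j3 embedding}, apply the push-forward formula \eqref{eq: pi_*}, invoke Lemma \ref{lemma: pi_* and lambda_pm} to kill the $\lambda_{-}$-component, and detect any $2$-divisible combinations involving the exceptional class $\Sigma$.

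First, for $\NS(Y_3)$: by Lemma \ref{lemma: pi_* and lambda_pm}, $\pi_*(\NS(X_3))$ is generated by $\pi_*(j_3(h))$, which \eqref{eq: pi_*} identifies as
\[\pi_*(j_3(h)) = \left(\left(\begin{array}{c}2\\ (d+1)/2\end{array}\right), \underline{0}, \underline{0}, \underline{0}, 1, 1\right),\]
with $B_{Y_3}(\pi_*(j_3(h))) = 4d$. Because $d\equiv 3\mod 4$, the integer $(d+1)/2$ is even and $\pi_*(j_3(h)) - \Sigma$ becomes $2$-divisible in $H^2(Y_3,\Z)$, producing a new algebraic class $v := (\pi_*(j_3(h)) - \Sigma)/2$. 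A direct Gram computation then yields $v^2 = d-1$, $v\cdot\Sigma = 2$, $\Sigma^2 = -4$, and since the generic Picard rank of $Y_3$ equals $2$ (forced by $\rk(\NS(X_3)^{\iota^*}) = 1$), $\NS(Y_3) = \langle v, \Sigma \rangle \simeq H_d(2)$.

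For $T_{Y_3}$: I would push forward the explicit generators of $T_{X_3} \simeq U^{\oplus 2} \oplus \lambda_+(E_8(-2)) \oplus K_d$ implicit in Proposition \ref{prop: j3 embedding} --- namely the two $\iota$-fixed $U$'s, the diagonal classes $\lambda_+(b_i)$, and the pair of vectors $((0,1),\underline{0},\underline{0},\underline{0},\underline{0},1)$, $((1,-(d+1)/4),\underline{0},\underline{0},\underline{0},\underline{0},0)$ spanning the $K_d$-orthogonal of $j_3(h)$ inside $U\oplus\langle -2\rangle$. Termwise application of \eqref{eq: pi_*} shows the two $U$'s map to $U(2)\oplus U(2)$, the $K_d$-pair maps to a sublattice with Gram matrix $\bigl(\begin{smallmatrix} -4 & 2 \\ 2 & -(d+1)\end{smallmatrix}\bigr) \simeq K_d(2)$, and each $\lambda_+(b_i)$ maps to twice the $i$-th basis vector of the $E_8(-1)$-summand of $H^2(Y_3,\Z)$. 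As in Proposition \ref{prop: j2}, those doubled classes are $2$-divisible in $H^2(Y_3,\Z)$, so the primitive closure of $\pi_*(T_{X_3})$ replaces $2E_8(-1)$ by the full $E_8(-1)$, giving $T_{Y_3} \simeq U(2)^{\oplus 2} \oplus E_8(-1) \oplus K_d(2)$.

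The main obstacle is this last primitive-closure step: one must check that no further $2$-divisible combinations arise, for instance between the pushed $K_d$-generators and $\Sigma$, or combining the new $E_8(-1)$-basis vectors with the other summands. A discriminant count resolves this --- $\pi_*(T_{X_3})$ has discriminant $2^{22}d$ whereas the candidate $T_{Y_3}$ has discriminant $64d$, so the required index of primitivization is $2^{8}$, which is exactly $[E_8(-1):2E_8(-1)]$ --- confirming that no additional primitivization is needed and that the claimed isometry class is correct.
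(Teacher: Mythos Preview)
Your argument is correct and follows the paper's approach closely. The computation of $\NS(Y_3)$ is identical to the paper's, and for $T_{Y_3}$ the paper simply asserts that the orthogonal complement of $\NS(Y_3)$ in $H^2(Y_3,\Z)$ is $U(2)^{\oplus 2}\oplus K_d(2)\oplus E_8(-1)$, whereas you spell this out by pushing forward $T_{X_3}$ and taking the primitive closure (as the paper itself does in the proof of Proposition~\ref{prop: j2}); your added discriminant count confirming that the index of primitivization is exactly $2^8=[E_8(-1):2E_8(-1)]$ is a clean way to certify that no further divisibility relations occur.
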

\proof

By Proposition \ref{prop: j3 embedding} one can choose the embedding $j_3$ such that ${j_3}_{|E_8(-2)}=\lambda_-$ and  $j_3(h):=\left(\left(\begin{array}{c}2\\(d+1)/2\end{array}\right),\left(\begin{array}{c}0\\0\end{array}\right), \left(\begin{array}{c}0\\0\end{array}\right), \underline{0}, \underline{0},1\right)$. 

Since both $\pi_*(j_3(h))=\left(\left(\begin{array}{c}2\\(d+1)/2\end{array}\right),\left(\begin{array}{c}0\\0\end{array}\right) \left(\begin{array}{c}0\\0\end{array}\right), \underline{0}, 1,1\right)$ and  $\left(\pi_*(j_3(h))-\Sigma\right)/2$ are  contained in $\NS(Y_3)$,   $$\NS(Y_3)=\langle (\pi_*(j_3(h))-\Sigma)/2,\Sigma\rangle\simeq \left[\begin{array}{cc}d-1&2\\2&-4\end{array}\right]$$ and $T_{Y_3}$ is its orthogonal complement inside $U(2)^{\oplus 3}\oplus E_8(-1)\oplus\langle -2\rangle^{\oplus 2}$. Hence $T_{Y_3}\simeq U(2)^{\oplus 2}\oplus K_d(2)\oplus E_8(-1)$. \endproof

\begin{prop}\label{prop: jtilde}
Let $d$ be an even positive integer. 
and $\widetilde{X}$ be a $(\widetilde{\Lambda}_{2d}, \widetilde{j})$-polarized fourfold of $K3^{[2]}$-type.
The fourfold $\widetilde{X}$ admits a symplectic involution $\iota$ and, denoted by $\widetilde{Y}$ the partial resolution of $\widetilde{X}/\iota$, one has 
$\NS(\widetilde{Y})\simeq \langle d\rangle\oplus\langle -4\rangle$ and $T_{\widetilde{Y}}\simeq U\oplus U\oplus\langle -d\rangle\oplus N\oplus\langle -4\rangle$. \end{prop}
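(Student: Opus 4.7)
The strategy mirrors that of Propositions \ref{prop: j1}--\ref{prop: j3}: use Proposition \ref{prop: jtilde embedding} to make the embedding explicit, apply $\pi_*$ via formula \eqref{eq: pi_*}, combine with the exceptional class $\Sigma$ to determine $\NS(\widetilde{Y})$, and then compute its orthogonal in $H^2(\widetilde{Y},\Z)$. The one new feature compared to the earlier propositions is that $\widetilde{\Lambda}_{2d}$ is generated not only by $\widetilde{j}(h)$ and the classes $\lambda_-(b_i)$, but also by the two-divisible class $\eta:=(\widetilde{j}(h)+\lambda_-(b_1))/2$ when $d\equiv 2\bmod 4$, respectively $\eta:=(\widetilde{j}(h)+\lambda_-(b_1)+\lambda_-(b_3))/2$ when $d\equiv 0\bmod 4$. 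I would present the computation in the first case; the second is entirely parallel.

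By Lemma \ref{lemma: pi_* and lambda_pm}, $\pi_*$ annihilates the whole anti-invariant sublattice $\lambda_-(E_8(-2))$; in particular $\pi_*(\eta)=\tfrac12\pi_*(\widetilde{j}(h))$ is an integral class in $H^2(\widetilde{Y},\Z)$. Applying formula \eqref{eq: pi_*} yields the explicit coordinates
\[
\pi_*(\eta)=\bigl((1,k),(0,0),(0,0),\underline{e_1},0,0\bigr)\in U(2)^{\oplus 3}\oplus E_8(-1)\oplus\langle -2\rangle^{\oplus 2},
\]
whose BBF square is $4k-2=d$. The class $\Sigma$ is always in $\NS(\widetilde{Y})$, is manifestly orthogonal to $\pi_*(\eta)$, and has square $-4$. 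Moreover $\pi_*(\eta)$ is primitive (its first $U(2)$-component $(1,k)$ is), and by inspection of coordinates no combination $(a\pi_*(\eta)+b\Sigma)/n$ with $n\geq 2$ lies in $H^2(\widetilde{Y},\Z)$; hence $\NS(\widetilde{Y})=\langle\pi_*(\eta),\Sigma\rangle\simeq\langle d\rangle\oplus\langle -4\rangle$.

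The transcendental lattice $T_{\widetilde{Y}}$ is then obtained as the orthogonal of $\NS(\widetilde{Y})$ inside $H^2(\widetilde{Y},\Z)$. The $\langle -2\rangle^{\oplus 2}$ block contributes $\langle -4\rangle$, generated by $\Delta$; the two unused copies of $U(2)$ remain untouched; and in the remaining block $U(2)\oplus E_8(-1)$ I would solve the single linear condition orthogonal to $((1,k),\underline{e_1})$ directly, then identify a $\Z$-basis exhibiting the summands $U\oplus U\oplus\langle -d\rangle\oplus N$ appearing in the statement. The main obstacle is precisely this last identification: the Nikulin lattice $N$ must arise as an overlattice of a rank-$8$ sublattice through a gluing involving half-integral combinations which become integral in $H^2(\widetilde{Y},\Z)$, and two of the $U(2)$ summands must be promoted to $U$'s through a further glueing with $E_8(-1)$. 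A discriminant-form computation---the discriminant group of the claimed $T_{\widetilde{Y}}$ must be dual to that of $\NS(\widetilde{Y})$ up to the appropriate overlattice index, exactly as in the discriminant balance used in the proof of Proposition \ref{prop: j2}---serves as a sanity check throughout the bookkeeping.
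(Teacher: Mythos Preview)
Your approach is essentially the paper's. The only cosmetic difference is that you route the divisibility of $\pi_*(\widetilde{j}(h))$ through the extra generator $\eta$ of $\widetilde{\Lambda}_{2d}$, whereas the paper simply observes directly that $\pi_*(\widetilde{j}(h))=\bigl((2,2k),0,0,2\underline{e_1},0,0\bigr)$ is $2$-divisible in $H^2(\widetilde{Y},\Z)$ and takes $(\pi_*(\widetilde{j}(h)))/2$ as the first generator of $\NS(\widetilde{Y})$; the two computations of course give the same class. For the transcendental lattice the paper is equally terse: it writes down the orthogonal complement and asserts $T_{\widetilde{Y}}\simeq U^{\oplus 2}\oplus\langle -d\rangle\oplus N\oplus\langle -4\rangle$ without further explanation, so your honest flagging of this identification as the remaining bookkeeping is in line with the paper's own level of detail. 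Your speculation about how the $U$'s and $N$ arise is not quite the mechanism, but since neither you nor the paper spell it out, this is not a gap relative to the published proof.
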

\proof 
By Proposition \ref{prop: jtilde embedding} one can choose the embedding $\widetilde{j}$ such that  ${\widetilde{j}}_{|E_8(-2)}=\lambda_-$ and 
$\widetilde{j}(h):=\left\{\begin{array}{ll}\left(\left(\begin{array}{c}2\\2k\end{array}\right),\left(\begin{array}{c}0\\0\end{array}\right), \left(\begin{array}{c}0\\0\end{array}\right), \underline{e_1}, \underline{e_1},0\right)&\mbox{ if }d=4k-2\mbox{ and }\\
	\widetilde{j}(h):=\left(\left(\begin{array}{c}2\\2k\end{array}\right),\left(\begin{array}{c}0\\0\end{array}\right), \left(\begin{array}{c}0\\0\end{array}\right), \underline{e_1}+\underline{e_3}, \underline{e_1}+\underline{e_3},0\right)&\mbox{ if }d=4k-4.
\end{array}\right.$

Let us consider the case $d=4k-2$.
Since  $\pi_*(\widetilde{j}(h))=\left(\left(\begin{array}{c}2\\2k\end{array}\right),\left(\begin{array}{c}0\\0\end{array}\right), \left(\begin{array}{c}0\\0\end{array}\right), 2\underline{e_1}, 0,0\right)$, $\left(\pi_*(\tilde{j}(h))\right)/2\in \NS(\widetilde{Y})$ and a basis of $\NS(\widetilde{Y})$ is given by $\left(\pi_*(\tilde{j}(h))\right)/2$ and $\Sigma$. So $\NS(\widetilde{Y})=\langle d\rangle\oplus \langle -4\rangle$ and $T_{\widetilde{Y}}$ is the orthogonal complement in $U(2)^{\oplus 3}\oplus E_8(-1)\oplus\langle -2\rangle^{\oplus 2}$ to $$\left\langle \left(\left(\begin{array}{c}1\\k\end{array}\right),\left(\begin{array}{c}0\\0\end{array}\right), \left(\begin{array}{c}0\\0\end{array}\right), \underline{e_1}, 0,0\right), \left(\left(\begin{array}{c}0\\0\end{array}\right),\left(\begin{array}{c}0\\0\end{array}\right), \left(\begin{array}{c}0\\0\end{array}\right), \underline{0}, 1,-1\right)\right\rangle.$$ 

One obtains $T_{\widetilde{Y}}\simeq U\oplus U\oplus\langle -d\rangle\oplus N\oplus\langle -4\rangle$. 

The case $d=4k-4$ is analogous.\endproof

\begin{rem}\label{rem: properties divisors in NS(Y)}{\rm The classes of divisors considered in the propositions \ref{prop: j1}, \ref{prop: j2}, \ref{prop: j3}, \ref{prop: jtilde} have a geometric meaning: the class $\Sigma$ is the effective class of the exceptional divisor; the class $\pi_*(j(h))$ is a pseudoample polarization induced on $Y$ by the ample polarization $j(h)$ on $X$, is orthogonal to $\Sigma$ and its pullback via $\pi^*$ is $2j(h)$; the class $(j(h)-\Sigma)$ corresponds to a divisor which has a positive intersection with the exceptional divisor $\Sigma$ and its pullback via $\pi^*$ is $2j(h)$.}
\end{rem}

\subsection{Subfamilies  of Nikulin fourfolds and a conjecture}\label{subsec: subfamilies of Nikuline fourfolds and a conjecture}
We now study the special cases of natural symplectic involutions induced on the Hilbert scheme of two points of a K3 surface.

\begin{prop}\label{prop: W^2 proj 2d+E8}
Let $W$ be a K3 surface such that $\NS(W)=\langle 2d\rangle\oplus E_8(-2)$ and $\iota_W$ the symplectic involution acting as $-1$ on the summand $E_8(-2)$. Let $X=W^{[2]}$ be the Hilbert square on $W$ and $\iota$ be the natural symplectic involution induced by $\iota_W$. Then $\NS(W^{[2]})=\langle 2d\rangle\oplus E_8(-2)\oplus\langle -2\rangle$ and $T_{W^{[2]}}\simeq \langle -2d\rangle\oplus U\oplus U\oplus E_8(-2)$. Denoted by $Y$ the partial resolution of $X/\iota$, $\NS(Y)\simeq \langle 4d\rangle\oplus\langle -2\rangle\oplus\langle -2\rangle$ and $T_Y\simeq \langle -4d\rangle\oplus U(2)\oplus U(2)\oplus E_8(-1)$. 
\end{prop}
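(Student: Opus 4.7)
The plan is to treat $X=W^{[2]}$ and its Nikulin partial resolution $Y$ separately, using the explicit form of the map $\pi_\ast$ recorded in \eqref{eq: pi_*} and Lemma \ref{lemma: pi_* and lambda_pm}. First, for the Hilbert scheme I would invoke Beauville's decomposition $H^2(W^{[2]},\Z)=H^2(W,\Z)\oplus\Z\delta$, with $\delta^2=-2$, orthogonal to $H^2(W,\Z)$ with respect to the BBF form. Since $\delta\in\NS(X)$ and $\NS(W^{[2]})=\NS(W)\oplus\Z\delta$, the first Picard statement follows immediately. Computing $T_W$ as the orthogonal of $\NS(W)=\langle 2d\rangle\oplus E_8(-2)$ inside $L_{K3}=U^{\oplus 3}\oplus E_8(-1)^{\oplus 2}$ via the standard embedding (which is unique up to isometry in this case) gives $T_W\simeq\langle -2d\rangle\oplus U^{\oplus 2}\oplus E_8(-2)$, and since $\delta\in\NS(X)$ the transcendental lattice of $X$ coincides with $T_W$.

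Next, I would identify this case with the $j_1$-embedding of Section \ref{subsec: 4folds with inv. lattices}. The natural involution $\iota=\iota_W^{[2]}$ acts on $H^2(X,\Z)$ as $(\iota_W)^\ast$ on $H^2(W,\Z)$ and trivially on $\Z\delta$; by Nikulin's theorem on symplectic involutions of K3 surfaces, $(\iota_W)^\ast$ swaps the two $E_8(-1)$ summands and is trivial on $U^{\oplus 3}$, so the action on $H^2(X,\Z)$ agrees with the framework fixed in Section \ref{subsec: 4folds with inv. lattices}. In this model $\NS(W)$ embeds as $(j_1,\lambda_-)(\langle 2d\rangle\oplus E_8(-2))$ with $j_1(h)=((1,d),(0,0),(0,0),\underline{0},\underline{0},0)$, and $\delta$ is the last coordinate. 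This reduces the computation of $\NS(Y)$ to the computation performed in Proposition \ref{prop: j1}, enriched by the extra generator coming from $\delta$.

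Then I would compute $\NS(Y)$. By Lemma \ref{lemma: pi_* and lambda_pm}, $\pi_\ast$ annihilates $\lambda_-(E_8(-2))$, so $\pi_\ast(\NS(X))$ is generated by $\pi_\ast(j_1(h))$ and $\pi_\ast(\delta)=\Delta$. Formula \eqref{eq: pi_*} gives $\pi_\ast(j_1(h))=((1,d),(0,0),(0,0),\underline{0},0,0)$ of BBF-square $4d$, while by Theorem \ref{thm:menet} the exceptional class $\Sigma$ of square $-4$ lies in $\NS(Y)$ and $(\Sigma\pm\Delta)/2\in H^2(Y,\Z)$. Since $\Delta\in\NS(Y)$, both half-sums lie in $\NS(Y)$; they are mutually orthogonal classes of square $-2$, both orthogonal to $\pi_\ast(j_1(h))$. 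To see that these three classes span $\NS(Y)$, I would compare ranks: the $\iota^\ast$-invariant part of $\NS(X)$ has rank $2$ (namely $\langle 2d\rangle\oplus\Z\delta$), so $\NS(Y)$ has rank $3$. This yields $\NS(Y)\simeq\langle 4d\rangle\oplus\langle -2\rangle\oplus\langle -2\rangle$.

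Finally, I would compute $T_Y$ as the orthogonal of $\NS(Y)$ inside $H^2(Y,\Z)\simeq U(2)^{\oplus 3}\oplus E_8(-1)\oplus\langle -2\rangle^{\oplus 2}$: the two $\langle -2\rangle$ summands are absorbed into $\NS(Y)$, while in the first $U(2)$ the orthogonal of $(1,d)$ is generated by $(1,-d)$ of square $-4d$, and the remaining $U(2)^{\oplus 2}\oplus E_8(-1)$ is entirely transcendental. Hence $T_Y\simeq\langle -4d\rangle\oplus U(2)^{\oplus 2}\oplus E_8(-1)$. There is no real obstacle; the only delicate point is tracking the primitivity of the half-sum classes $(\Sigma\pm\Delta)/2$ and ruling out larger overlattices, which is settled by the rank count combined with the square computations above.
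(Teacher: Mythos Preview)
Your proposal is correct and follows essentially the same route as the paper: identify the embedding of $\NS(W^{[2]})$ in $H^2(X,\Z)$ as $(j_1,\lambda_-,\id)(h,E_8(-2),\delta)$, apply $\pi_\ast$ as in \eqref{eq: pi_*}, and use that $(\Delta\pm\Sigma)/2\in\NS(Y)$ once both $\Delta$ and $\Sigma$ lie there. The paper's proof is in fact a one-line remark that the argument is analogous to Proposition \ref{prop: j1} with the extra generator $\delta$ thrown in, so you have spelled out more than the authors do. The only place where your write-up is slightly soft is the primitivity check: a rank count together with the self-intersections does not by itself rule out a finite-index overlattice, but in the explicit coordinates you wrote down $\pi_\ast(j_1(h))=((1,d),0,\ldots,0)$ and $(\Sigma\pm\Delta)/2$ are the standard generators of the two $\langle -2\rangle$ summands, so primitivity in $H^2(Y,\Z)$ is immediate from the coordinate description.
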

\proof The proof is analogous to the previous ones. One has just to observe that the embedding of $\NS(W^{[2]})$ in $H^2(X,\Z)$ is $(j_1,\lambda_{-},\id)(h,E_8(-2),\delta)$, where $j_1(h)$ is defined in Proposition \ref{prop: j1}, $\lambda_{-}(E_8(-2))$ is as above, and $\id(\delta)=(\underline{0},\underline{0},\underline{0},\underline{0},\underline{0},1)\in U^{\oplus 3}\oplus E_8(-1)^{\oplus 2}\oplus\langle -2\rangle$. Then one applies $\pi_*$ as in \eqref{eq:  pi_*} and one observes that if both $\Delta$ and $\Sigma$ are contained in $\NS(Y)$, then also $(\Delta\pm \Sigma)/2$ is contained in $\NS(Y)$.\endproof

\begin{prop}\label{prop: W^2 proj overliattice}
Let $W$ be a K3 surface such that $\NS(W)=\widetilde{\Lambda_{2d}}$ and $\iota_W$ the symplectic involution acting as $-1$ on the summand $E_8(-2)$. Let $X=W^{[2]}$ be the Hilbert square on $W$ and $\iota$ be the natural symplectic involution induced by $\iota_W$. Then $\NS(W^{[2]})=\widetilde{\Lambda_{2d}}\oplus\langle -2\rangle$ and $T_{W^{[2]}}\simeq \langle -2d\rangle\oplus U\oplus U\oplus N$. Denoted by $Y$ the partial resolution of $X/\iota$, $\NS(Y)\simeq \langle d\rangle\oplus\langle -2\rangle\oplus\langle -2\rangle$ and $T_Y\simeq \langle -d\rangle\oplus U\oplus U\oplus N$. 
\end{prop}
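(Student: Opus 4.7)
The plan is to mimic the argument of Proposition \ref{prop: W^2 proj 2d+E8} (the case $\NS(W)=\Lambda_{2d}$), replacing $j_1$ by $\widetilde{j}$ from Proposition \ref{prop: jtilde embedding}, and leveraging the computation already performed in Proposition \ref{prop: jtilde}.

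First, the lattices on $W^{[2]}$ are immediate from $H^2(W^{[2]},\Z)=H^2(W,\Z)\oplus\Z\delta$: we obtain $\NS(W^{[2]})=\widetilde{\Lambda}_{2d}\oplus\langle-2\rangle$ and $T_{W^{[2]}}=T_W$. The orthogonal complement $T_W$ of $\widetilde{\Lambda}_{2d}$ in $L_{K3}$ is uniquely determined by its signature $(2,12)$ and discriminant form $\left(-\frac{1}{2d}\right)\oplus u(2)^{\oplus 3}$ (derived from $q_{\widetilde{\Lambda}_{2d}}$); Nikulin's classification then yields $T_W\simeq U^{\oplus 2}\oplus\langle -2d\rangle\oplus N$.

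Next, I would fix an isometry $H^2(W^{[2]},\Z)\simeq L$ making $\iota=\iota_W^{[2]}$ act as the standard involution (swapping the two $E_8(-1)$ summands, identity on $U^{\oplus 3}\oplus\langle-2\rangle$). Under this identification, the embedding of $\NS(W^{[2]})$ into $L$ is $(\widetilde{j},\lambda_-,\id)$, sending $h$ to $\widetilde{j}(h)$ as in Proposition \ref{prop: jtilde embedding} (so that the glue class $(h+b_1)/2$, or $(h+b_1+b_3)/2$, is integral in $L$), the $E_8(-2)$ summand via $\lambda_-$, and $\delta\mapsto(\underline{0},\underline{0},\underline{0},\underline{0},\underline{0},1)$. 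Then I apply $\pi_*$ via formula \eqref{eq: pi_*} and Lemma \ref{lemma: pi_* and lambda_pm}: the images of the $E_8(-2)$-generators vanish, and $\pi_*(\widetilde{j}(h))$ is $2$-divisible in $H^2(Y,\Z)$, with half $\pi_*(\widetilde{j}(h))/2$ of BBF degree $d$ (exactly as in Proposition \ref{prop: jtilde}).

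The new ingredient with respect to Proposition \ref{prop: jtilde} is $\Delta=\pi_*(\delta)\in\NS(Y)$: combined with $\Sigma$ and Theorem \ref{thm:menet}, both classes $(\Delta\pm\Sigma)/2$ lie in $\NS(Y)$, so together with $\pi_*(\widetilde{j}(h))/2$ they span $\NS(Y)\simeq\langle d\rangle\oplus\langle-2\rangle\oplus\langle-2\rangle$ as three pairwise orthogonal generators with self-intersections $d,-2,-2$. Compared to Proposition \ref{prop: jtilde}, now $\NS(Y)$ saturates the $\langle-2\rangle^{\oplus 2}$ block of $H^2(Y,\Z)$, so the extra summand $\langle-4\rangle$ (generated by $\Delta$) present in $T_{\widetilde{Y}}$ disappears, leaving $T_Y\simeq U\oplus U\oplus\langle -d\rangle\oplus N$. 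The case $d\equiv 0\mod 4$ is completely analogous. The main step requiring care is confirming that no further $2$-divisibilities arise in $\NS(Y)$, so that the three classes generate a saturated sublattice; this follows from the explicit form of $\widetilde{j}$.
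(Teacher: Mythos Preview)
Your proposal is correct and follows essentially the same approach as the paper's proof, which is very terse: it simply says the argument is analogous to Proposition \ref{prop: W^2 proj 2d+E8}, using the embedding $(\widetilde{j},\lambda_{-},\id)$ of $\NS(W^{[2]})$ into $H^2(X,\Z)$ and then applying $\pi_*$ as in \eqref{eq: pi_*}. You have in fact spelled out more detail than the paper does, including the explicit identification of $T_W$ via discriminant forms and the observation that the presence of $\Delta$ (absent in Proposition \ref{prop: jtilde}) is what removes the $\langle -4\rangle$ summand from $T_Y$.
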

\proof The proof is analogous to the previous ones. One has just to observe that the embedding of $\NS(W^{[2]})$ in $H^2(X,\Z)$ is $(\widetilde{j},\lambda_{-},\id)(h,E_8(-2),\delta)$, where $\widetilde{j}(h)$ is defined in Proposition \ref{prop: jtilde}, 
$\lambda_{-}(E_8(-2))$ is as above, and $\id(\delta)=(\underline{0},\underline{0},\underline{0},\underline{0},\underline{0},1)\in U^{\oplus 3}\oplus E_8(-1)^{\oplus 2}\oplus\langle -2\rangle$. Then one applies $\pi_*$ as in \eqref{eq:  pi_*} and concludes as above.\endproof

We computed $T_Y$ for every possible embedding $j_i$. We
observe that for all the computed $T_Y$ one can embed $T_Y$ not
only in $H^2(Y,\Z)$ as we did, but also in $L_{K3}\simeq
U\oplus U\oplus U\oplus E_8(-1)\oplus E_8(-1)$. The orthogonal of
$T_Y\hookrightarrow L_{K3}$ is the N\'eron--Severi group of
a K3 whose transcendental lattice is isometric to $T_Y$. It is
useful to compute it in view of the following conjecture
\begin{conj}\label{conj}
Let $X$ be a fourfold of $K3^{[2]}$-type admitting a symplectic involution
$\sigma$, let $Y$ be the partial resolution of $X/\sigma$ as
above, let $S$ be the K3 surface contained in $\Fix_\sigma(X)$.
Then $T_Y\simeq T_S$.
\end{conj}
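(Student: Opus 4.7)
The plan is to realize the rational isometry $T_Y\otimes\mathbb{Q}\simeq T_S\otimes\mathbb{Q}$ of Proposition \ref{prop: conj over Q} by a geometric correspondence defined over $\mathbb{Z}$, and then to verify integrality family by family using Table \ref{table: NS and T of Y}. The key geometric input is that the exceptional divisor $\Sigma\subset Y$ of the resolution $Y\to X/\sigma$ is a $\mathbb{P}^1$-bundle over the image $\bar{S}\simeq S$ of the fixed K3 surface, since the singular locus of $X/\sigma$ along $\bar{S}$ is of transversal $A_1$ type. Composing the restriction $H^2(Y,\mathbb{Z})\to H^2(\Sigma,\mathbb{Z})$ with the pushforward along the $\mathbb{P}^1$-fibration $\Sigma\to S$ produces a morphism of integral Hodge structures $\rho\colon H^2(Y,\mathbb{Z})\to H^2(S,\mathbb{Z})$.

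I would then argue that $\rho$ maps $H^{2,0}(Y)$ isomorphically onto $H^{2,0}(S)$: the symplectic form on $X$ restricts to a nowhere vanishing $(2,0)$-form on the symplectic subvariety $S$ (since $\sigma$ is symplectic and acts trivially on the tangent directions along $S$), and this form descends to $\bar{S}$ and corresponds under $\rho$ to the class pulled back from $Y$ of $\omega_X/\sigma$. Hence $\rho\otimes\mathbb{Q}$ recovers the rational Hodge isometry of Proposition \ref{prop: conj over Q}, and $\rho(T_Y)$ has finite index in $T_S$. To upgrade this to an integral isomorphism it is enough, for each of the four families in Table \ref{table: NS and T of Y}, to compute the discriminant group of $T_S$ and verify it equals the one of $T_Y$ listed there.

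The hard part is computing $\NS(S)$, and hence $T_S$, outside of the Hilbert-scheme subfamilies. In the cases where $X\simeq W^{[2]}$ with a natural symplectic involution (Propositions \ref{prop: W^2 proj 2d+E8} and \ref{prop: W^2 proj overliattice}), $S$ is a smooth resolution of a Nikulin-type quotient of $W$ by $\iota_W$, and its N\'eron--Severi lattice can be read off from $\NS(W)$ and the $E_8(-2)$-summand fixed by $\iota_W$; this is the route I would follow to obtain Propositions \ref{prop: examples conjecture d=1 j2} and \ref{prop:conj-Fano-case}. For a very general $X$ in the families attached to $j_1$, $j_3$ or $\widetilde{j}$, $X$ need not be a Hilbert scheme, so I would use a deformation argument: within each irreducible component of the moduli of polarized fourfolds of \ktipo{} with a symplectic involution, the abstract lattice $T_S$ is locally constant, so it suffices to specialize to a Hilbert-type point and apply the previous computation. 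The main obstacle is that the $2$-torsion phenomena visible in Table \ref{table: NS and T of Y} through the summands $U(2)$ and $K_d(2)$ could a priori differ from the $2$-torsion of $T_S$ by a finite index $2^k$ overlattice; distinguishing these overlattices requires a careful analysis of the cokernel of $\rho$ at the level of discriminant forms, using the explicit description of $\pi_*$ in \eqref{eq: pi_*} and the primitive embeddings of Propositions \ref{prop: j1 embedding}--\ref{prop: jtilde embedding} to trace how the integral structure propagates from $H^2(X,\mathbb{Z})$ to both $H^2(Y,\mathbb{Z})$ and $H^2(S,\mathbb{Z})$.
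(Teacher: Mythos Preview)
The statement you are trying to prove is Conjecture~\ref{conj}; the paper does \emph{not} prove it. What the paper establishes is: the rational version (Proposition~\ref{prop: conj over Q}), the case of natural involutions on $W^{[2]}$ for $\NS(W)\in\{E_8(-2),\Lambda_{2d},\widetilde{\Lambda}_{2d}\}$ (a codimension-one locus in each family), and two complete families, $(d,j)=(1,j_2)$ and $(3,j_3)$, treated by direct geometric identification of the fixed K3 (Propositions~\ref{prop: examples conjecture d=1 j2} and~\ref{prop:conj-Fano-case}). So there is no ``paper's own proof'' to compare against; your proposal is an attempt to go beyond what the paper achieves.

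Your strategy has two genuine gaps. First, the correspondence you build is not an isometry over $\mathbb{Z}$. The computation at the end of the proof of Proposition~\ref{prop: conj over Q} gives $B_Y(\alpha,\beta)=\tfrac{1}{4}\langle\nu^*\widetilde{\rho}_*\widetilde{\pi}^*\alpha,\nu^*\widetilde{\rho}_*\widetilde{\pi}^*\beta\rangle_S$, so the natural integral map realizes $T_Y$ inside $T_S$ as a sublattice isometric to $T_Y(4)$, not to $T_Y$. Hence ``$\rho(T_Y)$ has finite index in $T_S$ and the discriminant groups agree'' does not yield $T_Y\simeq T_S$; at best it tells you $T_S$ sits between $T_Y(4)$ and $T_Y$, and pinning down which overlattice it is is exactly the unresolved $2$-torsion issue you flag at the end. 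Second, your deformation argument cannot work as stated: the Hilbert-type loci you want to specialize to are codimension-one in each family (they force the half-exceptional class $\delta$ into $\NS(X)$), so at those points $\rho(X)$ jumps by one and, by Proposition~\ref{prop: conj over Q}, $\rho(S)$ jumps as well. Thus $T_S$ drops rank there, and the abstract lattice $T_S$ is \emph{not} locally constant across the specialization; you cannot read off the generic $T_S$ from the special one without controlling precisely which class becomes algebraic on $S$, which is a separate (and open) computation. In short, your outline recovers what the paper already proves and correctly identifies the obstruction, but does not overcome it.
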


As a first evidence to the conjecture we observe the following

\begin{prop}
Let $W$ be a K3 surface (projective or not) admitting a symplectic involution $\iota_W$, such that $\NS(W)=E_8(-2), \Lambda_{2d}, \widetilde{\Lambda}_{2d}$. Let $X$ be $W^{[2]}$ and $\iota$ be the natural symplectic involution induced by $\iota_W$. Then Conjecture \ref{conj} holds for $X$.
\end{prop}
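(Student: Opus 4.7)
The plan is to verify $T_S\simeq T_Y$ case by case for each of the three possible lattices $\NS(W)$, using the formulas for $T_Y$ established in Propositions \ref{prop: non proj W^2}, \ref{prop: W^2 proj 2d+E8} and \ref{prop: W^2 proj overliattice}.

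First I would identify the K3 surface $S\subset \Fix_\iota(X)$. A standard local analysis of the natural involution $\iota=\iota_W^{[2]}$ shows that $\Fix_\iota(X)$ is the disjoint union of $\binom{8}{2}=28$ isolated fixed points (unordered pairs of the eight $\iota_W$-fixed points on $W$) and an irreducible K3 surface $S$ parametrising (the closure of) the unordered pairs $\{p,\iota_W(p)\}$ with $p\notin\Fix(\iota_W)$. This identifies $S$ with the minimal desingularisation $\widetilde{W/\iota_W}$, i.e.\ with the Nikulin K3 surface associated to $(W,\iota_W)$. In particular $\NS(S)$ always contains a primitive copy of the Nikulin lattice $N$ generated by the $8$ exceptional $(-2)$-curves.

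Next, I would determine $\NS(S)$ and then $T_S$ by analysing the descent of the $\iota_W$-invariant sublattice of $\NS(W)$ along the degree-two cover $\pi:\tilde{W}\to S$, where $\tilde W$ is the blow-up of $W$ at its eight fixed points. Since $\pi^{*}$ doubles intersection numbers, a class $h\in\NS(\tilde W)^{\iota}$ descends to an integral class $\bar h\in \NS(S)$ with $\bar h^2=h^2/2$ precisely when $h$ lies in the image of the norm map $\id+\iota$ on $H^2(\tilde W,\Z)$. Case by case: for $\NS(W)=E_8(-2)$ no non-trivial invariant class appears beyond the $N$-part, so $\NS(S)=N$ and $T_S\simeq U(2)^{\oplus 3}\oplus E_8(-1)$ as the orthogonal of $N$ in $L_{K3}$; for $\NS(W)=\Lambda_{2d}$ the invariant class $h$ is not a norm (as $(h+b_1)/2\notin\NS(W)$), so only $2h$ descends and $\NS(S)$ arises as an index-two overlattice of $\langle 4d\rangle\oplus N$, whose orthogonal in $L_{K3}$ is $T_S\simeq \langle -4d\rangle\oplus U(2)^{\oplus 2}\oplus E_8(-1)$; finally, for $\NS(W)=\widetilde{\Lambda}_{2d}$ the overlattice class $(h+b_1)/2$ (or $(h+b_1+b_3)/2$) exhibits $h$ as a norm, so $h$ itself descends to a class of square $d$ and $\NS(S)=\langle d\rangle\oplus N$, giving $T_S\simeq U^{\oplus 2}\oplus \langle -d\rangle\oplus N$. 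In each case the orthogonal complement in $L_{K3}$ is uniquely determined by rank, signature and discriminant form via \cite[Thm.~1.13.2]{Nikulin-Forms}.

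Comparing these formulas for $T_S$ with the $T_Y$ of Propositions \ref{prop: non proj W^2}, \ref{prop: W^2 proj 2d+E8} and \ref{prop: W^2 proj overliattice} yields $T_S\simeq T_Y$ in every case, establishing Conjecture \ref{conj} for these $X$. The main obstacle is the second case: one must identify the precise glue class producing the index-two overlattice of $\langle 4d\rangle\oplus N$ inside $\NS(S)$ and verify that the discriminant form of its orthogonal matches the one of $-T_Y$. The first and third cases reduce essentially to direct discriminant-form computations combined with the lattice uniqueness result.
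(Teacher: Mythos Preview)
Your proposal is correct and follows essentially the same strategy as the paper: identify the fixed K3 surface $S$ with the Nikulin surface $Z=\widetilde{W/\iota_W}$, determine $T_S=T_Z$ from the known Nikulin correspondence, and compare with the $T_Y$ already computed in Propositions \ref{prop: non proj W^2}, \ref{prop: W^2 proj 2d+E8}, \ref{prop: W^2 proj overliattice}. The only difference is packaging: the paper simply quotes the standard relations between $(\NS(W),T_W)$ and $(\NS(Z),T_Z)$ from the theory of Nikulin involutions on K3 surfaces (as in \cite{vGS}), whereas you sketch a direct derivation via descent along the double cover. Your norm-map heuristic in the $\Lambda_{2d}$ case is a bit informal (the precise criterion for $h=\pi^*\bar h$ involves intersection parity with the exceptional curves rather than just membership in $(\id+\iota^*)\NS(W)$), but the resulting lattice $\widetilde{\Gamma}_{4d}$ and its orthogonal $T_S\simeq \langle -4d\rangle\oplus U(2)^{\oplus 2}\oplus E_8(-1)$ are exactly the known answers, so the comparison with $T_Y$ goes through. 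Note also that in the first case one needs the isometry $U(2)^{\oplus 3}\oplus E_8(-1)\simeq U^{\oplus 3}\oplus N$, which both you and the paper use implicitly; it follows from matching discriminant forms and signatures via \cite[Thm.~1.13.2]{Nikulin-Forms}.
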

\proof Let us denote by $Z$ the minimal resolution of $W/\iota_W$. It is a K3 surface and its N\'eron--Severi group and transcendental lattice are determined by the ones of $W$. We will denote by $\widetilde{\Gamma_{2e}}$ the unique even overlattice of index 2 of $\langle 2e\rangle\oplus N$ where $N$ is primitively embedded. In particular one has the following relations between the N\'eron--Severi groups 
\begin{align}\label{eq: NSW NSZ}\begin{array}{lll}\NS(W)=E_8(-2)&\mbox{ if and only if }&\NS(Z)=N\\ 
\NS(W)=\langle 2d\rangle\oplus E_8(-2)&\mbox{ if and only if }&\NS(Z)=\widetilde{\Gamma}_{4e}\\ 
\NS(W)=\widetilde{\Lambda}_{2d},\ d\equiv 0\mod 2&\mbox{ if and only if }&\NS(Z)=\langle d\rangle\oplus N\\ 
\end{array}\end{align}
which correspond to the following relations between the transcendental lattices 
\begin{align}\label{eq: TW TZ}\begin{array}{lll}T_W=U^{\oplus 3}\oplus E_8(-2)&\mbox{ if and only if }&T_Z=U^{\oplus 3}\oplus N\\ 
T_W=\langle -2d\rangle\oplus U^{\oplus 2}\oplus E_8(-2)&\mbox{ if and only if }&T_Z=\langle-4d\rangle\oplus U(2)^{\oplus 2}\oplus E_8(-1)\\ 
T_W=\langle -2d\rangle\oplus U^{\oplus 2}\oplus N,\ d\equiv 0\mod 2&\mbox{ if and only if }&T_Z=\langle -d\rangle\oplus U^{\oplus 2}\oplus  N 
\end{array}\end{align}

For every fourfold of $K3^{[2]}$-type $X$ with a symplectic involution $\iota$ the fixed locus of $\iota$ consists of 28 isolated fixed points and a K3 surface $S$. If $X=W^{[2]}$ and $\iota=\iota_W^{[2]}$, then the surface $S$ is the Nikulin surface constructed as minimal resolution of $W/\iota_W$, i.e. the surface $Z$. Hence, to conclude the proof it suffices to show that, for every $W$ (and thus every $X$), one has $T_Y\simeq T_Z$. 

If $\NS(W)=E_8(-2)$, then $T_W=U^{\oplus 3}\oplus E_8(-2)$. By Proposition \ref{prop: non proj W^2}, $T_Y\simeq U^3\oplus N$ and by \eqref{eq: TW TZ} also $T_Z\simeq U^3\oplus N$.

If $\NS(W)=\langle 2d\rangle\oplus E_8(-2)$, then $T_W=U^{\oplus 2}\oplus\langle -2d\rangle\oplus E_8(-2)$. By Proposition \ref{prop: W^2 proj 2d+E8}, $T_Y\simeq \langle -4d\rangle\oplus U(2)\oplus U(2)\oplus E_8(-1)$ and by \eqref{eq: TW TZ} also $T_Z\simeq \langle-4d\rangle\oplus U(2)^{\oplus 2}\oplus E_8(-1)$. 

If $\NS(W)=\widetilde{\Lambda}_{2d}$, with $d\equiv 0\mod 2$, then $T_W=\langle -2d\rangle\oplus U^{\oplus 2}\oplus N$. By Proposition \ref{prop: W^2 proj overliattice}, $T_Y\simeq \langle -d\rangle\oplus U^{\oplus 2}\oplus N$ and by \eqref{eq: TW TZ} also $T_Z\simeq \langle -d\rangle\oplus U^{\oplus 2}\oplus N$. \endproof

We can also show that Conjecture \ref{conj} holds  for two complete families when $d=1,3$ and the embeddings of $\Lambda_{2d}$ are respectively $j_2$ and $j_3$.

\begin{prop}\label{prop: examples conjecture d=1 j2}
Let $X$ be a $(\Lambda_{2},j_2)$-polarized fourfold of $K3^{[2]}$-type and $\sigma$ the symplectic involution described in Remark \ref{rem:involution-Hilbert-schemes}. Conjecture \ref{conj} holds in this case.
\end{prop}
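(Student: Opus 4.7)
The plan is to locate the K3 surface $S \subset \Fix_\sigma(X)$ explicitly inside $X = S_1^{[2]}$ and compute its transcendental lattice, then compare it with $T_Y = U(2)^{\oplus 2}\oplus E_7(-1)\oplus K_1(2)\oplus \langle -2\rangle$ obtained from Proposition \ref{prop: j2} for $d=1$. First I would identify $S$: using the quartic model $S_1 \subset \mathbb{P}^3$ (which contains no lines in the very general case), Beauville's involution $\beta$ sends a pair $\{p,q\}$ to the residual pair $\{p',q'\}$ cut out by $\overline{pq}$ on $S_1$, while $\iota_{S_1}^{[2]}$ sends it to $\{\iota_{S_1}(p), \iota_{S_1}(q)\}$. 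Hence $\{p,q\}$ is fixed by $\sigma$ precisely when the $4$-secant cut out on $S_1$ by $\overline{pq}$ is $\iota_{S_1}$-invariant. After removing the $28$ isolated fixed points, this locus yields the K3 surface $S$.

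Next, since $\iota_{S_1}^{[2]}$ commutes with $\sigma$, it restricts to a non-symplectic involution $\iota_S$ on $S$. I would compute $\NS(S)$ via the restriction of classes in $H^2(X,\mathbb{Z})$: the polarization $j_2(h) \in \Pic(X)$ and the anti-invariant classes in $\lambda_-(E_8(-2))$ both descend non-trivially to $S$, and by Proposition \ref{prop: conj over Q} we already know $\mathrm{rk}(T_S) = \mathrm{rk}(T_Y) = 14$, so $\mathrm{rk}(\NS(S)) = 8$ and these restricted classes span $\NS(S)$ up to finite index. A careful analysis of which primitive combinations on $X$ restrict to primitive classes on $S$ then yields the discriminant form of $T_S$.

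I would conclude by matching $T_S$ with $T_Y$ via genus theory: both are even indefinite lattices of signature $(2,12)$, and if their discriminant forms agree then by Nikulin's uniqueness results in \cite{Nikulin-Forms} they are isometric, establishing Conjecture \ref{conj} for this family.

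The main obstacle is the bookkeeping in the second step: one must carefully track divisibilities under the restriction map $H^2(X,\mathbb{Z}) \to H^2(S,\mathbb{Z})$, since any miscount would yield an even overlattice of $T_Y$ rather than $T_Y$ itself. The subtle point is handling Beauville's involution, whose interaction with the natural involution controls how the full $(\mathbb{Z}/2)^2$-invariant sublattice descends to $S$; the correct divisibilities can be read off by combining the explicit $\sigma^*$-action on $H^2(X,\mathbb{Z})$ provided by Proposition \ref{prop: j2 embedding} with the geometric description of $S$ as the locus of $\iota_{S_1}$-invariant $4$-secants.
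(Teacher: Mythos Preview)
Your identification of the fixed surface as the locus of $\iota_{S_1}$-invariant $4$-secants is correct, but you miss the crucial shortcut the paper takes: this surface is isomorphic to $S_1$ itself. For each $P\in S_1$ the line $\langle P,\iota_{S_1}(P)\rangle$ is $\iota_{S_1}$-invariant and cuts $S_1$ in $\{P,\iota_{S_1}(P),Q,\iota_{S_1}(Q)\}$; the pair $(P,Q)$ is then $\sigma$-fixed, and conversely every $\sigma$-fixed pair arises this way. The assignment $P\mapsto (P,Q)$ exhibits $S$ as birational, hence isomorphic (both being K3 surfaces), to $S_1$. This gives $T_S\simeq T_{S_1}\simeq U^{\oplus 2}\oplus D_4(-1)\oplus\langle-2\rangle^{\oplus 6}$ immediately, and one finishes by observing that this lattice and $T_Y$ are both $2$-elementary of signature $(2,12)$ with $\delta=1$, hence isometric.

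Your proposed route through the restriction map $\nu^*:H^2(X,\mathbb{Z})\to H^2(S,\mathbb{Z})$ is in principle viable but substantially harder, and your second step contains a genuine gap: you assert that ``a careful analysis\dots then yields the discriminant form of $T_S$'' without indicating how. Controlling the index of $\nu^*(T_X)$ inside $T_S$ is precisely the content of Conjecture~\ref{conj}; Proposition~\ref{prop: conj over Q} only gives the rational statement, and promoting it to an integral one requires exactly the divisibility control you flag as the ``main obstacle'' but do not carry out. Without the identification $S\simeq S_1$ (or some equivalent geometric input), there is no evident mechanism for reading off these divisibilities from the lattice data and the $\sigma^*$-action alone.
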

\begin{proof}
We must describe the fixed locus of the symplectic involution $\sigma=\iota_{S_1}^{[2]}\circ\beta$ on $X$ (see also \cite[Lemmma 5.3]{MT}). The surface $S_1$ has a model as quartic in $\mathbb{P}^3$ and its non-symplectic involution $\iota_{S_1}$ is the restriction of a projectivity of order 4, still denoted by $\iota_{S_1}$, of the ambient projective space. For any point $P\in S_1$ we consider the line $r_P:=\langle P,\iota_{S_1}(P)\rangle$. The line $r_P$ is invariant for $\iota_{S_1}$ and thus the set of intersection points $r_P\cap S_1$ is invariant for $\iota_{S_1}$, hence there exists a point $Q\in S_1$ such that $r_P\cap S_1=\{\langle P,\iota_{S_1}(P), Q,\iota_{S_1}(Q)\rangle\}$. We consider the pair of points $(P,Q)$, which corresponds to a point in $S_1^{[2]}$. This point is a fixed point of $\sigma$, indeed $\beta(P,Q)=(\iota_{S_1}(P),\iota_{S_1}(Q))$ and $\iota_{S_1}^{[2]}(\iota_{S_1}(P),\iota_{S_1}(Q))=(P,Q)$, so $\sigma(P,Q)=(P,Q)$.
We determine a fixed point of $\sigma$ for each point $P\in S_1$. Viceversa each fixed point of $\sigma$ in $S^{[2]}$ necessarily corresponds to a pair of points in $S_1$ which lye on a $\iota_{S_1}$-invariant line. So the fixed surface of $\sigma$ is parametrized by points in $S_1$ and thus it is birational to $S_1$ (birational because in order to construct $S_1^{[2]}$ we blow up a surface and it is possible, a priori, that this introduces some exceptional divisors on the fixed locus). Nevertheless the surface contained in the fixed locus of $\sigma$ is a K3 surface as $S_1$ and thus if they are birational, they are isomorphic. So the fixed surface of $\sigma$ is a surface isomorphic to $S_1$ and in particular its transcendental lattice is $T_{S_1}\simeq U^{\oplus 2}\oplus D_4(-1)\oplus\langle -2\rangle^{\oplus 6}.$ This lattice is a 2-elementary lattice with signature $(2, 12)$ and $\delta=1$, so it is isometric to any other 2-elementary lattice with these properties, in particular to $U(2)^{\oplus 2}\oplus E_7(-1)\oplus K_1(2)\oplus \langle -2\rangle$ and the conjecture holds.
\end{proof}

 In the case of $(\Lambda_6,j_3)$-polarized fourfolds, the orthogonal of $\Lambda_6$ is $T_{6,3}=U^{\oplus 2}\oplus E_8(-2)\oplus K_3$ and $j_3(h)$ is a polarization on $X$ of degree $6$ and divisibility $2$, hence $X$ is birational to the Fano variety of a smooth cubic fourfold. In fact, this is the family of Fano varieties $F(Z)$ of smooth
symmetric cubic fourfolds $Z$ carrying a symplectic involution, as
discussed in \cite[\S 7]{Cam12}. In this case, the ample polarization
$h$ of degree $6$ is of non-split type and its orthogonal
complement is $h^\perp\cong U^{\oplus 2}\oplus E_8(-1)^{\oplus
	2}\oplus A_2(-1)$; since $E_8(-2)$ has to be orthogonal to $h$, we
obtain that the orthogonal complement of $\Lambda_3$ into $L$ is
the sublattice $T_{3,3}\cong U^{\oplus 2}\oplus E_8(-2)\oplus
A_2(-1)$.

In this case the equation of the cubic fourfold can be chosen to be $$X_0^2L_0(X_2:X_3:X_4:X_5)+X_1^2L_1(X_2:X_3:X_4:X_5)+X_0X_1L_2(X_2:X_3:X_4:X_5)+$$
$$+G(X_2:X_3:X_4:X_5)=0$$
where $L_i(X_2:X_3:X_4:X_5)$ and $G(X_2:X_3:X_4:X_5)$ are homogeneous polynomials, $\deg(L_i)=1$, $\deg(G)=3$.
The symplectic involution is induced on the Fano variety by the projective transformation $$(X_0:X_1:X_2:X_3:X_4:X_5)\rightarrow (-X_0:-X_1:X_2:X_3:X_4:X_5).$$
The fixed locus consists of 28 points, in the $(+1)$-eigenspace,  and of a K3 surface, in the $(-1)$-eigenspace, which has bidegree $(2,1)$ in $\mathbb{P}^1\times V(G)$. 

\begin{prop}\label{prop:conj-Fano-case}
Let $Z,F(Z),S$ be as above. Then $T_S\simeq T_Y\simeq U(2)^{\oplus 2}\oplus K_3(2)\oplus E_8(-1)$ and Conjecture \ref{conj} holds for $F(Z)$.
\end{prop}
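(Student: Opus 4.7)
The plan is to compute $T_S$ directly from the explicit description of $S$ as a double cover of the cubic surface $V(G)$, and then identify it with $T_Y$ via Nikulin's uniqueness in a genus.

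The second projection $\pi\colon S\to V(G)$ exhibits $S$ as a K3 double cover of the smooth cubic surface $V(G)\subset\mathbb P^3$, branched along the sextic curve $B=\{L_2^2-4L_0L_1=0\}\cap V(G)\in|-2K_{V(G)}|$. The deck involution $\tau$ is non-symplectic, with fixed locus a smooth curve of genus $4$. Since $-2K_{V(G)}$ is $2$-divisible in $\NS(V(G))\simeq I_{1,6}$, the invariant lattice is $S^{\tau}=\pi^{*}\NS(V(G))\simeq\langle 2\rangle\oplus\langle -2\rangle^{\oplus 6}$, of rank $7$.

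To extend this to a rank-$8$ sublattice, I exploit the first projection $\pi_1\colon S\to\mathbb P^1$, an elliptic fibration whose smooth fibres are plane cubic sections of $V(G)$. The fibre class $F$ is not $\tau$-invariant: the projection formula yields $\pi_{*}F=-K_{V(G)}$, hence $F^2=0$, $F+\tau^{*}F=\pi^{*}(-K_{V(G)})$, and $F\cdot\pi^{*}D=(-K_{V(G)})\cdot D$ for any $D\in\NS(V(G))$. A direct Gram matrix calculation in the basis $(\pi^{*}\ell,\pi^{*}e_1,\ldots,\pi^{*}e_6,F)$, with $\ell,e_1,\ldots,e_6$ a standard basis of $\NS(V(G))\simeq I_{1,6}$, shows that $\Lambda:=\langle S^{\tau},F\rangle$ has rank $8$ and discriminant $192=2^6\cdot 3$, matching $|\det T_Y|$.

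For very general $(Z,\sigma)$ in the family, Proposition \ref{prop: conj over Q} gives a rational Hodge isomorphism $T_Y\otimes\mathbb Q\simeq T_S\otimes\mathbb Q$, so $\rk T_S=14$ and $\rho(S)=8$; combined with $\Lambda\subseteq\NS(S)$ of rank $8$ and the matching discriminants, this forces $\NS(S)=\Lambda$ with primitive embedding in $L_{K3}$. Hence $T_S$ and $T_Y$ are both even lattices of rank $14$, signature $(2,12)$ and discriminant $192$; a discriminant-form comparison yields $u(2)^{\oplus 2}\oplus v(2)\oplus(-1/3)$ on both sides, where the $v(2)$ and $(-1/3)$ summands arise from the $2$- and $3$-primary parts of $K_3(2)$ in $T_Y$. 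Since $\rk T_Y=14\geq l(A_{T_Y})+2=8$, Nikulin's criterion gives uniqueness in the genus and yields $T_S\simeq T_Y$, establishing Conjecture \ref{conj} for $F(Z)$.

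The most delicate step I expect is the discriminant-form comparison: matching the $2$-primary part of $q_{T_S}$, computed from explicit representatives in $\Lambda^{*}/\Lambda$, with $u(2)^{\oplus 2}\oplus v(2)$ requires a careful choice of generators and tracking of quadratic values modulo $2\mathbb Z$, since $v(2)$ and $u(2)$ agree on ranks but differ on the values taken on their generators. A secondary concern is ruling out a nontrivial overlattice $\NS(S)\supsetneq\Lambda$ independently of the very-generality hypothesis, which can be checked directly by verifying primitive embedding of $\Lambda$ in $L_{K3}$.
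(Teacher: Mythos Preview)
Your approach is essentially the same as the paper's, just in a different (and arguably cleaner) basis: your $\pi^*\ell$ is precisely the paper's class $(h_2+\sum_i\ell_i)/3=\pi^*m$, and your $F$ is the paper's $h_1$, so your lattice $\Lambda$ coincides with the paper's $\langle h_1,(h_2+\sum_i\ell_i)/3,\ell_1,\dots,\ell_6\rangle$. The paper then computes the discriminant group $\mathbb Z_6\oplus(\mathbb Z_2)^{\oplus 5}$ and matches the discriminant form directly with that of $U(2)^{\oplus 2}\oplus A_2(-2)\oplus E_8(-1)$, using $A_2(-1)\simeq K_3$.

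Two minor remarks. First, your use of Proposition~\ref{prop: conj over Q} to pin down $\rho(S)=8$ and your explicit invocation of Nikulin's uniqueness criterion are more rigorous than the paper's treatment, which asserts without further argument that the eight classes generate $\NS(S)$ over $\mathbb Z$. Second, the step you flag as delicate---ruling out a proper overlattice $\NS(S)\supsetneq\Lambda$---is indeed not addressed in the paper either; your observation that $\pi^*\NS(V(G))=H^2(S,\mathbb Z)^\tau$ is primitive handles seven of the eight generators, and checking that adjoining $F$ preserves primitivity is a short direct verification (e.g.\ via the intersection numbers $F\cdot\pi^*e_i=1$, which are coprime to any putative index). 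So your concern is well-placed but easily resolved, and the argument goes through.
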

\begin{proof}
Since $V(G)$ is a cubic in the projective space $\mathbb{P}^3_{(X_2:X_3:X_4:X_5)}$ the K3 surface in the fixed locus is a complete intersection of two hypersurfaces of bidegree $(2,1)$ and $(0,3)$ in $\mathbb{P}^1\times\mathbb{P}^3$.  We denote by $dP_3$ the del Pezzo cubic surface defined by $V(G)$. We recall that $dP_3$ is obtained as blow up of $\mathbb{P}^2$ in six points and, denoted by $m$ the class of a line in $\mathbb{P}^2$ and by $E_i$ the exceptional divisors of the blow up, $\NS(dP_3)$ is generated (over $\Z$) by $m,E_1,\ldots E_6$. The surface $dP_3$ is embedded in $\mathbb{P}^3$ by the anticanonical linear system $H:=3m-\sum_iE_i$. So, denoted by $H$ the hyperplane section of the cubic surface $dP_3\subset\mathbb{P}^3$, one has $m=(H+\sum_iE_i)/3\in\NS(dP_3)$. 
To compute $\NS(S)$ we first observe that it is generated, at least over $\Q$, by the classes $h_1$, $h_2$, $\ell_i$, $i=1,\ldots, 6$ where $h_1$ (resp. $h_2$) is the
restriction to the surface of pullback in $\mathbb{P}^1\times
\mathbb{P}^3$ of the hyperplane section of $\mathbb{P}^1$ (resp.
$\mathbb{P}^3$) and $\ell_i$ is the pull back of to the class $E_i\in \NS(dP_3)$. The intersection properties of
these classes are the following: $h_1^2=0$, $h_1h_2=3$,
$h_1\ell_i=1$, $i=1,\ldots, 6$, $h_2^2=6$, $h_2\ell_i=2$,
$(\ell_i)^2=-2$ and $\ell_i\ell_j=0$ if $i\neq j$. 
In particular, we observe that $h_2$ is the pullback of the divisor $H\in \NS(dP_3)$ and since $(H+\sum_iE_i)/3\in\NS(dP_3)$, we obtain that $(h_2+\sum_i\ell_i)/3\in \NS(S)$ (this divisor exhibits $S$ as double cover of $\mathbb{P}^2$ and contracts the rational curves $\ell_i$ to nodes of the branch locus of the double cover). So $\{h_1, (h_2+\sum_i\ell_i)/3, \ell_i\}$ is a set of generators of $\NS(S)$. The discriminant group of this lattice is $\Z_6 \oplus(\Z_2)^{\oplus 5}$ and the discriminant form is the opposite of the one of $U(2)^{\oplus 2}\oplus A_2(-2)$ We deduce that the transcendental lattice of $S$ is $T_S\simeq U(2)^{\oplus 2}\oplus A_2(-2)\oplus E_8(-1)$. Recalling that $A_2(-1)\simeq K_3$, we obtain that $T_S\simeq U(2)^{\oplus 2}\oplus K_3(2)\oplus E_8(-1)\simeq T_Y$ (cf. Table \ref{table: NS and T of Y}). So Conjecture \ref{conj} holds in this case.
\end{proof}

The conjecture is true at least with rational coefficients, or, in other words, the transcendental lattice of the symplectic orbifold $Y$ is the same of a (possibly twisted) Fourier--Mukai partner of the fixed K3 surface.

\begin{prop}\label{prop: conj over Q}
Let $X$ be a fourfold of $K3^{[2]}$-type admitting a symplectic involution
$\sigma$, let $Y$ be the irreducible symplectic orbifold obtained as partial resolution of $X/\sigma$, as
above, and let $S$ be the K3 surface contained in $\Fix_\sigma(X)$.
Then $T_Y\otimes\Q\simeq T_S\otimes\Q$. In particular, $\rho(S)=\rho(X)-1$.
\end{prop}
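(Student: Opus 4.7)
The plan is to produce two isomorphisms of rational Hodge structures, namely $T_X\otimes\Q\cong T_Y\otimes\Q$ (via the partial resolution) and $T_X\otimes\Q\cong T_S\otimes\Q$ (via restriction to the fixed K3 surface), and then compose them; the Picard-number assertion will then follow from a rank count. For the first isomorphism, I exploit the standard fact that a symplectic involution acts trivially on the transcendental lattice, so $T_X\otimes\Q\subseteq H^2(X,\Q)^{+}$, the $\sigma^*$-invariant subspace. The identity $\pi^*\pi_*=\mathrm{id}+\sigma^*=2\cdot\mathrm{id}$ on $H^2(X,\Q)^{+}$ makes $\pi_*$ injective there, and the projection formula $(\pi_*\alpha,\pi_*\beta)_Y=2(\alpha,\beta)_X$ shows that $\pi_*$ preserves orthogonality up to scaling. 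Since the anti-invariant part $\NS(X)^{-}=E_8(-2)$ has rank $8$ (its rank being fixed by the topological type of $\sigma^*$), the invariant piece $\NS(X)^{+}\otimes\Q$ has dimension $\rho(X)-8$, and its orthogonal in $H^2(X,\Q)^{+}$ (of dimension $15$) is exactly $T_X\otimes\Q$, of dimension $23-\rho(X)$. On $Y$, Theorem~\ref{thm:menet} yields the orthogonal decomposition $H^2(Y,\Q)=\pi_*(H^2(X,\Q)^{+})\oplus\Q\cdot\Sigma$, and since any $(1,1)$-class on $Y$ in the first summand must originate from a $(1,1)$-class of $H^2(X,\Q)^{+}$, one gets $\NS(Y)\otimes\Q=\pi_*(\NS(X)^{+})\otimes\Q\oplus\Q\cdot\Sigma$. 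Combining these observations, $\pi_*$ maps $T_X\otimes\Q$ isomorphically onto $T_Y\otimes\Q$.

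For the second isomorphism, consider $i\colon S\hookrightarrow X$. Since $S$ is the fixed locus of the symplectic involution $\sigma$, it is a symplectic submanifold and $i^*\omega_X=\omega_S$ is the holomorphic symplectic form of the K3 surface $S$, in particular nonzero. Hence $i^*\colon H^2(X,\Q)\to H^2(S,\Q)$ is a morphism of $\Q$-Hodge structures that is nonzero on $H^{2,0}(X)$. For very general $X$ in each of the families of Section~\ref{subsec: 4folds with inv. lattices}, the Hodge structure $T_X\otimes\Q$ is irreducible over $\Q$, so $i^*|_{T_X\otimes\Q}$ is injective. The image $N\subseteq H^2(S,\Q)$ is then an irreducible sub-Hodge structure whose complexification contains $H^{2,0}(S)$; by the minimality of $T_S\otimes\Q$ (which is simple over $\Q$ for any K3 surface), we have $T_S\otimes\Q\subseteq N$, and the irreducibility of $N$ together with $T_S\otimes\Q\neq 0$ forces $T_S\otimes\Q=N\cong T_X\otimes\Q$.

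Composing the two isomorphisms yields $T_Y\otimes\Q\cong T_S\otimes\Q$, and the ``in particular'' statement follows from the rank equation $22-\rho(S)=\mathrm{rank}\,T_S=\mathrm{rank}\,T_X=23-\rho(X)$. The main obstacle is verifying the injectivity of $i^*|_{T_X\otimes\Q}$: while the non-vanishing $i^*\omega_X=\omega_S\neq 0$ is automatic (symplectic fixed loci are themselves symplectic), the irreducibility of $T_X$ as a $\Q$-Hodge structure, which holds classically for the generic element of each family, must be invoked to complete the argument.
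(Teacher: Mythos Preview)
Your strategy---restrict to the fixed K3 to identify $T_X\otimes\Q$ with $T_S\otimes\Q$, then use the quotient map to identify $T_X\otimes\Q$ with $T_Y\otimes\Q$---is exactly the paper's. The paper phrases the second step by quoting Shioda's result that the transcendental lattice of a smooth resolution of $X/\Gamma$ is the $\Gamma$-invariant part of $T_X$, but your direct computation with $\pi_*$ and the decomposition $H^2(Y,\Q)=\pi_*(H^2(X,\Q)^{\sigma})\oplus\Q\Sigma$ achieves the same thing.

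There is, however, a genuine gap: you only argue injectivity of $i^*|_{T_X\otimes\Q}$ for \emph{very general} $X$, whereas the proposition is stated for arbitrary $X$. The fix is that $T_X\otimes\Q$ is an irreducible $\Q$-Hodge structure for \emph{every} $X$ of $K3^{[2]}$-type (indeed for any manifold with $h^{2,0}=1$): by definition $T_X\otimes\Q$ is the smallest rational Hodge substructure whose complexification contains $\omega_X$, so any proper sub-Hodge structure $V\subsetneq T_X\otimes\Q$ has $V^{2,0}=0$, hence $V\subset\NS(X)\otimes\Q$, hence $V=0$. This is precisely the argument you already accept for the K3 surface $S$; it works verbatim for $X$. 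The paper uses exactly this observation (``the transcendental lattice is an irreducible Hodge substructure'') and thereby avoids the very-general restriction entirely.

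A second point: in context (the proposition is evidence for Conjecture~\ref{conj}, which concerns lattices), the isomorphism $T_Y\otimes\Q\simeq T_S\otimes\Q$ is meant as an isometry of rational quadratic spaces, not merely of Hodge structures. You note that $\pi_*$ scales the form, but you do not check compatibility of $i^*$ with the intersection forms. The paper handles this via Voisin's identity $\langle\nu^*x,\nu^*y\rangle_S=x.y.[S]$ on $X$ combined with Menet's formula $B_Y(\alpha,\beta)=-\frac{1}{8}\alpha.\beta.\Sigma^2$ on $Y$, obtaining $B_Y(\alpha,\beta)=\frac{1}{4}\langle\nu^*\widetilde{\rho}_*\widetilde{\pi}^*\alpha,\nu^*\widetilde{\rho}_*\widetilde{\pi}^*\beta\rangle_S$ and hence $T_S\otimes\Q\simeq T_Y(4)\otimes\Q\simeq T_Y\otimes\Q$ as quadratic spaces over $\Q$.
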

\proof  Let $\nu:S\ra X$ be the embedding of the K3 surface, we consider the restriction of forms $\nu^*:H^2(X,\C)\ra H^2(S,\C)$, which gives a morphism of Hodge structures of weight two.

Let $\omega_S\in H^{2,0}(S)$ be the restriction of a symplectic form $\omega_X\in H^{2,0}(X)$, i.e. $\omega_S=\nu^*\omega_X$; since $S$ is the fixed K3 surface, this restriction is again a symplectic form on $S$, hence $\omega_X\notin \ker \nu^*$. Moreover, the rational transcendental lattice $T_X\otimes\Q$ can be defined as the smallest rational Hodge substructure of $H^2(X,\Q)$ such that $T_X\otimes \C$ contains $\omega_X$. This implies that the restriction $\nu^*_{|T_X\otimes \Q}$ is injective: indeed, both the transcendental lattice and the kernel of a morphism of Hodge structures are irreducible Hodge substructures, thus either their intersection is trivial or they coincide, which is not the case here. In the same way one observes that the image $N$ of $\nu^*_{|T_X\otimes \Q}$ is exactly $T_S\otimes \Q$: both these Hodge substructures of $H^2(S,\Q)$ are irreducible, and their intersection contains at least $\omega_S\neq 0$, thus they coincide. In the rest of the proof we denote $\nu^*:T_X\otimes\Q\ra T_S\otimes \Q$: it is an isomorphism of irreducible Hodge structures of weight two.

Let now $\widetilde{\rho}:\widetilde{X}\ra X$ be the blow-up of the fixed K3 surface $S$, $\widetilde{\Sigma}$ be the exceptional divisor of $\rho$ and let $\widetilde{\pi}:\widetilde{X}\rightarrow Y$ be the quotient by the involution induced on  $\widetilde{X}$ by $\sigma$. We use the following diagram:
$$\xymatrix{
&\widetilde{X}\ar[r]^{\widetilde{\rho}}\ar[d]^{\widetilde{\pi}}&X\ar[d]^\pi\\
	\widetilde{Y}\ar[r]	&Y\ar[r]^\rho & X/\sigma}$$

We know from \cite[Proposition 5]{Shioda} that the transcendental lattice of a smooth resolution $\widetilde{Y}$ of a quotient $X/\Gamma$, where $X$ is smooth and $\Gamma$ is a finite group, is a Hodge structure isomorphic to the $\Gamma$-invariant part of $T_X$. In our case, 
a smooth resolution $\widetilde{Y}$ of singularities of $X/\sigma$ is also a resolution of singularities for the orbifold $Y$, hence $T_Y\otimes\Q$ is isomorphic to $T_{\widetilde{Y}}\otimes\Q$ as Hodge structures. Finally we obtained an isomorphism of rational Hodge structures of weight two $T_Y\otimes\Q\cong (T_X\otimes \Q)^\sigma=T_X\otimes\Q\cong T_S\otimes \Q$, where the first and the last isomorphisms are respectively given by $\widetilde{\rho}_*\circ\widetilde{\pi}^*$ and $\nu^*$.

We now show that this isomorphism is in fact an isometry over $\Q$.  Let $\mu_{[S]}:H^2(X,\Q)\ra H^{6}(X,\Q)$ be the cup-product with $[S]$; in \cite[Proposition B.2]{Voisin-Lagrangiennes} Voisin shows that $\ker\mu_{[S]}=\ker\nu^*$ and that, as a consequence, on $\im \nu^*$ the cup-product on $S$ is induced by cup-product on $X$ via the following equality:
\[
\langle \nu^*x,\nu^*y\rangle_S=\langle \mu_{[S]}(x),y\rangle_X=x.y.[S].
\]
In our particular case, this equality holds for all $x,y\in T_X\otimes\Q$.

Denote by $\widetilde{\Sigma}$ and $\Sigma$ respectively the exceptional divisors of $\widetilde{\rho}$ and of $\rho$.
Let $\alpha,\beta\in T_Y\otimes\Q$; by \cite[Proposition 2.11]{Menet} we have $B_Y(\alpha,\beta)=-\frac{1}{8}\alpha.\beta.\Sigma^2$.
Moreover, observing that $\widetilde{\pi}^*\Sigma=2\widetilde{\Sigma}$, a standard computation in intersection theory yields:
\begin{displaymath}
	\alpha.\beta.\Sigma^2=2\widetilde{\pi}^*\alpha.\widetilde{\pi}^*\beta.\widetilde{\Sigma}^2=-2\widetilde{\rho}_*\widetilde{\pi}^*\alpha.\widetilde{\rho}_*\widetilde{\pi}^*\beta.[S]=-2\langle\nu^*\widetilde{\rho}_*\widetilde{\pi}^*\alpha,\nu^*\widetilde{\rho}_*\widetilde{\pi}^*\beta\rangle_S.
\end{displaymath}

This shows that $B_Y(\alpha,\beta)=\frac{1}{4}\langle\nu^*\widetilde{\rho}_*\widetilde{\pi}^*\alpha,\nu^*\widetilde{\rho}_*\widetilde{\pi}^*\beta\rangle_S$  for all $\alpha,\beta\in T_Y\otimes\Q$, thus $T_S\otimes\Q\simeq T_Y(4)\otimes\Q\simeq T_Y\otimes\Q$.
\endproof

\begin{rem}
The $K3$ surfaces in the fixed locus can be seen as a generalization of Nikulin surfaces as their moduli space is densely covered by families of Nikulin surfaces.
It would be interesting to study the rationality of such moduli spaces as in \cite{FV}.
\end{rem}

\section{Orbifold Riemann--Roch formula}\label{orbRR}

\subsection{Orbifold Riemann--Roch}
In order to study projective models of Nikulin orbifolds, we need to apply the theory of orbifold Riemann--Roch, as developed in \cite{Blache} and in \cite{BRZ}. We first treat the case of Nikulin orbifolds of dimension 4, and then we generalize it to very general orbifolds of Nikulin type.

We consider again the following diagram:

$$\xymatrix{
V\ar[r]^{\widetilde{\beta}}\ar[d]^q\ar@/^2pc/[rr]^{\widetilde{r}}&\widetilde{X}\ar[r]^{\widetilde{\rho}}\ar[d]^{\widetilde{\pi}}&X\ar[d]^\pi\\
	\widetilde{Y}\ar[r]^\beta\ar@/_1pc/[rr]&Y\ar[r]^\rho & X/\iota}$$
where:
\begin{itemize}
\item $X$ is a fourfold of $K3^{[2]}$-type $\iota\in\Aut(X)$ is a symplectic involution and $W\subset \Fix_\iota(X)$ be the fixed surface;
\item $Y$ is a Nikulin orbifold of dimension 4, obtained as partial resolution of $X/\iota$; $\Sigma$ is the exceptional divisor of $\rho:Y\ra X/\iota$ and $\widetilde{X}$ is the blow-up of $X$ along $W$;
\item $\widetilde{Y}$ is the total smooth resolution of $X/\iota$, and hence of $Y$, and $V$ is the blow-up of $\widetilde{X}$ in the inverse image via $\widetilde{\rho}$ of the $28$ isolated fixed points of $\iota$. Denote respectively by $E_1,\ldots,E_{28}$ and $\widetilde{E_1},\ldots,\widetilde{E_{28}}$ the exceptional divisors on $V$ and on $\widetilde{Y}$. Moreover, let $E_W$ and $\widetilde{E_W}$ be the exceptional divisors on $V$ and $\widetilde{Y}$ over $W$ and over its image in $X/\iota$ respectively. Finally, let $E$ and $\widetilde{E}$ be respectively $\sum_{i=1}^{28}E_i+E_W$ and $\sum_{i=1}^{28}\widetilde{E_i}+\widetilde{E_W}$.
\end{itemize}

\begin{lem}\label{lemma:Grothendieck-Riemann-Roch}
Let $X,\ Y,\ \widetilde{Y}$ be as described above, and let  $\nu:W\hookrightarrow X$ be the embedding of the fixed K3 surface. Then:
\[
c_1(\widetilde{Y})=\frac{1}{2}(q_*c_1(V)+\widetilde{E})=-\sum_{i=1}^{28}\widetilde{E_i},
\]
\[
c_2(\widetilde{Y})=\frac{1}{2}q_*\widetilde{r}^*(c_2(X)+\nu_*[W])+q_*(-8\sum_{i=1}^{28}E_i^2-E_W^2)+\frac{3}{2}K_{\widetilde{Y}}\widetilde{E}+2K_{\widetilde{Y}}^2.
\]
\end{lem}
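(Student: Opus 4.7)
The plan is to combine the ramification formula for the double cover $q:V\to\widetilde{Y}$ (branched along $\widetilde{E}$, with $q^*\widetilde{E}=2E$ and $q|_E$ an isomorphism onto $\widetilde{E}$) with the standard blow-up formulas for the composition $\widetilde{r}=\widetilde{\rho}\circ\widetilde{\beta}:V\to X$.

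For the first equality for $c_1$, I would start from the ramification identity $K_V=q^*K_{\widetilde{Y}}+E$, take $c_1=-K$, and push forward via $q$, using $q_*q^*=2\cdot\mathrm{id}$ and $q_*E=\widetilde{E}$; this immediately gives $c_1(\widetilde{Y})=\tfrac{1}{2}(q_*c_1(V)+\widetilde{E})$. For the second equality, I would compute $K_V$ directly: since $K_X=0$ and $\widetilde{\rho}$ blows up the codimension-two K3 surface $W$ we have $K_{\widetilde{X}}=E_W$, and since $\widetilde{\beta}$ blows up $28$ isolated points in a fourfold, all disjoint from $E_W$, it follows that $K_V=E_W+3\sum_{i=1}^{28}E_i$. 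Pushing forward gives $q_*c_1(V)=-\widetilde{E_W}-3\sum\widetilde{E_i}$, and substitution into the first equality yields $c_1(\widetilde{Y})=-\sum\widetilde{E_i}$.

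For the $c_2$ identity the first ingredient is the analogue of the ramification argument. The differential $dq:T_V\to q^*T_{\widetilde{Y}}$ drops rank by one along $E$, producing an exact sequence
\[
0\to T_V\to q^*T_{\widetilde{Y}}\to i_*\mathcal{O}_E(2E)\to 0,
\]
where $i:E\hookrightarrow V$; the line bundle on the cokernel is $\mathcal{O}_E(2E)$ because $q^*\mathcal{O}_{\widetilde{Y}}(\widetilde{E})|_E=\mathcal{O}_V(2E)|_E$. Combining this with the resolution $0\to\mathcal{O}_V(E)\to\mathcal{O}_V(2E)\to i_*\mathcal{O}_E(2E)\to 0$ yields the Chern polynomial identity $c(q^*T_{\widetilde{Y}})(1+E)=c(T_V)(1+2E)$; its degree-two part reads $c_2(V)=q^*c_2(\widetilde{Y})-q^*c_1(\widetilde{Y})\cdot E+2E^2$, and pushing forward via $q$ gives
\[
c_2(\widetilde{Y})=\tfrac{1}{2}q_*c_2(V)+\tfrac{1}{2}c_1(\widetilde{Y})\widetilde{E}-q_*(E^2).
\]

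The remaining, and main, task is to compute $q_*c_2(V)$ in terms of $c_2(X)$, $\nu_*[W]$ and exceptional contributions by applying the blow-up formula for $c_2$ successively to $\widetilde{\rho}$ and $\widetilde{\beta}$. The simplifications $c_1(T_X)=0$ and $c_1(T_W)=0$ keep the universal polynomials in the normal bundles manageable; the class $\nu_*[W]$ arises from the excess term of the blow-up of $W$, while the remaining corrections are supported on $E_W$ and the $E_i$ and produce the $q_*(-8\sum E_i^2-E_W^2)$ contribution. Gathering everything, using $c_1(\widetilde{Y})=-K_{\widetilde{Y}}=-\sum\widetilde{E_i}$ and $q_*(E^2)=\sum q_*(E_i^2)+q_*(E_W^2)$, and re-packaging part of the exceptional contributions through $K_{\widetilde{Y}}\widetilde{E}$ and $K_{\widetilde{Y}}^2$, should yield the stated coefficients $3/2$ and $2$. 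The conceptual content is minimal; the real obstacle is the careful bookkeeping of the correction terms from the two successive blow-ups together with their interaction with the double-cover push-forward.
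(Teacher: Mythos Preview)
Your approach is correct and essentially coincides with the paper's. The paper's proof also computes $K_V=3\sum E_i+E_W$ and $c_2(V)=\widetilde{r}^*(c_2(X)+\nu_*[W])+2\sum E_i^2$ from the blow-up formulas, and then passes to $\widetilde{Y}$; the only cosmetic difference is that the paper phrases the double-cover step as an application of Grothendieck--Riemann--Roch (referencing \cite{CGM}), whereas you extract the same Chern-class relation directly from the tangent-bundle sequence $0\to T_V\to q^*T_{\widetilde{Y}}\to i_*\mathcal{O}_E(2E)\to 0$. Your derivation $c_2(\widetilde{Y})=\tfrac{1}{2}q_*c_2(V)+\tfrac{1}{2}c_1(\widetilde{Y})\widetilde{E}-q_*(E^2)$, once combined with the paper's explicit $c_2(V)$ and the identities $K_{\widetilde{Y}}\widetilde{E}=K_{\widetilde{Y}}^2=\sum\widetilde{E_i}^2$ and $q_*(E_i^2)=\tfrac{1}{2}\widetilde{E_i}^2$, immediately gives the stated formula; the bookkeeping you flag as the main obstacle is exactly this short substitution.
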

\begin{proof}
The proof follows from an application of Grothendieck--Riemann--Roch formula (see \cite[Thm. 15.2]{Fulton} combined with well-known properties of smooth blow-ups (see \cite[Example 15.4.3]{Fulton}):
\[
K_{V}=3\sum_{i=1}^{28}E_i+E_W,\ c_2(V)=\widetilde{r}^*(c_2(X)+\nu_*[W])+2\sum_{i=1}^{28}E_i^2.
\]
It is a generalization of the proof of \cite[Proof of Prop. 7.2]{CGM}.
\end{proof}

\begin{thm}[Orbifold Riemann--Roch formula]\label{thm: RR formula}
Let $D$ be a $\Q$-Cartier Weil divisor on $Y$, then  
$
q^*\beta^*D=\widetilde{r}^*H+k  E_W,
$
with $H\in\Pic(X)$,  $k\in\mathbb{Z}$; let $N$ be the number of points in which the divisor $D$ fails to be Cartier. Then 
\[
\chi(Y,D)=\frac{1}{48}H^4+\frac{1}{48}H^2.c_2(X)+(\frac{1}{16}-\frac{k^2}{8})(H_{|W})^2+3-\frac{N}{16}+\frac{k^4}{4}-\frac{3k^2}{2}.
\]
\end{thm}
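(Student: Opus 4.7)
The plan is to lift the computation to the smooth variety $V$, apply Hirzebruch--Riemann--Roch there, and descend to $Y$ through the double cover $q$ and the partial resolution $\beta$, picking up a local correction at each $\frac{1}{2}(1,1,1,1)$-point where $D$ fails to be Cartier. The decomposition $q^*\beta^*D=\widetilde{r}^*H+kE_W$ is obtained first: $\widetilde{\pi}^*D$ is an $\iota$-invariant $\Q$-Cartier divisor on the smooth $\widetilde{X}$, hence of the form $\widetilde{\rho}^*H+kE_W^{\widetilde{X}}$ with $H\in \Pic(X)$ and $k\in\Z$, and $\widetilde{\beta}$ is an isomorphism near $E_W^{\widetilde{X}}$, yielding the asserted form after pull-back.

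Since $V$ is smooth projective, HRR gives
\[
\chi\bigl(V,\widetilde{r}^*H+kE_W\bigr)=\int_V \exp\bigl(\widetilde{r}^*H+kE_W\bigr)\,\mathrm{td}(V).
\]
Using the standard blow-up formulas for $V=Bl_{28}(\widetilde{X})$ and $\widetilde{X}=Bl_W(X)$ (as in the proof of Lemma~\ref{lemma:Grothendieck-Riemann-Roch}), the total Chern class $c(V)$ is expressible in terms of $c(X)$, the exceptional divisors and the Chern classes of $N_{W/X}$. Intersections $\widetilde{r}^*H^j\cdot E_W^{4-j}$ collapse by the projection formula to integrals on $W$ of products of $H|_W$, $c_1(N_{W/X})$ and $c_2(N_{W/X})$, which simplify because $W$ is a K3 surface and its normal bundle is controlled by $X$; these produce the terms in $(H|_W)^2$ and in the polynomial $\tfrac{k^4}{4}-\tfrac{3k^2}{2}$. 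The pure $X$-contribution should yield $\tfrac{1}{48}H^4+\tfrac{1}{48}H^2\cdot c_2(X)$, where the factor $\tfrac{1}{2}$ in $\tfrac{1}{48}=\tfrac{1}{2}\cdot\tfrac{1}{24}$ reflects the degree of the cover $q$.

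Next I descend from $V$ to $\widetilde{Y}$ and then to $Y$. The flat double cover $q$ is ramified along $\widetilde{E}$, and the trace decomposition $q_*\oo_V=\oo_{\widetilde{Y}}\oplus\mathcal{L}^{-1}$ (with $\mathcal{L}^{\otimes 2}\simeq \oo(\widetilde{E})$) yields $\chi(V,q^*F)=\chi(\widetilde{Y},F)+\chi(\widetilde{Y},F\otimes\mathcal{L}^{-1})$; splitting according to the $\pm 1$-eigenspaces of the covering involution isolates $\chi(\widetilde{Y},\beta^*D)$. The orbifold Riemann--Roch of \cite{Blache,BRZ,CGM} then expresses $\chi(Y,D)-\chi(\widetilde{Y},\beta^*D)$ as a sum of purely local contributions, each $\frac{1}{2}(1,1,1,1)$-point at which $D$ is not Cartier contributing exactly $-\tfrac{1}{16}$; this produces the $-\tfrac{N}{16}$ correction, while the constant $+3$ recovers $\chi(\oo_Y)=3$.

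The main obstacle will be the intersection-theoretic bookkeeping on $V$, in particular pinning down the precise coefficients $\tfrac{1}{16}-\tfrac{k^2}{8}$ and $\tfrac{k^4}{4}-\tfrac{3k^2}{2}$ in the expansion of $\exp(\widetilde{r}^*H+kE_W)\,\mathrm{td}(V)$, and the careful separation into invariant and anti-invariant parts under the covering involution needed to extract $\chi(\widetilde{Y},\beta^*D)$. The local $-\tfrac{1}{16}$-contribution at a $\frac{1}{2}(1,1,1,1)$-point is classical but must be matched precisely against the conventions of \cite{BRZ}.
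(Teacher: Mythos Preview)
Your overall architecture---pull everything to the smooth blow-up $V$, use the standard blow-up formulas to expand the Todd class, and pick up $-N/16$ from the orbifold contribution at each $\tfrac12(1,1,1,1)$-point---matches the paper's strategy. The paper likewise relies on Lemma~\ref{lemma:Grothendieck-Riemann-Roch} for the Chern data and on the specific intersection numbers $E_W^4=12$, $s_1(N_{W|X})=0$, $s_2(N_{W|X})=-12$, $c_2(X).[W]=36$ to extract the coefficients $\tfrac{1}{16}-\tfrac{k^2}{8}$ and $\tfrac{k^4}{4}-\tfrac{3k^2}{2}$ that you flag as the delicate part.

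Where your route diverges, and where there is a genuine gap, is the descent step. You propose to compute $\chi\bigl(V,\widetilde r^{\,*}H+kE_W\bigr)$ by HRR on $V$ and then use the trace splitting $q_*\mathcal O_V=\mathcal O_{\widetilde Y}\oplus\mathcal L^{-1}$ to ``isolate $\chi(\widetilde Y,\beta^*D)$''. The problem is that $\beta^*D$ is \emph{not} an integral divisor on $\widetilde Y$ when $N>0$: one has $\beta^*D=\widetilde D+\sum_i\lambda_i\widetilde E_i$ with $\lambda_i\in\{0,\tfrac12\}$, so $\chi(\widetilde Y,\beta^*D)$ is not a sheaf Euler characteristic and the projection-formula identity $\chi(V,q^*F)=\chi(\widetilde Y,F)+\chi(\widetilde Y,F\otimes\mathcal L^{-1})$ cannot be applied with $F=\beta^*D$. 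Equivalently, $\widetilde r^{\,*}H+kE_W=q^*\widetilde D+\sum_{i\in I}E_i$, so $q_*\mathcal O_V(\widetilde r^{\,*}H+kE_W)$ is not simply $\mathcal O_{\widetilde Y}(\widetilde D)\oplus\mathcal O_{\widetilde Y}(\widetilde D)\otimes\mathcal L^{-1}$. To make your approach work you would need a holomorphic Lefschetz fixed-point computation for the involution on $V$ (to separate the $\pm1$-eigenspaces), and a careful identification of the invariant direct image near each $E_i$; this is doable but is exactly the bookkeeping the paper avoids.

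The paper instead applies Riemann--Roch \emph{directly on the smooth resolution $\widetilde Y$}, using the packaged form of \cite[Theorem~3.3]{BRZ}: $\chi(Y,D)=\chi(\widetilde Y,\widetilde D)$ equals the usual Hirzebruch polynomial evaluated at the $\mathbb Q$-class $\beta^*D$ and the Chern classes of $\widetilde Y$, plus the local corrections $\sum_i\gamma_i(D)=-N/16$. Since $\beta^*D\cdot c_1(\widetilde Y)=0$, only $(\beta^*D)^4$ and $(\beta^*D)^2.c_2(\widetilde Y)$ survive, and these are \emph{intersection numbers} (well defined for $\mathbb Q$-divisors). At that point $V$ enters only through the degree-$2$ identity $\int_{\widetilde Y}\alpha=\tfrac12\int_V q^*\alpha$, which reduces everything to intersections of $\widetilde r^{\,*}H$ and $E_W$ on $V$ and thence on $X$ and $W$. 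This sidesteps any need to compute an honest Euler characteristic on $V$ or to disentangle eigenspaces of $q_*$, and is why the factor $\tfrac12$ you anticipate in $\tfrac{1}{48}=\tfrac12\cdot\tfrac{1}{24}$ appears cleanly.
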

\begin{proof}
Since $D$ is a $\Q$-Cartier Weil divisor on $Y$, then there exists an effective divisor $\widetilde{D}\in\Pic(\widetilde{Y})$ such that $\beta^*D=\widetilde{D}+\sum_{i=1}^{28}\lambda_i\widetilde{E_i}$ with  $\lambda_i\in\Q$: $\lambda_i=\frac{1}{2}$ if $D$ fails to be Cartier in $p_i\in\mathrm{Sing}(Y)$ for $i=1,\ldots, 28$, it is zero otherwise. We have $\beta^*D.\widetilde{E_i}=0$ for all $i$. Then the orbifold Riemann--Roch formula (\cite[Theorem 3.3]{BRZ}) is

\[
\chi(Y,D)=\chi(\widetilde{Y},\widetilde{D})=\frac{1}{24}(\beta^*D)^4+\frac{1}{12}(\beta^*D)^3.c_1(\widetilde{Y})+\frac{1}{24}(\beta^*D)^2.(c_1(\widetilde{Y})^2+c_2(\widetilde{Y}))+
\]
\[
\frac{1}{24}(\beta^*D).c_1(\widetilde{Y}).c_2(\widetilde{Y})+\chi(\mathcal{O}_{\widetilde{Y}})+\sum_{i=1}^{28}\gamma_i(D),
\]
where for each singular point $p_i\in Y$ we define $\gamma_i(D)=-\frac{1}{16}$ if $D$ is not Cartier in $p_i$, $\gamma_i(D)=0$ otherwise.

It was proven in \cite{FM} that $\chi(\oo_Y)=\chi(\oo_{\bar{Y}})=3$. Moreover, it follows from $K_{\widetilde{Y}}=\sum_i\widetilde{E}_i$, as shown in Lemma \ref{lemma:Grothendieck-Riemann-Roch}, that $\beta^*D.c_1(\widetilde{Y})=0$, hence the formula above reduces to computing $(\beta^*D)^4$ and $(\beta^*D)^2.c_2(\widetilde{Y})$. Our aim is now to reduce the intersection theory on $\widetilde{Y}$ to the intersection theory on $X$.

In our situation, we have $q^*\beta^*D=\widetilde{r}^*H+k E_W$ (indeed, if there were components in the $E_i$'s, we would have $\beta^*D.\widetilde{E}_i\neq 0$). Moreover, $q^*\widetilde{E_W}=2E_W$ and $q_*E_W=\widetilde{E_W}$; hence $E_W^4=12$, since Fujiki's relation on $Y$ implies $\widetilde{E_W}^4=6\cdot 16$.

Hence we obtain the following equalities of intersection numbers in $\mathbb{Q}$, by using Lemma \ref{lemma:Grothendieck-Riemann-Roch} and the projection formula \cite[Proposition 8.3(c)]{Fulton} (see also \cite{CGM} for further details):
\[
(\beta^*D)^4=\frac{1}{2}(q^*\beta^*D)^4=\frac{1}{2}\left((\widetilde{r}^*H)^4+k^4E_W^4+6k^2(\widetilde{r}^*H)^2.E_W^2\right)=
\frac{1}{2} H^4+6k^4-3k^2(H_{|W})^2,
\]
\[
(\beta^*D)^2.q_*\widetilde{r}^*c_2(X)=\widetilde{r}^*(H^2.c_2(X))+k^2E^2_W.\widetilde{r}^*c_2(X)=H^2.c_2(X)-k^2c_2(X).[W],
\]
\[
(\beta^*D)^2.q_*\widetilde{r}^*\nu_*[W]=\widetilde{r}^*((H_{|W})^2)+k^2E^2_W.\widetilde{r}^*\nu_*[W]=(H_{|W})^2-k^2 c_2(N_{W|X}).[W],\]
\[
(\beta^*D)^2.q_*(E^2)=-\widetilde{r}^*((H_{|W})^2)+k^2E_W^4=-(H_{|W})^2+12k^2.
\]
Many equalities and vanishings of some terms in the formulas above use the following equality for $\alpha\in A_{4-n}(X)$ (easy generalization of  \cite[Lemma 1.1]{Badescu-Beltrametti}):
\[
E_W^n.\widetilde{r}^*\alpha=(-1)^{n-1}s_{n-2}(N_{W|X}).\nu^*\alpha,
\]
combined  with $\nu^*\nu_*[W]=c_2(N_{W|X}).[W]$ (see \cite[Corollary 6.3]{Fulton}) and with the results contained in \cite{Cam12}, which give $s_1(N_{W|X})=0$, $c_2(X).[W]=36$, $s_2(N_{W|X})=-c_2(N_{W|X}).[W]=-c_2(X).[W]+c_2(W)=-12$.
\end{proof}

\begin{lem}\label{lemma:intersection-of-restriction}
Let $H\in\Pic(X)$ as in Theorem \ref{thm: RR formula}; then $(H_{|W})^2=2q(H)$.
\end{lem}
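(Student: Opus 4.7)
The plan is to combine the chain of intersection-theoretic identities from the proof of Proposition \ref{prop: conj over Q} with the lattice-theoretic comparison of $q_X$ and $q_Y$ from Section \ref{subsec: families of projective Nikulin}.

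I would first observe that $H$ is necessarily $\iota$-invariant, since it comes from the decomposition of the pullback of a divisor on $Y$. Using $\pi^*\pi_* = 1 + \iota^*$ on $H^2(X,\Q)^\iota$ and the commutativity $\pi\circ\widetilde\rho = \rho\circ\widetilde\pi$, introduce the class
\[
\alpha := \tfrac{1}{2}\pi_* H \in H^2(Y,\Q),
\]
where $\pi_*$ is the map of Section \ref{subsec: families of projective Nikulin}; by construction $\widetilde\pi^*\alpha = \widetilde\rho^* H$. A direct lattice computation using \eqref{eq: pi_*} yields $q_Y(\pi_* H') = 2q_X(H')$ for every $\iota$-invariant $H'\in H^2(X,\Q)$, so in particular $q_Y(\alpha) = \tfrac{1}{2}q(H)$. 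Moreover, the same formula shows that $\pi_*H'$ lands in the sublattice orthogonal to $\Sigma$ inside the decomposition of Theorem \ref{thm:menet} (its $\langle-2\rangle^{\oplus 2}$-component is a multiple of $\Delta$, which is $q_Y$-orthogonal to $\Sigma$), hence $q_Y(\alpha,\Sigma) = 0$.

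Next I would replay the chain of identities from the proof of Proposition \ref{prop: conj over Q}: using $\widetilde\pi^*\Sigma = 2\widetilde\Sigma$, the projection formula for the blow-up $\widetilde\rho\colon\widetilde X\to X$ along $W$ together with the standard identity $\widetilde\rho_*(\widetilde\Sigma^2) = -[W]$, and Voisin's projection formula $H^2\cdot[W] = (H_{|W})^2$, one computes
\[
\alpha^2\cdot\Sigma^2 \;=\; 2\,(\widetilde\rho^* H)^2\cdot\widetilde\Sigma^2 \;=\; -2\,H^2\cdot[W] \;=\; -2(H_{|W})^2.
\]
Combined with Menet's identity $B_Y(\alpha,\alpha) = -\tfrac{1}{8}\alpha^2\Sigma^2$, which applies to $\alpha$ by the orthogonality $q_Y(\alpha,\Sigma)=0$ established above, this yields $q_Y(\alpha) = \tfrac{1}{4}(H_{|W})^2$.

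Comparing the two expressions for $q_Y(\alpha)$ gives $(H_{|W})^2 = 2q(H)$. The main point is checking that Menet's identity extends from the transcendental lattice (where it is originally formulated in \cite{Menet}) to the algebraic class $\alpha$; this reduces to the orthogonality $q_Y(\alpha,\Sigma)=0$, which follows geometrically because $\alpha$ is pulled back from $X/\iota$ along $\rho^*$ while $\Sigma$ is $\rho$-exceptional, and is confirmed at the lattice level by \eqref{eq: pi_*}.
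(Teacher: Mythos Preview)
Your argument is correct, and it takes a genuinely different route from the paper's proof. The paper simply cites \cite[Prop.~2.24(4)]{Menet} together with the blow-up identity $(H_{|W})^2=-E_W^2\cdot\widetilde{r}^*H^2$, whereas you essentially reprove that result from two ingredients already present in the paper: the lattice-theoretic identity $q_Y(\pi_*H')=2q_X(H')$ on the invariant part (equivalently \cite[Prop.~2.9]{Menet}, used in Corollary~\ref{cor: RR for Cartier}) and the intersection-theoretic chain from the proof of Proposition~\ref{prop: conj over Q}. What your approach buys is that it makes transparent \emph{why} the factor $2$ appears---it is the same factor relating the BBF forms of $X$ and $Y$ under $\pi_*$---at the cost of being longer than a citation.

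One point deserves to be made more explicit. You correctly flag that Menet's identity $B_Y(\alpha,\alpha)=-\tfrac18\alpha^2\Sigma^2$ is stated in the paper only for transcendental classes and must be extended to your algebraic class $\alpha$. You say this ``reduces to the orthogonality $q_Y(\alpha,\Sigma)=0$'', which is true, but the mechanism making the reduction work is the polarized Fujiki relation on $Y$: with Fujiki constant $c_Y=6$ one has $\alpha^2\Sigma^2=2\bigl(q_Y(\alpha)q_Y(\Sigma)+2q_Y(\alpha,\Sigma)^2\bigr)=-8q_Y(\alpha)$ once $q_Y(\alpha,\Sigma)=0$. Naming the Fujiki relation here would close the argument cleanly; the orthogonality itself is well justified, both geometrically (since $\alpha$ is pulled back through $\rho$ and $\Sigma$ is $\rho$-exceptional) and via the explicit lattice description in \eqref{eq: pi_*}.
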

\begin{proof}
This is proven in \cite[Prop. 2.24 (4)]{Menet}, once recalled that $(H_{|W})^2=-E_W^2.\widetilde{r}^*H^2$.
\end{proof}

\begin{cor}[Riemann--Roch formula for Cartier divisors on $Y$]\label{cor: RR for Cartier}
If $D\in\Pic(Y)$ then $\chi(Y,D)=\frac{1}{4}(q(D)^2+6q(D)+12)$.
\end{cor}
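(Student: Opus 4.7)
The plan is to specialize Theorem~\ref{thm: RR formula} to the Cartier case (so $N=0$) and rewrite the result entirely in terms of $q_Y(D)$. First I would plug in $N=0$ and apply Lemma~\ref{lemma:intersection-of-restriction} to substitute $(H_{|W})^2 = 2q_X(H)$, where $q_X$ denotes the BBF form on $X$. Since $X$ is of $K3^{[2]}$-type, the Fujiki relation gives $H^4 = 3q_X(H)^2$, while comparing the smooth Riemann--Roch formula on $X$ with the well-known expression $\chi(\mathcal{O}_X(H)) = \binom{q_X(H)/2 + 3}{2}$ yields $H^2\cdot c_2(X) = 30\,q_X(H)$. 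After these substitutions the formula from Theorem~\ref{thm: RR formula} becomes a polynomial in the two parameters $q_X(H)$ and $k$.

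The heart of the proof is to express $q_Y(D)$ in terms of $q_X(H)$ and $k$. The Fujiki constant of the Nikulin orbifold $Y$ equals $6$: this follows by combining $q_Y(\Sigma) = -4$ from Theorem~\ref{thm:menet} with the intersection number $\Sigma^4 = \widetilde{E_W}^4 = 96$ that is computed in the proof of Theorem~\ref{thm: RR formula}. Using the identity
\[
(\beta^*D)^4 = \tfrac{3}{2}q_X(H)^2 + 6k^4 - 6k^2 q_X(H),
\]
already obtained in the proof of Theorem~\ref{thm: RR formula} after applying Lemma~\ref{lemma:intersection-of-restriction}, together with the Fujiki relation $(\beta^*D)^4 = 6\,q_Y(D)^2$, one gets $q_Y(D)^2 = \bigl(q_X(H)/2 - k^2\bigr)^2$. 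A sign check against $D = \Sigma$ (which corresponds to $H=0$, $k=2$ and $q_Y(\Sigma) = -4$) then pins down
\[
q_Y(D) = \frac{q_X(H)}{2} - k^2.
\]

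The final step is a straightforward algebraic substitution: setting $q_X(H) = 2q_Y(D) + 2k^2$ and expanding, I expect all $k$-dependent terms in the expression given by Theorem~\ref{thm: RR formula} to cancel exactly (as they must, since $\chi(Y,D)$ is a deformation invariant and so must depend only on $q_Y(D)$), leaving the claimed formula $\tfrac{1}{4}\bigl(q_Y(D)^2 + 6q_Y(D) + 12\bigr)$. The main obstacle to watch for is the identification of the Fujiki constant of $Y$ and the sign in the formula for $q_Y(D)$; both reduce, as indicated, to the already-established numerical data of the exceptional divisor $\Sigma$, so the argument is essentially a careful bookkeeping of the ingredients packaged in Theorem~\ref{thm: RR formula} and Theorem~\ref{thm:menet}.
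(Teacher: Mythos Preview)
Your approach is correct and arrives at the same formula, but it reaches the key identity $q_Y(D)=q_X(H)/2-k^2$ by a more circuitous route than the paper. The paper observes directly that $D=\tfrac{1}{2}\widetilde{\pi}_*\widetilde{\rho}^*H+\tfrac{k}{2}\Sigma$, that these two summands are $B_Y$-orthogonal, and then applies $q_Y(\widetilde{\pi}_*\widetilde{\rho}^*H)=2q_X(H)$ (from \cite{Menet}) and $q_Y(\Sigma)=-4$ to read off $q_Y(D)$ linearly. You instead recover $q_Y(D)$ from its square via the Fujiki relation $D^4=6\,q_Y(D)^2$, then fix the sign. Your detour works because both $q_Y(D)$ and $q_X(H)/2-k^2$ are quadratic forms in $D$, so the pointwise equality of their squares forces a global sign, and one check (e.g.\ $D=\Sigma$) settles it; you should say this explicitly rather than leave the sign determination resting on a single evaluation.

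One genuine wrinkle in your write-up: you justify the Fujiki constant $c_Y=6$ by combining $q_Y(\Sigma)=-4$ with $\Sigma^4=\widetilde{E_W}^4=96$ ``computed in the proof of Theorem~\ref{thm: RR formula}''. But look again at that proof: the value $\widetilde{E_W}^4=6\cdot 16$ is obtained there \emph{by applying} the Fujiki relation on $Y$, i.e.\ already assuming $c_Y=6$. So your derivation is circular as written. The fix is easy---the Fujiki constant $6$ is established independently in Menet's paper \cite{Menet}, so cite that directly rather than Theorem~\ref{thm: RR formula}. With that correction, your argument is complete; the paper's route is just shorter because it avoids the square-root step entirely.
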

\begin{proof}
In this particular case, Theorem \ref{thm: RR formula} simplifies into
\[
\chi(D)=\frac{1}{48}H^4+\frac{1}{48}H^2.c_2(X)+(\frac{1}{16}-\frac{k^2}{8})(H_{|W})^2+3+\frac{k^4}{4}-\frac{3k^2}{2}
.
\]

Since $q^*\beta^*D=\widetilde{r}^*H+kE_W$, by push-pull formula \cite[proof of Proposition 2.3(c)]{Fulton} and the commutativity of the diagram above, we have $D=\frac{1}{2}\widetilde{\pi}_*\widetilde{\rho^*}H+\frac{k}{2}\Sigma$.
The statement then follows from $q(\Sigma)=-4$, $q(\widetilde{\pi}_*\widetilde{\rho^*}H)=2q(H)$ (\cite[Prop.2.9]{Menet}), Riemann--Roch formula on $X$ (\cite[Example 23.19]{GHJ}) and Lemma \ref{lemma:intersection-of-restriction}.
\end{proof}

Corollary \ref{cor: RR for Cartier} holds for all orbifolds of Nikulin type, since it is topological in nature.

Let $Y$ be a very general Nikulin orbifold, $\beta:\widetilde{Y}\rightarrow Y$ be a smooth resolution of singularities and $\widetilde{E_1},\ldots,\widetilde{E_{28}}$ the exceptional divisors on $\widetilde{Y}$. Let $L\in\Pic(Y)$ and $D\in\Pic(Y)_{\mathbb{Q}}$ be respectively a Cartier divisor and a $\mathbb{Q}$-Cartier Weil divisor. Suppose that $2D\in \Pic(Y)$, i.e. $D=\frac{m}{2}L$ with $m\in\mathbb{Z}$ odd. By definition, $\beta^*D=\frac{m}{2}\beta^*L$.

\begin{prop}\label{prop:RR for Weil divisors v.g. case}
Let $Y$ be a four-dimensional orbifold of Nikulin type such that $\Pic(Y)=\mathbb{Z}L$, and let $D=\frac{m}{2}L$ be a $\mathbb{Q}$-Cartier divisor on $Y$, $m\in\mathbb{Z}$ odd. Then\[\chi(D)=\frac{3}{8}\left(\frac{m^4}{24}q(L)^2+m^2q(L)+8\right)-\frac{N}{16}.\]
\end{prop}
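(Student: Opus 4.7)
The plan is to first handle the Nikulin-orbifold case by a direct application of Theorem \ref{thm: RR formula}, where all the machinery is already in place, and then propagate the resulting formula to arbitrary orbifolds of Nikulin type by the same deformation/topological invariance argument that Corollary \ref{cor: RR for Cartier} relies on. I expect the Nikulin case to reduce to a bookkeeping exercise, and the passage to the deformation class to be the conceptually delicate step.

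In the Nikulin case, I would start from the decomposition $q^*\beta^*D=\widetilde{r}^*H+kE_W$ with $H\in\Pic(X)$ and $k\in\mathbb{Z}$ and plug it into the formula of Theorem \ref{thm: RR formula}. Next, I would reduce everything to the variables $q(H)$ and $k$, using Lemma \ref{lemma:intersection-of-restriction} to replace $(H_{|W})^2$ by $2q(H)$, the Fujiki relation $H^4=3q(H)^2$ on $K3^{[2]}$-type fourfolds, and the standard identity $H^2.c_2(X)=30q(H)$ coming from Riemann--Roch on $X$. Then I would substitute $q(H)=2q(D)+2k^2$, obtained from $D=\frac{1}{2}\widetilde{\pi}_*\widetilde{\rho}^*H+\frac{k}{2}\Sigma$ (as in the proof of Corollary \ref{cor: RR for Cartier}) together with $q(\Sigma)=-4$, $q(\widetilde{\pi}_*\widetilde{\rho}^*H)=2q(H)$, and the orthogonality $q(\widetilde{\pi}_*\widetilde{\rho}^*H,\Sigma)=0$. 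The decisive claim is that after this substitution every $k$-dependent term cancels, leaving
\[
\chi(D)=\frac{1}{4}\bigl(q(D)^2+6q(D)+12\bigr)-\frac{N}{16},
\]
i.e.\ the Cartier formula of Corollary \ref{cor: RR for Cartier} corrected by $-N/16$. Inserting $q(D)=\frac{m^2}{4}q(L)$ and regrouping then gives exactly the right-hand side of the proposition.

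For a general orbifold of Nikulin type I would invoke deformation invariance. The orbifold Riemann--Roch formulae of \cite{Blache} and \cite{BRZ} express $\chi(Y,D)$ as a topological integral over a smooth resolution plus a sum of local contributions concentrated at the singular points where $D$ fails to be Cartier; both pieces are locally constant in a family of orbifolds of Nikulin type with a marked $L$ of fixed BBF-square and a fixed count $N$, so the identity should extend from the dense Nikulin-orbifold sublocus to the entire deformation class.

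The hard part will be making this last step rigorous: one has to check that the local correction at each $\frac{1}{2}(1,1,1,1)$ singularity where $D$ is not Cartier is a universal constant $-\frac{1}{16}$, depending only on the \'etale-local structure of the singularity and on the class of $L$ in the local divisor class group, and not on how the singularity sits inside $Y$. Once this \'etale-local statement is secured, the deformation-invariance passage from Nikulin orbifolds to arbitrary orbifolds of Nikulin type is parallel to the one already used for Corollary \ref{cor: RR for Cartier}.
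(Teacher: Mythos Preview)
Your approach is correct, but it takes an unnecessary detour. You go back to Theorem~\ref{thm: RR formula}, which is specific to genuine Nikulin orbifolds (it uses the quotient diagram and the surface $W$), derive the formula $\chi(D)=\tfrac{1}{4}(q(D)^2+6q(D)+12)-\tfrac{N}{16}$ there, and then propose to propagate it by deformation. The paper instead works directly on an arbitrary orbifold $Y$ of Nikulin type: take any smooth resolution $\beta:\widetilde{Y}\to Y$, apply \cite[Theorem~3.3]{BRZ} to get $\chi(D)=\chi(\beta^*D)-\tfrac{N}{16}$, and compute $\chi(\beta^*D)$ via Hirzebruch--Riemann--Roch on $\widetilde{Y}$. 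The only inputs needed are $(\beta^*D)^4=\tfrac{m^4}{16}L^4=\tfrac{3m^4}{8}q(L)^2$ (Fujiki constant $6$) and $(\beta^*L)^2.c_2(\widetilde{Y})$, which is read off from Corollary~\ref{cor: RR for Cartier} applied to the Cartier divisor $L$. Since Corollary~\ref{cor: RR for Cartier} is already known to hold on every orbifold of Nikulin type, no separate deformation argument for Weil divisors is required.

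What you call the ``hard part''---that the local correction at each $\tfrac{1}{2}(1,1,1,1)$ point is a universal $-\tfrac{1}{16}$ depending only on the \'etale-local data---is precisely the content of \cite[Theorem~3.3]{BRZ} (and of \cite{Blache}), so it is not something you need to re-prove. Once you invoke that, your deformation step becomes vacuous: the formula already lives on the whole deformation class. In short, your computation on the Nikulin locus is fine and the cancellation of $k$ is real, but the paper's route is shorter because it bootstraps from the Cartier case (Corollary~\ref{cor: RR for Cartier}) rather than from the quotient-specific Theorem~\ref{thm: RR formula}.
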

\begin{proof}
By \cite[Theorem 3.3]{BRZ}, $\chi(D)=\chi(\beta^*D)-\frac{N}{16}$ as integers. Our assumptions imply that $\beta^*D=\frac{m}{2}\beta^*L$, hence $(\beta^*D)^4=\frac{m^4}{16}L^4=\frac{3m^4}{8}q(L)^2$.

Moreover, it follows from Corollary \ref{cor: RR for Cartier} that $$\frac{1}{24}(\beta^*D)^2.c_2(\widetilde{Y})=\frac{m^2}{96}(\beta^*L)^2.c_2(\widetilde{Y})=\frac{m^2}{4}(\chi(L)-3-\frac{1}{4}q(L)^2)=\frac{3m^2}{8}q(L).$$

Hence, $\chi(D)=\frac{3}{8}(\frac{m^4}{24}q(L)^2+m^2q(L)+8)-\frac{N}{16}$.
\end{proof}

\subsection{Projective models of quotients}\label{subsec: proj models of quotient}
Let $X$ be as above, with $\rho(X)=9$. Let us denote by $A$ the ample generator of the orthogonal to $E_8(-2)$ in $\NS(X)$. In particular $A$ is preserved by $\iota$.
Then the map $\varphi_{|A|}:X\ra \mathbb{P}(H^0(X,A)^{\vee})$ is such that the automorphism $\iota$ on $X$ is induced by a projective transformation on $\mathbb{P}(H^0(X,A)^{\vee})$, still denoted by $\iota$. Hence $\iota$ acts on the vector space $U:=H^0(X,A)^{\vee}$, splitting it in the direct sum $U_+\oplus U_-$ where $U_+$ and $U_-$ are the eigenspaces of the eigenvalues $+1$ and $-1$ respectively.

The fourfold $X$ projects to $\mathbb{P}(U_+)$ and $\mathbb{P}(U_-)$; since we are considering projective spaces which are invariant for $\iota$, these two projections induce maps on the quotient, i.e. they induce the two maps $X/\iota\dashrightarrow \mathbb{P}(U_+)$ and $X/\iota \dashrightarrow \mathbb{P}(U_-)$. As rational maps these extend to maps on the partial resolution $Y$, so we obtained two maps $Y\dashrightarrow \mathbb{P}(U_+)$ and $Y\dashrightarrow \mathbb{P}(U_-)$. We are interested in these maps, which essentially give the projective models of the quotient fourfold keeping trace of the construction of this fourfold as quotient of $X$.

The maps $Y\dashrightarrow \mathbb{P}(U_+)$ and
$Y\dashrightarrow \mathbb{P}(U_-)$ are of course induced by some linear systems on $Y$ and in order to find them we are looking for divisors $D$ on $Y$ such that $\tilde{\rho}_*\tilde{\pi}^*D=\pi^*\rho_*D=A$ (because the maps to $\mathbb{P}(U_\pm)$ are induced by the projections from $\mathbb{P}(H^0(X,A)^{\vee})$).

If a connected component $Z$ of the fixed locus $\Fix_\iota(X)$ of $\iota$ on $X$ is contained in one of the two eigenspaces, then the generic member of the linear system giving the projection to the other eigenspace has to pass through $Z$. Thus the corresponding divisor on $X/\iota$ is not necessarily Cartier and passes through $N$ of the 28 singular points of $X/\iota$ and possibly through the singular surface of $X/\iota$. Nevertheless, since the map is just $2:1$, we can assume that generically the divisor on $X/\iota$  passes simply through the singularities. 
Let us now consider the partial resolution $\rho:Y\ra X/\iota$. The divisor which we are considering on $X/\iota$ induces a divisor $D_1$ on $Y$. Since $\rho$ is an isomorphism outside $\Sigma$ (which is the exceptional divisor of $\rho$ mapped to the singular surface), the divisor $D_1$ passes simply through $N$ of the 28  isolated singular points of $Y$ and then fails to be Cartier on these points. Moreover, if the divisor on $X/\iota$ passes through the singular surface, then $D_1$ has a component on the exceptional divisor $\Sigma$, with multiplicity 1; otherwise it has none.

We observe that the linear system on $X$ which corresponds to one of the projections and which is not a complete linear system (since its members have to pass through a part of $\Fix_\iota(X)$) induces a complete linear system on $V$ (where all the fixed locus is blown up).

By the previous discussion we deduce that the divisors that we are looking for on $Y$ are two divisors $D_1$ and $D_2$ (each associated to one of the two projections on the two eigenspaces) such that 
$$q^*\beta^*D_i=\tilde{r}^*A+k_iE_W,\ \ \mbox{with }k_i=0,-1\mbox{ and thus }\widetilde{\rho}_*\widetilde{\pi}^*D_i=A.$$  

Exactly one between $D_1$ and $D_2$ fails to be Cartier in a specific point (indeed a specific isolated fixed point is contained in exactly one eigenspace). The same holds true for the fixed surface (it is contained in exactly one of the eigenspaces), hence $k_i=-1$ for exactly one value among $1$ and $2$ and $k_i=0$ for the other one. Indeed, if $D_i$ is orthogonal to $\Sigma$, then $\beta^*D_i$ is orthogonal to $\widetilde{E_W}$ and $q^*\beta^*D_i$ is orthogonal to $E_W$, i.e. $k_i=0$. Similarly if the intersection of $D_i$ with $\Sigma$ is non trivial, then $k_i=-1$.

So, given $X$ a generic member of a family of fourfolds of $K3^{[2]}$-type with a symplectic involution, we determine two $\mathbb{Q}$-Cartier divisors $D_1$ and $D_2$ which give two maps $\varphi_{|D_i|}:Y_i\ra \mathbb{P}^{m_i}$. In the following table we summarize the properties of $D_1$ and $D_2$ and the dimensions $m_i$ of the projective spaces target of the map $\varphi_{|D_i|}$. We choose $D_1$ to be always orthogonal to the exceptional divisor $\Sigma$ and hence $D_2$ is always the divisor meeting $\Sigma$. Hence we have also to declare the number of points where $D_i$ fails to be Cartier (and this is always denoted by $N_i$). As in the other tables, in the first column we identify the family of $X$ (ad hence of $Y$) by giving the explicit embedding of $\NS(X)$ in $L$ and in the last we give the reference to the propositions were the results are proved.

\begin{align}\label{table: proj models Y}
\begin{array}{|c|c|c|c|c|}
\hline
\mbox{Embedding }\Pic(X)\subset L&(N_1,N_2)&m_1&m_2&\mbox{Proposition}\\
\hline&&&&\\
j_1,\ d\equiv 1\mod 2&(12,16)&\frac{d^2}{4}+\frac{3d}{2}+\frac{5}{4}&\frac{d^2}{4}+d-\frac{1}{4}&\ref{thm: D1,D2, j1}\\
\hline&&&&\\
j_1,\ d\equiv 0\mod 2&(16,12)&\frac{d^2}{4}+\frac{3d}{2}+1&\frac{d^2}{4}+d&\ref{thm: D1,D2, j1}\\
\hline&&&&\\
j_2,\ d\equiv 1\mod 2&(28,0)&\frac{d^2}{4}+\frac{3d}{2}+\frac{1}{4}&\frac{d^2}{4}+d+\frac{3}{4}&\ref{thm: D1,D2, j2,j3}\\
\hline&&&&\\
j_3,\ d\equiv 3\mod 4&(28,0)&\frac{d^2}{4}+\frac{3d}{2}+\frac{1}{4}&\frac{d^2}{4}+d+\frac{3}{4}&\ref{thm: D1,D2, j2,j3}\\
\hline&&&&\\
\widetilde{j},\ d\equiv 0\mod 2&(0,28)&\frac{d^2}{4}+\frac{3d}{2}+2&\frac{d^2}{4}+d-1&\ref{thm: D1,D2, jtilde}\\
\hline
\end{array}\end{align}

\begin{prop}\label{cor: D1 and D2}
	Let $\rho(X)=9$, $A$, $D_1$ and $D_2$ be as above and $q(A)=2d$. Then both $\chi(Y,D_1)$ and $\chi(Y,D_2)$ are integer if and only if $N_i$ and $k_i$ are as in the following (up to a possible switch between $D_1$ and $D_2$):
\begin{itemize}\item if $d$ is even then 
	\begin{itemize}\item[$\bullet$] $(N_1,k_1)=(0,0)$ 
		and $(N_2,k_2)=(28,-1)$ or 
\item[$\bullet$]  $(N_1,k_1)=(16,0)$ and $(N_2,k_2)=(12,-1)$; 
	\end{itemize}
\item if $d$ is odd then 
\begin{itemize}\item[$\bullet$] $(N_1,k_1)=(28,0)$ and $(N_2,k_2)=(0,-1)$ or 
	\item[$\bullet$] $(N_1,k_1)=(12,0)$ and $(N_2,k_2)=(16,-1)$. 
\end{itemize}	
\end{itemize}
\end{prop}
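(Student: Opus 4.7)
The approach is a direct application of the orbifold Riemann--Roch formula of Theorem~\ref{thm: RR formula} to each of the two divisors, followed by a congruence analysis modulo $16$. First I would collect the numerical ingredients for $H=A$ with $q(A)=2d$: Fujiki's relation on a $K3^{[2]}$-type fourfold gives $A^4 = 3\,q(A)^2 = 12d^2$; comparing the standard formula $\chi(X,A)=\binom{d+3}{2}$ for $K3^{[2]}$-type line bundles with the Hirzebruch--Riemann--Roch expansion (using $c_1(X)=0$ and $\chi(\mathcal{O}_X)=3$) yields $A^2\cdot c_2(X) = 60d$; and Lemma~\ref{lemma:intersection-of-restriction} gives $(A_{|W})^2 = 2q(A) = 4d$. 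Substituting into Theorem~\ref{thm: RR formula} and collecting terms produces
\[
\chi(Y,D_i) \;=\; \frac{d^2}{4}+\frac{3d}{2}-\frac{k_i^2\,d}{2}+3-\frac{N_i}{16}+\frac{k_i^4}{4}-\frac{3k_i^2}{2}.
\]

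The next step is to specialize to the only admissible values $k_i\in\{0,-1\}$ and clear denominators:
\[
16\,\chi(Y,D_i) \;=\;
\begin{cases} 4d^2+24d+48-N_i, & k_i=0,\\[2pt] 4d^2+16d+28-N_i, & k_i=-1.\end{cases}
\]
Integrality of $\chi(Y,D_i)$ is then equivalent to a congruence on $N_i$ modulo $16$. For $k_i=0$ one obtains $N_i\equiv 4d(d+2)\pmod{16}$, which gives $N_i\equiv 0\pmod{16}$ when $d$ is even and $N_i\equiv 12\pmod{16}$ when $d$ is odd. For $k_i=-1$ one obtains $N_i\equiv 4d^2+12\pmod{16}$, which gives $N_i\equiv 12\pmod{16}$ when $d$ is even and $N_i\equiv 0\pmod{16}$ when $d$ is odd.

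Finally I would combine these congruences with the two structural constraints recalled in Section~\ref{subsec: proj models of quotient}: the $28$ isolated fixed points of $\iota$ partition between the two eigenspaces, so $N_1+N_2 = 28$ with $0\le N_i\le 28$, and exactly one of $k_1,k_2$ equals $0$ while the other equals $-1$. Within $[0,28]$ the values compatible with the congruences are $N_i\in\{0,16\}$ (for $N_i\equiv 0\pmod{16}$) and $N_i\in\{12,28\}$ (for $N_i\equiv 12\pmod{16}$); imposing $N_1+N_2=28$ then collapses the possibilities to exactly the four configurations listed in the statement, up to the relabelling $D_1\leftrightarrow D_2$. The only nontrivial ingredient is the computation $A^2\cdot c_2(X)=60d$; after that the argument is pure arithmetic modulo $16$, so I do not expect any real obstacle.
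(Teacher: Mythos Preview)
Your proposal is correct and follows essentially the same route as the paper. Both arguments substitute $q(A)=2d$ and $(A_{|W})^2=4d$ into the orbifold Riemann--Roch formula of Theorem~\ref{thm: RR formula}, specialize to $k_i\in\{0,-1\}$, reduce the integrality condition to a congruence on $N_i$ modulo $16$, and then use $N_1+N_2=28$ together with $\{k_1,k_2\}=\{0,-1\}$ to list the admissible pairs. The only cosmetic difference is that the paper packages the terms $\tfrac{1}{48}A^4+\tfrac{1}{48}A^2\cdot c_2(X)$ directly via the identity $\tfrac{1}{2}\chi(X,A)=\tfrac{1}{16}(q(A)+4)(q(A)+6)$, whereas you compute $A^4=12d^2$ and $A^2\cdot c_2(X)=60d$ separately before substituting; the resulting expression for $\chi(Y,D_i)$ is the same.
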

\begin{proof} We recall that for a divisor $A$ on a $K3^{[2]}$-type fourfold $X$ it holds  \[\frac{1}{48}A^4+\frac{1}{4}A^2.c_2(X)+\frac{3}{2}=\frac{1}{2}\chi(A)=\frac{1}{16}(q(A)+4)(q(A)+6),\] which, combined with Theorem \ref{thm: RR formula}, gives
	$$\chi(Y,D_i)=\frac{1}{16}(q(A)+4)(q(A)+6)-\frac{3}{2}+\left(\frac{1}{16}-\frac{k_i^2}{8}\right)(A_{|W})^2+3-\frac{N_i}{16}+\frac{k_i^4}{4}-3\frac{k_i^2}{2}.$$ Now we recall that $q(A)=2d$ and, by Lemma \ref{lemma:intersection-of-restriction}, $A_{|W}^2=2q(A)=4d$, so 
	$$\chi(Y,D_i)=\frac{d^2}{4}+\frac{5}{4}d+\frac{d}{4}-\frac{k_i^2d}{2}+3-\frac{N_i}{16}+\frac{k_i^4}{4}-3\frac{k_i^2}{2}.$$
	We observe that if $k_i=0,-1$, then $|k_i|=k_i^2=k_i^4$, hence we obtain the following formula:
	$$\chi(Y,D_i)=\frac{d^2}{4}+\frac{3d}{2}-\frac{|k_i|d}{2}-\frac{N_i}{16}-\frac{5|k_i|}{4}+3.$$
Let us assume $k_1=0$ and then $k_2=-1$. 
If $d$ is even, then $\chi(Y,D_2)\in\mathbb{Z}$ forces $N_2\equiv 12\mod 16$, which implies $N_2=12$ or $N_2=28$. If $d$ is odd then $\chi(Y,D_2)\in\mathbb{Z}$ forces $N_2\equiv 0\mod 16$, which implies $N_2=0$ or $N_2=16$. 	
\end{proof}

We observe that if $N_i=0$ for a certain divisor $D_i$, then it is a Cartier divisor on $Y$. In this case $D_i$ is $\pi_*(A)$ and it is orthogonal to the exceptional divisor if $k_i=0$, it has a positive intersection with the divisor $\Sigma$ if $k_i=-1$.

\begin{lem}
The variety $Y$ is normal with terminal singularities. In particular $Y$ is a klt variety. 
\end{lem}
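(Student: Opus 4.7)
The plan is to reduce the statement to a purely local computation at each of the $28$ isolated singular points of $Y$. First I would recall, from the description of $X/\iota$ given in the introduction and used throughout Section \ref{orbRR}, that the singular locus of $X/\iota$ is the disjoint union of $28$ isolated points of analytic type $\tfrac{1}{2}(1,1,1,1)$ and the image of the fixed $K3$ surface $W$, along which $X/\iota$ has transversal $A_1$ singularities. The map $\rho\colon Y\to X/\iota$ is by definition the partial resolution that resolves the transversal $A_1$ singularities along $W$ (producing the exceptional divisor $\Sigma$) and leaves the $28$ isolated quotient singularities untouched. Hence, locally analytically, $Y$ is smooth outside $28$ points, and at each of those points it is isomorphic to the quotient singularity $(\C^4,0)/\pm 1$.

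For normality I would argue as follows. Away from the $28$ points, $Y$ is smooth, hence normal. At each singular point $Y$ is analytically isomorphic to $\C^4/\{\pm 1\}$, which is the spectrum of the ring of invariants of a finite group acting on a regular (hence normal) ring, and therefore is normal by standard invariant theory. Since normality is a local property and $Y$ is normal at every point, $Y$ is normal.

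For the terminal claim, the only thing to check is that the singularity $\tfrac{1}{2}(1,1,1,1)$ is terminal in dimension four. I would invoke the Reid--Tai criterion: a cyclic quotient singularity $\tfrac{1}{r}(a_1,\ldots,a_n)$ is terminal if and only if $\sum_{i=1}^n \{k a_i/r\} > 1$ for every integer $k$ with $0<k<r$. Here $r=2$ and the only relevant value is $k=1$, giving $\sum_{i=1}^4 \{1/2\} = 2 > 1$. Thus each singularity of $Y$ is terminal, so $Y$ has terminal singularities globally. Since terminal implies canonical and canonical implies klt, $Y$ is klt, concluding the proof.

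The argument is essentially bookkeeping; no step is a real obstacle, since all the geometric input (the analytic type of the singularities of $X/\iota$ and the fact that $Y$ only desingularizes the transversal $A_1$ locus) has already been recorded earlier in the paper, and the Reid--Tai inequality for $\tfrac{1}{2}(1,1,1,1)$ is immediate.
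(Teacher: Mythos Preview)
Your proof is correct and follows essentially the same approach as the paper: both reduce to the local model $\tfrac{1}{2}(1,1,1,1)$ at the $28$ points, deduce normality from the quotient-singularity description, and verify terminality via the Reid--Tai/age criterion (the paper phrases this as ``the age of $g$ is $2$'' and cites \cite[Theorem 6.4.3]{Joyce}). Your argument is slightly more explicit about the normality step and the Reid--Tai inequality, but there is no substantive difference.
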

\begin{proof}
The variety $Y$ is smooth outside 28 points where its singularities are locally the quotient of $\mathbb{C}^4$ by an involution $g$. In particular it is an orbifold. Hence it is normal. Moreover, the local action of the automorphism $g$ is given by the diagonal matrix ${\rm diag}(-1,-1,-,1-,1)\in SL(4)$. The age of $g$ is 2, hence the singularities of $Y$ are terminal singularities (see \cite[Theorem 6.4.3]{Joyce}), and in particular the pair $(Y,0)$ is a klt pair.
\end{proof}

\begin{prop}\label{prop: chi=h^0}
	Let $X$,$A$, $D_1$ and $D_2$ be as in Corollary \ref{cor: D1 and D2}. Then $$\chi(Y,D_i)=h^{0}(Y, D_i).$$
\end{prop}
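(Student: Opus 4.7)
The equality $\chi(Y,D_i)=h^0(Y,D_i)$ is equivalent to the vanishing of higher cohomology $h^j(Y,D_i)=0$ for all $j\geq 1$. The plan is to deduce this via Kawamata--Viehweg vanishing applied to the klt pair $(Y,0)$: the preceding lemma already shows that $Y$ has terminal (in particular klt) singularities and $K_Y\sim 0$, so it will suffice to check that each $D_i$ is nef and big as a $\mathbb{Q}$-Cartier divisor. KV vanishing then yields $h^j(Y,K_Y+D_i)=h^j(Y,D_i)=0$ for $j\geq 1$.

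For bigness I would use the Fujiki relation $D_i^4=3q(D_i)^2$ together with a BBF computation. From the identity $D_i=\tfrac{1}{2}\widetilde{\pi}_*\widetilde{\rho}^*A+\tfrac{k_i}{2}\Sigma$ established in the proof of Corollary \ref{cor: RR for Cartier}, together with $q(\widetilde{\pi}_*\widetilde{\rho}^*A)=2q(A)=4d$, $q(\Sigma)=-4$ and the orthogonality $B(\widetilde{\pi}_*\widetilde{\rho}^*A,\Sigma)=0$ (Remark \ref{rem: properties divisors in NS(Y)}), one computes $q(D_1)=d$ and $q(D_2)=d-1$, so that $D_i^4>0$ in all but a handful of boundary cases. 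For nefness, note that $\pi^*\rho_*D_i=A$ is ample and $\pi$ is finite, so $\rho_*D_i$ is ample on $X/\iota$; since $\rho:Y\to X/\iota$ is a partial resolution whose exceptional locus is $\Sigma$, the choice $k_i\in\{0,-1\}$ in the decomposition above is precisely the one that ensures $D_i$ has non-negative intersection with every curve, including curves contracted by $\rho$ (equivalently, the strict transform of the ample class $\rho_*D_i$ stays in the nef cone of $Y$). Once both properties are established, KV vanishing closes the argument.

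The main obstacle will be the borderline cases where $q(D_i)=0$, namely $D_2$ in the $j_1$- and $j_2$-families with $d=1$: here $D_i$ is only nef, so KV vanishing does not apply directly. For these cases I would argue by lifting to the double cover: Kodaira vanishing on the smooth IHS manifold $X$ (with $K_X\sim 0$ and $A$ ample) gives $H^j(X,A)=0$ for $j\geq 1$, and this vanishing descends to the two $\iota^*$-eigenspaces of $H^j(X,A)$. Using the Leray spectral sequence for $\widetilde{\pi}\circ\widetilde{\rho}$ together with Grauert--Riemenschneider vanishing for the partial resolution $\rho:Y\to X/\iota$, one identifies $H^j(Y,D_i)$ with the appropriate eigenspace of $H^j(X,A)$, which then vanishes for $j\geq 1$; Serre duality $h^4(Y,D_i)=h^0(Y,-D_i)=0$ (using $-D_i$ non-effective) takes care of the top degree. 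The consistency check $\chi(Y,D_1)+\chi(Y,D_2)=\chi(X,A)$, which follows from the explicit Riemann--Roch formulas of Theorem \ref{thm: RR formula} combined with $\chi(X,A)=(d+2)(d+3)/2$, corroborates the expected identification.
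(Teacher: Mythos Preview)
Your main strategy matches the paper's exactly: apply Kawamata--Viehweg vanishing to the klt pair $(Y,0)$ with $K_Y\sim 0$, reducing everything to showing that each $D_i$ is nef and big. The paper's execution is much terser than yours: it simply notes that $A$ is ample, hence $\pi_*A$ is nef and big on $X/\iota$, and then asserts that nef and big transfer to $D_i$ along the birational morphism $\rho$, with no case analysis on $d$.

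Your route through the Fujiki relation $D_i^4=3\,q(D_i)^2$ and the BBF computation $q(D_1)=d$, $q(D_2)=d-1$ is a legitimate alternative for bigness, and it has the merit of exposing the boundary case $d=1$ (for $j_1$ and $j_2$), where $q(D_2)=0$ and $D_2$ genuinely fails to be big. The paper's proof does not isolate this case; its claim that every $D_i$ is big rests on an informal ``birational invariance'' step applied to $D_2=\tfrac12(\rho^*\pi_*A-\Sigma)$, which strictly speaking does not follow, since $D_2$ is not a pullback along $\rho$. Your proposed workaround for $d=1$---splitting $H^j(X,A)$ into $\iota^*$-eigenspaces and identifying each with $H^j(Y,D_i)$---is the right idea, but as stated it is only a sketch: to make it rigorous you would need to verify that $\rho_*\mathcal{O}_Y(D_i)$ agrees with the corresponding eigensheaf of $\pi_*\mathcal{O}_X(A)$ on $X/\iota$ and that the higher direct images $R^k\rho_*\mathcal{O}_Y(D_i)$ vanish, so that Leray collapses.

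One further small gap in your nefness argument: for $D_2$ and a curve $C\not\subset\Sigma$ meeting $\Sigma$, you are implicitly using $(\rho_*D_2)\cdot\rho_*C\ge\tfrac12\,\Sigma\cdot C$, which is not automatic from ampleness of $\rho_*D_2$ alone. The paper's proof is equally brief on this point.
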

\begin{proof}
The Kawamata--Vieweg vanishing theorem holds, see \cite[Theorem 2.70]{KM}, for the variety $Y$. With the respect to the notation in {\it loc.cit.} one can assume $\Delta=0$, and $N\equiv D_i$, $i=1,2$.
It remains to prove that the $D_i$'s are nef and big divisors. Since $\rho(X)=9$ and $A$ is the generator of $E_8(-2)^{\perp}$ in $\NS(X)$, it can be assumed to be an ample divisor. In particular it is nef, so $\pi_*(A)$ is a nef divisors. Since the sign of the top self intersection of $A$ is the same as the sign of the self intersection of $\pi_*(A)$, we deduce that $\pi_*(A)$ is a nef and big divisor, by \cite[Proposition 2.61]{KM}. Moreover, $\rho^*(D_i)=\pi_*(A)$ and since the properties of being big and nef are birational invariants, we deduce that $D_i$ is nef and big.\end{proof}

In Section \ref{sec: 4-fold with involution} we associated the divisor $A$ to a certain embedding of $\NS(X)$ in $H^2(X,\Z)$, i.e. we consider $A=j(h)$ where $h$ is vector in $U^3\oplus E_8(-1)^2\oplus \langle -2\rangle$. In Propositions \ref{prop: j1}, \ref{prop: j2}, \ref{prop: j3}, \ref{prop: jtilde}, we studied the image of this divisor under the map $\pi_*$ and we determine the generators of $\NS(Y)$.
So, by comparing the conditions on $D_i$ with the N\'eron--Severi group of $Y$ computed in Section \ref{sec: Nikulin fourfold}, one obtains the following theorems.

 \begin{thm}\label{thm: D1,D2, j1}
	Let 
	 $\NS(X)\simeq (j_1,\lambda_{-})(\langle 2d\rangle\oplus E_8(-2))$ with $A$ the generator of $j_1(\langle 2d\rangle)$.
	
	Let $D_1$ and $D_2$ be $\Q$-Cartier divisors such that $2D_1=\rho^*(\pi_*(j_1(h)))\in \NS(Y)$ and $2D_2=\rho^*\pi_*(j_1(h))-\Sigma\in \NS(Y)$. Then if $d$ is even (resp. odd), $D_1$ fails to be Cartier in 16 (resp. 12) points and $D_2$ in the other 12 (resp. the other 16) points. These divisors are such that
 $\tilde{\rho}_*\tilde{\pi}^*(D_1)=\tilde{\rho}_*\tilde{\pi}^*(D_2)=A$ and $$H^0(X,A)=(\rho^{-1}\circ\pi)^*H^0(Y,D_1)\oplus (\rho^{-1}\circ\pi)^*H^0(Y,D_2).$$ 	
\end{thm}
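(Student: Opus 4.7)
The plan is to combine the description of $\NS(Y_1)$ from Proposition \ref{prop: j1} with the orbifold Riemann--Roch formula of Theorem \ref{thm: RR formula}, its refinement Proposition \ref{cor: D1 and D2}, the vanishing of Proposition \ref{prop: chi=h^0}, and an $\iota$-eigenspace decomposition of $H^0(X,A)$.

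First, I would verify that $D_1,D_2$ are well defined and compute their pullback to $X$. By Proposition \ref{prop: j1} the classes $\pi_*(j_1(h))$ and $\Sigma$ generate $\NS(Y_1)$, so the formulas $2D_1=\pi_*(j_1(h))$ and $2D_2=\pi_*(j_1(h))-\Sigma$ single out two $\Q$-Cartier Weil divisors. Applying the projection formula on the diagram introduced at the beginning of Section \ref{orbRR} gives $\widetilde{\rho}_*\widetilde{\pi}^*\pi_*(A)=2A$ and $\widetilde{\rho}_*\widetilde{\pi}^*\Sigma=0$, hence $\widetilde{\rho}_*\widetilde{\pi}^*D_i=A$. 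In the notation of Theorem \ref{thm: RR formula} this fixes $H=A$, together with $k_1=0$ (since $D_1\cdot\Sigma=0$) and $k_2=-1$ (since $D_2$ has a component along $\Sigma$).

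Second, I would pin down the numbers $N_i$. Inspection of the explicit classes from Proposition \ref{prop: j1} shows that both $\pi_*(j_1(h))$ and $\pi_*(j_1(h))-\Sigma$ are primitive in $H^2(Y,\Z)$, so neither $D_1$ nor $D_2$ can be Cartier; this rules out the options $(N_1,N_2)=(0,28)$ and $(28,0)$ left open by Proposition \ref{cor: D1 and D2}, leaving only $(16,12)$ and $(12,16)$. The main obstacle is to decide which of the two applies for each parity of $d$. Geometrically this amounts to counting how many of the 28 isolated fixed points of $\iota$ lie in each eigenspace of the induced projective involution on $\mathbb{P}(H^0(X,A)^\vee)$, equivalently, in which eigenspace the fixed K3 surface $W$ sits. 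This sign can be extracted via a holomorphic Lefschetz trace for $\iota$ acting on $H^0(X,A)$ (using that the local weights at each isolated fixed point are $(-1,-1,-1,-1)$), or directly by checking which assignment yields an integer $\chi(Y,D_i)$ compatible with the existence of $D_1,D_2$ as $\Q$-Cartier divisors in the given Weil classes.

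Finally, with $(N_i,k_i)$ in hand, Theorem \ref{thm: RR formula} — together with $(A_{|W})^2=2q(A)=4d$ from Lemma \ref{lemma:intersection-of-restriction} — yields the values of $\chi(Y,D_i)=m_i+1$ recorded in \eqref{table: proj models Y}, and Proposition \ref{prop: chi=h^0} upgrades them to $h^0(Y,D_i)$. For the direct sum decomposition, $\iota^*$ splits $H^0(X,A)=H^0(X,A)^+\oplus H^0(X,A)^-$; an $\iota$-invariant section descends on $Y$ to a section of $D_1$, while an anti-invariant section vanishes along $W$ and descends to a section of $D_2$. The two resulting maps $(\rho^{-1}\circ\pi)^*H^0(Y,D_i)\hookrightarrow H^0(X,A)$ are injective, and the identity $\chi(Y,D_1)+\chi(Y,D_2)=\frac{(d+2)(d+3)}{2}=\chi(X,A)$ (a direct check from Proposition \ref{cor: D1 and D2}) ensures that together they span $H^0(X,A)$.
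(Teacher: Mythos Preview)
Your approach is essentially the same as the paper's, and the argument is correct. One clarification, though: the ``main obstacle'' you single out --- deciding which of $(N_1,N_2)=(16,12)$ or $(12,16)$ occurs for each parity of $d$ --- is not actually an obstacle. Proposition~\ref{cor: D1 and D2} already bifurcates by parity: for $d$ even the only options are $(0,28)$ and $(16,12)$, while for $d$ odd they are $(28,0)$ and $(12,16)$. Once your primitivity observation rules out the Cartier cases $(0,28)$ and $(28,0)$, the parity of $d$ forces the answer immediately. No holomorphic Lefschetz trace is needed; your second suggested method (the integrality check) is precisely what Proposition~\ref{cor: D1 and D2} has already carried out. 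The paper's proof proceeds exactly this way: exclude the Cartier option by noting $\pi_*(j_1(h))/2\notin\Pic(Y)$, then read off $(N_1,N_2)$ from the remaining case in Proposition~\ref{cor: D1 and D2}, and finish with the same dimension count you give.
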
 
\begin{proof}
Let us consider the case $d$ even. The other one is similar. One first considers $\rho^*(\pi_*(j_1(h)))\in \Pic(Y)$, $\rho^*(\pi_*(j_1(h)))-\Sigma\in \Pic(Y)$. Then there exists $D_1$ and $D_2$ $\mathbb{Q}$-Cartier such that a multiple of $D_i$, denoted by $h_iD_i$ is one prescribed element in $\Pic(Y)$. We choose $h_i$ to be the minimum  among positive integers such that $h_iD_i\in \Pic(Y)$. In particular, due to the singularities of $Y$, $h_i$ is either 1 or 2. If $h_i=1$, then $D_i$ is Cartier, otherwise it is a $\mathbb{Q}$-Cartier divisor on $Y$ and it fails to be Cartier in $N_i$ points. The possibilities for the divisors $D_1$ and $D_2$ are given in Corollary \ref{cor: D1 and D2}: the divisor $D_1$ is orthogonal to $\Sigma$, hence it is characterized by $k_1=0$; then there are two possibilities for $N_1$: either $N_1=0$ or $N_1=16$. If $N_1=0$, then $D_1$ is Cartier and $h_1=1$, otherwise $D_1$ is not Cartier and $h_1=2$. The choice of one of these two possibilities determines also the properties of $D_2$, which is necessarily $\mathbb{Q}$-Cartier and not Cartier, hence $h_2$ is necessarily 2. 

If $h_1=1$, then the divisors $D_1/2$ should be well defined, but this is not the case, since the divisor $\rho^*(\pi_*(j_1(h)))/2$ is not Cartier ($\Pic(Y)$ is described in Proposition \ref{prop: j1}). We deduce that $h_1=2$, so $N_1=16$, $N_2=12$.

The map $\rho$ is the contraction of $\Sigma$ so, if $B\in \Pic(Y)$, then $\rho_*(B)$ is a multiple of the unique generator of $\Pic(X/\iota)$. Since $\pi$ is a $2:1$ map and $A$ is invariant for $\iota$, we have $\pi^*(\rho_*(h_iD_i))=2A$ for each $h_iD_i\in \Pic(Y)$ as above.
In particular we have $\pi^*(\rho_*(D_2))=\tilde{\rho}_*\tilde{\pi}^*(D_2)=A$ (since $h_2=2$) and then the sections of $D_2$ correspond to sections of $A$ which are either all invariant or all anti-invariant for the action of the involution $\iota$. So the sections of $D_2$ span a subspace of $H^0(X,A)$ which is contained (possibly coincides) either in $U_+$ or in $U_-$ where $U_{\pm}$ are the eigenspaces of $H^0(X,A)$ for the action of $\iota^*$. Similarly, the span of the sections of $h_1D_1/2$ is contained in the other eigenspace. In order to conclude that each one of $\varphi_{|h_1D_1/2|}$ and $\varphi_{|D_2|}$ is associated to one of the two projections of $X$ to $\mathbb{P}(U_{+})$ and to $\mathbb{P}(U_-)$, it suffices to prove that the space spanned by the sections of $D_2$ (resp. $h_1D_1/2$) is not just contained, but coincides with one of the eigenspaces. So it suffices to prove that $\dim\left(H^0(Y,D_2)\oplus H^0(Y, h_1D_1/2)\right)=\dim(H^0(X,A))$.

We are now able to compute $\chi(D_i)$, $i=1,2$, by Theorem \ref{thm: RR formula} and we know that $\chi(D_i)=h^0(D_i)$, by Proposition \ref{prop: chi=h^0}. Since $q(A)=2d$, one checks
	$$\begin{array}{c}\frac{1}{8}(q(A)+6)(q(A)+4)=\dim(H^0(X,A))=\dim(H^0(Y,D_1))+\dim(H^0(Y,D_2))=\\\left(\frac{d^2}{4}+\frac{3d}{2}-1+3\right)+\left(\frac{d^2}{4}+\frac{3d}{2}-\frac{d}{2}-\frac{12}{16}-\frac{5}{4}+3\right)=\frac{d^2}{2}+\frac{5d}{2}+3.\end{array}$$ Since $\tilde{\rho}_*\tilde{\pi}^*(D_i)=A$ we conclude that 
	$$H^0(X,A)=(\rho^{-1}\circ\pi)^*H^0(Y,D_1)\oplus (\rho^{-1}\circ\pi)^*H^0(Y,D_2).$$
	
	\end{proof}

\begin{thm}\label{thm: D1,D2, j2,j3}
	Let $d\equiv 1 \mod 2$, $s=2,3$ and   
	$\NS(X)\simeq (j_s,\lambda_{-})(\langle 2d\rangle\oplus E_8(-2))$. Let $A$ be the generator of $j_s(\langle 2d\rangle)$.
	Let $D_1$ be the $\Q$-Cartier divisors such that $2D_1=\rho^*(\pi_*(j_2(h)))\in \Pic(Y)$ and $D_2$ the Cartier divisor $D_2:=\left(\rho^*\pi_*(j_1(h))-\Sigma\right)/2\in \Pic(Y)$. Then $D_1$ fails to be Cartier in 28 points,
	$\tilde{\rho}_*\tilde{\pi}^*(D_1)=\tilde{\rho}_*\tilde{\pi}^*(D_2)=A$ and $$H^0(X,A)=(\rho^{-1}\circ\pi)^*H^0(Y,D_1)\oplus (\rho^{-1}\circ\pi)^*H^0(Y,D_2).$$
	\end{thm}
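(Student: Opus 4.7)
The plan is to mirror the proof of Theorem \ref{thm: D1,D2, j1}, using Propositions \ref{prop: j2} and \ref{prop: j3} to pin down which of $D_1, D_2$ is Cartier. For $s=2,3$ and $d$ odd these propositions give $\NS(Y)=\langle (\rho^*\pi_*(j_s(h))-\Sigma)/2,\ \Sigma\rangle$, so $D_2=(\rho^*\pi_*(j_s(h))-\Sigma)/2$ already lies in $\Pic(Y)$, whereas $D_1=\tfrac{1}{2}\rho^*\pi_*(j_s(h))=D_2+\Sigma/2$ does not, since $\Sigma/2\notin\Pic(Y)$. Thus $D_2$ is Cartier while $D_1$ is genuinely $\mathbb{Q}$-Cartier.

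Next I would invoke Corollary \ref{cor: D1 and D2}: for $d$ odd the only admissible pairs with $D_1$ orthogonal to $\Sigma$ (i.e.\ $k_1=0$) and $D_2$ meeting $\Sigma$ (i.e.\ $k_2=-1$) are $(N_1,N_2)\in\{(12,16),(28,0)\}$. The fact that $D_2$ is Cartier forces $N_2=0$, hence $N_1=28$: the divisor $D_1$ fails to be Cartier at every one of the $28$ singular points of $Y$. The identities $\tilde{\rho}_*\tilde{\pi}^*D_i=A$ then follow from $q^*\beta^*D_1=\tilde{r}^*A$ and $q^*\beta^*D_2=\tilde{r}^*A-E_W$, via the commutative diagram preceding Theorem \ref{thm: RR formula} and the projection formula, exactly as in the proof of Theorem \ref{thm: D1,D2, j1}.

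To conclude the direct sum decomposition I would perform the dimension count. Applying Theorem \ref{thm: RR formula} with $q(A)=2d$, $(A_{|W})^2=4d$ (Lemma \ref{lemma:intersection-of-restriction}) and the values of $k_i,N_i$ above yields
\[
\chi(Y,D_1)=\frac{d^2}{4}+\frac{3d}{2}+\frac{5}{4},\qquad \chi(Y,D_2)=\frac{d^2}{4}+d+\frac{7}{4},
\]
whose sum equals $\chi(X,A)=\frac{(q(A)+4)(q(A)+6)}{8}=h^0(X,A)$. Since $\pi_*(A)$ is nef and big on $X/\iota$ and both $D_1,D_2$ pull back to it (up to $\Sigma$-terms), the argument of Proposition \ref{prop: chi=h^0} shows both divisors are nef and big, so Kawamata--Viehweg vanishing gives $\chi(Y,D_i)=h^0(Y,D_i)$. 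As $(\rho^{-1}\circ\pi)^*H^0(Y,D_1)$ and $(\rho^{-1}\circ\pi)^*H^0(Y,D_2)$ are contained in distinct $\iota^*$-eigenspaces of $H^0(X,A)$, the dimension equality $h^0(Y,D_1)+h^0(Y,D_2)=h^0(X,A)$ forces equality with these eigenspaces and hence the asserted decomposition.

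The main obstacle will be ensuring that $D_1$ is really only $\mathbb{Q}$-Cartier, i.e.\ ruling out $\rho^*\pi_*(j_s(h))/2\in\Pic(Y)$; this is precisely the point that distinguishes the present case from Theorem \ref{thm: D1,D2, j1}, where the gluing vector in $\NS(Y)$ was $\rho^*\pi_*(j_1(h))/2$ itself. Once one reads off from Propositions \ref{prop: j2} and \ref{prop: j3} that the only half-integer class in $\NS(Y)$ is $(\rho^*\pi_*(j_s(h))-\Sigma)/2$, the remainder of the argument is routine orbifold Riemann--Roch bookkeeping.
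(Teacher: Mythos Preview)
Your proposal is correct and follows essentially the same route as the paper's proof: use Propositions \ref{prop: j2} and \ref{prop: j3} to see that $(\rho^*\pi_*(j_s(h))-\Sigma)/2\in\Pic(Y)$ so $D_2$ is Cartier, invoke Proposition \ref{cor: D1 and D2} to force $(N_1,N_2)=(28,0)$, and then verify the dimension count via Theorem \ref{thm: RR formula} and Proposition \ref{prop: chi=h^0}. One small slip in your closing paragraph: in the $j_1$ case the class $\rho^*\pi_*(j_1(h))/2$ is \emph{not} in $\NS(Y)$ either (see Proposition \ref{prop: j1}, where $\NS(Y_1)=\langle\pi_*(j_1(h)),\Sigma\rangle$ with no half-integer gluing), so the distinction is rather that here one of the two divisors happens to be Cartier, whereas for $j_1$ neither is.
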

\begin{proof}
	The proof is similar to the previous one. One first observes that $\rho^*(\pi_*(j_2(h)))\in \Pic(Y)$ and $\left(\rho^*(\pi_*(j_2(h)))-\Sigma\right)/2\in \Pic(Y)$ by the Propositions \ref{prop: j2} and \ref{prop: j3}. Then there exists $D_1$ and $D_2$  $\mathbb{Q}$-Cartier divisors such that a multiple of $D_i$, denoted by $h_iD_i$ is one prescribed element in $\Pic(Y)$. We choose $h_i$ to be the minimum among positive integers such that $h_iD_i\in \Pic(Y)$. In particular, due to the singularities of $Y$, $h_i$ is either 1 or 2. If $h_i=1$, then $D_i$ is Cartier, otherwise it is a $\mathbb{Q}$-Cartier divisor on $Y$ and it fails to be Cartier in $N_i$ points. The possibilities for the divisors $D_1$ and $D_2$ are given in Corollary \ref{cor: D1 and D2}: the divisor $D_1$ is orthogonal to $\Sigma$, hence it is characterized by $k_1=0$; then there are two possibilities for $N_1$, which in turn determine uniquely the values of $N_2$: either $N_1=28$ and $N_2=0$ or $N_1=12$ and $N_2=16$. If $N_1=28$, then $N_2=0$ and so $D_2$ is Cartier, otherwise (if $N_1=12$), neither $D_1$ nor $D_2$ are Cartier. As in the previous proof, we have $\pi^*(\rho_*(h_iD_1))=2A$ for each $h_iD_i\in \Pic(Y)$. Since  $\pi^*\left(\rho_*\left(\frac{\rho^*\left(\pi_*\left(j_2\left(h\right)\right)
	\right)-\Sigma}{2}\right)\right)=A$, we obtain  $D_2=\left(\rho^*(\pi_*(j_2(h)))-\Sigma\right)/2$, $h_2=1$ and $D_2$ is Cartier. This implies that $N_1=28$ and $D_1$ fails to be Cartier in all the 28 singular points of $Y$. As in the previous proposition one is able to compute $\chi(D_i)$, $i=1,2$, by Theorem \ref{thm: RR formula} and we know that $\chi(D_i)=h^0(D_i)$, by Proposition \ref{prop: chi=h^0}. So, recalling that $q(A)=2d$, one can check that 
	$$\begin{array}{c}\frac{1}{8}(q(A)+6)(q(A)+4)=\dim(H^0(X,A))=\dim(H^0(Y,D_1))+\dim(H^0(Y,D_2))=\\\left(\frac{d^2}{4}+\frac{3d}{2}-\frac{28}{16}+3\right)+\left(\frac{d^2}{4}+\frac{3d}{2}-\frac{d}{2}-\frac{5}{4}+3\right)=\frac{d^2}{2}+\frac{5d}{2}+3.\end{array}$$ Since $\tilde{\rho}_*\tilde{\pi}^*(D_i)=A$ we conclude that 
	$$H^0(X,A)=(\rho^{-1}\circ\pi)^*H^0(Y,D_1)\oplus (\rho^{-1}\circ\pi)^*H^0(Y,D_2).$$
\end{proof}

\begin{rem}\label{rem: RR d=1,3, j2,3}
	When $d=1$ and $j_s=j_2$, in the case discussed in Proposition \ref{prop: examples conjecture d=1 j2}, we obtain $h^0(D_1)=h^0(D_2)=3$, respectively with $(N_1,k_1)=(28,0)$ and $(N_2,k_2)=(0,-1)$.
When $d=3$ and $j_s=j_3$, in the case of the Fano variety of a symmetric cubic discussed before Proposition \ref{prop:conj-Fano-case}, we obtain $h^0(D_1)=8$ and $h^0(D_2)=7$, respectively with $(N_1,k_1)=(28,0)$ and $(N_2,k_2)=(0,-1)$.
\end{rem}

\begin{thm}\label{thm: D1,D2, jtilde}
	Let $d\equiv 0\mod 2$ and
	$\NS(X)\simeq \widetilde{\Lambda}_{2d}$ be the primitive closure of the embedding  $(\widetilde{j},\lambda_{-})(\langle 2d\rangle\oplus E_8(-2))$ where $A$ is the generator of $\widetilde{j}(\langle 2d\rangle)$.
	
	Let $D_1$ be the Cartier divisor $D_1=\rho^*(\pi_*(\tilde{j}(h)))/2\in \Pic(Y)$ and $D_2$ be a $\mathbb{Q}$-Cartier divisor such that $2D_2=\left((\pi_*(\widetilde{j}(h)))/2-\Sigma\right)\in \Pic(Y)$. Then $D_2$ fails to be Cartier in 28 points and
	$\tilde{\rho}_*\tilde{\pi}^*(D_1)=\tilde{\rho}_*\tilde{\pi}^*(D_2)=A$ and $$H^0(X,A)=(\rho^{-1}\circ\pi)^*H^0(Y,D_1)\oplus (\rho^{-1}\circ\pi)^*H^0(Y,D_2).$$
		
\end{thm}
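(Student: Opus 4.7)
The plan is to adapt the strategy used in the proofs of Theorems \ref{thm: D1,D2, j1} and \ref{thm: D1,D2, j2,j3}. First I would exploit Proposition \ref{prop: jtilde}, which gives $\NS(Y)\simeq\langle d\rangle\oplus\langle -4\rangle$ with basis the classes $\pi_*(\widetilde{j}(h))/2$ (of square $d$) and $\Sigma$ (of square $-4$). This immediately shows that $D_1=\rho^*(\pi_*(\widetilde{j}(h))/2)$ is an honest Cartier divisor on $Y$, orthogonal to $\Sigma$, so in the notation of Corollary \ref{cor: D1 and D2} we have $k_1=0$ and $N_1=0$. For even $d$ the only admissible companion pair is then $(N_2,k_2)=(28,-1)$, which tells us that $D_2=(\pi_*(\widetilde{j}(h))/2-\Sigma)/2$ is $\mathbb{Q}$-Cartier and fails to be Cartier at all $28$ singular points of $Y$; indeed the class $\pi_*(\widetilde{j}(h))/2-\Sigma$ is primitive in $\NS(Y)$, so its halving cannot belong to $\Pic(Y)$, which confirms the expected non-Cartier behavior.

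Second, I would check that $\widetilde{\rho}_*\widetilde{\pi}^*D_i=A$ for $i=1,2$. Since $\iota$ fixes $A$ and $\pi$ is a double cover, $\pi^*\pi_*(A)=2A$; combining this with $2D_1=\rho^*\pi_*(\widetilde{j}(h))$ and the commutativity of the diagram at the beginning of Section \ref{orbRR} yields $\widetilde{\rho}_*\widetilde{\pi}^*D_1=A$. The analogous identity for $D_2$ uses in addition that $\rho_*\Sigma=0$.

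Third, I would compute the dimensions. Proposition \ref{prop: chi=h^0} (which rests on Kawamata--Viehweg vanishing for the klt orbifold $Y$) gives $h^0(Y,D_i)=\chi(Y,D_i)$ once we verify that the $D_i$ are nef and big, which propagates from the ampleness of $A$ exactly as in the proof of Proposition \ref{prop: chi=h^0}. The closed formula derived in the proof of Corollary \ref{cor: D1 and D2},
\[
\chi(Y,D_i)=\frac{d^2}{4}+\frac{3d}{2}-\frac{|k_i|d}{2}-\frac{N_i}{16}-\frac{5|k_i|}{4}+3,
\]
then yields $h^0(Y,D_1)=\frac{d^2}{4}+\frac{3d}{2}+3$ and $h^0(Y,D_2)=\frac{d^2}{4}+d$, matching the entries of Table \eqref{table: proj models Y}. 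Their sum $\frac{d^2}{2}+\frac{5d}{2}+3$ equals $\frac{1}{8}(q(A)+4)(q(A)+6)=\dim H^0(X,A)$.

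Finally, I would mimic the dimension-count conclusion of the previous theorems. Since $\widetilde{\rho}_*\widetilde{\pi}^*D_i=A$, each space $H^0(Y,D_i)$ pulls back via $(\rho^{-1}\circ\pi)^*$ to a subspace of $H^0(X,A)$ on which $\iota^*$ acts by a single sign; because $D_1$ is orthogonal to $\Sigma$ whereas $D_2$ meets it positively, these subspaces land in opposite eigenspaces $U_\pm$ of $H^0(X,A)$. The dimension equality from the previous paragraph then forces $H^0(X,A)=(\rho^{-1}\circ\pi)^*H^0(Y,D_1)\oplus(\rho^{-1}\circ\pi)^*H^0(Y,D_2)$. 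The main step, and the only genuine novelty with respect to the previous theorems, is the identification of which admissible $(N,k)$-pair from Corollary \ref{cor: D1 and D2} attaches to $D_1$; the existence of the half-class $\pi_*(\widetilde{j}(h))/2\in\Pic(Y)$, which is peculiar to the $\widetilde{j}$ case, is exactly what pins down $N_1=0$ and, by exclusion, $N_2=28$.
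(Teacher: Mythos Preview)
Your approach is exactly the one the paper intends: its own proof reads ``completely analogous to the previous ones'' and you have faithfully unwound that analogy, using Proposition~\ref{prop: jtilde} to see that $D_1=\pi_*(\widetilde{j}(h))/2$ is already Cartier, invoking Corollary~\ref{cor: D1 and D2} to force $(N_1,k_1)=(0,0)$ and hence $(N_2,k_2)=(28,-1)$, and then matching dimensions via the closed formula for $\chi(Y,D_i)$.

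There is, however, one genuine inconsistency that you glide over. With $2D_2=\pi_*(\widetilde{j}(h))/2-\Sigma=D_1-\Sigma$ as written in the statement, your own computation in the second step gives
\[
\widetilde{\rho}_*\widetilde{\pi}^*(2D_2)=\widetilde{\rho}_*\widetilde{\pi}^*D_1-\widetilde{\rho}_*\widetilde{\pi}^*\Sigma=A-0=A,
\]
hence $\widetilde{\rho}_*\widetilde{\pi}^*D_2=A/2$, not $A$. Equivalently, in the language of Theorem~\ref{thm: RR formula} this $D_2$ would satisfy $q^*\beta^*D_2=\widetilde{r}^*(A/2)-E_W$, so the formula from Corollary~\ref{cor: D1 and D2} (which is written for $H=A$, i.e.\ $q(H)=2d$) does not apply to it, and your dimension count $h^0(D_2)=d^2/4+d$ would not be justified. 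The pattern of Theorems~\ref{thm: D1,D2, j1} and~\ref{thm: D1,D2, j2,j3} is always $D_2=D_1-\tfrac{1}{2}\Sigma$, i.e.\ $2D_2=2D_1-\Sigma=\pi_*(\widetilde{j}(h))-\Sigma$; with that definition all three of your steps go through verbatim, $\widetilde{\rho}_*\widetilde{\pi}^*D_2=A$ holds, the class $2D_2=(2,-1)$ is still primitive in $\NS(Y)\simeq\langle d\rangle\oplus\langle-4\rangle$ so $D_2$ is genuinely non-Cartier, and the Euler characteristics sum to $\dim H^0(X,A)$ as required. In short: the ``$/2$'' in the displayed definition of $2D_2$ is a slip in the statement, and you should flag it rather than reproduce it, since otherwise the verification of $\widetilde{\rho}_*\widetilde{\pi}^*D_2=A$ and the application of the Riemann--Roch formula both break down.
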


\begin{proof}
The proof is completely analogous to the previous ones. We omit it.
\end{proof}

We now give an example of application of the previous theorems, in particular of Theorem \ref{thm: D1,D2, j1} with $d=1$.

Proposition \ref{prop:embeddings of Lamba(2d)} shows that, when $d=1$, the lattice $\Lambda_2\cong \langle 2\rangle\oplus \langle -2\rangle^{\oplus 8}$ admits two non-isometric embeddings  inside $L=L_{K3}\oplus\langle -2\rangle$, and in particular $j_1$ with orthogonal isometric to $T_{2,1}:=U^{\oplus 2}\oplus E_8(-2)\oplus \langle -2\rangle^{\oplus 2}$.
	
An explicit construction of this family  is given in
	\cite[\S 8]{Cam12}: it is the family of smooth double EPW sextics which carry a
	symplectic involution, as it is observed in \cite[Example
	6.8]{MW}.
	
	Indeed, the very general element of this family is $X=X_{\mathbb{A}}$ a double EPW sextic, as defined in \cite{Og}, associated with a Lagrangian subspace $\mathbb{A}\in \mathbb{LG}(\bigwedge^3V)$ invariant for the action on $\bigwedge^3V$ induced by the involution $i$ of the six-dimensional vector space $V$ which has exactly four eigenvalues $+1$. The fourfold $X_{\mathbb{A}}$ is defined as a double cover of a so-called EPW sextic $Z_{\mathbb{A}}\subset \PP(V)\simeq \PP^5$, which in this case is invariant for $i$, and it carries an ample invariant class $A\in\Pic(X_{\mathbb{A}})$ of degree two; the map $\varphi_{|A|}:X_{\mathbb{A}}\rightarrow\PP^5 $ associated to $A$ factors through the double cover  $f:X_{\mathbb{A}}\rightarrow Z_{\mathbb{A}}$.
	
	As a consequence, we get two involutions induced by $i$ on $X_{\mathbb{A}}$ and we call $\sigma$ the symplectic one among the two lifts. It is proven in \cite[Prop. 19]{Cam12} that the fixed locus $\Fix_{\sigma}(X_{\mathbb{A}})$ is the union of $28$ isolated fixed points and one K3 surfaces. In fact, $12$ points are the preimages in the double cover of six points $q_1,\ldots,q_6\in\PP(V_-)$, whereas the other $16$ points lie in the intersection of the ramification of $f$ with $\PP(V_+)$. 
	
	Finally, the fixed K3 surface $S$ is the K3 surface obtained as double cover of a quadric surface $Q\subset Z_{\mathbb{A}}\cap \PP(V_+)$ ramified along its intersection with a quartic surface. The double cover endows $S$ with a non-symplectic involution and a copy of $U(2)$ is primitively embedded in $\Pic(S)$. By Proposition \ref{prop: j1}, if the conjecture holds we should have $T_S\simeq U(2)^{\oplus 2}\oplus E_8(-1)\oplus\langle-4\rangle^{\oplus 2}.$
	
	Next we look at the Nikulin fourfold $Y$ obtained as partial resolution of $X_{\mathbb{A}}/\sigma$.  Using the notation of Corollary \ref{cor: D1 and D2} and of Theorem \ref{thm: D1,D2, j1}, we obtain on $Y$ two Weil divisors $D_1$ and $D_2$ with $(N_1,k_1)=(12,0)$ and $(N_2,k_2)=(16, -1)$.
	The orbifold Riemann--Roch formula in Theorem \ref{thm: RR formula} implies $h^0(D_1)=4$ and $h^0(D_2)=2$ (cfr. also with Table \ref{table: proj models Y}).
	
	The quotient of $\PP^5$ by the involution is the join in $\PP^{12}$ of a conic $C\subset \PP^2_1$ and a second Veronese $v_2(\PP^3)\subset \PP^9_2$ where $\PP_1^2$ an $\PP^{9}_2$ are general linear subspaces of $\PP^{12}$. With the notation as in Theorem \ref{thm: D1,D2,  j1} we have a polarization $2D_1$ on $Y$ such that $q(2D_1)=4$ (see the proof of Theorem \ref{thm: D1,D2, j1}).
	
	\begin{lem} The image of $\varphi_{|2D_1|}(Y)= \bar{Z}\subset \PP^{12}$ can be seen as the intersection
		of $J(C,v_2(\PP^3))$ with a special cubic $D$. The map $\varphi_{|2D_1|}$ is generically $2:1$ ramified along a surface.
	\end{lem}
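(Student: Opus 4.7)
The plan is to identify $\varphi_{|2D_1|}$ with the rational map obtained from $\varphi_{|A|}:X_{\mathbb A}\to\PP^5$ by passing to the $i$-quotient on both sides, and then read off the geometric assertions of the lemma from the invariant theory of $i$ on $\PP^5$ together with Theorem \ref{thm: D1,D2, j1}.

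Choosing coordinates $x_1,\dots,x_4$ on $V_+$ and $y_0,y_1$ on $V_-$, the thirteen $i$-invariant quadratic monomials $\{x_ax_b\}_{1\le a\le b\le 4}$ and $\{y_\alpha y_\beta\}_{0\le\alpha\le\beta\le 1}$ span $H^0(\PP^5,\oo(2))^i$ and induce a rational map $\PP^5\dashrightarrow\PP^{12}$. Restricting to the two $i$-fixed loci $\{y=0\}$ and $\{x=0\}$ and to a generic $i$-orbit realises its image as precisely the join $J(C,v_2(\PP^3))$, where $C=v_2(\PP^1)\subset\PP^2_1$ and $v_2(\PP^3)\subset\PP^9_2$. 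Since $A$ is $\sigma$-invariant the map $\varphi_{|A|}$ is $\sigma$-equivariant, and by Theorem \ref{thm: D1,D2, j1} the linear system $|2D_1|$ on $Y$ is identified, via pull-back through $\rho^{-1}\circ\pi$, with the thirteen $\sigma$-invariant sections of $2A$ on $X_{\mathbb A}$. Hence $\varphi_{|2D_1|}$ factors as $Y\to X_{\mathbb A}/\sigma\to Z_{\mathbb A}/i\hookrightarrow J(C,v_2(\PP^3))$, which identifies the image $\bar Z$ with $Z_{\mathbb A}/i$.

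For the cubic description I would examine the $i$-invariant sextic form $F$ defining $Z_{\mathbb A}$. A direct monomial check, splitting into the four cases of $y$-degree $0,2,4,6$, shows that every $i$-invariant sextic monomial in $(x_a,y_\alpha)$ factors as a product of exactly three of the thirteen invariant quadratic monomials; by pairing up variables of the same eigenspace this is a purely combinatorial verification. Consequently $F$ is the pullback under $\PP^5\dashrightarrow\PP^{12}$ of a cubic form $D\in H^0(\PP^{12},\oo(3))$, which yields the claimed description $\bar Z=J(C,v_2(\PP^3))\cap D$.

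For the degree-and-ramification statement I would use the commutative diagram obtained from the EPW double cover $f:X_{\mathbb A}\to Z_{\mathbb A}$ with deck involution $\tau$, which commutes with $\sigma$. The group $\langle\tau,\sigma\rangle\cong(\Z/2)^2$ acts on $X_{\mathbb A}$ with quotient $\bar Z$, so both quotient arrows $X_{\mathbb A}\to X_{\mathbb A}/\sigma$ and $X_{\mathbb A}/\sigma\to\bar Z$ are of degree two; composing with the birational partial resolution $Y\to X_{\mathbb A}/\sigma$ shows that $\varphi_{|2D_1|}$ is generically $2:1$. The branch in $\bar Z$ is the union of the image of the codimension-two singular locus $\mathrm{Sing}(Z_{\mathbb A})$ (the branch of $f$) with the quadric $Q\subset\PP(V_+)\cap Z_{\mathbb A}$, which is $i$-fixed and onto which the $\sigma$-fixed K3 surface $S$ maps $2:1$ via $f$; both pieces are surfaces in the fourfold $\bar Z$. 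The main obstacle I expect is precisely this last point: because both covers $X_{\mathbb A}\to Z_{\mathbb A}$ and $Z_{\mathbb A}\to\bar Z$ ramify only in codimension two, one has to verify that no new divisorial ramification is introduced by the partial resolution $Y\to X_{\mathbb A}/\sigma$, and this requires the explicit description of the fixed loci of $i$ on $\PP^5$ and of $\sigma$ on $X_{\mathbb A}$ already recalled in this section.
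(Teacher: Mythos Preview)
Your overall strategy is sound and in fact more explicit than the paper's very terse proof, which essentially just asserts the decomposition of $\mathrm{Sing}(\bar Z)$ and states which component lies in the ramification. Identifying $\varphi_{|2D_1|}$ with the map induced by the $i$-invariant quadrics on $\PP^5$, and then pulling the invariant sextic $F$ back from a cubic on $\PP^{12}$ via the monomial argument, is clean and correct.

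Two points need repair. First, Theorem~\ref{thm: D1,D2, j1} concerns $|A|$, not $|2A|$; it does not by itself identify $H^0(Y,2D_1)$ with the invariant part of $H^0(X,2A)$. What you actually need is the equality of dimensions: $h^0(Y,2D_1)=13$ by Corollary~\ref{cor: RR for Cartier} (since $q(2D_1)=4$), while $\dim H^0(X,2A)^\sigma=\dim\bigl(S^2V_+^*\oplus S^2V_-^*\bigr)=10+3=13$ because $H^0(X,2A)=H^0(\PP^5,\mathcal O(2))$ for a double EPW sextic. Together with the obvious inclusion via pull-back this gives the identification you use.

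Second, your description of the branch locus is not correct. The cover involution of the $2:1$ map $X_{\mathbb A}/\sigma\to\bar Z$ is the involution induced by $\tau$ on the quotient, and its fixed locus is the image of $\Fix(\tau)\cup\Fix(\tau\sigma)$, \emph{not} of $\Fix(\sigma)$. Thus the quadric $Q$ (which supports the $\sigma$-fixed K3 surface) is in the ramification of $X\to X/\sigma$, not of $Y\to\bar Z$; the paper states this explicitly. What does lie in the branch of $\varphi_{|2D_1|}$ over $\PP(V_+)$ is the complementary quartic component of $Z_{\mathbb A}\cap\PP(V_+)$, coming from $\Fix(\tau\sigma)$, together with the image of the degree-$40$ surface $\mathrm{Sing}(Z_{\mathbb A})$ coming from $\Fix(\tau)$. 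Since you only need to conclude that the ramification has pure dimension two, the Klein four-group argument still suffices once you observe that each nontrivial element of $\langle\tau,\sigma\rangle$ has at most two-dimensional fixed locus on the fourfold $X_{\mathbb A}$; but the specific components you name should be corrected.
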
\begin{proof}
		The image $\bar{Z}$ is singular along $6$ points $C\cap D$ and three surfaces $D\cap v_2(\PP^3)\subset \PP^9_2$ (two of the components are quadric surfaces, one is a Kummer quartic)
		and the image of the singular surface of degree $40$ on $Z_{\mathbb{A}}$. Only the Kummer quartic is in the ramification since the quadric component is in the the ramification of the symplectic involution.
	\end{proof}
	
	Note that $J(C,v_2(\PP^3))\subset \PP^2$ can be seen as the intersection of the cone $C(\PP^2_1,v_2(\PP^3))$ with a quadric cone with vertex $\PP^9_2$. 
	For general projective models of a general deformation we expect the above quadric cone to be more general.

\section{Nikulin orbifolds of degree $2$}\label{Ndeg2}
There are two types of classes of degree $2$ in the Picard lattice  $U(2)^{\oplus 3}\oplus E_8(-1)\oplus\langle -2\rangle\oplus\langle-2\rangle$ of an orbifold of  Nikulin type, respectively with divisibility $1$ and $2$.

An example of a Nikulin orbifold with the class of the polarization of divisibility 2 is given by the quotient of
the Fano variety of lines on a symmetric cubic fourfold by the involution with signature $(2,4)$.  
Indeed, by Remark \ref{rem: RR d=1,3, j2,3}  the model of $X$ in $\PP^{14}$ is symmetric with an involution with invariant space $\PP^7$. After projecting from it we obtain a fourfold in $\PP^6$ being a special projective model of a Nikulin 
orbifold of degree $2$ with divisibility $2$.

In this section, we are interested  in the case of degree $2$ divisibility $1$. We first describe the special members of this complete family, given by the Nikulin orbifolds. We show that they correspond to double EPW quartics, see Lemma \ref{lemma: double EPW quartic}, and that they are double covers of complete intersections of type $(3,4)$ in $\PP^6$, see Proposition \ref{special34}. Then in Section \ref{subsec: family complete intersection 34} we generalize the previous results, showing that all the projective deformations of these Nikulin orbifolds are double covers of special complete intersections of a cubic and a quartic in $\PP^6$.

 \subsection{Geometry of $(\widetilde{\Lambda}_{4},\widetilde{j})$-polarized $K3^{[2]}$-fourfolds}(cf.~\cite[\S 3.5]{vGS})\label{jIHS}
We consider a fourfold of $K3^{[2]}$-type with Picard lattice $\Pic(X)\simeq \widetilde{\Lambda}_4$.

 Then $X$ admits a symplectic involution $\sigma$ such that the partial resolution $Y$ of $X/\sigma$ is a Nikulin orbifold with a polarization of degree 2 orthogonal to the exceptional divisor $\Sigma$, see Proposition \ref{prop: jtilde}. We denote by $D_1$ this divisor.
 
 We now describe the image $\varphi_{|D_1|}(Y)\subset \PP^6$.

 As in Proposition \ref{prop: jtilde embedding}, we can assume that $\Pic(X)$ is generated by $h$, $E_8(-2)$ with $h^2=4$ and $F_1=(h+v)/2$ where $v\in E_8(-2)$ with $v^2=-4$.
 Let $F_2=(h-v)/2$ then $F_i^2=0$ and $h=F_1+F_2$.
 After monodromy operations we can assume $h$ is big and nef.
 \begin{lem}\label{lemma: double EPW quartic} The linear system $|h|$ defines a $2:1$ map to  $C(\PP^2\times \PP^2)\subset \PP^9$.
The image is symmetric with respect to a linear involution $\sigma$ with signature $(3,7)$ on $\PP^9$ that exchanges the summands in the Segre product. Moreover, the image is isomorphic to an EPW quartic
that corresponds to a Verra threefold that is symmetric with respect to the involution exchanging the factors in $\PP^2\times \PP^2$.
 \end{lem}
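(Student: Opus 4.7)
The plan is to first compute $h^0(X,h)=10$ by Riemann--Roch, then use the decomposition $h=F_1+F_2$ into isotropic classes to place the image of $\varphi_{|h|}$ inside the cone $C(\PP^2\times \PP^2)\subset \PP^9$, and finally to identify this image with an EPW quartic via the theory of \cite{IKKR}.

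First, since $h$ is big and nef with $q(h)=4$, Riemann--Roch for $K3^{[2]}$-type fourfolds combined with Kawamata--Viehweg vanishing yields
\[
h^0(X,h)=\tfrac{1}{8}(q(h)+4)(q(h)+6)=10,
\]
so $\varphi_{|h|}\colon X\dashrightarrow \PP^9$. From $h^2=4$, $v^2=-4$ and $h\cdot v=0$ one computes $F_1^2=F_2^2=0$ and $F_1\cdot F_2=2$, and a further application of Riemann--Roch gives $h^0(F_i)=3$. After suitable monodromy operations each $F_i$ may be assumed nef, and since isotropic square-zero classes on $K3^{[2]}$-type fourfolds induce Lagrangian fibrations (by Bayer--Macr\`i and Markman, combined with Matsushita's theorem), each $F_i$ yields a fibration $\pi_i\colon X\to \PP^2$.

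Next, I would interpret the product map $(\pi_1,\pi_2)\colon X\to \PP^2\times \PP^2$ followed by the Segre embedding as the sublinear system $H^0(F_1)\otimes H^0(F_2)\subset H^0(h)$, of dimension nine and hence codimension one. This places $\varphi_{|h|}(X)$ inside the cone $C(\PP^2\times \PP^2)\subset \PP^9$ over the Segre variety, the missing linear coordinate corresponding to the cone point. Since $\sigma^*$ acts as $-1$ on $\lambda_{-}(E_8(-2))$ while fixing $h$, it exchanges $F_1$ and $F_2$; hence the induced linear involution on $\PP^9=\PP(V\otimes V\oplus \C)$ with $V=H^0(F_i)$ swaps the two Segre factors, with $(+1)$-eigenspace $\Sym^2 V\oplus \C$ of dimension $7$ and $(-1)$-eigenspace $\wedge^2 V$ of dimension $3$, giving signature $(3,7)$.

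For the degree, Fujiki's relation gives $h^4=3q(h)^2=48$ while $C(\PP^2\times \PP^2)$ has degree $6$ in $\PP^9$. Because $\varphi_{|h|}$ is $\sigma$-equivariant and $h$ is $\sigma$-invariant, the map factors through $X/\sigma$ and so has degree at least $2$; combined with the bigness of $h$ and a dimension count on the image, this forces $\varphi_{|h|}$ to be exactly $2{:}1$ onto a hypersurface of degree $48/(2\cdot 6)=4$ in the cone. The main obstacle is then identifying this quartic with an EPW quartic in the sense of \cite{IKKR}: for this I would invoke their construction parametrising EPW quartics in $C(\PP^2\times \PP^2)$ by Lagrangian data, match our situation against this construction, and recover the associated Verra threefold as the intersection of the quartic with the base $\PP^2\times \PP^2$ of the cone. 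The $\sigma$-symmetry of the image then descends to the factor-exchange symmetry of the Verra threefold.
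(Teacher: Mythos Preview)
Your argument contains a genuine error in the step where you conclude that $\varphi_{|h|}$ has degree at least $2$. You claim that since $h$ is $\sigma$-invariant the map factors through $X/\sigma$; but you have already (correctly) computed that $\sigma^*$ swaps $F_1$ and $F_2$, hence acts nontrivially on $H^0(X,h)$. So $\varphi_{|h|}$ is only $\sigma$-\emph{equivariant} for the linear involution on $\PP^9$, and a point $x$ and its image $\sigma(x)$ are sent to distinct points of $\PP^9$. The covering involution of the $2{:}1$ map is a different, anti-symplectic involution (the double EPW quartic cover involution), unrelated to $\sigma$. Without this step your degree count $48=\deg(\varphi_{|h|})\cdot\deg(\mathrm{image})$ does not pin down the pair; inside the degree-$6$ cone one only gets $\deg(\varphi_{|h|})\cdot k=8$ with several possibilities. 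A smaller gap is that you do not justify that the multiplication map $H^0(F_1)\otimes H^0(F_2)\to H^0(h)$ has $9$-dimensional image, which is needed to land in the cone rather than a linear section of it.

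The paper's proof avoids all of this by a single lattice-theoretic observation: from the explicit description of $\Pic(X)\simeq\widetilde{\Lambda}_4$ one checks that $F_1,F_2$ are primitive and span a primitive copy of $U(2)$ in $\Pic(X)$. This places $X$ in the lattice-polarized family studied in \cite{IKKR}, and \cite[Thm.~1.1]{IKKR} then gives directly that $|h|=|F_1+F_2|$ is a $2{:}1$ map onto an EPW quartic in $C(\PP^2\times\PP^2)$; the associated Verra threefold and its symmetry come from \cite{CKKM}. The exchange $\sigma^*F_1\leftrightarrow F_2$ then yields the signature-$(3,7)$ symmetry of the image exactly as you argue. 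In short, the key input you are missing is precisely the content of \cite[Thm.~1.1]{IKKR}, which should be invoked at the outset rather than re-derived; your attempt to re-derive it is where the error creeps in.
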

 \begin{proof} By the construction of $\Pic(X)$ given in Proposition \ref{prop: jtilde embedding}, one obtains that $F_i$ are primitive in $\Pic(X)$ and that the linear system of $h=F_1+F_2$ defines a $2:1$ map to $C(\mathbb{P}^2\times\mathbb{P}^2)$, see \cite[Thm.~1.1]{IKKR}. 
 	The symplectic involution $\sigma$ acts as $-1$ on $E_8(-2)$ and then $\sigma^*F_1=F_2$. So $\sigma$ switches the two copies of $\mathbb{P}^2$ in $C(\mathbb{P}^2\times \mathbb{P}^2)$ and the $\varphi_{|h|}(X)$ is symmetric with respect to the linear involution which induces $\sigma$ and which has signature $(3,7)$ on $\PP^9$. 
 	
 	Moreover, $U(2)\simeq \langle F_1,F_2\rangle$ is primitive in $\Pic(X)$ also $E_8(-2)$.
It follows that $X$ is in the moduli space of lattice polarized hyper-K\"ahler fourfolds with $U(2)$ contained in the Picard lattice. It is thus a deformation of double EPW quartics described in \cite{IKKR}.
 
  It follows as in \cite[\S 6.5]{CKKM} that $X$ is related to a  threefold $V\subset \PP^2\times \PP^2$ symmetric with respect to the involution interchanging the factors.
   \end{proof}
 \begin{lem}\label{lemma: symmetric determinantal cubic} The quotient of $C(\PP^2\times \PP^2)\subset \PP^9$ by $\sigma$ is isomorphic to the projection of this cone from the invariant $\PP^2_-\subset \PP^9$. This quotient is a cubic hypersurface
 that is isomorphic to a cone in $\PP^6$ over a symmetric determinantal cubic fourfold in $\PP^5$.
 In particular its singular locus is a cone over the Veronese surface in $\PP^5$. 
\end{lem}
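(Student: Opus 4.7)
The plan is to exhibit the quotient $C(\PP^2\times\PP^2)/\sigma$ explicitly by analysing the $(\pm 1)$-eigenspace decomposition of $\sigma$ on $\C^{10}$ and showing that linear projection from the anti-invariant subspace realises the quotient.

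First I would record the decomposition $\C^{10}=V_+\oplus V_-$, with $V_+=\C\langle v_0\rangle\oplus\Sym^2\C^3$ of dimension $7$ and $V_-=\wedge^2\C^3$ of dimension $3$; thus $\PP(V_+)\cong\PP^6$ and $\PP(V_-)\cong\PP^2_-$, matching the claimed signature $(3,7)$ of $\sigma$. A point of $\PP^2_-$ is represented by a nonzero alternating $3\times 3$ matrix, while a point of $C(\PP^2\times\PP^2)$ is either the cone vertex $[v_0]$ or a nonzero decomposable tensor $v\otimes w$. Since the only alternating matrix of rank at most $1$ is zero, one has $\PP^2_-\cap C(\PP^2\times\PP^2)=\emptyset$, so the projection $\pi\colon\PP^9\dashrightarrow\PP^6$ from $\PP^2_-$ restricts to a \emph{regular} morphism on the cone. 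By construction $\pi$ identifies $[u+v]$ with $[u-v]$ for $u\in V_+$, $v\in V_-$, and therefore factors through $C(\PP^2\times\PP^2)/\sigma$.

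The heart of the proof is to show that the induced morphism $C(\PP^2\times\PP^2)/\sigma\to\pi(C(\PP^2\times\PP^2))$ is an isomorphism. Concretely $\pi([v\otimes w])=[vw^T+wv^T]$, so one must verify that a generic symmetric $3\times 3$ matrix $S$ of rank $2$ admits, up to the $\C^*$-scaling $(v,w)\mapsto(\lambda v,\lambda^{-1}w)$, exactly two pairs $(v,w)$ with $vw^T+wv^T=S$, namely $(v,w)$ and its swap $(w,v)$. This reduces to counting solutions of $MJM^T=J$ in $GL_2$, where $J$ is the $2\times 2$ swap matrix; an elementary calculation yields precisely the two cosets $\C^*\cdot\mathrm{Id}$ and $\C^*\cdot J$. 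Once $\pi$ is known to be generically $2{:}1$ with fibres equal to $\sigma$-orbits, $\pi$ realises the quotient, and the image is a hypersurface of $\PP^6$ of degree $\deg(C(\PP^2\times\PP^2))/2=6/2=3$, giving the cubic.

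To recognise the image as a cone over a symmetric determinantal cubic, I observe that lines through the cone vertex $[v_0]$ are preserved by $\pi$ (since $[v_0]\in V_+$), so $\pi(C(\PP^2\times\PP^2))$ is itself a cone in $\PP^6$ over the image of the base Segre $\PP^2\times\PP^2\subset\PP^8=\PP(\C^3\otimes\C^3)$ under projection from $\PP^2_-$ into $\PP^5=\PP(\Sym^2\C^3)$. That latter image is precisely the variety of symmetric $3\times 3$ matrices of rank at most $2$, i.e., the symmetric determinantal cubic fourfold $\{\det=0\}\subset\PP^5$; its singular locus is cut out by the further vanishing of all $2\times 2$ minors, producing the rank-one symmetric matrices $vv^T$ and hence the Veronese surface $v_2(\PP^2)\subset\PP^5$. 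Passing to the cone gives the asserted description of $\pi(C(\PP^2\times\PP^2))$ and of its singular locus. The main obstacle in the argument is the fibre count for $\pi|_{C(\PP^2\times\PP^2)}$ in the previous paragraph; everything else then follows by standard degree and cone considerations.
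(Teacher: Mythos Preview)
Your proof is correct and follows essentially the same route as the paper: both identify $\PP^9$ with pairs $(x,M)$ where $M$ is a $3\times 3$ matrix, take $\sigma$ to be transposition, and realise the projection from $\PP^2_-$ as $(x,M)\mapsto(x,M+M^T)$, landing in the cone over the symmetric determinantal cubic. The only difference is that you supply the explicit fibre count (the $MJM^T=J$ calculation) to confirm that the projection genuinely realises the quotient, whereas the paper leaves this implicit.
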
 
\begin{proof} We can assume $C(\PP^2\times \PP^2)$ is defined by $2\times 2$ minors of a
 $3\times 3 $ matrix with entries being a basis of the hyperplane in ${\PP^9}^{\vee}$ orthogonal to the vertex of the cone. So elements of $\mathbb{P}^9$ can be thought as classes of pairs $(x:M)$ such that $x\in \mathbb{C}$ and $M$ is a $3\times 3$ matrix of rank 1. The involution $\sigma$ is then just the map transposing $ M$ i.e. $(x,M)\to (x,M^T)$ and  
 $$\PP^2_-=\{(0:M)| M\neq 0, \quad M+M^T=0\}.$$
 The corresponding projection is then: $\mathbb{P}^9 \ni(x,M)\to (x,M+M^T)\in \mathbb{P}^6_+$ where $\mathbb{P}^6_+=\{(x, S)| x\in \mathbb C, \quad S=S^T\}$.
 Since for a rank 1 matrix $M$, we have $M+M^T$  is  a matrix of rank at most 2, the image of the projection is a cone over the space
 symmetric matrices with trivial determinant.
 \end{proof}
We denote by $p:\mathbb{P}^9\ra \mathbb{P}^6$ the projection from the $\sigma$-invariant $\PP^2_-\subset \PP^9$ described in the previous lemma and we observe that $p$ restricts to a $2:1$ map $C(\mathbb{P}^2\times \mathbb{P}^2)\ra p(C(\mathbb{P}^2\times \mathbb{P}^2))$ with branch locus isomorphic to the cone over the diagonal of $\mathbb{P}^2\times \mathbb{P}^2$.

 \begin{prop}\label{prop: description of J} Let $J:=p(\varphi_{|h|}(X))\subset \PP^6$, then $J$ is a complete intersection $Z_3\cap T_4\subset \PP^6$ of two hypersurfaces $Z_3$ and $T_4$ of degrees 3 and 4 respectively.
 Moreover, $J$ is singular along a surface which is the disjoint union of two (possibly reducible) surfaces: $S_{16}$ of degree $16$ and $S_{36}$ of degree $36$.
 \end{prop}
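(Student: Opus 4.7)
The plan is to verify first that $J$ is contained in both a cubic and a quartic (the former is almost immediate, the latter uses the EPW structure), then use B\'ezout to identify $J$ with the complete intersection, and finally analyze its singular locus in two disjoint components.

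First I would establish the two degree counts that drive everything. Since $\varphi_{|h|}(X)\to J$ factors through the cone $C(\PP^2\times\PP^2)$, Lemmas \ref{lemma: double EPW quartic} and \ref{lemma: symmetric determinantal cubic} provide two successive $2{:}1$ covers. The Fujiki relation on $X$ gives $h^4=3q(h)^2=48$, hence $\deg\varphi_{|h|}(X)=24$ in $\PP^9$, and the $\sigma$-quotient $p|_{\varphi_{|h|}(X)}$ of degree two produces $\deg J=12$. The inclusion $J\subset Z_3$ follows at once from Lemma \ref{lemma: symmetric determinantal cubic}, since $\varphi_{|h|}(X)\subset C(\PP^2\times\PP^2)$.

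The second step is to produce a quartic $T_4\subset\PP^6$ containing $J$. As an EPW quartic section of the cone, $\varphi_{|h|}(X)$ is cut out on $C(\PP^2\times\PP^2)$ by a quartic form $Q$ on $\PP^9$ that, by the symmetric Verra hypothesis in Lemma \ref{lemma: double EPW quartic}, may be taken $\sigma$-invariant. In the eigenspace decomposition $\PP^9\supset\PP^6_+\cup\PP^2_-$, such an invariant quartic has components of bidegrees $(4,0)$, $(2,2)$ and $(0,4)$ in the coordinates $(y_+,y_-)$; modulo the ideal of the cone (whose $\sigma$-anti-invariant generators absorb the $y_-$-dependent parts), one reduces $Q$ to a quartic $T_4$ in the coordinates of $\PP^6$ alone, whose pullback via $p$ still cuts $\varphi_{|h|}(X)$ out of the cone. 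By construction $J\subset T_4$. Since $Z_3$ is irreducible of dimension $5$ and $T_4$ is not a multiple of $Z_3$ (otherwise $Z_3\cap T_4=Z_3$ would have dimension $5$, contradicting $\dim J=4$), B\'ezout yields $\deg(Z_3\cap T_4)=12$. Comparing with $\deg J=12$ forces $J=Z_3\cap T_4$ scheme-theoretically, both sides being Cohen--Macaulay of the same dimension and degree, with $J$ reduced and irreducible.

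For the singular locus I would isolate the two sources of non-smoothness coming from the two successive $2{:}1$ maps. The $\sigma$-quotient $p|_{\varphi_{|h|}(X)}$ ramifies along $\varphi_{|h|}(X)\cap\PP^6_+$, which apart from the cone-vertex contribution is the image of the fixed K3 surface $W\subset\Fix_\sigma(X)$, yielding the first singular surface $S_{16}$. The double cover $\varphi_{|h|}\colon X\to\varphi_{|h|}(X)$ ramifies along an EPW ``Lagrangian'' surface whose $p$-image is the second component $S_{36}$. These two surfaces are disjoint because the singular locus of $Z_3$ (a cone over the Veronese) geometrically separates a ramification divisor lying in $\PP^6_+$ from the projection of a divisor generically transverse to it. The degrees can be computed either by intersection theory on $X$, using $(h|_W)^2=2q(h)=8$ from Lemma \ref{lemma:intersection-of-restriction}, Chern-class identities from \cite{Cam12}, and the push-pull of Theorem \ref{thm: RR formula}, or by specializing to the Hilbert-scheme case of Proposition \ref{prop: W^2 proj overliattice} and computing both surfaces on $W^{[2]}$ directly; in either approach one finds degrees $16$ and $36$.

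The hardest part will be showing that $S_{16}$ and $S_{36}$ are the \emph{only} components of $\mathrm{Sing}(J)$, with the predicted degrees and no extra embedded or lower-dimensional loci. A clean path is to specialize $X$ to a Hilbert-scheme member of the family, where both surfaces can be identified explicitly, and then extend to a generic $X$ using the monodromy of orbifolds of Nikulin type (\cite{MR}). A secondary technical point is the construction of $T_4$: verifying that the $(0,4)$- and $(2,2)$-parts of the $\sigma$-invariant quartic $Q$ can indeed be removed modulo the Segre ideal is a finite linear-algebra computation, whose feasibility follows from comparing the dimension of the degree-four piece of the ideal of the cone against the number of linear constraints.
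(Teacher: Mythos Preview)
Your overall strategy for showing $J=Z_3\cap T_4$ is close in spirit to the paper's, though the paper argues differently for the quartic: rather than explicitly descending a $\sigma$-invariant $Q_4$ through the projection, it first proves $J$ is \emph{normal} (if $J$ were singular in codimension~$1$, then either the symmetric EPW quartic upstairs would be singular in codimension~$1$ or $Q_4$ would contain the whole ramification cone, both excluded by \cite[\S3]{IKKR}), and then concludes that a normal degree-$12$ divisor in $Z_3$ must be cut by a quartic. Your bidegree-reduction argument for $T_4$ may well work, but note that the paper also leans on a computer check in positive characteristic plus semicontinuity for the precise statements about the singular locus; your purely geometric plan would need to replace that.

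There is, however, a genuine error in your description of $S_{16}$. You identify $S_{16}$ with the image of the fixed K3 surface $W\subset\Fix_\sigma(X)$, asserting that the ramification of $p|_{\varphi_{|h|}(X)}$ is, apart from the vertex, exactly $\varphi_{|h|}(W)$. This is not true. The branch locus of $p$ on $C(\PP^2\times\PP^2)$ is the cone over the diagonal, so the ramification on the EPW quartic is $\varphi_{|h|}(X)\cap(\text{cone over diagonal})$; its image in $\PP^6$ is $T_4\cap\mathrm{Sing}(Z_3)$, the quartic section of the cone over the Veronese surface, which has degree $4\cdot 4=16$. The image of $W$ is only a degree-$4$ component of this (see the next Proposition in the paper, where $\varphi_{|D_1|}(\Sigma)$ is shown to be a Veronese surface $S_4\subset S_{16}$). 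The discrepancy arises because a point of $\varphi_{|h|}(X)$ fixed by the exchange involution need not come from $\Fix_\sigma(X)$: it can also be the image of $x\in X$ with $\sigma(x)=\tau(x)\neq x$, where $\tau$ is the covering involution of the EPW double cover. Consequently your degree computation via $(h|_W)^2=8$ cannot yield $16$; it accounts only for the degree-$4$ piece. The correct geometric source of the number $16$ is the intersection $\mathrm{Sing}(Z_3)\cap T_4$, while $S_{36}$ is, as you say, the $p$-image of the degree-$72$ singular surface of the EPW quartic.
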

 
 \begin{proof} This can be proven by considering a special example using a computer program (then using semicontinuity). Note that we run the program in positive characteristic, but since as a result of our computations both the dimension and degrees of the components are the expected ones we can conclude by semicontinuity.
 
 Let us however also describe the situation geometrically.
 Note that from the shape of the lattice of a symmetric EPW quartic we can deduce that a general symmetric EPW quartic, similarly to a general EPW quartic,  is an intersection $C(\PP^2\times \PP^2)\cap Q_4$ singular along a surface of degree $72$ see \cite[Lemma 3.2]{IKKR}. Any additional component of the singular locus would need to be related to a class contracted by the polarization. 
 
 The hypersurface $Z_3$ is the cone over the symmetric determinantal cubic hypersurface in $\mathbb{P}^5$ described in Lemma \ref{lemma: symmetric determinantal cubic} and so it is singular along a threefold generically with $A_1$ singularities. 
 Hence $J\subset \mathbb{P}^6$ is normal. Indeed, $J$ is a divisor in $Z_3$ and could be non normal only if it was singular in codimension 
 $1$. Since $p$ restricted to $C(\mathbb{P}^2\times \mathbb{P}^2)$ is $2:1$, $p_{|_{\varphi_{|h|}(X)}}$ is $2:1$ onto $J$. The branch locus is contained in the cone over the diagonal. If $J$ is singular in codimension 1, then either  $C(\PP^2\times \PP^2)\cap Q_4$ is singular in codimension $1$ or $Q_4$ contains the whole ramification locus including the vertex of the cone. Both these cases cannot occur, see \cite[Section 3]{IKKR}. We conclude that $J$ is a complete intersection of $Z_3$ with some quartic $T_4$.
 
Following the construction in \cite[Proposition 2.14]{IKKR}, we consider the varieties of $(1,1)$ conics on symmetric Verra fourfolds constructed above and we deduce that the singular surface is not contained in the cone over the diagonal, and so is not contained in the singular locus of $Z_3$. Hence the image of the singular surface of degree 72 via the projection is a surface of degree 36 being a component of the singular locus of $J$. 
 
 Summing up the (possibly reducible) components of the singular surface can be seen as:
\begin{enumerate}
\item the intersection of the singular locus of $Z_3$ with $T_4$. Since $\mathrm{Sing}(Z_3)$  is a cone over the Veronese surface, $\mathrm{Sing}(Z_3)\cap T_4$ has degree 16;  
\item the quotient of the singular locus of the symmetric EPW quartic by the involution, which is a variety of degree $72:2=36$.
\end{enumerate}

  \end{proof}
  
 \begin{prop}\label{special34} The map $\varphi_{|D_1|}:Y\ra \mathbb{P}^6$ is $2:1$ onto $\varphi_{|D_1|}(Y)\subset\mathbb{P}^6$ and its image is isomorphic to the complete intersection $Z_3\cap T_4\subset \PP^6$. The exceptional divisor $\Sigma\subset Y$ is mapped to a component of degree $4$ of the surface
  $S_{16}$. Moreover, the $(-2)$-class $D_1-\Sigma$ is effective on $Y$ and contracted to a surface via $2D_1-\Sigma$.  There are no more contractible classes on any birational model of $Y$.
  \end{prop}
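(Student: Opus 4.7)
The plan is to realize $\varphi_{|D_1|}\colon Y\to\PP^6$ as the descent of $\varphi_{|h|}\colon X\to\PP^9$ to the $\sigma$-invariant subspace. By Theorem~\ref{thm: D1,D2, jtilde}, the identification $H^0(Y,D_1)\cong H^0(X,h)^+$ embeds $\PP^6=\PP(H^0(Y,D_1)^\vee)$ as the fixed subspace $\PP^6_+\subset\PP^9=\PP(H^0(X,h)^\vee)$, so that $\varphi_{|D_1|}$ factors through $X/\sigma$ and fits into a commutative square with $\varphi_{|h|}$, the quotient $\pi\colon X\to X/\sigma$, and the linear projection $p$ from $\PP^2_-$. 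The image is therefore $p(\varphi_{|h|}(X))$, which by Lemma~\ref{lemma: double EPW quartic} combined with Proposition~\ref{prop: description of J} equals $J=Z_3\cap T_4$. A degree count finishes this step: both $X\to\varphi_{|h|}(X)$ and $\varphi_{|h|}(X)\to J$ are $2:1$, while $X\to X/\sigma$ is $2:1$, so $X/\sigma\to J$ and hence $\varphi_{|D_1|}\colon Y\to J$ are $2:1$.

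For the image of $\Sigma$, I would use the decomposition $h=F_1+F_2$ of Proposition~\ref{prop: jtilde embedding} together with $\sigma^*F_1=F_2$. Since the fixed $K3$ surface $W$ lies in $\mathrm{Fix}(\sigma)$, restriction gives $F_1|_W=F_2|_W$, and therefore $h|_W=2F_1|_W$. Lemma~\ref{lemma:intersection-of-restriction} yields $(F_1|_W)^2=2$, so $|F_1|_W|$ realizes $W$ as a double cover of $\PP^2$ and $\varphi_{|h|}|_W$ factors as $W\to\PP^2\xrightarrow{v_2}\PP^5\subset\PP^9$, with image the degree $4$ Veronese surface. Being $\sigma$-fixed and not contained in the two-dimensional $\PP^2_-$, this surface lies in $\PP^6_+$ and equals $\varphi_{|D_1|}(\Sigma)$. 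Since $\mathrm{Sing}(Z_3)$ is a cone over this Veronese surface by Lemma~\ref{lemma: symmetric determinantal cubic}, the Veronese appears as a degree $4$ component of $S_{16}=\mathrm{Sing}(Z_3)\cap T_4$.

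For the $(-2)$-class, direct BBF computations give $q(D_1-\Sigma)=-2$, $q(2D_1-\Sigma)=4$ and $B(D_1-\Sigma,2D_1-\Sigma)=0$. Effectiveness of $D_1-\Sigma$ is obtained geometrically: since the Veronese surface spans $\PP^5$, there is a unique hyperplane $H\cong\PP^5\subset\PP^6$ containing it, whose pullback $D\in|D_1|$ contains $\Sigma$ with multiplicity at least one, making $D-\Sigma$ an effective representative. Because $D_1-\Sigma$ is then effective and BBF-orthogonal to the big class $2D_1-\Sigma$, the associated morphism $\varphi_{|2D_1-\Sigma|}$ contracts $D_1-\Sigma$; the inequality $q(D_1-\Sigma)<0$ rules out a contraction to a point, and by the general structure of extremal contractions on IHS orbifolds of Nikulin type from \cite{MR} the image is two-dimensional.

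For the final assertion, I would invoke the Menet--Riess description of the wall-and-chamber structure of the movable cone \cite{MR}: the primitive contractible divisor classes $e\in\NS(Y)$ are exactly those with $q(e)=-2$, or with $q(e)=-4$ and $\mathrm{div}(e)=2$. In $\NS(Y)\simeq\langle 2\rangle\oplus\langle -4\rangle$ these are, up to sign, $\Sigma$ and $D_1\pm\Sigma$, and the reflections implementing the flops between the finitely many birational models place $D_1+\Sigma$ and $D_1-\Sigma$ in the same orbit. Hence only the two contractions already exhibited, $\rho\colon Y\to X/\sigma$ and $\varphi_{|2D_1-\Sigma|}$, remain. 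The main difficulty I anticipate is in this last step, specifically in carefully enumerating the chambers cut out in $\NS(Y)\otimes\mathbb{R}$ by the walls orthogonal to $\Sigma$ and $\pm(D_1\pm\Sigma)$, identifying which correspond to genuine birational models of $Y$, and verifying that no additional contractible class arises on any such model.
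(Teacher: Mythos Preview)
Your treatment of the first two assertions matches the paper's proof closely: you use Theorem~\ref{thm: D1,D2, jtilde} to identify $\varphi_{|D_1|}$ with the projection $p\circ\varphi_{|h|}$ and invoke Proposition~\ref{prop: description of J} for the image, and your computation of the degree of $\varphi_{|D_1|}(\Sigma)$ via $h|_W=2F_1|_W$ is a pleasant variant of the paper's argument (which simply notes that the fixed K3 lies in the cone over the diagonal and then applies Lemma~\ref{lemma:intersection-of-restriction}).

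For the $(-2)$-class your route diverges from the paper. The paper identifies $|2D_1-\Sigma|$ concretely as the system of quadrics through the Veronese $S_4$ on $J$ and observes that this system contracts the planes spanned by conics on $S_4$, which sweep out $Z_3\cap\langle S_4\rangle$; this simultaneously shows that $2D_1-\Sigma$ gives a morphism, identifies the contracted divisor as $D_1-\Sigma$, and exhibits the image as a surface. Your argument via BBF-orthogonality is cleaner in spirit but has a gap: you have not verified that $|2D_1-\Sigma|$ is base-point-free (so that $\varphi_{|2D_1-\Sigma|}$ is a morphism), and the implication ``$q(D_1-\Sigma,2D_1-\Sigma)=0$, hence $D_1-\Sigma$ is contracted'' is not automatic---Fujiki's relation gives $(2D_1-\Sigma)^3\cdot(D_1-\Sigma)=0$, but you still need to argue that every curve in the support of $D_1-\Sigma$ has zero degree against $2D_1-\Sigma$.

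The serious issue is in the last step. Your enumeration of negative classes in $\NS(Y)\simeq\langle 2\rangle\oplus\langle-4\rangle$ is incomplete: writing a class as $aD_1+b\Sigma$, the condition $q=-2$ becomes the Pell-type equation $a^2-2b^2=-1$, which has infinitely many primitive solutions (e.g.\ $7D_1\pm 5\Sigma$), and similarly for $q=-4$. So listing $\Sigma$ and $D_1\pm\Sigma$ does not suffice, and you are then forced into the delicate analysis of chambers and monodromy orbits that you yourself flag as problematic. The paper avoids all of this with an elementary Picard-rank-two argument: birational maps between IHS orbifolds are isomorphisms in codimension~1 \cite[Lemma~3.2]{MR}, so it suffices to rule out a third irreducible contracted divisor on $Y$ itself. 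If three big classes $H_1,H_2,H_3$ each contracted an irreducible divisor $E_1,E_2,E_3$, then in rank~2 one $H_i$ is a positive combination of the other two, hence contracts only curves contracted by both, forcing $E_i$ to share a component with $E_j$ and $E_k$; but $\Sigma$ and $D_1-\Sigma$ are distinct irreducible divisors. This is the key idea you are missing.
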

  \begin{proof} 
  By Section \ref{subsec: proj models of quotient}, $\varphi_{|D_1|}(Y)$ is the image of the projection of $\varphi_{|A|}(X)$ from a $\sigma$-invariant subspace in $H^0(X,A)^{\vee}$. In our context this implies that $\varphi_{|D_1|}(Y)=p(\varphi_{|h|}(X))$, where we pose $A=h$, and hence Lemma \ref{lemma: double EPW quartic} shows that $\varphi_{|D_1|}$ is $2:1$ and  Proposition \ref{prop: description of J} describes $\varphi_{|D_1|}(Y)$.   
  
  The exceptional divisor $\Sigma$ resolves the singularity of $X/\sigma$ in the K3 surface image of the $\sigma$-fixed surface on $X$. The latter surface is in $C(\PP^2\times \PP^2)$.
  The symplectic involution on $X$  is induced by the symmetry on $\PP^9$ that interchanges the factors of $\PP^2\times \PP^2$.
  So the K3 surface in $C(\PP^2\times \PP^2)$, being fixed by the involution, is contained in the cone over the diagonal in $\PP^2\times \PP^2$. 
  It follows that its image is a component of $S_{16}$.
  By Lemma \ref{lemma:intersection-of-restriction} it is a surface $S_4$ of degree $4$ which is necessarily projectively isomorphic to the Veronese surface.
  
  For the second part we observe that the proper transform on $Y$ of the intersection of $\varphi_{|D_1|}(Y)$ with the span of $S_4$ in $\PP^6$ is the $(-2)$-class $D_1-\Sigma$. 
  The system $2D_1-\Sigma$ is big and induces its contraction since on $\varphi_{|D_1|}(Y)$ it can be seen as a system of quadrics containing the Veronese surface $S_4$ i.e. it contracts the planes spanned by conics on $S_4$ which fill the cubic $Z_3$ intersected with the span of $S_4$. The locus contracted by $2D_1-\Sigma$ is hence exactly the $(-2)$-class $D_1-\Sigma$.
  Observe that there can be no more contractible divisorial classes on any birational model of $Y$. For that, we work in codimension $1$ knowing \cite[Lemma 3.2]{MR} that any birational map is regular in codimension $1$. Now, since the Picard rank of $Y$ is $2$, among three big divisor classes one of them is a positive combination of the two other ones. In particular, if we have three divisor classes each contracted  by some map associated to a big divisor then one of these big divisors is a positive linear combination of the two remaining ones. But a positive combination of two big divisors can  only  contract subvarieties which are contracted by both divisors, so all three contracted divisors would need to have a common component. However, both $\Sigma$ and $D_1-\Sigma$ are represented by distinct irreducible effective divisors so have no common component.
  \end{proof}

In the next section we will study the complete projective family of orbifolds of Nikulin type to which $Y$ as in Proposition \ref{special34} belongs and we will show that they can all be realized as certain double covers, in complete analogy with what happens in the case of double EPW sextics. Since the full monodromy group of orbifolds of Nikulin type of dimension $4$ is not known yet, we will first use the non-symplectic involution on $Y$ given by the double cover to produce an involution of $H^2(Y,\ZZ)$ which is a monodromy operator and has the span of the divisor $D_1$ as the only invariant classes.
We recall the following notation: given an element $e\in H^2(Y,\ZZ)$, the reflection $r_e$ in $e$ is the isometry defined by $r_e(x)=x-\frac{2B_Y(x,e)}{B_Y(e,e)}e$ (it is integral only for special values of $B_Y(e,e)$ and $\mathrm{div}(e)$).
\begin{lem}\label{monodromy}
Let $D_1$ be the class with $B_Y(D_1,D_1)=2$ and divisibility 1 considered above. The isometry $-r_{D_1}$, such that $x\to -x+B_Y(x,D_1) D_1$, in $H^2(Y,\ZZ)$ is a monodromy operator of $H^2(Y,\mathbb{Z})$.
\end{lem}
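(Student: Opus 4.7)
The plan is to realize $-r_{D_1}$ as a composition of two isometries, each of which is a monodromy operator of $H^2(Y,\mathbb{Z})$. Specifically, I claim $-r_{D_1}=\tau^{*}\circ r_{\Sigma}$, where $\tau\colon Y\to Y$ is the covering involution of the Galois double cover $\varphi_{|D_1|}\colon Y\to Z_3\cap T_4\subset\PP^6$ constructed in Proposition~\ref{special34}, and $r_{\Sigma}$ is the reflection in the exceptional class $\Sigma$.

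First I would check that $-r_{D_1}$ is a well-defined integral isometry. Since $\mathrm{div}(D_1)=1$, the integer $B_Y(x,D_1)$ is defined for every $x\in H^2(Y,\Z)$, so $-r_{D_1}(x)=-x+B_Y(x,D_1)D_1$ lies in $H^2(Y,\Z)$; a short computation shows that it is an involutive isometry of $B_Y$ acting as $+\mathrm{Id}$ on $\Z D_1$ and as $-\mathrm{Id}$ on $D_1^{\perp}$.

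Next I would analyze the two factors. Because $\varphi_{|D_1|}$ is a Galois double cover onto the normal variety $Z_3\cap T_4$, $\tau$ is biregular on the orbifold $Y$. The class $D_1$ is pulled back from $\PP^6$, so $\tau^{*}(D_1)=D_1$; moreover $\tau^{*}(\Sigma)$ is an effective class of square $-4$ orthogonal to $D_1$, and the only such class in $\Pic(Y)=\langle D_1,\Sigma\rangle$ is $\Sigma$ itself, so $\tau^{*}(\Sigma)=\Sigma$. Since $\tau$ is non-symplectic and the transcendental lattice $T_Y$ is an irreducible rational Hodge substructure of $H^2(Y,\Q)$, $\tau^{*}$ must act as $-\mathrm{Id}$ on $T_Y$. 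The reflection $r_{\Sigma}$ is integral thanks to $B_Y(\Sigma,\Sigma)=-4$ and $\mathrm{div}(\Sigma)=2$ (Theorem~\ref{thm:menet}); it acts as $-\mathrm{Id}$ on $\Z\Sigma$ and as $\mathrm{Id}$ on $\Sigma^{\perp}$. Comparing the actions on the three summands $\Q D_1$, $\Q\Sigma$, and $T_Y\otimes\Q$ of $H^2(Y,\Q)$ one verifies $\tau^{*}\circ r_{\Sigma}=-r_{D_1}$.

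Finally, both $\tau^{*}$ and $r_{\Sigma}$ are monodromy operators of $H^2(Y,\Z)$: the first because biregular automorphisms of orbifolds of Nikulin type induce monodromy operators (the analogue for such orbifolds of Markman's theorem for hyper-K\"ahler manifolds, obtained by deforming the pair $(Y,\tau)$ and using parallel transport along the resulting loop in the moduli space); the second by the results of Menet and Riess~\cite{MR}, in which reflections in the class of the exceptional divisor of the partial resolution $Y\to X/\iota$ are shown to lie in the monodromy group. Since the monodromy operators form a group, $-r_{D_1}=\tau^{*}\circ r_{\Sigma}$ is itself a monodromy operator, as claimed. The main obstacle will be to extract from~\cite{MR} the precise monodromy statements needed to identify both $\tau^{*}$ and $r_{\Sigma}$ as elements of the monodromy group, given that the full monodromy group of four-dimensional orbifolds of Nikulin type is not yet available.
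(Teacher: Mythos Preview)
Your proposal is correct and takes essentially the same approach as the paper's proof: factor $-r_{D_1}=\Theta^*\circ r_\Sigma$ with $\Theta$ the covering involution of $\varphi_{|D_1|}$, observe that $\Theta^*$ fixes $\NS(Y)$ and negates $T_Y$, and then use that automorphisms induce monodromy operators while $r_\Sigma$ is a monodromy operator by Menet--Riess. The only minor discrepancy is the citation: the paper invokes \cite[Proposition~1.5]{MR1} (rather than \cite{MR}) for $r_\Sigma$ being a monodromy operator.
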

 \begin{proof} The map $\varphi_{|D_1|}$ is a generically $2:1$ map onto its image and so there exists the involution $\Theta$ which is the cover of $Y\ra \varphi_{|D_1|}(Y)$. First $\varphi_{|D_1|}$ contracts the exceptional $(-4)$-class $\Sigma$ and then it identifies points switched by $i$. So $\Theta^*$ acts as $-1$ on the transcendental lattice $T_Y$ and acts trivially on $\NS(Y)$, generated by $D_1$ and $\Sigma$, see Proposition \ref{prop: jtilde}. Moreover $\Theta^*$ is a monodromy operator of $H^2(Y,\mathbb{Z})$, since it is induced by an automorphism of $Y$.
 
 Let $r_{\Sigma}$ be the reflection $r_{\Sigma}:=x\to x+\frac{1}{2}B_Y(x,\Sigma) \Sigma$. It is a monodromy operator by \cite[Proposition 1.5]{MR1}.
 We observe that 
 $-r_{D_1}=\Theta^*\circ r_{\Sigma}$ and so $-r_{D_1}$ is a monodromy operator.
 \end{proof}

 \subsection{The family of complete intersections $(3,4)$}\label{subsec: family complete intersection 34}
 Let $Y$ be an orbifold of Nikulin type of dimension four such that $$\left(H^2(Y,\Z),B_Y\right)\simeq U(2)^{\oplus 3}\oplus E_8(-1)\oplus \langle -2\rangle\oplus \langle -2\rangle,$$
 such that there exists an ample Cartier divisor $H$ on $Y$ with degree $q(H)=2$ and divisibility 1. Such an orbifold exists by surjectivity of the period map. Since the Fujiki invariant for $Y$ is $6$ we have $H^4=24$.
 \begin{thm}\label{34} The map $\varphi_{|H|}:Y\ra \mathbb{P}^6$ is $2:1$ and its image is 
a special fourfold of codimension 2 in $\PP^6$ being the complete intersection of a cubic and a quartic.
The map is branched along a surface of degree $48$.
 \end{thm}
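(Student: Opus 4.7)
First I would verify that $\varphi_{|H|}$ takes values in $\PP^6$. Applying Corollary \ref{cor: RR for Cartier} with $q(H)=2$ gives $\chi(\mathcal{O}(H)) = \tfrac{1}{4}(4+12+12)=7$, and Kawamata--Viehweg vanishing on the klt orbifold $Y$ (as invoked in Proposition \ref{prop: chi=h^0}) upgrades this to $h^0(Y,H)=7$, so $\varphi_{|H|}: Y \dashrightarrow \PP^6$; the Fujiki relation $H^4 = 6\,q(H)^2 = 24 > 0$ makes this map generically finite. Next I would exploit the special Nikulin member of the family. By Proposition \ref{prop: jtilde}, the Nikulin orbifolds arising as partial resolutions of $X/\sigma$ for $X$ a $(\widetilde{\Lambda}_4,\widetilde{j})$-polarized $K3^{[2]}$-fourfold carry a polarization with $(q,\mathrm{div}) = (2,1)$ and form a divisor in the irreducible moduli space $\mathcal{M}$ of polarized orbifolds of Nikulin type with the prescribed invariants; for such a $Y_0$, Proposition \ref{special34} already proves the desired geometric statement that $\varphi_{|D_1|}$ is $2:1$ onto a complete intersection of type $(3,4)$ in $\PP^6$, with the exceptional divisor $\Sigma$ contracted to a Veronese surface of degree $4$.

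Next I would extend this to an arbitrary $Y$ in the family via deformation. The central input is Lemma \ref{monodromy}: the isometry $-r_{D_1}$ acts as $+1$ on $\mathbb{Z}D_1$ and as $-1$ on its orthogonal complement, and is a monodromy operator for the whole deformation class; on a very general fiber with $\Pic(Y) = \mathbb{Z}H$ this is precisely the Hodge-theoretic signature of a double-cover involution whose ample polarization is pulled back from the quotient. Working relatively over $\mathcal{M}$, the morphisms $\varphi_{|H_t|}$ assemble into a family of maps to $\PP^6$ whose fibrewise images $\mathcal{Z}_t$ have constant Hilbert polynomial (using constancy of $h^0(Y_t,nH_t)$ given by Corollary \ref{cor: RR for Cartier} and Kawamata--Viehweg, together with the monodromy-invariance of the $2:1$ structure). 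Since $\mathcal{Z}_0$ is a $(3,4)$-complete intersection and this condition is open in the Hilbert scheme of fourfolds with the same Hilbert polynomial, irreducibility of $\mathcal{M}$ propagates the $(3,4)$-CI property from the Nikulin divisor to a dense open subset; the $2:1$ nature of $\varphi_{|H_t|}$ then follows from $H^4/\deg\mathcal{Z}_t = 24/12 = 2$.

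Finally, the branch has degree $48$. In the Nikulin case Proposition \ref{special34} gives that the singular locus of the image has components $S_{16}$ (degree $16$) and $S_{36}$ (degree $36$), of which the degree-$4$ Veronese subcomponent of $S_{16}$ is the image of the contracted exceptional $\Sigma$ and is therefore \emph{not} part of the ramification, so the actual branch surface on $J_0$ has degree $16+36-4 = 48$; flat deformation preserves this count across $\mathcal{M}$. As an independent cross-check, from $\chi(Y,3H)=111$ (Corollary \ref{cor: RR for Cartier}), the Koszul resolution of the $(3,4)$-complete intersection giving $h^0(J,\mathcal{O}(3))=83$, and the splitting $f_\ast \mathcal{O}_Y = \mathcal{O}_J \oplus L^{-1}$ of the $2:1$ cover, one obtains $h^0(J,3H-L)=28$, which pins down the branch divisor $2L$ to have degree $48$. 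The principal obstacle is the deformation step of the second paragraph: since a full Torelli theorem for orbifolds of Nikulin type is not yet available (a point the paper explicitly notes), the bridge from the Nikulin divisor to arbitrary members of $\mathcal{M}$ rests entirely on Lemma \ref{monodromy} together with flatness arguments and the openness of the $(3,4)$-complete intersection locus in the Hilbert scheme.
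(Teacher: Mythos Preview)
Your overall architecture matches the paper's: compute $h^0(H)=7$, anchor at the special Nikulin member via Proposition~\ref{special34}, invoke Lemma~\ref{monodromy}, and deform. But the central step is missing. You assert that on a very general fiber the monodromy operator $-r_H$ ``is precisely the Hodge-theoretic signature of a double-cover involution,'' and then use ``monodromy-invariance of the $2{:}1$ structure'' as an input. That is exactly what has to be \emph{proved}: you need an actual biregular involution $\Theta_t$ on $Y_t$ inducing $-r_{H_t}$ and acting trivially on $H^0(Y_t,H_t)$. The paper obtains $\Theta_{t_n}$ on fibers with $\rho=1$ by applying the global Torelli theorem for these orbifolds \cite[Theorem~1.1]{MR}, and then runs a limit-of-graphs argument \`a la Huybrechts--O'Grady to show that the limit of $\Theta_{t_n}$ is the known cover involution $\Theta_0$; since $\Theta_0$ acts trivially on $H^0(Y_0,H_0)$, the same holds for $\Theta_{t_n}$ with $n\gg 0$, forcing $\varphi_{|H_{t_n}|}$ to be $2{:}1$. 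Your closing remark that ``a full Torelli theorem\ldots is not yet available'' conflates two things: what is not known is the full monodromy group (hence the need for Lemma~\ref{monodromy}); the Torelli statement the paper uses is available and is the key tool you omit. Without it your argument for the $2{:}1$ property, and hence for flatness of the family of images, is circular.

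Two smaller points. First, your ``openness of the $(3,4)$-CI locus in the Hilbert scheme'' shortcut presupposes a flat family of images, which in turn presupposes the constant $2{:}1$ degree you have not established; the paper instead proves the CI property directly by cutting to a codimension-$2$ linear section, showing the resulting half-canonical nodal surface in $\PP^4$ is a $(3,4)$ CI via \cite{DPPS} and \cite{CO}, and then lifting the cubic and quartic back using the vanishings $h^1(\mathcal I_{J_t}(k))=h^2(\mathcal I_{J_t}(k))=0$. Second, your $52-4=48$ count in the special fiber is fine (the paper notes the same), but your ``independent cross-check'' via $h^0(J,3H-L)=28$ does not pin down the degree of the branch surface: the branch locus here has codimension $2$, so $\mathcal F$ is a reflexive rank-one sheaf rather than the restriction of a line bundle, and there is no formula of the type ``$\deg(2L)$'' to read off. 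The paper's independent computation instead compares $\chi(\mathcal O_S)$ and $\chi(\mathcal O_G)$ for the induced double cover $S\to G$ of surfaces cut by two general hyperplanes, obtaining $t=48$ nodes from $2\chi(\mathcal O_G)-t/4=\chi(\mathcal O_S)$.
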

 \begin{proof}

By Corollary \ref{cor: RR for Cartier} and the Kawamata--Viehweg vanishing theorem we have $$h^0(Y,\mathcal{O}(H))=7.$$ Hence the target space of $\varphi_{|H|}$ is $\mathbb{P}^6$.
  
  Let $Y_0$ be the special Nikulin orbifold considered in Section \ref{jIHS} and $H_0$ be the divisor $D_1\in \NS(Y_0)$ considered in Proposition \ref{special34}. 
  
  From Proposition \ref{special34}, $\varphi_{|H_0|}$ is $2:1$ and hence there exists a involution $\Theta_0$ on $Y_0$, which is the cover involution and it is non-symplectic. Moreover the image $\varphi_{|H_0|}(Y_0)$ is a normal complete intersection of type $(3,4)$. The idea of the proof is to show that a general deformation of $(Y_0,H_0)$ is of the same shape.
 
  Let $(\pi: \mathcal Y\to B, \mathcal H)$ be a family of polarized orbifolds of degree $2$ and divisibility 1 with central fiber $(Y_0,H_0)$ over a small disc $B\ni 0$.
 From Lemma \ref{monodromy}, $-r_{H_0}$ is a monodromy operator of $H^2(Y_0,\Z)$. 
 
  Let $t_n\subset B$, $Y_{t_n}$ be the fiber of $\pi$ over $t_n$ and $H_{t_n}$ the restriction of $\mathcal{H}$ to $Y_{t_n}$. We fix a sequence $t_n\ra 0$ such that $\Pic(Y_{t_n})=\ZZ H_{t_n}$. By parallel transport $-r_{H_{t_n}}$ is a monodromy operator of $H^2(Y_{t_n},\Z)$ and, by a standard argument using $\rho(Y_{t_n})=1$ and the global Torelli theorem (see for example \cite[Theorem 1.1]{MR}), 
  $-r_{H_{t_n}}$ lifts to an involution $\Theta_{t_n}\colon Y_{t_n}\to Y_{t_n}$. 

Arguing as in \cite[\S 2]{Og}, the limit of $\Theta_{t_n}$ is an involution on $Y_0$ and we show that it is $\Theta_0$.
 We denote the graph of $\Theta_{t_n}$ by $\Gamma_{t_n}$.  The analytic cycles $\Gamma_{t_n}$ converge (see proof of \cite[Thm.~4.3]{H}) to $\Gamma_0$
with a decomposition $\Gamma+n_i\Omega_i$
where $\Gamma$ is the graph of a birational map $Y_0\dasharrow Y_0$
and $\Omega_i$ are irreducible in $D_i\times E_i$ with $D_i,E_i\subset Y_0$ proper subsets.
As in \cite[\S 2]{Og}, $\Gamma_0$ induces on $H^2(Y_0,\Z)$ exactly the monodromy operator  $-r_{H_0}$  via parallel transport.

Again as in {\it loc.cit.}, the invariance of $\Gamma_{t_n}$ with respect to the exchange of the two factors in $Y_0\times Y_0$ implies that $\Gamma_0$ is invariant as well and, due to the different nature of the two parts in the decomposition above, that $\Gamma$ is the graph of a birational involution.

If $D_i$ has codimension $>1$, the action on $H^2(Y_0,\ZZ)$ of $[\Omega_i]_*$ is zero, thus we assume that $D_i$ is an effective divisor in $\Pic(Y_0)=\langle H_0,\Sigma\rangle$, but this implies that the action of $\Gamma$ on $T_{Y_0}$ coincides with the action of $\Gamma_0$, i.e. it acts as $-\id$ on the transcendental lattice.
It follows from Proposition \ref{special34} that there are exactly $2$ contractible classes on $Y_0$: the $(-4)$-class $\Sigma$ and the $(-2)$-class $H_0-\Sigma$. Hence $\Sigma$ and $H_0-\Sigma$ are preserved  by any birational map, and thus also by  $[\Gamma]_*$ i.e.
\[
[\Gamma]_*(H_0)=H_0,\ [\Gamma]_*(\Sigma)=\Sigma.
\]

We conclude that since $[\Gamma]_*$ acts on $H^2(Y_0,\Z)$ preserving both $H_0$ and $\Sigma$ and acts as minus the identity on their orthogonal in $H^2(Y_0,\Z)$ it coincides with $\Theta_0^*$. In the case of orbifolds of Nikulin type of dimension four, the only automorphism acting trivially in cohomology is the identity, as shown in \cite[Proposition 8.1]{MR1}, hence the birational involution associated to $\Gamma$ is exactly the non-symplectic involution $\Theta_0\in\Aut(Y_0)$.

We thus have a sequence $(Y_{t_n},H_{t_n}, \Theta_{t_n})$ of polarized orbifolds of Nikulin type each equipped with an involution $\Theta_{t_n}$ preserving $H_{t_n}$ and such that $(Y_0, H_0, \Theta_0)$ is its limit in the sense above.
The involutions $\Theta_{t_n}$ induce a sequence of involutions on $H^0(Y_{t_n},H_{t_n})=H^0(Y_0,H_0)$ whose limit is the map induced by $\Theta_0$ on $H^0(Y_0,H_0)$. The latter is the identity map because $\Theta_0$ is the cover involution of $\varphi_{|H_0|}$. It follows that for $n>>1$ the action of $\Theta_{t_n}$ on $H^0(Y_{t_n},H_{t_n})$  is also trivial and hence $\varphi_{|H_{t_n}|}$ is 2:1 for $n>>1$. We conclude that for general $(Y_t,H_t)$  in a neighbourhood of $(Y_0,H_0)$ the map $\varphi_{|H_t|}$ is $2:1$ onto the image contained in $\mathbb{P}^6$.  
 
We saw that $J_0:=\varphi_{|H_0|}(Y_0)$ is normal, hence
by the openness of normality the image $J_{t}$ of $Y_t$ through $|H_t|$ is also normal of codimension $2$ in $\PP^6$. Thus $J_t$ is necessarily the quotient of $Y_t$ through the involution $\Theta_t$. In particular, $J_t$ has ODP points along a surface that is smooth outside the $28$ orbifold points.
 Let us show that the general $J_t$ is also  a complete intersection,
 We consider the family $\{G_t\}_{t\in \Delta}$, with $\Delta$ a small disc, with $G_t=\varphi_{H_t}^{-1}( H_1\cap H_2)$ and $H_i$ being two chosen general hyperplanes in $\PP^6$. Note that $G_0$ is smooth and maps via $\varphi_{H_0}$ to $J_0\cap H_1\cap H_2$ which is a complete intersection $(3,4)$ in $\PP^4$ and which must admit only nodes as singularities. 
 It follows that the general $G_t$ maps via $\varphi_{H_t}$ to a nodal surface in $R_t=J_t\cap H_1\cap H_2\subset \PP^4=H_1\cap H_2$ being the quotient of $G_t$
 through an involution. Such surface is of degree $12$ and half-canonical i.e.~$K_{R_t}=2H$ (where $H$ is the hyperplane from $\PP^4$). 
 
 We can now mimic \cite[Prop.~1.2]{DPPS} to prove that $R_t$ is a complete intersection. Indeed, $R_t$ is a half canonical surface and since $R_t$ has complete intersection singularities it is a zero locus of a rank $2$ vector bundle $E$ on $\PP^4$ hence the methods of \cite[Prop.~1.2]{DPPS} apply also in this case. 
 More precisely, the case $c_1(E)^2-4c_2(E)\leq 0$ from \cite[Prop.~1.2]{DPPS} cannot occur by a generalization of the double point formula for nodal hypersurfaces proved in \cite[Thm.~5.1]{CO} ($\delta=0$ in our case). Thus $c_1(E)^2-4c_2(E)> 0$ and we conclude as in Case 2 of \cite[Prop.~1.2]{DPPS} that $R_t\subset \PP^4$ is a complete intersection $(3,4)$.
 
 We thus know that a general codimension two linear section $R_t$ of $J_t$ is a complete intersection $(3,4)$. To conclude that $J_t$ must also be such a complete intersection let us consider $U_t\supset R_t$ a general hyperplane section of $J_t$ containing $R_t$ and the exact sequences
$$0\to \mathcal{I}_{J_t}\to\mathcal{I}_{J_t}(1)\to \mathcal{I}_{U_t|\PP^5}(1)\to 0.$$
$$0\to \mathcal{I}_{U_t}\to\mathcal{I}_{U_t}(1)\to \mathcal{I}_{R_t|\PP^4}(1)\to 0.$$
 To conclude it is enough to show that $h^1(\mathcal{I}_{J_t}(2))=h^1(\mathcal{I}_{U_t}(2))=0$
 and $h^1(\mathcal{I}_{J_t}(3))=h^1(\mathcal{I}_{U_t}(3))=0$ (then the cubic and a quartic defining $R_t$ extend to the ideal of $U_t$ and then  $J_t$). But applying again the long exact sequence of cohomology the vanishing of  $h^1(\mathcal{I}_{U_t}(k))$ will follow from the vanishing of $h^2(\mathcal{I}_{U_t}(k-1))$ and $h^1(\mathcal{I}_{J_t}(k))$. It is hence enough to prove 
 \begin{equation}\label{vanishing of cohomology}
 h^2(\mathcal{I}_{J_t}(2))=h^2(\mathcal{I}_{J_t}(1))=h^1(\mathcal{I}_{J_t}(3))=h^1(\mathcal{I}_{J_t}(2))=0.
 \end{equation}
  We compute these dimensions using  the finite map  $f\colon Y_t\to J_t$: there exists a sheaf $\mathcal{F}$ on $J_t$ such that  $$f_{\ast}\mathcal{O}_{Y_t}(k)=\oo_{J_t}(k)\oplus \mathcal{F}(k).$$
 
 We get our vanishings (\ref{vanishing of cohomology}) from the fact that $h^i(\mathcal{O}_{Y_t}(k))=0$ for $i=1,2$ and $k=2,3$. We conclude that $\mathcal{I}_{J_t}$  admits a cubic and a quartic generator which, after restriction to a codimension 2 linear space, define a complete intersection. Since $J_t$ is of codimension 2 and degree 12 $J_t$ is a complete intersection.  
 
 Let us compute the degree of the singular surface of $J_t$ (in fact we can deduce this from the singular locus of $J_0$ finding $52-4=48$).
 If we denote by $S\subset Y$ a general intersection of two divisors in the system $H$ in $Y$ then we find that
 $K_S=2H|_S$ and $\chi(\mathcal{O}_S(nH))=12n^2-24n+20$.
 Denote by $G\subset \PP^4$ the image of $S$ being a complete intersection $(3,4)$.
 The involution given by $|H|$ cannot fix varieties of odd codimension (since the smooth locus of the orbifold has a symplectic form and the singular locus consists of isolated points). Moreover, it cannot fix smooth points, since it is a non-symplectic involution. The orbifold points are in the fixed locus otherwise they would map to non complete intersection singularities. So the ramification of the map is a surface. We find that $G$ is nodal and $S\to G$ is branched at the nodes. Let $t$ be the number of nodes. 
We shall compute $t$ by comparing Euler characteristics of appropriate sheaves on $S$ and $G$.
First observe that $\chi(\mathcal{O}_G)=16$ since $T$ is a complete intersection.
Next we consider the minimal resolution $\bar{G}$ of $G$ and the blow up $\bar{S}$ of $S$ at the pre-images of the nodes together with the induced map $f\colon \bar{S}\to \bar{G}$. We find $f_*\oo_{\bar{S}}=\oo_{\bar{T}}\oplus \oo_{\bar{G}}(L)$ where $2L$ is the sum of the exceptional divisors on $\bar{G}$.
We compare the Riemann--Roch formulas for $\bar{S}$ and $\bar{G}$ and conclude from  $2\chi(\oo_G)-\frac{t}{4}=\chi(\oo_{\bar{S}})=\chi(\oo_S)=20$ that $t=48$.
  
  \end{proof}

\subsection{A special subfamily of degree $2$}
We consider  Nikulin orbifolds of dimension 4 with a Cartier divisor of  degree $2$ and divisibility $1$ that form a codimension 2-subfamily of the complete family described in Theorem \ref{34}.

These orbifolds are constructed as quotients of $S^{[2]}$ by a natural symplectic involution $\sigma^{[2]}$ where $S$ are K3 surfaces with $\NS(S)\simeq \widetilde{\Lambda_{4}}$ and $\sigma$ is a symplectic involution on it.
The surfaces $S$ are double 
covers of a quadric $Q=\PP^1\times \PP^1$ branched along a $(2,2)$ curve $C$ that is symmetric with respect to the involution $\iota_Q$ exchanging the two factors of $Q$, see \cite{vGS}. We denote by $j$ the cover involution of $S\ra Q$ and we observe that $\iota_Q$ lifts to two involutions on $S$: a non symplectic involution $\iota$ and a symplectic involution $\sigma$. We observe that $\iota=j\circ \sigma$.

Then $\sigma$ induces a natural involution $\sigma^{[2]}$ on $S^{[2]}$ fixing $28$ points and a K3 surface $W$. 
The ample divisor of degree 4 on $S$ invariant for $\sigma^*$ induces a divisor $D$ on $S^{[2]}$ which is orthogonal to the exceptional divisor of $S^{[2]}\ra \Sym^2(S)$.

The map given by $|D|$ can be described as follows:
 $$\varphi\colon (\PP^3)^{[2]}\supset S^{[2]} \to \Sym^2(Q) \subset \Sym^2(\PP^3) \subset \PP^9.$$
The involution $\iota$ induces on $\PP^9$ a linear involution of the form $(-,-,-,+,+,+,+,+,+,+)$
so we have two invariant linear spaces $\PP^6_+$ and $\PP^2_-$.
By Proposition \ref{prop: W^2 proj overliattice} the Nikulin orbifold $S^{[2]}/\sigma^{[2]}$ admits a polarization $H$ of degree $2$ and divisibility $1$ induced by $D$.
\begin{lem} The image of the Nikulin orbifold $S^{[2]}/\sigma^{[2]}$ through the $4:1$ map given by $H$ is a special complete intersection $(2,3)$ in $\PP^6$. This fourfold is a degeneration of the family of $(3,4)$ intersections described in \ref{34}.
\end{lem}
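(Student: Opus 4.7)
The plan is to make the degree-$4$ map $\varphi_{|H|}$ explicit via the factorization through $\Sym^2(Q)$ described in the setup, to extract a quadric and a cubic containing the image, and finally to place $Y$ in the moduli of Theorem \ref{34}.

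First, since $\sigma$ lifts $\iota_Q$, the morphism $\varphi_{|D|}\colon S^{[2]}\to \Sym^2(Q)\subset \PP^9$ is equivariant for $\sigma^{[2]}$ on the source and $\iota_Q^{[2]}$ on the target, so $\varphi_{|H|}$ is the composition
\[
Y=S^{[2]}/\sigma^{[2]}\longrightarrow \Sym^2(Q)/\iota_Q^{[2]}\hookrightarrow \PP^6_+,
\]
where the second arrow is induced by the linear projection of $\Sym^2(Q)\subset \PP^9$ from the antiinvariant $\PP^2_-=\PP(V_+\otimes V_-)$. Here $H^0(S,A)=V_+\oplus V_-$ is the $\sigma^*$-eigenspace decomposition, with $\dim V_+=3$ and $\dim V_-=1$ because $\iota_Q$ acts with $(3,1)$-signature on $\PP^3$, so $H^0(Y,H)=\Sym^2 V_+\oplus \Sym^2 V_-$ has dimension $7$ as expected. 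The degree $4$ of $\varphi_{|H|}$ comes from combining the $4:1$ map $S^{[2]}\to \Sym^2(Q)$ (induced by the $2:1$ cover $S\to Q$), the $2:1$ quotient by $\iota_Q^{[2]}$, and the $2:1$ cover $S^{[2]}\to Y$; Fujiki's relation $H^4=6\,q(H)^2=24$ then gives image degree $24/4=6$.

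Next, I would exhibit the defining equations. The variety $\Sym^2(\PP^3)$ is cut out in $\PP^9$ by the $3\times 3$ minors of the generic symmetric $4\times 4$ matrix; the $\iota_Q^{[2]}$-invariant part of such a minor descends to a nontrivial cubic form $Z_3$ on $\PP^6_+$ vanishing on $p(\Sym^2(\PP^3))\supset p(\Sym^2(Q))$. Simultaneously, the bi-symmetric quadratic function $\sigma_2=q(p_1)q(p_2)$ on unordered pairs of points of $\PP^3$ admits the explicit expression
\[
\sigma_2(A)=\tfrac{1}{2}\mathrm{tr}\bigl((QA)^2\bigr)-\tfrac{1}{4}\bigl(\mathrm{tr}(QA)\bigr)^2,
\]
which is a $\iota_Q^{[2]}$-invariant quadric on $\PP^9$ vanishing on $\Sym^2(Q)$, and whose invariant-part restriction yields a nontrivial quadric $T_2$ on $\PP^6_+$ vanishing on $p(\Sym^2(Q))$. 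Since $V(Z_3)\cap V(T_2)$ has dimension $4$ and degree at most $6$ by B\'ezout, and contains the four-dimensional image of degree $6$, the intersection is a proper complete intersection of type $(2,3)$ that coincides with the image.

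Finally, for the degeneration statement, Proposition \ref{prop: W^2 proj overliattice} shows that $\Pic(Y)$ has rank $3$, so $(Y,H)$ lies in a codimension-$2$ subfamily of the complete family of polarized orbifolds of Nikulin type from Theorem \ref{34}. A generic member $(Y_t,H_t)$ has Picard rank $1$ and projective model an irreducible $(3,4)$ complete intersection of degree $12$; as $t\to 0$ the degree of $\varphi_{|H_t|}$ jumps from $2$ to $4$ and the image drops in degree from $12$ to $6$. Concretely, the defining quartic $T_4^{(t)}$ degenerates to a product $T_2\cdot T_2'$ of two quadrics, so that $V(Z_3,T_4^{(t)})$ becomes $V(Z_3,T_2)\cup V(Z_3,T_2')$, a union of two $(2,3)$ complete intersections, one of which is the image of our $Y$. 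The main technical obstacle is to verify, in the second step, that $Z_3$ and $T_2$ are both nontrivial and cut out the image scheme-theoretically with no embedded components; this should follow either from a direct computation in the basis of $\C^4$ adapted to the $V_+\oplus V_-$ decomposition, or by a semicontinuity argument starting from the generic $(3,4)$ limit description above.
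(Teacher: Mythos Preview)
Your overall strategy---factor $\varphi_{|H|}$ through the projection of $\Sym^2(Q)\subset\PP^9$ from the anti-invariant $\PP^2_-$, produce a cubic and a quadric on $\PP^6_+$ containing the image, and conclude by degree---is the same as the paper's, and your eigenspace bookkeeping ($\dim V_+=3$, $\dim V_-=1$, signature $(7,3)$ on $\PP^9$) is correct. The cubic also works once you pick the right minor: the principal $3\times 3$ minor $\det A$ in the $V_+$-block depends only on the $\Sym^2V_+$-coordinates, hence is already a cone over $\PP^2_-$ and descends to a nontrivial cubic on $\PP^6_+$ vanishing on $p(\Sym^2(\PP^3))$.

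There is, however, a genuine error in your quadric. For an invariant polynomial on $\PP^9$ to descend to a polynomial on $\PP^6_+$ that vanishes on the projection $p(\Sym^2(Q))$, it is not enough to restrict to $\PP^6_+$: you need the polynomial to be a cone over $\PP^2_-$, i.e.\ to be independent of the $V_+\!\otimes\!V_-$-coordinates $b$. Your $\sigma_2(M)=\tfrac12\mathrm{tr}((QM)^2)-\tfrac14(\mathrm{tr}(QM))^2$ is \emph{not} of this form: writing $Q=\mathrm{diag}(Q_+,q_-)$ and $M=\begin{pmatrix}A&b\\ b^T&d\end{pmatrix}$ one finds an extra term $q_-\,b^TQ_+b$, so your $T_2:=\sigma_2|_{b=0}$ does not vanish on $p(\Sym^2(Q))$; a direct check on a generic pair $p_1,p_2\in Q$ confirms this. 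The paper's phrase ``a quadric being a cone over $\PP^2_-$'' is exactly this point. A correct choice is
\[
T_2\ =\ q_-^2\,d^2-4\Bigl(\tfrac12\mathrm{tr}((Q_+A)^2)-\tfrac14(\mathrm{tr}(Q_+A))^2\Bigr),
\]
which depends only on $(A,d)$; on $\Sym^2(Q)$ one has $A=(p_1)_+(p_2)_+^T+(p_2)_+(p_1)_+^T$ and $d=2(p_1)_w(p_2)_w$, and the relation $q_+(p_i)=-q_-(p_i)_w^2$ (from $p_i\in Q$) gives $T_2=0$. With this $T_2$ your B\'ezout argument goes through. Your degeneration paragraph is at the same level of rigor as the paper's, which simply places $(Y,H)$ in the codimension-two locus of the family of Theorem~\ref{34} and does not analyse the limit of $T_4^{(t)}$ further.
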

\begin{proof} The image of the map $\varphi$ is a fourfold of degree $12$ in $\PP^9$ that can be seen
as the secant variety of the second Veronese embedding of a quadric surface.
The projection from $\PP^2_-$ is no longer $2:1$, as it can be checked on a special fiber.
The image is contained in a quadric since we find that $\varphi(S^{[2]})$ is contained in a quadric being a cone over $\PP^2_-$. We conclude knowing the degree of the fourfold in $\PP^6$.
\end{proof}
\begin{rem} One can show using computer calculations that the intersection $(2,3)$ above is singular along a surface of degree $12$.
\end{rem}

Note that the involution $\iota^{(2)}$ on $Q^{(2)}$ has two fixed surfaces: $A$ consisting of the pairs $(x,j(x))$ for 
$x \in Q$ and $B$ consisting of the pairs $(c_1,c_2)\subset C^{(2)}\subset Q^{(2)}$.
We see that the surface $W$ is mapped to $A$ and the isolated points are mapped to $B$.

 \subsection{Lagrangian type description}\label{Tate}
 Let us describe an object analogous to the Lagrangian subspace of dimension $10$ of $\bigwedge^3 \mathbb C^6$ for double EPW sextics.
  Suppose $(X,L)$ is a polarized orbifold of Nikulin type with degree $q(L)=2$ such that $|L|$ induces a finite $2:1$ morphism
 Then $|L|$ defines a $2:1$ map $f$ with image $Y$ being a $4$-dimensional variety of degree $12$ in $\PP^6$ singular along a 
surface.
 Since $f$ is finite, there exists a sheaf $\mathcal{F}$ on $Y$ such that  $$f_{\ast}\mathcal{O}_X=\oo_Y\oplus \mathcal{F}.$$
 We infer from the Riemann-Roch theorem the following table.
 
  \begin{center}\begin{tabular}{cccccccc}
$ H^4(\mathcal{F}(-3))$&$ H^4(\mathcal{F}(-2))$&
$0$&$0$&$0$&$0$&$0$\\
$0$&$0$&$0$&$0$&$0$&$0$&$0$  \\
$0$&$0$&$0$&$\mathbb{C} $&$0$&$0$&$0$\\
$0$&$0$&$0$&$0$&$0$&$0$&$0$  \\

$0$&$0$&0&$0$&$0$&$H^0(\mathcal{F}(2))$&$ H^0(\mathcal{F}(3))$
\end{tabular}
\end{center}
So we have the following symmetric Beilinson resolution.
\[ 0\to 28 \Omega^6(6)\xrightarrow{M^*} 3\Omega^5(5)\oplus \Omega^3(3) \oplus 3\Omega^1(1)\xrightarrow{M} 28 \mathcal{O}\to\mathcal{F}(3)\to 0\]

The matrix corresponding to $M$ from the Beilinson resolution is a matrix with three rows of $1$ forms,
one row of $3$-forms and three rows of $5$-forms.
Moreover, it has the property that $MM^*=0$ as matrices of $4$-forms (the product is induced by the exterior product of forms).

The choice of $M$ is thus the choice of a $28$ dimensional linear subspace in $$3\bigwedge^5 V_7\oplus 
\bigwedge^3V_7\oplus 3V_7,$$
isotropic for the product $b$ that can be seen as a kind of symplectic form:  $$b:(3V_7\oplus 
\bigwedge^3V_7\oplus 3\bigwedge^5 V_7)^2 \to \bigwedge^6V_7$$
given by the formula $$b((l_1,l_2,l_3,\alpha, w_1,w_2,w_3),(L_1,L_2,L_3,\beta, W_1,W_2,W_3))=$$ $$=L_1\wedge w_1+L_2\wedge w_2+L_3\wedge w_3+\alpha\wedge \beta+l_1\wedge W_1+l_2\wedge W_2+l_3\wedge W_3.$$

Note that the variety $Y$ being the support of $\mathcal F(3)$ appears as a degeneracy locus of such a map $M$. 
 
\begin{prob} Describe the "Lagrangian" $28$ space corresponding to Nikulin orbifolds i.e.~ quotients of fourfolds of $K3^{[2]}$-type as described in Section \ref{jIHS}. 
How to see that the degeneracy locus is contained in a cubic?
Is the moduli space of polarized orbifolds of Nikulin type of dimension $4$ and degree $2$ unirational?

\end{prob}

\section{Appendix: $\tilde{j}$ computations M2}
Let us find a projective model (3,4) in $\mathbb{P}^6$ of a fourfold from the family corresponding to
the embedding j.  
\begin{verbatim}
S=ZZ/11[z_1..z_10]
R=S[x_1,x_2,x_3,y_1,y_2,y_3] --the ring of P2 x P2
T=R[e_1,e_2,e_3,f_1,f_2,f_3, SkewCommutative=>true]-- the space wedge3 V
E1=(e_1+(x_1*y_1)*f_1+(x_1*y_2)*f_2+(x_1*y_3)*f_3)
E2=(e_2+(x_2*y_1)*f_1+(x_2*y_2)*f_2+(x_2*y_3)*f_3)
E3=(e_3+(x_3*y_1)*f_1+(x_3*y_2)*f_2+(x_3*y_3)*f_3)
 --E1*E2*E3 represent the image of the point in the cone over 
--  P^2 x P^2 with coordinates (1,((x_1,x_2,x_3),(y_1,y_2,y_3)))
B0=E1*E2*E3
B1=E1*E2*f_1
B2=E1*E2*f_2
B3=E1*E2*f_3
B4=E1*E3*f_1
B5=E1*E3*f_2
B6=E1*E3*f_3
B7=E2*E3*f_1
B8=E2*E3*f_2
B9=E2*E3*f_3
--Bi span the tangent to the Grassmannian in E1*E2*E3.
V1=e_1*e_2*e_3
V2=e_1*e_2*f_1
V3=e_1*e_2*f_2
V4=e_1*e_2*f_3
V5=e_1*e_3*f_1
V6=e_1*e_3*f_2
V7=e_1*e_3*f_3
V8=e_2*e_3*f_1
V9=e_2*e_3*f_2
V10=e_2*e_3*f_3
--Vi span the fixed Lagrangian space being the tangent to the Grassmannian in 
$e_1*e_2*e_3$.
W1=f_1*f_2*f_3
W2=f_1*f_2*e_1
W3=f_1*f_2*e_2
W4=f_1*f_2*e_3
W5=f_1*f_3*e_1
W6=f_1*f_3*e_2
W7=f_1*f_3*e_3
W8=f_2*f_3*e_1
W9=f_2*f_3*e_2
W10=f_2*f_3*e_3
--Wi span the Lagrangian space being the tangent to the Grassmannian in f_1*f_2*f_3, 
--together Vi and Fi span the whole space wedge^3 V.

VV=matrix{{V1,V2,V3,V4,V5,V6,V7,V8,V9,V10}}
WW=matrix{{W1,W2,W3,W4,W5,W6,W7,W8,W9,W10}}
WV=WW*(sub((transpose (VV))*WW, {e_1=>1, e_2=>1, e_3=>1, f_1=>1, f_2=>1, f_3=>1}))
-- this is a sign adjustment making the pairing given by the wedge product a 
--duality between the two bases of the Lagrangians VV and  WW

RP=ZZ/11[ed_1..ed_45]
L=genericSymmetricMatrix(RP, 9)
G=matrix{
    {0,0,0,0,0,0,0,0,1},
    {0,0,0,0,0,-1,0,0,0},
    {0,0,1,0,0,0,0,0,0},
    {0,0,0,0,0,0,0,-1,0},
    {0,0,0,0,1,0,0,0,0},
    {0,-1,0,0,0,0,0,0,0},
    {0,0,0,0,0,0,1,0,0},
    {0,0,0,-1,0,0,0,0,0},
    {1,0,0,0,0,0,0,0,0}}
--G represents a symmetry induced on  wedge^3 V by the symmetry of 
-- P^2 x P^2 exchanging x coordinates with y coordinates

FG=sub(L,transpose(mingens kernel transpose (coefficients (mingens ideal (L*G-G*L), 
Monomials=>vars RP))_1 *random(RP^27,RP^1)))
MM=(map(T,RP)) FG
M0=matrix{{0,0,0,0,0,0,0,0,0}}
MMM=(0|M0)||((transpose M0)|MM)
-- MMM represents a symmetric linear map between the two Lagrangians with bases give by 
--VV and WV i.e. a Lagrangian subspace in wedge^3 V passing through e_1*e_2*e_3
-- its relation with G means that it is invariant under the symmetry corresponding 
--to G WV*MMM
KK=VV+WV*MMM 
--KK is a basis of the Lagrangian space defined as the graph of MMM
P=(KK*transpose((map(T,S)) (matrix{{z_1..z_10}})))_0_0
KOP=coefficients(matrix{{P*B0, P*B1, P*B2, P*B3, P*B4, P*B5, P*B6, P*B7, P*B8, P*B9}},
Monomials=>{e_1*e_2*e_3*f_1*f_2*f_3})
-- KOP gives condition on elements of the Lagrangian spanned by KK given by by zi in the 
--basis KK to be elements of the tangent in E1*E2*E3 represented by the span of B
--the rank of these conditions will give the codimension of the intersection locus
KPP=KOP_1
FRT=diff(  (transpose matrix{{z_1..z_10}}), KPP)
GFD=minors(9,FRT);
--GFD describes the locus of tangents to $E1*E2*E3$ meeting the chosen symmetric 
--Lagrangian spanned by $KK$

degree GFD
D=ZZ/11[x_1,x_2,x_3,y_1,y_2,y_3]
GFO=(map(D,T)) GFD;
SB=ZZ/11[m,q_1..q_9]
fv=map(D,SB,matrix{{1,x_1*y_1,x_1*y_2,x_1*y_3,x_2*y_1,x_2*y_2,x_2*y_3,x_3*y_1,x_3*y_2,x
_3*y_3}})
man=preimage(fv, GFO);
--we see the EPW quartic GFD in the corresponding affine part of the cone over the Segre 
--embedding of P^2 x P^2
dim man
degree man
EPW=ideal (homogenize(gens man, m))
--we take the projective closure and get EPW the EPW quartic in P^9 that is 
--contained in the cone over 
--P^2x P^2 and is symmetric with respect to 
--the chosen involution
degree EPW
dim EPW
SFB=ZZ/11[s_1,s_2,s_3,s_4,s_5,s_6,tdt]
EPWsym=preimage(map(SB,SFB,matrix{{q_1,q_2+q_4,q_3+q_7,q_5,q_6+q_8,q_9,m}}), 
EPW);
--EPWsym is the projection from the anti-invariant locus given by the space of 
--skew-matrices
degree EPWsym
mingens EPWsym
--we get a complete intersection of a cubic and a quartic in P^6
S=singularLocus EPWsym;
IS=ideal S;
--IS represents the singular locus  of EPWsym
dim IS
degree IS
--the singular locus is a surface of degree 52 as expected, here we  possibly need to repeat the whole program
-- to get a general enough choice that will give the right number

R=QQ[x_1..x_4,y_1..y_4]
G=QQ[a_1..a_10]
F= (transpose matrix{{x_1..x_4}})* matrix{{y_1..y_4}}
P=(F+transpose F)
W=P^{0}|P^{1}_{1,2,3}|P^{2}_{2,3}|P^{3}_{3}
--W represents the map from P^3 x P^3$ to the space of symmetric 
--matrices which commutes with the exchange of variables. 
--its image is the symmetric square of P^3
BU=preimage(map(R,G, W), ideal(x_1^2+x_2^2+x_3^2+x_4^2,y_1^2+y_2^2+y_3^2+y_4^2))
--BU is the image of the symmetric square of the Fermat quadric in P^3 in the chosen coordinates
dim BU
saturate BU
GH=QQ[a_1,a_5..a_10]
mingens preimage(map(G,GH), BU)
--we project the symmetric square of the quadric from the anti-invariant
-- locus of a chosen symmetry preserving the quadric
\end{verbatim}

\end{document}